\documentclass[10pt,reqno]{amsart}
\usepackage{amsmath, amssymb, amsthm}
\usepackage{thmtools}
\usepackage[colorlinks=true,allcolors=blue,backref=page]{hyperref}
\usepackage[noabbrev,capitalize,nameinlink]{cleveref}
\usepackage{color}

\usepackage{mathrsfs}
\usepackage{mathtools}

\crefname{equation}{}{}
\usepackage{fullpage}
\usepackage[noadjust]{cite}
\usepackage{graphics}
\usepackage{pifont}
\usepackage{tikz}
\usepackage{bbm}
\usepackage[T1]{fontenc}

\usetikzlibrary{arrows.meta}

\usepackage{environ}
\usepackage{framed}
\usepackage{url}
\usepackage[linesnumbered,ruled,vlined]{algorithm2e}
\usepackage[noend]{algpseudocode}
\usepackage[labelfont=bf]{caption}
\usepackage{cite}
\usepackage{framed}
\usepackage[framemethod=tikz]{mdframed}
\usepackage{appendix}
\usepackage{graphicx}
\usepackage[textsize=tiny]{todonotes}
\usepackage{tcolorbox}
\usepackage{enumerate}
\usepackage[shortlabels]{enumitem}
\allowdisplaybreaks[1]

\apptocmd{\sloppy}{\hbadness 10000\relax}{}{} % magic BibTeX spacing fix

\crefname{algocf}{Algorithm}{Algorithms}

\crefname{equation}{}{} %remove ``Equation''
\crefname{conjecture}{Conjecture}{Conjectures} %add ``Conjecture''
\AtBeginEnvironment{appendices}{\crefalias{section}{appendix}} %appendices

\crefformat{enumi}{#2#1#3}
\crefrangeformat{enumi}{#3#1#4 to~#5#2#6}
\crefmultiformat{enumi}{#2#1#3}%
{ and~#2#1#3}{, #2#1#3}{ and~#2#1#3}

\usepackage[color,final]{showkeys} %add in 'final' into parameter to remove showkeys

\colorlet{refkey}{orange!20}
\colorlet{labelkey}{blue!30}

\crefname{algocf}{Algorithm}{Algorithms}

\numberwithin{equation}{section}
\newtheorem{theorem}{Theorem}[section]
\newtheorem{proposition}[theorem]{Proposition}
\newtheorem{lemma}[theorem]{Lemma}

\crefname{claim}{Claim}{Claims}

\newtheorem*{question*}{Question}

\theoremstyle{definition}
\newtheorem{definition}[theorem]{Definition}

\newtheorem*{definition*}{Definition}
\newtheorem{assumption}[theorem]{Assumption}
\theoremstyle{remark}
\newtheorem{remark}[theorem]{Remark}

\newcommand{\mb}{\mathbb}

\newcommand{\on}{\operatorname}

\allowdisplaybreaks

\title{Rising GUE Eigenvalue Process from a Fixed Level}

\author[A1]{Zoe Himwich}
\address{Department of Statistics, University of California, Berkeley, Berkeley, CA 94705}
\email{himwich@berkeley.edu}

\begin{document}

\begin{abstract} We construct the multilevel correlation kernel for the rising $\textsc{GUE}$ eigenvalue process starting from a fixed initial configuration $x^{(m)}$, and show that it converges on short time scales (as quickly as $\text{polylog}(m)$) to the extended semi-discrete sine kernel. As an application, we show fixed-energy universality of bulk local statistics of complex Hermitian Wigner matrices matching the covariance structure of $\textsc{GUE}$ and with a finite $4+\varepsilon$ moment for $\varepsilon>0$. This application demonstrates that it is possible to obtain universality of bulk local statistics under near-optimal moment assumptions without using a Dyson Brownian motion relaxation step, which had been a key ingredient in many results on this topic.%, such as those by Erd\H{o}s and Yau \cite{EY15} and Bourgade, Erd\H{o}s, Yau, and Yin \cite{BEYY16}.  
\end{abstract}

\maketitle
\setcounter{tocdepth}{1}{
  \hypersetup{linkcolor=black}
  \tableofcontents
}

\section{Introduction}\label{sec:introduction}

\subsection{Preface}
In this article, we construct the determinantal kernel of the rising $\textsc{GUE}$ eigenvalue process started from a fixed configuration at level $m$ (\cref{t:kernel}). This kernel did not previously exist in the literature, though a closely related kernel, associated with the corner-cutting process starting from a fixed configuration, was constructed by Metcalfe \cite{Met13} (see \cref{p:closedkernel}). The construction of the kernel for the rising eigenvalue process starting from a fixed configuration presents distinct challenges. We study the asymptotics of this kernel, showing that under assumptions on the starting configuration which are satisfied with high probability for Wigner matrices, we can obtain uniform-on-compact convergence to the extended semi-discrete sine kernel (or Boutillier bead kernel \cite{B09}) after $\text{polylog}{(m)}$ steps of the rising process (\cref{t:convrate}). We are able to use these integrable results to obtain a new proof of universality of bulk local statistics at fixed-energy for Wigner matrices that match the covariance structure of the $\textsc{GUE}$ and have a finite $4+\varepsilon$ moment for $\varepsilon>0$ (\cref{t:gaps}).

Bulk universality of local statistics (see the Wigner-Dyson-Gaudin-Mehta conjecture \cite[Conjectures 1.2.1 and 1.2.2]{Meh04}) for specific types of Wigner matrices have been studied and resolved by many researchers. A non-exhaustive collection of works studying local universality and variants of the general Wigner-Dyson-Gaudin-Mehta conjecture in various settings includes \cite{GM60,Dys70,DKMVZ99,PS97,BI99,J12,John01,Shcher09,EKYY12,BEYY16,ELPRSY10,ERSTVY10,ESY11,ESYY12,EY12comm,EYY12,HLY15,LSY19,TV12,TV11,TV11b,Agg19,EY15,ELRJSY10,T13}. There are many approaches which appear in this literature: early results began by proving this conjecture in classically integrable models; Johansson \cite{John01} proved bulk universality for certain Gaussian-divisible Hermitian Wigner matrices; and a program initiated by Erd\H{o}s, Schlein, and Yau \cite{ESY09,ESY09a} established local semicircle laws for eigenvalues and delocalization for eigenvectors. Subsequent papers by Erd\H{o}s, P\'ech\'e, Ramirez, Schlein, and Yau \cite{ELPRSY10}; Erd\H{o}s, Schlein, and Yau \cite{ESY11}; and Erd\H{o}s, Ramírez, Schlein, Tao, Vu, and Yau \cite{ERSTVY10} employed Dyson Brownian motion as a Gaussian perturbation of a general Wigner matrix which can be shown to equilibrate over short time scales (including through a determinantal kernel for eigenvalues which are allowed to evolve for a short time under Dyson Brownian motion dynamics, which is employed in the case of \cite{ELPRSY10}).  Tao and Vu \cite{TV11} pioneered an approach (also making use of the local semicircle law) which uses a Lindeberg replacement argument for random matrices which have moments matching classical integrable ensembles up to a certain order.

The optimal assumptions under which universality is believed to hold are that $2+\varepsilon$ moments exist for some $\varepsilon>0$ and that the covariance structure matches the symmetry class \cite{Agg19}. For the complex Hermitian Wigner matrices considered in this paper, it is possible to obtain bulk local universality for matrices which match the covariance structure of $\textsc{GUE}$ and which have a finite $4+\varepsilon$ moment for some $\varepsilon>0$, which can be achieved by combining \cite[Theorem 2.2]{EY15} or \cite[Theorem 2.2]{BEYY16} with \cite[Lemma 7.6]{EKYY12}. The papers of G\"otze, Naumov, Tikhomirov, and Timushev \cite{GNTT18} and G\"otze, Naumov, and Tikhomirov \cite{GNT18} developed local semicircle laws, eigenvalue rigidity, and eigenvector delocalization for Wigner matrices under a finite $4+\varepsilon$ moment assumption.

We use a standard Lindeberg replacement argument, adapted to pairs of Wigner
matrices sharing an $m\times m$ principal submatrix, to compare the fixed-energy
bulk local statistics of the original Wigner matrix with those of the matrix
obtained by adding Gaussian rows and columns. Combining this result with \cref{t:convrate}, we are able to show universality of bulk local statistics at fixed-energy for complex Wigner matrices that match the covariance structure of $\textsc{GUE}$ and have a finite $4+\varepsilon$ moment for $\varepsilon>0$. 

Arguments for fixed-energy universality and fixed-index gap universality under various conditions typically employ a three-step strategy: $(1)$ proving a local semicircle law; $(2)$ adding a Gaussian perturbation and proving universality of the perturbed matrix; and $(3)$ comparing the original and perturbed matrices. Our argument is analogous to this strategy. We assume the local semicircle law and eigenvalue rigidity of \cite{GNTT18,GNT18} (see \cref{thm:rigid}), which is analogous to step $(1)$.
In this case, step $(2)$ corresponds to adding Gaussian rows and columns to the $m\times m$ Wigner matrix and showing convergence of the resulting kernel to the extended semi-discrete sine kernel, and step $(3)$ corresponds to the Lindeberg comparison strategy. A distinction between the step $(2)$ in this paper and that in other papers is that the Gaussian perturbation is localized in the final few rows and columns of the matrix, rather than a Gaussian noise applied to each entry. Other papers, such as \cite{G17}, have taken the approach of conditioning on a large part of a point process configuration and then using kernel methods to analyze the remaining piece, and our approach uses the same idea.

Finally, along the way to the construction of the kernel in \cref{t:kernel}, we also prove
    \begin{enumerate}
        \item A version of the Eynard-Mehta theorem adapted to rising processes which start at a fixed level (\cref{a:em}),
        \item That the kernel of \cref{t:kernel} can be obtained as a limit of the kernel of lozenge tilings of a suitable sequence of polygon domains (\cref{s:leolimit}). This result is related to an earlier result of Aggarwal and Gorin \cite{AG22}, which does the same for the unconditioned rising eigenvalue process. 
        \item An alternative derivation of the kernel constructed by Metcalfe \cite{Met13} (\cref{s:compare}).
    \end{enumerate}
\subsection{Random Matrix Ensembles, Eigenvalue Processes}
In this section, we make several definitions which we use throughout the paper. There will be additional setup in \cref{s:prelim}.
\begin{definition}[Wigner Ensemble]\label{d:wigner} For each $n\in\mathbb{N}$, let $H^{(n)}=\{h_{ij}^{(n)}\}_{i,j=1}^{n}$ be a random $n\times n$ Hermitian matrix. Assume that the random variables $\{h_{ij}^{(n)}\}_{1\leq i\leq j\leq n}$ are independent, that the off-diagonal entries $\{h_{ij}^{(n)}\}_{1\leq i<j\leq n}$ are identically distributed, that the diagonal entries $\{h_{ii}^{(n)}\}_{1\leq i\leq n}$ are identically distributed and real-valued, and that $h_{ij}^{(n)}=\overline{h_{ji}^{(n)}}$. For all $1\leq i< j\leq n$, the $h_{ij}^{(n)}$ satisfy 
\begin{align*}
    \mathbb{E}\left[h_{ij}^{(n)}\right] = 0, & & \mathbb{E}[\Re(h_{ij}^{(n)})^{2}]=\mathbb{E}[\Im(h_{ij}^{(n)})^{2}]=\frac{1}{2}, & & & & \mathbb{E}[\Re(h_{ij}^{(n)})\Im(h_{ij}^{(n)})] = 0, & &  \mathbb{E}\left[|h_{ij}^{(n)}|^{4+\varepsilon}\right]<C, 
\end{align*}  and for all $1\leq i\leq n$, 
\begin{align*}
    \mathbb{E}[h_{ii}^{(n)}]=0, & & \mathbb{E}[(h_{ii}^{(n)})^{2}] =1, & & \mathbb{E}[|h_{ii}^{(n)}|^{4+\varepsilon}] <C,  
\end{align*} for some $\varepsilon>0$ and constant, $C>0$ which do not depend on $n$. 
We refer to a random matrix $H^{(n)}$ which satisfies these properties as an $n\times n$ Wigner matrix. We refer to the collection $\{H^{(n)}\}_{n\in\mathbb{N}}$ as a Wigner ensemble. 
\end{definition}
By Wigner's semicircle law, if $H^{(n)}$ has eigenvalues $\{\lambda^{(n)}_{i}\}_{i\in[[n]]}$, then the empirical spectral measure of $\{\lambda^{(n)}_{i}/\sqrt{n}\}_{i\in[[n]]}$ converges weakly in probability to the semicircle distribution, $\rho_{sc}(x):=\mathbf{1}_{|x|\leq 2}(2\pi)^{-1}\sqrt{4-x^{2}}.$ Eigenvalues near a ``bulk'' energy $X\in(-2,2)$ correspond to eigenvalues with the normalization as in \cref{d:wigner} which are close to $X\sqrt{n}$.

There are many natural notions of stochastic processes on eigenvalues of random matrices; we will work with the rising eigenvalue process, defined below. 
\begin{definition}[Rising Eigenvalue Process] \label{d:gueprocess}  We define $\mathrm{Conf}_{n}(\mathbb{R})$ to be the collection of ordered $n$-tuples of the form $x^{(n)}=(x_{1}^{(n)},\cdots,x_{n}^{(n)})$ with $x_{i}^{(n)}\geq x_{j}^{(n)}$ when $i<j$. We then define $\mathcal{F}_{n}=\mathcal{B}(\mathrm{Conf}_{n}(\mathbb{R}))$. We also define $\mathrm{Conf}_{n}^{0}(\mathbb{R})$ to be the collection of ordered $n$-tuples with strict inequality, $x^{(n)}=(x_{1}^{(n)},\cdots, x_{n}^{(n)})$ with $x_{i}^{(n)}>x_{j}^{(n)}$ when $i<j$. 

Let $H=\{h_{ij}\}_{i,j\ge 1}$ be an infinite Hermitian random matrix with entries satisfying the same assumptions as those in \cref{d:wigner}. For each $n\in\mathbb{N}$, let $H^{(n)}$ be the $n\times n$ principal submatrix of $H$ and let $\lambda^{(n)}$ be the $\mathrm{Conf}_{n}(\mathbb{R})$-valued random variable given by its eigenvalue configuration, with law on $(\mathrm{Conf}_{n}(\mathbb{R}),\mathcal{F}_{n})$ denoted by $\mathbb{P}^{W}_{n}$. The rising Wigner eigenvalue process is the law $\mathbb{P}^{W}$ of the sequence defined by $\{\lambda^{(n)}\}_{n=1}^{\infty}$ on $\left(\prod_{n\in\mathbb{N}}\mathrm{Conf}_{n}(\mathbb{R}),\bigotimes_{n\in\mathbb{N}}\mathcal{F}_{n}\right)$. If $\pi_{n}$ denotes projection onto the $n$-th level, then $\mathbb{P}_{n}^{W}=\mathbb P^{W}\circ\pi_{n}^{-1}.$ 

By Cauchy's interlacing theorem, $\mathbb{P}^{W}$ is supported on elements of $\prod_{n\in\mathbb{N}}\mathrm{Conf}_{n}(\mathbb{R})$ such that $x^{(n)}\prec x^{(n+1)}$ for all $n\in\mathbb{N}$, where the notation $x^{(n)}\prec x^{(n+1)}$ denotes interlacing configurations, meaning that $x^{(n)},x^{(n+1)}$ satisfy $x^{(n+1)}_{i}\geq x^{(n)}_{i}\geq x^{(n+1)}_{i+1}$ for all $1\leq i\leq n$. 
\end{definition}
\begin{remark}[Gaussian Unitary Ensemble, Rising Process] \label{r:gue} 
Let $G^{(n)}=\{g_{ij}^{(n)}\}_{i,j=1}^{n}$ be an $n\times n$ Wigner matrix as in \cref{d:wigner}. We say that $G^{(n)}$ is an $n\times n$ Gaussian Unitary Ensemble matrix if, for $1\leq i<j\leq n$, $g_{ij}^{(n)}$ takes the form $2^{-1/2}(\xi_{ij}+i\eta_{ij})$, and for $1\leq i\leq n$, $g_{ii}^{(n)}$ takes the form $\xi_{ii}$ where $\xi_{ij},\eta_{ij}$ are independent, real-valued, and distributed as $\mathcal{N}(0,1)$ for all $1\leq i\leq j\leq n$.

The rising $\textsc{GUE}$ eigenvalue process is defined analogously to that for the general rising  Wigner eigenvalue process, starting instead from an infinite $\textsc{GUE}$ matrix, and is supported on $\mathrm{Conf}^{0}(\mathbb{R})=\prod_{n\in\mathbb{N}}\mathrm{Conf}_{n}^{0}(\mathbb{R})$. We use $\mathbb{P}^{\textsc{GUE}}$ to denote its law, and $\mathbb{P}^{\textsc{GUE}}_{n}$ to denote the projection to the $n$-th level. In the $\textsc{GUE}$ case, the transition densities with respect to the Lebesgue measure on $\mathrm{Conf}^{0}_{n+1}(\mathbb{R})$ from $x^{(n)}\in\mathrm{Conf}^{0}_{n}(\mathbb{R})$ to $x^{(n+1)}$, $p_{n,n+1}(x^{(n)};x^{(n+1)})$, are given by
    \begin{align*}
        p_{n,n+1}(x^{(n)};x^{(n+1)}) & = \frac{1}{\sqrt{2\pi}} \mathbf{1}_{x^{(n)}\prec x^{(n+1)}}\frac{\prod_{k<j}(x_{k}^{(n+1)}-x_{j}^{(n+1)})}{\prod_{k<j}(x_{k}^{(n)}-x_{j}^{(n)})} \exp{\left(-\sum_{k=1}^{n+1}\frac{(x_{k}^{(n+1)})^{2}}{2} + \sum_{k=1}^{n}\frac{(x_{k}^{(n)})^{2}}{2}\right)}.
    \end{align*} 
\end{remark} 

We also define what it means for two random matrices to match moments up to a given order.
\begin{definition}[Matching Moments]\label{d:matching} We say two complex random variables $\xi,\zeta$ match moments to order $k\geq 1$ if $\mathbb{E}[\Re(\xi)^{a}\Im(\xi)^{b}]=\mathbb{E}[\Re(\zeta)^{a}\Im(\zeta)^{b}]$ for all $a,b\in\mathbb{Z}_{\geq 0}$ such that $a+b\leq k$. Similarly, we say that two random matrices match moments to order $k\geq 1$ if all entries in both matrices match moments to order $k$. 
\end{definition}
\begin{remark} Therefore, all Wigner matrices considered in this paper  match $\textsc{GUE}$ to two moments. 
\end{remark}

The rising GUE eigenvalue process in probability literature goes back to Baryshnikov \cite{B01}. 
Johansson and Nordenstam \cite{JN06} showed that the rising $\textsc{GUE}$ eigenvalue process is determinantal. Adler, Nordenstam, and van Moerbeke \cite[Theorem 1.3 and Corollary 1.4]{ANV14} showed that the kernel of the rising $\textsc{GUE}$ eigenvalue process converges, under appropriate rescaling around a bulk point, and uniformly on compact sets, to a limiting kernel (see \cite[Theorem 1.3]{ANV14} with $t=t'=0$), which we will refer to as the extended semi-discrete sine kernel (it is also known as the Boutillier bead kernel).
\begin{definition}\label{d:extendedsine}
    The extended semi-discrete sine process with parameter $a\in \mathbb{C}$ such that $\Im(a)>0$ is the determinantal point process on $\mathbb{N}\times\mathbb{R}$ with kernel given by 
    \begin{align*}
        K_{a}^{\text{sine}}(n_{1},x_{1};n_{2},x_{2}) & = \begin{cases}
             \frac{1}{2\pi i}\int_{\overline{a}}^{a}z^{n_{1}-n_{2}}e^{z(x_{2}-x_{1})}dz & n_{1}\geq n_{2}
             \\ -\frac{1}{2\pi i}\int_{\Re(a)+i(\mathbb{R}\setminus[-\Im(a),\Im(a)])}z^{n_{1}-n_{2}}e^{z(x_{2}-x_{1})}dz & n_{2}>n_{1}
        \end{cases},
    \end{align*} with respect to the counting measure on $\mathbb{N}$ and the Lebesgue measure on $\mathbb{R}$. Both contours lie in the set $\{z:\Re(z)=\Re(a)\}$ and are oriented in the positive imaginary direction. The integral in the second expression is interpreted as \begin{align*}
        \lim_{L\to\infty}
\left(
\int_{\Re(a)-iL}^{\Re(a)-i\Im(a)}z^{n_{1}-n_{2}}e^{z(x_{2}-x_{1})} dz
+
\int_{\Re(a)+i\Im(a)}^{\Re(a)+iL}
z^{n_{1}-n_{2}}e^{z(x_{2}-x_{1})} dz\right).
    \end{align*}
    This kernel coincides with that of the Boutillier bead process \cite{B09}. When $n_{1}=n_{2}=n,$ up to a conjugation by a non-vanishing function (see \cref{r:conj}), $K_{a}^{\text{sine}}(n,x_{1};n,x_{2})$ is equal to the sine kernel with slope $\phi = \Im(a)$,
\begin{align}\label{e:sinekernel}
        K_{\phi}^{\text{sine}}(x_{1},x_{2}) & := \frac{\sin{\left(\phi(x_{2}-x_{1})\right)}}{\pi(x_{2}-x_{1})}.
    \end{align}  
    When $\phi=\pi$, we will refer to this kernel by the notation $K^{\text{sine}}(x_{1},x_{2})$. 
\end{definition}    

\subsection{Main Results}\label{s:mainresults}

Our main results are an explicit multilevel kernel for the rising $\textsc{GUE}$ eigenvalue process starting from a fixed configuration $x^{(m)}\in\mathrm{Conf}_{m}^{0}(\mathbb{R})$ (\cref{t:kernel}) and showing that it converges on $\text{polylog}(m)$-time scales to the extended semi-discrete sine kernel. 

For $x^{(m)}\in\mathrm{Conf}_{m}^{0}(\mathbb R)$, we define the rising $\textsc{GUE}$ eigenvalue process started at $x^{(m)}$ to be the Markov process on $\prod_{r=m+1}^{\infty}\mathrm{Conf}^{0}_{r}(\mathbb R)$
whose transition densities are the $\textsc{GUE}$ transition densities $p_{r,r+1}$ (see \cref{r:gue}), with initial condition
$x^{(m)}$.
\begin{theorem}\label{t:kernel} The $\textsc{GUE}$ rising eigenvalue process started at $x^{(m)}\in \mathrm{Conf}_{m}^{0}(\mathbb{R})$ is determinantal with respect to the reference measure on $\mathbb{N}\times\mathbb{R}$ defined by the counting measure on $\mathbb{N}$ and the Lebesgue measure on $\mathbb{R}$. For $n_{1},n_{2}\in\mathbb{N}_{>m}$ and  $x_{1},x_{2}\in\mathbb{R}$, the kernel of this point process is given by 
\begin{align*}
     K^{\textsc{GUE}}_{x^{(m)}}(n_{1}, x_{1} ;n_{2},x_{2})  & = \frac{(n_{1}-m)!}{(n_{2}-m-1)!}
    \frac{-1}{(2\pi i)^{2}}
    \int\limits_{\Gamma} d z\oint\limits_{c(x_{1})}dw
    \frac{(z-x_{2})^{n_{2}-m-1}}{(w-x_{1})^{n_{1}-m+1}}
    \frac{e^{z^{2}/2-w^{2}/2} }{w-z}
    \prod_{r=1}^{m}
    \frac{w-x_{r}^{(m)}}{z-x_{r}^{(m)}}.
\end{align*} In this expression, $\Gamma=b+i\mathbb{R}$ for $b\in\mathbb{R}$ such that $x_{2}<b$, and, for all $j\in [[m]]$ which satisfy $x_{j}^{(m)}>x_{2}$ $b$ also satisfies $b<x_{j}^{(m)}$. Similarly, if $x_{2}<x_{1}$, then $b<x_{1}$. The value of the integral is independent of any such choice of $b$. The contour $\Gamma$ is oriented in the positive imaginary direction, and the contour $c(x_{1})$ is a small positively oriented contour around $x_{1}$ containing none of the $x^{(m)}_{j}$ for $j\in[[m]]$ unless $x^{(m)}_{j}=x_{1}$. The contour $c(x_{1})$ is also chosen so as not to intersect $\Gamma$. 

We also define the notation $\widetilde{K}_{x^{(m)}}^{\textsc{GUE}}(n_{1},x_{1};n_{2},x_{2}):=K_{x^{(m)}}^{\textsc{GUE}}(n_{1}+m,x_{1};n_{2}+m,x_{2})$ for $n_{1},n_{2}\in\mathbb{N}$ and $x_{1},x_{2}\in\mathbb{R}$ to emphasize that this process starts at level $m$. 
\end{theorem}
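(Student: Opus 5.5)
We will prove the claim by an Eynard--Mehta argument for a Markov chain of interlacing configurations started at a deterministic level. After that we identify the resulting kernel with the double contour integral. First, rewrite the transition density $p_{r,r+1}$ in a form where the interlacing indicator becomes a determinant. The standard identity
\[
\mathbf{1}_{x^{(r)}\prec x^{(r+1)}}=\det\big[\mathbf{1}_{x^{(r)}_{i}<x^{(r+1)}_{j}}\big],
\]
with a virtual variable $x^{(r)}_{r+1}=-\infty$ bordering the matrix to make it square, turns the joint density of $(x^{(m+1)},\dots,x^{(N)})$ given $x^{(m)}$ into a product of determinants of the form
\[
\prod_{r=m}^{N-1}\det[\phi_{r}(x^{(r)}_{i},x^{(r+1)}_{j})]\cdot\det[\Psi_{k}(x^{(N)}_{j})].
\]
The Vandermonde ratios telescope to $\Delta(x^{(N)})/\Delta(x^{(m)})$. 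The Gaussian factors combine into $e^{-\sum (x^{(N)}_k)^2/2}$ times a constant depending on $x^{(m)}$. The product $\Delta(x^{(N)})e^{-\sum x^2/2}$ is written as $\det[p_{k}(x_j)e^{-x_j^2/2}]$ for any monic basis $p_k$. Since the process is Markov, marginalizing levels above $N$ is trivial; we fix $N$ large and check at the end that the kernel does not depend on $N$.

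Second, apply the version of the Eynard--Mehta theorem for processes started from a fixed level (the paper's \cref{a:em}). It gives a kernel
\[
K(n_1,x_1;n_2,x_2)=-\phi_{n_1,n_2}(x_1,x_2)+\sum_{k,l}\Psi^{n_1}_{k}(x_1)\,[G^{-1}]_{kl}\,\Phi^{n_2}_{l}(x_2).
\]
The $\Phi$'s are built by convolving the indicator kernels forward from the points $x^{(m)}_j$ and the virtual $-\infty$'s. The $\Psi$'s are built backward from the level-$N$ functions. The composition $\phi_{n_1,n_2}$ of indicator kernels is $(x_2-x_1)^{n_2-n_1-1}_+/(n_2-n_1-1)!$, which should become the residue part of the contour integral when $n_2>n_1$. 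The forward functions from the virtual points are polynomials of degree $<n-m$ in $x$. The forward functions from $x^{(m)}_j$ are truncated powers $(x-x^{(m)}_j)_+^{n-m-1}$ up to constants, which are more awkward. The main task is to choose a biorthogonal system so that $G$ becomes triangular.

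Third, and this is the step I expect to be the main obstacle, we must find that biorthogonal system explicitly. Guided by the shape of the target formula, take the $w$-part as the $x$-side functions:
\[
\Psi^{n}_k(x)\propto\oint \frac{e^{-w^2/2}\prod_r (w-x_r^{(m)})}{(w-x)^{n-m+1}}\,w^{k}\,dw\text{-type}.
\]
These are derivatives of $e^{-x^2/2}\prod_r(x-x^{(m)}_r)$ times monomials, i.e.\ residues at $x_1$. Take the dual functions as the $z$-integrals
\[
\int_\Gamma e^{z^2/2}(z-x_2)^{n-m-1}\prod_r (z-x^{(m)}_r)^{-1}z^{-l}\,dz.
\]
For $x_2$ below the relevant $x^{(m)}_j$, closing $\Gamma$ and taking residues at the poles $x^{(m)}_j$ to its left reproduces the truncated-power structure $(x_j^{(m)}-x_2)_+$. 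This is why $b$ must separate $x_2$ from the larger $x^{(m)}_j$; moving $b$ across no pole shows independence of $b$. We then verify biorthogonality by integrating over $x$ and evaluating the Gaussian integral in closed form. The sum over $k$ of the geometric-type factors collapses to $1/(w-z)$. Finally, we check that the $-\phi_{n_1,n_2}$ term equals the residue at $w=z$ that appears when $x_2<x_1$ forces the contours to be arranged accordingly. The prefactor $(n_1-m)!/(n_2-m-1)!$ comes from the derivative normalization of the residue at $x_1$ and from the $\Phi$ normalization. Independence from $N$ follows because the formula no longer contains $N$.

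If the direct biorthogonalization proves intractable, a fallback is the route mentioned in the introduction: obtain the kernel as a limit of lozenge-tiling kernels of polygons (\cref{s:leolimit}), where Petrov-type double contour formulas are available, and pass to the limit.
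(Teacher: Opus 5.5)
The framework (Eynard--Mehta for a chain started at a fixed level) is the right one, but your third step carries all the content and fails as written. Your $x$-side functions are residues at $w=x$:
\[
\Psi^n_k(x)\propto \frac{1}{(n-m)!}\,\partial_x^{\,n-m}\Big(e^{-x^2/2}P(x)\,x^k\Big),\qquad P(x):=\prod_{r=1}^m(x-x^{(m)}_r).
\]
This is $e^{-x^2/2}$ times a polynomial of exact degree $n+k\ge n$.

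The backward functions at level $n$, however, are forced. The top-level factor $\Delta(x^{(N)})e^{-\sum_j (x^{(N)}_j)^2/2}$ fixes their span at level $N$ to be $e^{-x^2/2}$ times polynomials of degree at most $N-1$. Each backward step $f\mapsto\int_x^\infty f(y)\,dy$ lowers that degree by one, and also produces Gaussian-tail functions $\int_x^\infty (y-x)^j e^{-y^2/2}\,dy$, which are asymptotically polynomials of distinct degrees as $x\to-\infty$. So at level $n$ the admissible space is $e^{-x^2/2}$ times polynomials of degree at most $n-1$, plus Gaussian tails. None of your $\Psi^n_k$ with $k\ge 0$ lies in it, so pairing them against your $z$-integrals cannot produce the Eynard--Mehta kernel.

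Likewise, the ``geometric collapse'' $\sum_{k\ge0}w^kz^{-k-1}=1/(z-w)$ is an infinite series. It requires $|w|<|z|$ on $c(x_1)\times\Gamma$, which fails whenever $|x_1|>|b|$, and it never truncates. In the fixed-level \textsc{GUE} formula the series truncates because both contours are centred at $0$ and carry $(z/w)^n$. Here the factors $(w-x_1)^{-(n_1-m+1)}$ and $(z-x_2)^{n_2-m-1}$ have different centres. As a result, the prefactor $(n_1-m)!/(n_2-m-1)!$, the cancellation of $-\varphi^{(n_1,n_2)}$, and the constraints on $b$ are asserted rather than derived.

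Your second step also has an unaddressed divergence. With $\varphi(x,y)=\mathbf{1}_{x\le y}$ and a virtual row of ones, $(\varphi\star\varphi)(\mathrm{virt},y)=\int_{-\infty}^{y}dz=\infty$. So the forward functions from the virtual points are not polynomials, and several Gram entries diverge. The paper repairs this in \cref{l:detshift}: it subtracts $\kappa(x^{(r-1)}_k)=\frac{1}{\sqrt{2\pi}}\int_{x^{(r-1)}_k}^\infty e^{-t^2/2}dt$ times the virtual row, which replaces $\varphi$ by the Hermite-expanded $\widetilde\varphi$. With $\upsilon_k(x_j^{(m)}|m)=[h(x^{(m)})^{-T}]_{kj}e^{(x_j^{(m)})^2/2}$ and $\psi_k(x|m+L)=h_{m+L-k}(x)e^{-x^2/2}$, the Gram matrix is then exactly the identity (\cref{l:gid}), so no biorthogonalization is needed.

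The real work is resumming the resulting explicit Hermite kernel. \cref{l:termsolve1} uses Hermite integral representations, two Fourier transforms, and the identity $\sum_k(-1)^{k-1}e_{k-1}(x^{(m)}\setminus x_j^{(m)})w^{m-k}=\prod_{r\neq j}(w-x_r^{(m)})$ to turn the $\upsilon$-term into a $z$-contour around $\{x^{(m)}_j\le x_2\}$. \cref{l:termsolve2}, via \cref{l:annoying} and a partial-fraction decomposition of $P(w)/((z-w)P(z))$, turns the remaining terms into an integral over a vertical line to the left of all poles. Moving that line to $b$ cancels both the $\upsilon$-term and $-\varphi^{(n_1,n_2)}$.

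Your fallback does not close the gap either. \cref{s:leolimit} only shows that Petrov's kernels converge pointwise to the already-derived formula. Deducing \cref{t:kernel} from that would also require proving that the polygon tiling measures converge, jointly over levels, to the rising \textsc{GUE} process started at $x^{(m)}$. That is an Aggarwal--Gorin-type theorem which is not available in this conditioned setting.
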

In \cref{s:leolimit}, we also demonstrate that, under an appropriate choice of parameters, we can obtain this kernel as a limit of Petrov's \cite{Pet14} kernel of the polygon lozenge tiling. This is analogous to, but distinct from, an existing result of Aggarwal and Gorin \cite[Theorem 4]{AG22} which shows that the law of the rising $\textsc{GUE}$ eigenvalue process arises as a scaling limit of uniformly random lozenge tilings of large
domains with three adjacent straight boundary segments meeting at $120$ degrees, under the appropriate scaling.

A result of Metcalfe \cite{Met13} fixes the final eigenvalue configuration of rising process with $m$ steps (as opposed to fixing the initial state, in our case, which presents distinct challenges). We also demonstrate in \cref{s:compare} that we can obtain Metcalfe's kernel using the methods from \cref{s:kernel}. It is also possible to obtain this kernel as the limit of the kernel which appears in \cite[Theorem 1]{Pet14}, by a process similar to that outlined in \cref{s:leolimit}. 

The next result demonstrates convergence of the kernel of the rising $\textsc{GUE}$ eigenvalue process started from a fixed configuration $x^{(m)}\in\mathrm{Conf}_{m}^{0}$. The uniform-on-compact convergence which we show is strong enough to obtain convergence of gap statistics in bounded regions. The constants in this theorem may depend on the $\varepsilon,C>0$ which appear in \cref{d:wigner}.
\begin{theorem}\label{t:convrate} Let $T:=T(m)$ be an integer-valued sequence satisfying $ \log^{4}{(m)}\ll T(m)\ll m^{2/3}$. There exists a sequence of events $\Omega_{m}\in\mathcal{F}_{m}$ with $\mathbb{P}_{m}^{W}(\Omega_{m})\geq 1-m^{-25}$ such that for any compact set $K\subset \mathbb{N}\times \mathbb{R}$ and $\alpha\in (0,2)$ there exist $C:=C(K,\alpha)>0,m_{0}:=m_{0}(K,\alpha)\in\mathbb{N}$ such that for all $(n_{1},x_{1}),(n_{2},x_{2})\in K$, $m>m_{0},$ $X\in(-2+\alpha,2-\alpha)$, and $x^{(m)}\in\Omega_{m}$, 
    \begin{align}\label{e:convratebound}
        \big| \frac{1}{\sqrt{m}}\widetilde{K}_{x^{(m)}}^{\textsc{GUE}}\left(n_{1}+T, X\sqrt{m}+\frac{x_{1}}{\sqrt{m}}; n_{2}+T, X\sqrt{m}+\frac{x_{2}}{\sqrt{m}}\right) - K^{\text{sine}}_{a(X)}(n_{1},x_{1};n_{2},x_{2})\big|\leq CT^{-1/2},
    \end{align} where $a(X)=X/2+i\pi\rho_{sc}(X)$. 
\end{theorem}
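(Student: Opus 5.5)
We will prove \cref{t:convrate} by steepest descent applied to the double contour integral of \cref{t:kernel}. The good event $\Omega_m$ is taken from the rigidity and local law results cited as \cref{thm:rigid}. On $\Omega_m$ each eigenvalue $x_j^{(m)}$ lies within $m^{-1/2+o(1)}$, in the $\sqrt m$-scaled units, of its classical location $\gamma_j\sqrt m$. The Stieltjes transform $s_m(z)=\frac1m\sum_r (z/\sqrt m - x_r^{(m)}/\sqrt m)^{-1}$ is within $m^{-1+o(1)}/\eta$ of $s_{sc}$, uniformly for $\Im z\ge m^{-1+o(1)}$.

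\textbf{Step 1: rescaling.} Substitute $z=\sqrt m\,X+ \zeta/\sqrt m$ and $w=\sqrt m X+\omega/\sqrt m$, and set $N_i=n_i+T$. The integrand becomes $\exp(S(\omega)-S(\zeta))$ times lower-order factors, where
\[
S(\omega)= -\frac{w^2}{2} - N_1\log(w-x_1)+\sum_{r}\log(w-x^{(m)}_r).
\]
The relevant scale is $T\ll m^{2/3}$ and $T\gg\log^4 m$. Define the shifted variable $u=\omega$ at microscopic scale. Since $\frac{d}{dw}\big(-w^2/2+\sum_r\log(w-x_r)\big)=-w+m s_m$, and at the macroscopic point this vanishes to leading order only through the combination $-X\sqrt m+\sqrt m\, s_{sc}(X)$, the dominant $T$-dependent part is
\[
F(u)= -\big(\sqrt m X - m\,\mathrm{Re}\, s\big)\cdot(\ldots) - T\log(u).
\]
The critical point equation then reads $T/u \approx \sqrt m(\,-X+ s_{sc}(X+\cdot))$ after rescaling $u$ by $T/\sqrt m$. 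Concretely, we rescale $w-x_1 = (T/\sqrt m)\,v$. This gives the phase
\[
T\big(v\,(\overline{s_{sc}}\text{-type constant}) - \log v\big)+O(T^{3/2}m^{-1/2}),
\]
with a pair of complex-conjugate critical points. These are determined by $\sqrt m(s_{sc}(X)+X)$-type quantities, and their rescaled locations are exactly $a(X)$ and $\overline{a(X)}$ with $a(X)=X/2+i\pi\rho_{sc}(X)$. The cubic Taylor error is $O(T^3/ m)\cdot T^{-2}$; this is small precisely because $T\ll m^{2/3}$, which is where that upper bound enters.

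\textbf{Step 2: contour deformation and residue.} Deform $c(x_1)$ and $\Gamma$ to steepest descent contours through $a$ and $\bar a$. The $w$-contour becomes a circle-like curve around $x_1$ through both critical points. The $z$-contour becomes a vertical-type line through the critical points, which on the microscopic scale is $\Re(a)+i\mathbb R$. When these contours are moved past each other, the pole $1/(w-z)$ contributes a residue supported on the arc between $\bar a$ and $a$. This residue is the single integral $\frac{1}{2\pi i}\int_{\bar a}^{a} z^{n_1-n_2}e^{z(x_2-x_1)}dz$. It arises because $(z-x_2)^{n_2-m-1}/(w-x_1)^{n_1-m+1}$ at $w=z$, combined with the prefactor $(n_1-m)!/(n_2-m-1)!$ and Stirling's formula, gives $z^{n_1-n_2}e^{z(x_2-x_1)}$ after rescaling. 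The $n_1<n_2$ case yields the complementary contour in the same way. The remaining double integral has both variables on descent contours. Its leading Gaussian contribution is $O(T^{-1/2})$ relative to the residue, and this is the claimed rate.

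\textbf{Step 3: error control (main obstacle).} We must bound $\prod_r (w-x_r)/(z-x_r)$ along the deformed contours uniformly on $\Omega_m$. Near the real axis within distance $T/\sqrt m$ (unscaled $T/\sqrt m$ corresponds to $\eta\sim T/m$ in semicircle units), the local law gives $m|s_m-s_{sc}|\lesssim m^{o(1)}\cdot(T/m)^{-1}\cdot m^{-1}\cdot m$. Integrating along a path of length $T/\sqrt m$ then yields log-product errors of size polylog$(m)/\sqrt T$-ish. The hypothesis $T\gg\log^4 m$ is exactly what makes these errors $o(1)$ and indeed $O(T^{-1/2})$, provided we use the rigidity version with polylogarithmic rather than $m^{o(1)}$ losses; I expect to need the GNTT polylog rigidity here. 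Far from the critical points we need global decay estimates, namely $\Re(S)$ monotone along the contours. The $-w^2/2$ and $\sum\log$ terms together are controlled by comparing with the semicircle potential using macroscopic rigidity. Finally, uniformity over $X\in(-2+\alpha,2-\alpha)$ and over compact $K$ follows because all constants depend only on $\rho_{sc}(X)\ge c(\alpha)$.
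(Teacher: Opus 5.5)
Your overall architecture matches the paper's. You rescale by $T/\sqrt m$ about $X\sqrt m$, compare the phase $S_m(z)=zX+\log z+\frac{Tz^2}{2m}-\frac1T\sum_r\log(z-u_r^{(m)})$ with $S_*(z)=a(X)z+\log z$ (up to constants) using the polylogarithmic rigidity of \cref{thm:rigid}, swap contours to produce a single residue integral, and bound the leftover double integral by $O(T^{-1/2})$ as in \cref{p:steepest}.

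\textbf{The gap is in Step 3.} You rightly flag it as the main obstacle, but you propose to close it with the local law, which is unavailable exactly where it is needed. The $z$-contour of \cref{t:kernel} must cross $\mathbb R$ between $x_2$ and the next eigenvalue above it. After rescaling, that crossing lies within $O(1/T)$ of poles of $\prod_r(z-u_r^{(m)})^{-1}$, and the steepest curves of $S_m$ themselves run into the real axis. Below height of order $\log^2(m)/T$ the local law says nothing. So neither your ``log-product errors of size $\mathrm{polylog}(m)/\sqrt T$'' nor monotonicity of $\Re S$ from macroscopic rigidity is available there.

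Even at height of order one, $T|S_m-C_m-S_*|$ is only known to be $O(\log^2 m)$ plus the deterministic $T^2z^2/(2m)$, which is unbounded once $T\gg\sqrt m$. So the model phase cannot be substituted globally; your $\mathrm{polylog}(m)/\sqrt T$ is only the relative error inside the $T^{-1/2}$-window of the saddle.

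The paper handles this in three steps.
\begin{enumerate}
\item It works with the exact saddle $z_0^{(m)}$ of $S_m$, shown to be the unique critical point in $\mathbb H$ by a real-root count (\cref{l:maximaginary}).
\item It cuts the exact steepest curves at height $\kappa$, with $\log^2(m)T^{-1/2}+T^{3/2}m^{-1}\ll\kappa\to0$. By \cref{l:conditions3} they meet $\partial\mathcal R(c_0,\kappa)$ in four points: three on $\{\Im z=\kappa\}$ (the descent exit within $R\kappa$ of $0$, the two ascent exits at distance at least $c\kappa^{1/2}$) and one on $\{\Im z=\kappa^{-1}\}$. At all four, $\Re S_m$ is separated from its saddle value by a fixed $d>0$.
\item It closes up with short segments through $x_2/T$ and around $x_1/T$, on which only crude counting bounds \eqref{e:bd1}--\eqref{e:bd3} are used. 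There the blow-up $(\kappa/|y|)^{C\log^2 m}e^{CT\kappa}$ is absorbed by the zero of order $n_2+T-1$ of $(z-x_2/T)^{n_2+T-1}$ at the crossing point and by the gain $e^{-dT}$, using $T\kappa=o(T)$ and $T\gg\log^2 m$.
\end{enumerate}

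\textbf{Step 2 misidentifies the limit.} In the rescaled variable, $S_*'(z)=a+1/z$ vanishes at $z_0=-1/a(X)=-X/2+i\sqrt{1-X^2/4}$ (note $|a|=1$), not at $a,\bar a$. The residue at $w=z$ is, up to $O(1/T)$, $\frac{1}{2\pi i}\int_{\bar z_0}^{z_0}\zeta^{n_2-n_1-2}e^{(x_1-x_2)/\zeta}\,d\zeta$, not $\frac1{2\pi i}\int_{\bar a}^{a}z^{n_1-n_2}e^{z(x_2-x_1)}dz$. Likewise the $z$-descent contour is not $\Re(a)+i\mathbb R$. To reach $K^{\mathrm{sine}}_{a(X)}$ you need three further ingredients:
\begin{enumerate}
\item the inversion $u=-1/\zeta$, which sends $z_0\mapsto a$;
\item bookkeeping of the residue at $x_1/T$, giving indicator terms that depend on the sign of $\Re z_0$ and on $x_1\lessgtr x_2$;
\item bookkeeping of the pole of $u^{n_1-n_2}$ at $0$ when $n_2>n_1$, which produces the second case of the sine kernel.
\end{enumerate}

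\textbf{Where the hypotheses on $T$ enter.} They enter more cleanly than in your Step 1. The residue term depends on the configuration only through its endpoints. By \cref{l:sconv} and nondegeneracy of $z_0$, $|z_0^{(m)}-z_0|=O(e_m)$ with $e_m=\log^2(m)T^{-1}+Tm^{-1}$. The condition $e_m=o(T^{-1/2})$ is exactly $\log^4 m\ll T\ll m^{2/3}$. As written, your phase error $O(T^{3/2}m^{-1/2})$ is not small throughout that range.
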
 
\begin{remark}
    We can relax the scaling assumptions on $T,m$ to $\log^{2}{(m)}\ll T(m)\ll m$ at the cost of obtaining a worse rate in \eqref{e:convratebound}. 
\end{remark} 

There is also a more general version of \cref{t:convrate}, 
which holds in the case where we do not have \cref{t:local}, but instead have some weaker assumptions on the configurations $x^{(m)}$, analogous to the ``local density'' and ``intermediate scale'' assumptions imposed in \cite{GP19}. We state this result, and the weaker assumptions which are required to prove it, in \cref{a:fullgen}, and outline its proof.

A related, earlier work of Huang \cite[Theorem 1.4]{H22} proves that the bulk scaling limit of the Wigner corner process over any fixed finite number of adjacent levels is the bead process associated with $\text{Sine}_{\beta}$. In the complex Hermitian $\beta=2$ case, this corresponds to the Boutillier bead process. The argument in that paper uses single-level bulk universality, eigenvalue rigidity, local interval-count variance bound, and eigenvector delocalization as inputs (whereas we use universality of a rising process as an input to prove bulk local universality).  

Finally, as an application of \cref{t:kernel} and \cref{t:convrate}, we obtain fixed-energy universality for complex Hermitian Wigner matrices with covariance structure matching $\textsc{GUE}$, as in \cref{d:wigner}, which demonstrates that it is possible to obtain universality of bulk local statistics at fixed energy. 
 \begin{definition}[Correlation Measure] \label{d:cormeas}
     The general class of Wigner matrices encompassed by \cref{d:wigner} includes cases where the $k$-point correlations may not have a density with respect to the Lebesgue measure. Therefore, we define the correlation measure $\widehat{\rho}_{k,n}^{W}$ such that for all $F:\mathbb R^k\to\mathbb R$ bounded and Borel-measurable,
\begin{align*}
\int_{\mathbb{R}^k} F(y_1,\ldots,y_k) 
\widehat{\rho}_{k,n}^{W}(dy_1,\cdots, dy_k)
:=\mathbb{E}^{W}_{n}\left[
\sum_{\substack{i_{1},\ldots,i_{k} \in [[n]]\\ \text{distinct} }}
F(x_{i_1}^{(n)},\ldots,x_{i_k}^{(n)})\right].
\end{align*} 
We also define $\widehat{\rho}_{k,X}^{W,(n)}$ as the push-forward of $\widehat{\rho}_{k,n}^{W}$ under $$(y_1,\ldots,y_k)
\mapsto
(
\rho_{sc}(X)\sqrt n(y_1-X\sqrt n),
\ldots,
\rho_{sc}(X)\sqrt n(y_k-X\sqrt n)
).$$ 
We define $\widehat{\rho}_{k,n}^{\textsc{GUE}}$ and
$\widehat\rho_{k,X}^{\textsc{GUE},(n)}$ analogously, and define $\widehat\rho_{k,X;x^{(m)}}^{\textsc{GUE},(m,T)}$ to be the correlation measure corresponding to the kernel $$\frac{e^{\frac{X}{2\rho_{sc}(X)}(x-y)}}{\rho_{sc}(X)\sqrt{m}} \widetilde{K}^{\textsc{GUE}}_{x^{(m)}}\left(T,X\sqrt{m}+\frac{x}{\rho_{sc}(X)\sqrt{m}}; T, X\sqrt{m}+\frac{y}{\rho_{sc}(X)\sqrt{m}}\right),$$ (with $n:=m+T$) which is appropriately rescaled to converge to $K^{\text{sine}}(x,y)$.
 \end{definition}
 As before, the constants in this theorem may also depend on the constants $\varepsilon$ and $C$ from \cref{d:wigner}. 
\begin{theorem}\label{t:gaps} For each $n\in\mathbb{N}$, let $H^{(n)}$ be an $n\times n$ Wigner matrix satisfying \cref{d:wigner}. Then, for any $\alpha\in (0,2)$,  $k\in\mathbb{N}$, $U\subset\mathbb{R}$ compact, and $F\in C^{1}_{c}(\mathbb{R}^{k})$ with $\mathrm{supp}(F)\subset U^{k}$, there exist $C:=C(U,k,\alpha)>0,c:=c(k,\alpha)>0,n_{0}:=n_{0}(U,k,\alpha)\in\mathbb{N}$ such that for all $X\in (-2+\alpha,2-\alpha)$ and $n>n_{0}$,
    \begin{align*}
        \left|\int_{\mathbb R^k} F(x_{1},\ldots x_{k})\widehat{\rho}_{k,X}^{W,(n)}(dx_{1},\ldots,dx_{k})-\int_{\mathbb R^k} F(x_{1},\ldots,x_{k}) \widehat{\rho}_{k,X}^{\textsc{GUE},(n)}(dx_{1},\ldots, dx_{k})\right|\leq C n^{-c}\|F\|_{C^{1}}.
    \end{align*} 
\end{theorem}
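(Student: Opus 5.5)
The plan is to compare the $n\times n$ Wigner matrix $H^{(n)}$ with an auxiliary matrix $\widetilde H^{(n)}$. We set $m:=n-T$, where $T=\lfloor n^{\delta}\rfloor$ for a small $\delta\in(0,2/3)$, so that $\log^{4}(m)\ll T\ll m^{2/3}$. The matrix $\widetilde H^{(n)}$ shares the top-left $m\times m$ principal submatrix $H^{(m)}$ with $H^{(n)}$, but its last $T$ rows and columns are replaced by independent $\textsc{GUE}$ entries. The triangle inequality then splits the target difference into three pieces. The first compares $\widehat\rho^{W,(n)}_{k,X}$ with the correlation measure of $\widetilde H^{(n)}$. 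The second compares the correlation measure of $\widetilde H^{(n)}$ with the sine-kernel correlation functions $\det[K^{\text{sine}}(x_i,x_j)]$. The third compares $\widehat\rho^{\textsc{GUE},(n)}_{k,X}$ with the same sine-kernel correlation functions. The third piece is a special case of the second, obtained by taking the $m\times m$ corner itself to be $\textsc{GUE}$. Alternatively, it follows from the classical rate of convergence of the $\textsc{GUE}$ kernel to the sine kernel.

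For the second piece, we condition on $H^{(m)}$, that is, on $x^{(m)}=\lambda^{(m)}$. Conditionally, the eigenvalues of $\widetilde H^{(n)}$ are level $n$ of the rising $\textsc{GUE}$ process started at $x^{(m)}$. This holds because adding a $\textsc{GUE}$ row and column to any Hermitian matrix gives transition density $p_{r,r+1}$, by unitary invariance of the Gaussian column. By \cref{t:kernel}, the conditional correlation measure is therefore $\widehat\rho^{\textsc{GUE},(m,T)}_{k,X;x^{(m)}}$, up to the change of normalization from $\sqrt m$ to $\sqrt n$. Since $n/m=1+O(T/m)$, this rescaling contributes only $O(T/m)$ errors after adjusting $X$ by $O(T/m)$. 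On $\Omega_m$ we apply \cref{t:convrate}. Note that $a(X)=X/2+i\pi\rho_{sc}(X)$, so after the scaling $x\mapsto x/\rho_{sc}(X)$ and the conjugation factor $e^{\frac{X}{2\rho_{sc}(X)}(x-y)}$ from \cref{d:cormeas}, $K^{\text{sine}}_{a(X)}$ becomes $K^{\text{sine}}$. The kernel error is $O(T^{-1/2})=O(n^{-\delta/2})$ uniformly on $U$. Expanding the $k\times k$ determinants, which are bounded on compacts, gives a correlation error of $O_k(n^{-\delta/2})\|F\|_\infty$. Off $\Omega_m$ we need a crude bound on $\int F\,d\widehat\rho$. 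It is at most $\|F\|_\infty$ times the $k$-th moment of the number of eigenvalues in an $O(n^{-1/2})$ window, which is trivially at most $n^{k}$. Multiplying by $\mathbb P(\Omega_m^c)\le m^{-25}$ makes this contribution negligible for $k<20$. For larger $k$, I would instead bound the conditional number of points in the window via the kernel, or replace $m^{-25}$ by $m^{-D}$ using rigidity (\cref{thm:rigid}).

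The first piece is the main obstacle. It is a Lindeberg replacement of the $O(nT)$ entries in the last $T$ rows and columns, performed one entry pair $(h_{ij},h_{ji})$ at a time. The statistic $\int F\,d\widehat\rho$ is first expressed, up to $n^{-c}$ errors, as a smooth function of resolvent traces $\operatorname{Im}\operatorname{Tr}G(X\sqrt n+E+i\eta)$ with $\eta=n^{-1/2-\kappa}$. The standard Helffer--Sj\"ostrand and Green's-function-comparison representation from \cite{EYY12,TV11} serves this purpose. Each swap is Taylor-expanded in the entry. Because the first two moments match, the first- and second-order terms cancel exactly. The third- and fourth-order terms are bounded using resolvent entry bounds $|G_{ab}|\le n^{o(1)}\cdot n^{-1/2}$ from the local law of \cite{GNTT18,GNT18}. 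With the $\sqrt n$ normalization, these terms are of order $n^{-3/2+o(1)}$ per swap. The remainder uses only the $4+\varepsilon$ moment, after truncating entries at $n^{1/2-c'}$; the truncation is harmless by Markov's inequality. There are only $nT=n^{1+\delta}$ swaps, far fewer than the $n^{2}$ in a full Lindeberg argument. So two matching moments suffice once the total $n^{1+\delta}\cdot n^{-3/2+o(1)}$ is $o(1)$, and it is here that choosing $\delta<1/2$ small is essential. The delicate point is controlling off-diagonal resolvent entries $G_{ij}$ with one index in the replaced block: the local law must hold uniformly along the interpolation for the mixed matrices. I would obtain this by applying \cref{thm:rigid} and the local law to every intermediate matrix, each of which is itself a Wigner matrix in the sense of \cref{d:wigner} with independent but non-identically distributed entries, combined with a union bound. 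Collecting the three pieces yields the bound $Cn^{-c}\|F\|_{C^1}$, with $c$ depending on $k$ through $\delta$.
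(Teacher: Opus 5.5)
Your proposal is correct and follows essentially the same route as the paper. You set $n=m+T$ with $T=n^{\delta}$ and run a two-moment Green-function/Lindeberg comparison over only the $mT+T(T+1)/2$ entries outside the shared $m\times m$ block (the paper's \cref{t:couplingcost}, total error $Tn^{-1/2+o(1)}$). You then condition on $x^{(m)}$, apply \cref{t:kernel} and \cref{t:convrate} on $\Omega_m$ and \cref{l:gueunif} through \cref{p:unif2jan}, and choose $\delta$ small.

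Your explicit handling of two points the paper passes over is an improvement. The first is the recentering from $X\sqrt m$ to $X\sqrt n$, done by replacing $X$ with $X\sqrt{n/m}$, which the uniformity in $X$ of \cref{t:convrate} permits. The second is the contribution from $\Omega_m^{c}$. For large $k$, however, that last term still needs a genuine moment bound on local eigenvalue counts, not the crude $n^{k}m^{-25}$.
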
 
In the argument, we will set $n:=m+T$, where $m,T$ are the parameters which appear in \cref{t:convrate}, so that $T=o(m)$.
\begin{remark}
    It is also possible to use \cref{t:convrate} and the notion of a Janossy density (see \cite{Bor11}), along with a generalization of the approximate independence argument of Tao \cite[pg. 17]{T13}, and a variant of the fixed-index Lindeberg result \cite[Theorem 15]{TV11} to show universality of fixed-index gaps through similar arguments to those outlined in \cref{s:carg}. 
\end{remark}

\subsection{Organization} In \cref{s:prelim} we set up the definition of a determinantal point process and the Eynard-Mehta theorem. The version of this theorem which we use in this paper is proved in \cref{a:em}. In \cref{s:prelim} we also discuss the local semicircle law and eigenvalue rigidity, which will be essential to the proof of \cref{t:convrate}.  
In \cref{s:kernel} we prove \cref{t:kernel} using the Eynard-Mehta theorem from \cref{s:prelim}. In \cref{s:leolimit} we show that this kernel can also be obtained as the limit of the kernel of lozenge tilings of an appropriate sequence of polygons. In \cref{s:convrate} we prove \cref{t:convrate} using a steepest descent analysis of the kernel from \cref{t:kernel}. \cref{s:steepest} proves a saddle point estimate, which is needed in that argument, and \cref{a:fullgen} states a generalized version of \cref{t:convrate} and outlines the proof.
Finally, in \cref{s:carg} we state a modified Lindeberg swap theorem analogous to \cite[Theorem 6.4]{EYY12} and apply it (as well as \cref{t:kernel} and \cref{t:convrate}) to prove \cref{t:gaps}.

\subsection{Acknowledgments} The author is very grateful to Mehtaab Sawhney for suggesting the topic and supervising this project. The author is also very grateful to Leonid Petrov for discussions concerning his work \cite{Pet14}; to Vadim Gorin and Leonid Petrov for discussions about their joint work \cite{GP19}; and to Amol Aggarwal, Ivan Corwin, Shirshendu Ganguly, and Matthew Nicoletti for additional helpful discussions and comments. This project was supported by NSF MSPRF 2503374. 

\section{Preliminaries}\label{s:prelim}
In this section we set up context which is involved in the construction of the main results, \cref{t:kernel}, \cref{t:convrate}, and \cref{t:gaps}.  In \cref{ss:dpp} we briefly define determinantal point processes and the Eynard-Mehta theorem, which provides a way of computing the determinantal kernel of these point processes, which we will use in \cref{s:kernel}. In \cref{ss:jan} we discuss how to extract the law of gaps from a determinantal kernel and discuss the notions of convergence which are appropriate to obtain convergence of gaps. Finally, in \cref{ss:assumptions}, we establish some notation which will be important in the proof of \cref{t:gaps} and we use the local semicircle law and eigenvalue rigidity to prove \cref{t:local}.
\subsection{Determinantal Kernels and the Eynard-Mehta Theorem}\label{ss:dpp} In this section, we introduce determinantal point processes, a topic that is covered in depth in resources such as~\cite{J06}. We will end the section by discussing the Eynard-Mehta theorem, which can be used to obtain the kernel of a determinantal point process when the measure has certain nice properties.

\begin{definition}[Point Process]\label{d:point} Let $\mathcal{X}$ be a locally compact separable topological space. A point configuration in $\mathcal{X}$ is a locally finite collection of points in $\mathcal{X}$. We use $\mathrm{Conf}(\mathcal{X})$ to denote the set of all point configurations in $\mathcal{X}$. We refer to a relatively compact Borel subset $A\subset \mathcal{X}$ as a window. For a window $A$ and $X\in \mathrm{Conf}(\mathcal{X})$, the number of points of $X$ in the window is $N_{A}(X)=|A\cap X|$. We equip $\mathrm{Conf}(\mathcal{X})$ with the Borel structure generated by functions $N_{A}$ for all windows $A$. 

A random point process on $\mathcal{X}$ is a probability measure on $\mathrm{Conf}(\mathcal{X})$. Given a random point process, we can define a sequence of measures $\{\rho_{n}\}_{n\in\mathbb{N}}$ on $\mathcal{X}^{n}$, where we refer to $\rho_{n}$ as the $n$-th correlation measure. For further discussion of conditions under which these correlation measures exist and are unique, see \cite{Bor11,Len73}. We typically have a natural measure $\mu$ on $\mathcal{X}$, called the reference measure.

If the correlation measure $\rho_n$ has density
$\rho_n(x_1,\ldots,x_n)$ with respect to $\mu^{\otimes n}$, then for
any compactly supported measurable function $F$,
$$
\mathbb{E}\left[
\sum_{\substack{p_{1},\ldots,p_{n}\in \Xi\\ \text{distinct}}}
F(p_{1},\ldots,p_{n})
\right]
=
\int_{\mathcal{X}^n}F(x_{1},\ldots,x_{n})
\rho_{n}(x_{1},\ldots,x_{n})\,
\mu(dx_{1})\cdots\mu(dx_{n}),$$
where $\Xi$ denotes a random element of $\mathrm{Conf}(\mathcal{X})$.
\end{definition}

\begin{definition}[Determinantal Point Process] Assume that we have a point process and a reference measure so that all $k$-point correlation measures have densities with respect to the reference measure. The process is called \textit{determinantal} if there exists a function $K:\mathcal{X}\times \mathcal{X}\to\mathbb{C}$, the determinantal kernel of the process, such that for all $n\in\mathbb{N}$, 
\begin{align*}
    \rho_{n}(x_{1},\cdots,x_{n})=\det\left[K(x_{i},x_{j})\right]_{i,j=1}^{n}.
\end{align*} 
\end{definition}

\begin{remark}[Uniqueness of the Kernel]\label{r:conj} Determinantal kernels are not unique. For instance, any conjugation 
\begin{align*}
    K'(x ,y) & = f(x)K(x,y)f(y)^{-1}
\end{align*} by a function $f$ which is non-vanishing on the state space will result in a kernel for the same point process. 
\end{remark}

The Eynard-Mehta theorem, which first appeared in \cite{EM98} and was extended to the setting which we consider by Borodin and Rains \cite{BR05}, is a general framework for obtaining a determinantal kernel for measures with density expressed as the product of determinants. It was used by \cite{BR05} to obtain the determinantal kernel for the Schur process, and in many later works, including \cite{Pet14,P14b} to obtain variants of that kernel for a variety of applications. There is also useful discussion of this theorem in \cite{Bor11}. 

To set up \cref{t:em}, we define $\mathrm{Conf}(\mathbb{R})=\prod_{n=m+1}^{m+L}\mathrm{Conf}_{n}(\mathbb{R})$ where $\mathrm{Conf}_{n}(\mathbb{R})$ is the space of ordered $n$-tuples $x^{(n)} = (x_{1}^{(n)},\ldots,x_{n}^{(n)})$ such that $x_{i}^{(n)}\in\mathbb{R}$ for all $i\in[[n]]$, $x_{i}^{(n)}\geq x_{j}^{(n)}$ when $i<j$ and with the formal $n+1$-st entry $x_{n+1}^{(n)}=\text{virt}$ which we refer to as a ``virtual particle,'' and which can be thought of as equal to $-\infty$. We also define notation for the associated Borel $\sigma$-algebra $\mathcal{F}=\mathcal{B}(\mathrm{Conf}(\mathbb{R}))$. We use $x^{(m)}$ to denote a fixed element of $\mathrm{Conf}_{m}(\mathbb{R})$. We define a collection of functions 
\begin{align}\label{e:thefunctions}
    \varphi(\cdot,\cdot):\mathbb{R}\times \mathbb{R}\to\mathbb{C}, & & 
    \varphi (\text{virt},\cdot):\mathbb{R}\to\mathbb{C},  
    \nonumber 
    \\
    \psi_{j}(\cdot | m+L):\mathbb{R}\to\mathbb{C}, & & j\in[[1,m+L]], 
    \\ 
    \upsilon_{i}(\cdot |m) : \{x^{(m)}_{j}\}_{j\in[[m]]}\to\mathbb{C}, & & i \in [[1,m]].\nonumber
\end{align} For any functions $f,g:\mathbb{R}^{2}\to\mathbb{R}$ and $h:\mathbb{R}\to\mathbb{R}$, we define the notation 
\begin{align*}
    (f \star g)(x,y):= \int_{-\infty}^{\infty}f(x,z)g(z,y)dz, & & (g \star h) (x) =\int_{-\infty}^{\infty}g(x,y)h(y)dy, & & (h\star g)(y) := \int_{-\infty}^{\infty}h(x)g(x,y)dx.
\end{align*} Similarly, for any $g:\mathbb{R}^{2}\to\mathbb{R}$ and $f:S\to\mathbb{R}$ where $S\subset\mathbb{R}$ satisfies $|S|\in\mathbb{N}$, we define
\begin{align*}
    (f\star g)(y) = \sum_{x\in S} f(x)g(x,y).
\end{align*}For fixed $m\in\mathbb{N}$, we make the following definitions for all $r,n\in [[m,m+L]]$,
\begin{align}\label{e:theconvolutions}\begin{split}
    \varphi^{(n,r)}(x,y) & : = \mathbf{1}_{n<r}(\underset{r-n \text{ times}}{\varphi \star \cdots \star \varphi})(x,y),
    \\ \left(\varphi\star\varphi^{(n,r)}\right)(\text{virt},x) & : = \mathbf{1}_{n\leq r}(\varphi(\text{virt},\cdot)\star \underset{r-n \text{ times}}{\varphi \star \cdots \star \varphi})(x),  \\
    \upsilon_{i}(x|r) & := (\upsilon_{i}(\cdot|m)\star\underset{r-m\text{ times}}{\varphi \star\cdots \star \varphi})(x) ,
    \\ \psi_{j} (x|r) & := (\underset{m+L-r\text{ times}}{\varphi \star \cdots \star \varphi } \star \psi_{j}(\cdot | m+L))(x) ,
    \end{split}
\end{align} where we use the convention that taking the $\star$-convolution with a function zero times leaves the original function unchanged.   
We assign a weight to any configuration $X\in\mathrm{Conf}(\mathbb{R})$ and fixed $x^{(m)}\in\mathrm{Conf}_{m}(\mathbb{R})$,
\begin{align}\label{e:weight}
    W(X,x^{(m)}) & \propto  \mathrm{det}\left[\upsilon_{i}(x_{j}^{(m)}|m)\right]_{i,j=1}^{m}\left(\prod_{k=m+1}^{m+L} \mathrm{det}\left[\varphi(x_{i}^{(k-1)},x_{j}^{(k)})\right]_{i,j=1}^{k}\right)\det\left[\psi_{i}(x_{j}^{(m+L)}|m+L)\right]_{i,j=1}^{m+L}.
\end{align} 
When we sum over all the weights associated with possible ending configurations $x_{j}^{(m+L)}$, we obtain the Gram matrix of the system. The Gram matrix of the weight function $W(X,x^{(m)})$ is
\begin{align*}
    G_{ij} & = \begin{cases} \upsilon_{i}(\cdot | m)\star \varphi^{(m,m+L)} \star \psi_{j}(\cdot |m+L) & i\in [[m]] \\  \left(\varphi\star\varphi^{(i,m+L)}\right)(\mathrm{virt},\cdot) 
    \star \psi_{j}(\cdot |m+L) & i \in [[m+1,m+L]] \end{cases}.
\end{align*}
The Eynard-Mehta theorem uses the Gram matrix to give an explicit determinantal kernel for any measure where the weight function takes a similar form to that of $W(X,x^{(m)})$, above (in particular, any weight function which can be expressed as the product of determinants). 

The version of the Eynard-Mehta theorem which we will apply here is slightly different from the versions which appear as \cite[Theorem 4.2, Theorem 4.4]{Bor11}. For this reason, we include a proof of this version of the theorem in \cref{a:em}, though the method of proof is the same as the $L$-ensemble approach taken by \cite{BR05}. Forrester and Nordenstam~\cite{FN09} also give a nice exposition of this approach, which we modify slightly to prove \cref{t:em}. 
\begin{theorem}[Eynard-Mehta]\label{t:em} The point process on $\mathrm{Conf}(\mathbb{R})$, started from a fixed $x^{(m)}\in \mathrm{Conf}_{m}(\mathbb{R})$, and with measure defined by a weight function $W(\cdot,x^{(m)})$ which takes the form \eqref{e:weight} is determinantal and, if the Gram matrix $G$ is invertible, then for all $x_{1},x_{2}\in\mathbb{R}$ and $n_{1},n_{2}\in[[m+1,m+L]]$, its correlation kernel is given by 
\begin{align*}
    K(n_{1},x_{1};n_{2},x_{2}) & =  -\varphi^{(n_{1},n_{2})}(x_{1},x_{2}) + \sum_{i=1}^{m}\sum_{j=1}^{m+L}[G^{-t}]_{ij}\upsilon_{i}(x_{2}|n_{2})\psi_{j}(x_{1}|n_{1})  \\ & + \sum_{i=m+1}^{n_{2}}\sum_{j=1}^{m+L}[G^{-t}]_{ij}\left(\varphi \star\varphi^{(i,n_{2})}\right)(\text{virt},x_{2}) \psi_{j}(x_{1}|n_{1}).
\end{align*}  
\end{theorem}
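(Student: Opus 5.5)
We prove \cref{t:em} by the $L$-ensemble method of \cite{BR05}, as presented in \cite{FN09}, adapted to a process started from the fixed level $x^{(m)}$ and including virtual particles. The idea is to realize the weight \eqref{e:weight} as a block determinant of one large matrix $L$ indexed by the particles $\{x^{(k)}_j\}_{k\in[[m+1,m+L]]}$ together with auxiliary indices. Two standard facts about $L$-ensembles then do most of the work: first, the Gram normalization equals $\det(I+L)$ restricted appropriately; second, the correlation kernel is $K=I-(I+L)^{-1}$ restricted to the particle coordinates. The work lies in inverting $I+L$ in block form and recognizing the entries.

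\emph{Step one: the $L$-ensemble structure.} Build a block matrix $\mathcal{L}$ whose row and column blocks are indexed as follows:
\begin{itemize}
\item a block of size $m$ for the initial functions $\upsilon_i$, coupled to level $m+1$ via $\upsilon_i(\cdot|m)\star\varphi$ (this absorbs the sum over the fixed configuration $x^{(m)}$);
\item blocks for each level $k\in[[m+1,m+L]]$, with $-\varphi(x^{(k)}_i,x^{(k+1)}_j)$ in the off-diagonal (super-diagonal) positions;
\item rows for the virtual particles $\varphi(\text{virt},\cdot)$ attached to each level;
\item a final block coupling level $m+L$ to the functions $\psi_j(\cdot|m+L)$.
\end{itemize}
Expand $\det\mathcal{L}$ by the generalized Cauchy--Binet/Laplace identity, as in \cite[Lemma~2.1]{FN09} or \cite{Bor11}. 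This shows that the principal minors indexed by a configuration $X$ reproduce $W(X,x^{(m)})$ up to sign. The virtual particle in row $k$ is exactly what makes each matrix $[\varphi(x^{(k-1)}_i,x^{(k)}_j)]$ square of size $k$. Summing over configurations expresses the partition function as $\det G$, which is why invertibility of $G$ is assumed.

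\emph{Step two: inverting $I+\mathcal{L}$.} Because the level-to-level part is strictly upper block-triangular, it is nilpotent. Hence $(I-\Phi)^{-1}=\sum_{r}\Phi^r$, and its entries are the iterated convolutions $\varphi^{(n,r)}$ of \eqref{e:theconvolutions}. Applying the Schur complement with respect to the auxiliary (non-particle) blocks, the remaining finite matrix to invert is exactly $G$. Its rows split into the $m$ rows coming from $\upsilon_i$ and the $L$ virtual rows coming from $\varphi\star\varphi^{(i,m+L)}(\text{virt},\cdot)$.

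\emph{Step three: reading off the kernel.} Read off the particle-particle block of $I-(I+\mathcal{L})^{-1}$. The Neumann series contributes $-\varphi^{(n_1,n_2)}(x_1,x_2)$. The Schur complement contributes a rank-$(m+L)$ correction $\sum_{i,j}[G^{-t}]_{ij}(\text{left vector})_i(x_2|n_2)\,\psi_j(x_1|n_1)$. Here the left vectors are the propagated functions $\upsilon_i(\cdot|n_2)$ for $i\le m$, and $\varphi\star\varphi^{(i,n_2)}(\text{virt},\cdot)$ for $i>m$. The indicator $\mathbf 1_{i\le n_2}$ arises because a virtual particle introduced at level $i$ can only propagate upward, which truncates the sum at $n_2$. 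Finally, the right vectors $\psi_j(\cdot|n_1)$ arise by propagating $\psi_j(\cdot|m+L)$ down to level $n_1$.

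The main obstacle is bookkeeping rather than any analytic difficulty. Three things must be checked carefully: the signs and transposes (getting $G^{-t}$ rather than $G^{-1}$); that the fixed-level block, which is a finite sum over $x^{(m)}$ rather than an integral, enters only through the $\upsilon_i$ and produces no extra diagonal term; and that virtual rows at different levels do not double count. I would check these on a two-level example ($L=1$) before doing the general block induction. Convergence issues are handled by assuming the relevant convolutions are absolutely integrable, so that the Fredholm determinants are well defined.
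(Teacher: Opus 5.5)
Your proposal is correct and follows essentially the same route as the paper's proof in Appendix~A. Both use the conditional $L$-ensemble built from $\Upsilon$, $-\Phi_r$ and $\Psi$, invert the nilpotent level-to-level part by a Neumann series, identify the Schur complement $BD^{-1}C$ with the Gram matrix $G$, and read off the kernel. The only difference is cosmetic: you fold the fixed level $m$ into $\upsilon_i(\cdot|m)\star\varphi$, whereas the paper keeps $\hat{\mathcal X}^{(m)}$ as an auxiliary layer and restricts the kernel to levels $m+1,\ldots,m+L$ at the end, and the two give the same kernel.
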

\begin{remark} 
In the case we consider in \cref{s:kernel} in the proof of \cref{t:kernel}, we will adjust the functions appearing in the role of \eqref{e:thefunctions} and \eqref{e:theconvolutions} so that the Gram matrix is the identity. Furthermore, in the case considered in \cref{t:kernel}, the expression for the kernel is invariant under any choice of $L$ such that $m+L\geq \max\{n_{1},n_{2}\}.$
\end{remark}
\subsection{Convergence of Kernels}\label{ss:jan}
We will need the following result about the convergence of determinantal point processes.
\begin{proposition}\label{p:unif2jan}
Let $\{\mathbb{P}_{n}\}_{n\in\mathbb{N}}$ be probability measures on $(\mathrm{Conf}_{n}(\mathbb{R}),\mathcal{F}_{n})$ corresponding to determinantal point processes with correlation kernels given by $\{K^{(n)}\}_{n\in\mathbb{N}}$, and let $\mathbb{P}$ be a probability measure on the space of locally finite point configurations on $\mathbb{R}$ corresponding to a determinantal point process with continuous correlation kernel $K$. Further suppose that these determinantal point processes share a locally finite reference measure $\mu$. Consider any function $\eta:\mathbb{N}\to\mathbb{R}$ such that $\eta(n)\geq 0$ and $\eta(n)=o(1)$. If for all compact $U\subset  \mathbb{R}$, there exists $C:=C(U)>0$ such that for all $n\in\mathbb{N}$, 
\begin{align*}
  \sup_{x_{1},x_{2}\in U}\big| K^{(n)}(x_{1},x_{2})- K(x_{1},x_{2})\big|\leq C\eta(n),
\end{align*} then $\mathbb{P}_{n}\to\mathbb{P}$ weakly as point processes. It further follows, denoting by $\{\rho^{(n)}\}_{n\in\mathbb{N}}$ and $\rho$ the correlation measures corresponding to the $\{K^{(n)}\}_{n\in\mathbb{N}}$ and $K$, that for all compact $U\subset \mathbb{R}$ and $k\in\mathbb{N}$, there exists $C:=C(U,k)>0$ such that for all $F\in C_{c}^{1}(\mathbb{R}^{k})$ with $\text{supp}(F)\subset U^{k}$, 
\begin{align*}
    \bigg| \int_{\mathbb{R}^{k}} F(x_{1},\cdots,x_{k})\rho^{(n)}(dx_{1},\cdots,dx_{k}) - \int_{\mathbb{R}^{k}}F(x_{1},\cdots,x_{k})\rho(dx_{1},\cdots,dx_{k})\bigg| \leq C\|F\|_{C^{1}}\eta(n).
\end{align*}
\end{proposition}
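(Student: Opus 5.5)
Since both processes are determinantal with respect to the same reference measure $\mu$, I will work directly with the correlation densities $\rho^{(n)}_k=\det[K^{(n)}(x_i,x_j)]_{i,j=1}^k$ and $\rho_k=\det[K(x_i,x_j)]_{i,j=1}^k$. I will first prove the quantitative statement about correlation measures, and then obtain weak convergence of the point processes from it.

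\textbf{Step 1 (determinant comparison).} Fix a compact $U$ and $k$. Since $K$ is continuous, $M:=\sup_{U^2}|K|<\infty$. The hypothesis gives $\sup_{U^2}|K^{(n)}|\le M+C\sup_n\eta(n)=:M'$, which is finite because $\eta(n)=o(1)$ and $\eta\ge 0$. Expanding both determinants over permutations, I will telescope each product $\prod_i K^{(n)}(x_i,x_{\sigma(i)})-\prod_i K(x_i,x_{\sigma(i)})$, replacing one factor at a time. Each replaced factor contributes an error at most $C\eta(n)$, and the remaining factors are bounded by $\max(M,M')^{k-1}$. Summing over the $k$ factors and the $k!$ permutations gives
\[
\sup_{U^k}|\rho_k^{(n)}-\rho_k|\le k\cdot k!\,\max(M,M')^{k-1}C\eta(n).
\]
This could alternatively be derived from Hadamard's inequality, but the crude expansion suffices.

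\textbf{Step 2 (integrate against $F$).} If $\mathrm{supp}(F)\subset U^k$, then
\[
\Big|\int F\,d\rho^{(n)}_k-\int F\,d\rho_k\Big|\le \|F\|_\infty\,\mu(U)^k\sup_{U^k}|\rho^{(n)}_k-\rho_k|.
\]
Here $\mu(U)<\infty$ because $\mu$ is locally finite. Since $\|F\|_\infty\le\|F\|_{C^1}$, this gives the claimed bound with $C(U,k)=k\cdot k!\,\max(M,M')^{k-1}C(U)\mu(U)^k$. No derivative of $F$ is actually needed; the $C^1$ norm only makes the statement compatible with later uses.

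\textbf{Step 3 (weak convergence).} To show $\mathbb{P}_n\to\mathbb{P}$, I will use the standard criterion for determinantal or locally finite point processes: it suffices to prove convergence of the joint Laplace functionals $\mathbb{E}_n[\exp(-\sum_{p}f(p))]\to\mathbb{E}[\exp(-\sum_p f(p))]$ for $f\ge 0$ continuous and compactly supported. Equivalently, for bounded $g=1-e^{-f}$ supported in a compact $U$, it suffices to prove convergence of the Fredholm determinants $\det(I-gK^{(n)})_{L^2(U,\mu)}$.

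I will expand the Fredholm determinant as
\[
\sum_k\frac{(-1)^k}{k!}\int\prod_i g(x_i)\,\rho^{(n)}_k\,d\mu^{\otimes k}.
\]
By Hadamard's inequality, $|\rho_k^{(n)}|\le k^{k/2}\max(M,M')^k$ on $U^k$, so the series is dominated uniformly in $n$ by the summable series $\sum_k k^{k/2}(\max(M,M')\mu(U))^k/k!$. Each term converges by Step 1, so dominated convergence gives convergence of the Laplace functionals, and therefore weak convergence. The same domination would also give gap and Janossy statistics.

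\textbf{Main obstacle.} The only delicate point is this uniform domination of the series. One has to check that the sup bound on $K^{(n)}$ is uniform in $n$, which it is because $\eta$ is bounded. One must also be careful that the correlation measures $\rho^{(n)}$ possibly live on $\mathrm{Conf}_n(\mathbb{R})$ with finitely many points, which poses no issue since all statements are local.
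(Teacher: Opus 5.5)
Your proof is correct. Steps 1--2 match the paper's argument for the quantitative bound: the paper only says it is ``an application of the multilinearity of the determinant,'' and your one-factor-at-a-time telescoping is that multilinearity written out. Your remark that only $\|F\|_\infty\le\|F\|_{C^1}$ is actually used is accurate.

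The difference is in the weak-convergence claim. The paper does not prove it and instead cites \cite[Proposition 2.15]{dim26}. You give a self-contained argument with three ingredients:
\begin{itemize}
\item the Laplace-functional criterion;
\item the expansion $\mathbb{E}[\prod_p(1-g(p))]=\sum_k\frac{(-1)^k}{k!}\int g^{\otimes k}\rho_k\,d\mu^{\otimes k}$;
\item the uniform Hadamard domination $k^{k/2}(M'\mu(U))^k/k!$.
\end{itemize}
This is the standard mechanism behind such results. Your version makes explicit what is actually needed: uniform boundedness of $K^{(n)}$ on compacts, which follows from boundedness of $\eta$, and $\mu(U)<\infty$. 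The same Hadamard bound also justifies the series expansion of the Laplace functional for the limit process $\mathbb{P}$, not just for $\mathbb{P}_n$. The citation buys brevity.
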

\begin{proof} The first claim follows immediately from \cite[Proposition 2.15]{dim26}. The second claim is an application of the multilinearity of the determinant.
\end{proof}
We also record a well-known convergence result for the  rising $\textsc{GUE}$ eigenvalue process kernel.
First we recall the $\textsc{GUE}$ kernel at fixed level $n$ (see \cite[Eqn. 6.15]{J18}, using probabilist's Hermite polynomials rather than physicist's, and setting $n_{1}=n_{2}=n$),
\begin{align*}
    K^{\textsc{GUE}} (n,x_{1};n,x_{2}) & :=   \frac{-1}{(2\pi i )^{2}} \int_{\Gamma}dz  \oint_{\gamma_{r}} dw  \frac{e^{(z-x_{2})^{2}/2 - (w-x_{1})^{2}/2}}{w-z}\left(\frac{z }{w }\right)^{n},
\end{align*} where $\gamma_{r}$ is a positively oriented circle of radius $r$ around the origin, and $\Gamma$ given by $s+it$ for a fixed $s\in\mathbb{R}$ such that $s>r$ and for all $t\in\mathbb{R}$; this contour is oriented in the positive imaginary direction. 
\begin{lemma}[Theorem 1.8, \cite{KSS15}] \label{l:gueunif} Let $\alpha\in (0,2)$. For any compact set $K\subset   \mathbb{R}$ there exist $C:=C(K,\alpha)>0,n_{0}:=n_{0}(K,\alpha)\in\mathbb{N}$ such that for all $ x,y\in K$, $n>n_{0},$ and $X\in (-2+\alpha,2-\alpha)$, 
\begin{align}\label{e:convratebound2}
        \big| \frac{1}{\rho_{sc}(X)\sqrt{n}}K^{\textsc{GUE}}\left(n, X\sqrt{n}+\frac{x }{\rho_{sc}(X)\sqrt{n}};n, X\sqrt{n}+\frac{y}{\rho_{sc}(X)\sqrt{n}}\right) - K^{\text{sine}}(x,y)\big|
        \leq Cn^{-1} 
    \end{align} where $K^{\text{sine}}(x,y)$ is as in \cref{d:extendedsine}. 
\end{lemma}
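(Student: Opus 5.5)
I would prove \cref{l:gueunif} by a steepest descent analysis of the double contour integral for $K^{\textsc{GUE}}(n,x_{1};n,x_{2})$, along the lines of the analysis used for \cref{t:convrate}. The bound does not depend on the finer structure of the rising process.

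\textbf{Step 1: rescaling and the action.} Put $x_{1}=X\sqrt n+\frac{x}{\rho_{sc}(X)\sqrt n}$ and $x_{2}=X\sqrt n+\frac{y}{\rho_{sc}(X)\sqrt n}$, and substitute $z=\sqrt n\,\zeta$, $w=\sqrt n\,\omega$. Up to the explicit factor $e^{(x_{2}^{2}-x_{1}^{2})/2}$, which is a conjugation in the sense of \cref{r:conj}, the integrand becomes
$$\exp\bigl(n[f(\zeta)-f(\omega)]\bigr)\cdot \exp\bigl(-\zeta y/\rho_{sc}(X)+\omega x/\rho_{sc}(X)\bigr)\cdot\frac{1}{\omega-\zeta},$$
with
$$f(\zeta)=\zeta^{2}/2-X\zeta+\log\zeta .$$
The critical points solve $\zeta^{2}-X\zeta+1=0$, so they are $\zeta_{\pm}=X/2\pm i\pi\rho_{sc}(X)$. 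For $X\in(-2+\alpha,2-\alpha)$ these stay a uniform distance $c(\alpha)>0$ from each other and from the real axis. This separation is what makes all constants uniform in $X$.

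\textbf{Step 2: contour deformation and the residue term.} I deform the $\omega$-contour to a closed curve through $\zeta_{\pm}$ on which $\Re f(\omega)\ge\Re f(\zeta_{\pm})$, with equality only at $\zeta_{\pm}$. A natural choice is the unit circle $|\omega|=1$, since $|\zeta_{\pm}|=1$. I move $\Gamma$ to the vertical line $\Re\zeta=X/2$, on which $\Re f(\zeta)\le\Re f(\zeta_{\pm})$. The two contours then cross at $\zeta_{\pm}$. Dragging $\Gamma$ across the circle picks up the residue at $\omega=\zeta$ along the segment $[\zeta_{-},\zeta_{+}]$. That residue is
$$\frac{1}{2\pi i}\int_{\zeta_-}^{\zeta_+}\sqrt n\, e^{\sqrt n\zeta (x-y)/(\rho_{sc}(X)\sqrt n)}\,d\zeta .$$
After dividing by $\rho_{sc}(X)\sqrt n$, this is exactly the one-level case of $K^{\text{sine}}_{a(X)}$, which equals $K^{\text{sine}}$ up to the conjugation $e^{X(x-y)/(2\rho_{sc}(X))}$. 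So the residue reproduces the sine kernel exactly. The real content of the lemma is the estimate of the remaining double integral, with the comparison understood modulo these conjugation factors as in \cref{d:cormeas}.

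\textbf{Step 3: bounding the remaining double integral (main obstacle).} Away from $\zeta_{\pm}$, the integrand decays like $e^{-cn\delta^{2}}$, where $\delta$ is the distance to the saddles, and the factor $1/(\omega-\zeta)$ stays bounded. This part is therefore exponentially small, uniformly for $x,y$ in the compact set $K$. Near each saddle, I Taylor expand $f$ to third order and use the local variables $\zeta=\zeta_{\pm}+s/\sqrt n$, $\omega=\zeta_{\pm}+t/\sqrt n$. The singular factor becomes $\sqrt n/(t-s)$ and the measure contributes $1/n$, so each saddle gives order $n^{-1/2}$ before the normalisation $1/\sqrt n$. The leading Gaussian term is an integral of $e^{f''(\zeta_\pm)(s^{2}-t^{2})/2}/(t-s)$ over the two crossing lines. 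With a principal value prescription that is consistent with the residue bookkeeping of Step 2, I expect this term to vanish by the symmetry $(s,t)\mapsto(-s,-t)$, since the integrand is odd.

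The first nonvanishing corrections come from the cubic term of $f$ and from the linear terms in $x,y$. They enter at the next order in $n^{-1/2}$, which, together with the $1/\sqrt n$ normalisation, yields the claimed $O(n^{-1})$ bound. The delicate point is doing this symmetric cancellation carefully near the crossing points, where $1/(\omega-\zeta)$ is singular. If the exact cancellation proves awkward, I would fall back on a weaker $O(n^{-1/2})$ rate. I would then recover the $n^{-1}$ rate via the Plancherel--Rotach asymptotics of Hermite polynomials applied to the Christoffel--Darboux form of the kernel, which is essentially the route of \cite{KSS15}.
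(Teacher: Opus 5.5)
The paper does not prove this lemma; it quotes it from \cite[Theorem 1.8]{KSS15}, where it comes out of uniform asymptotics for Christoffel--Darboux kernels of a general class of unitary-invariant ensembles. You instead give a self-contained saddle-point proof from the paper's own double contour formula. It works and gives the full $n^{-1}$ rate, so the Plancherel--Rotach fallback is not needed.

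Your contour choices are right:
\begin{itemize}
\item on $|\omega|=1$ one has $\Re f(e^{i\theta})=(\cos\theta-X/2)^{2}-X^{2}/4-1/2$, which is minimised exactly at $\zeta_{\pm}$;
\item on the line $\Re\zeta=X/2$, $\Re f$ is maximised exactly at $\zeta_{\pm}$;
\item after dividing by $\rho_{sc}(X)\sqrt n$, the residue term of Step 2 equals $e^{X(x-y)/(2\rho_{sc}(X))}K^{\text{sine}}(x,y)$ with no error.
\end{itemize}

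On the trade-off: the citation costs nothing and covers far more general weights. Your argument is elementary and uses only objects already in the paper. It makes the uniformity in $X$ transparent, since everything is controlled by $\sin(\arg\zeta_{+})=\pi\rho_{sc}(X)\ge c(\alpha)$. It also exhibits the symmetry cancellation that \cref{p:steepest} discards by bounding the local integral in absolute value, which is why that proposition only yields $n^{-1/2}$.

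Four points should be tightened.

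\textbf{No principal value is needed.} The line and the circle cross transversally at $\zeta_{\pm}$. With $\zeta=\zeta_{+}+is/\sqrt n$ and $\omega=\zeta_{+}e^{it/\sqrt n}$, one has $|\omega-\zeta|\ge c(|s|+|t|)/\sqrt n$ for $|s|,|t|\le\delta\sqrt n$. So the double integral converges absolutely, the identity of Step 2 holds literally, and the leading Gaussian term over the symmetric box vanishes exactly by oddness. The corrections from $f'''(\zeta)=2\zeta^{-3}$, from the linear terms in $x,y$, and from the curvature of the circle are each $O(n^{-1/2})$ relative. (The curvature perturbs $\omega-\zeta$ by $O(t^{2}/n)$, a relative error $O(|t|/\sqrt n)$.)

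\textbf{Power counting.} State it as follows: $dz\,dw/(w-z)=\sqrt n\,d\zeta\,d\omega/(\omega-\zeta)$ absorbs the $\sqrt n$ of the normalisation. So the diagonal saddle contribution to the normalised kernel is $n^{-1/2}\bigl(0+O(n^{-1/2})\bigr)=O(n^{-1})$, and there is no further factor $1/\sqrt n$ to apply.

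\textbf{Mixed regions.} The regions with $\zeta$ near $\zeta_{+}$ and $\omega$ near $\zeta_{-}$ (or vice versa) are neither ``away from the saddles'' nor covered by your local expansion. There $|\omega-\zeta|\approx 2\pi\rho_{sc}(X)$, and the two Gaussian factors give a term of exact order $n^{-1}$ carrying the phase $e^{n(f(\zeta_{+})-f(\zeta_{-}))}$. It is not exponentially small, but it fits within the bound.

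\textbf{Conjugation.} Combining $e^{(x_2^2-x_1^2)/2}$ with the residue gives the overall factor $e^{X(y-x)/(2\rho_{sc}(X))}e^{(y^{2}-x^{2})/(2\rho_{sc}(X)^{2}n)}$. Its $n$-dependent part is $1+O(n^{-1})$ on $K$, so the lemma holds after a fixed, $n$-independent conjugation, as you indicate.
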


\subsection{Simple Spectrum, Local Limit Theorem}\label{ss:assumptions} In this section, we prove \cref{t:local} and also note that with high probability, the random matrices of \cref{d:wigner} have simple spectrum.
\begin{remark}\label{r:simple}
     \cref{d:wigner} implies that there exists $\mu=\mu(C,\varepsilon)>0$ such that $\sup_{z\in\mathbb C}\mathbb P(h_{ij}^{(n)}=z)\le 1-\mu.$ 
    Therefore, the conditions of \cite[Theorem 5.1]{TV17} hold, and there exists $n_{0}:=n_{0}(C,\varepsilon)$ such that for all $n>n_{0}$, the eigenvalues $x^{(n)}\in \mathrm{Conf}_{n}(\mathbb{R})$ are simple with probability at least $1-n^{-25}$. 
\end{remark}
We state an eigenvalue rigidity result, which is a consequence of the local semicircle law (see \cite[Theorem 1]{GNT18} for a version which applies to the definition of Wigner matrices with $4+\varepsilon$ moments). 
\begin{theorem}[Theorem 1.4, \cite{GNTT18}]\label{thm:rigid}
Let $M_n$ be an $n\times n$ Wigner matrix with eigenvalues $x_{j}^{(n)}$ for $j\in [[n]]$ and satisfying \cref{d:wigner} for a fixed $\varepsilon>0$. Then there exists a constant $C:=C(\varepsilon)>0$ such that 
\begin{align*}\mathbb{P}^{W}_{n}\left(\sup_{t\in \mb{R}}\bigg|\frac{1}{n}\sum_{j\in [n]}\mathbf{1}_{x_{j}^{(n)}\leq t\sqrt{n}} -\int_{-\infty}^{t} \rho_{\on{sc}}(x)dx\bigg|\leq  \frac{C\log^{2} n}{n}\right)\geq 1-n^{-25}.
\end{align*} 
\end{theorem}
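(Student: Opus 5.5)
This is \cite[Theorem 1.4]{GNTT18}, so in the paper it is simply imported. If I had to reprove it, I would follow the standard route: first a local semicircle law for the Stieltjes transform, and then a Helffer--Sjöstrand conversion to the counting function. The $4+\varepsilon$ moment assumption would be absorbed by a truncation that costs only a bounded-rank perturbation.

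\emph{Step 1: truncation by a finite-rank perturbation.} Fix $\tau_n:=\sqrt{n}/\log^{3}n$ and decompose $M_n=\widehat M_n+E_n$. Here $\widehat M_n$ has entries $h_{ij}\mathbf{1}_{|h_{ij}|\le\tau_n}$, recentered and rescaled so that the first two moments match \cref{d:wigner}; the recentering cost is $O(\tau_n^{-3-\varepsilon})$ per entry by Markov. By Markov's inequality, $\mathbb{P}(|h_{ij}|>\tau_n)\le C\tau_n^{-4-\varepsilon}$, so the number $N$ of large entries satisfies
\begin{align*}
\mathbb{P}(N\ge k)\le \binom{n^2}{k}(C\tau_n^{-4-\varepsilon})^{k}\le \big(Cn^{-\varepsilon/2}\log^{15}n\big)^{k}.
\end{align*}
Choosing a constant $k=k(\varepsilon)$ makes this at most $\tfrac12 n^{-25}$. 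On the complementary event, $E_n$ has rank at most $2k$, since each large entry and its Hermitian mirror give a rank-two matrix. Weyl interlacing then shows that the empirical counting functions of $M_n$ and $\widehat M_n$ differ by at most $2k/n$ uniformly in $t$, which is well within $C\log^2 n/n$. The small deterministic recentering shift is handled by Weyl's inequality in operator norm.

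\emph{Step 2: local law for $\widehat M_n$ with very high probability.} The truncated entries satisfy $\mathbb{E}|\hat h_{ij}|^{p}\le C\tau_n^{p-4}$ for $p\ge4$. This is exactly the ``sparse-type'' moment profile, with sparsity parameter $q=\sqrt n/\tau_n=\log^3 n$, under which the Schur-complement self-consistent equation for $s_n(z)=\frac1n\operatorname{tr}(\widehat M_n/\sqrt n-z)^{-1}$ can be closed. I would bound high moments $\mathbb{E}|s_n(z)-s_{sc}(z)|^{p}$ with $p\asymp\log n$, using large-deviation estimates for quadratic forms and a bootstrap in $\Im z$ down to $\Im z\ge \log^{2}n/n$. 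This gives
\begin{align*}
|s_n(z)-s_{sc}(z)|\le \frac{C\log^{c} n}{n\Im z}
\end{align*}
simultaneously on a fine grid of $z$, with failure probability $n^{-26}$. Lipschitz continuity of $s_n$ and $s_{sc}$ on the scale $n^{-3}$ then extends the bound to all of the relevant domain. The rate in $\log n$ must be tracked so that Step 3 produces exactly $\log^2 n$; I expect the truncation level $\tau_n$ and the moment parameter $p$ to need tuning to reach that power.

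\emph{Step 3: from Stieltjes transform to counting function.} Apply the Helffer--Sjöstrand formula to a smoothed indicator of $(-\infty,t]$, smoothed on scale $\eta_0=\log^{2}n/n$. Integrating the local-law error against $\bar\partial\tilde f$ bounds the smoothed counting error by $C\log^{2}n/n$. The smoothing error is controlled by the number of eigenvalues in windows of length $\eta_0$, which is $O(n\eta_0)$ by the local law. The edges $|t|\ge 2-\kappa$ are handled with a norm bound $\|\widehat M_n\|\le (2+o(1))\sqrt n$, obtained by the moment method with truncation at $\tau_n$, on the high-probability event.

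I expect Step 2 to be the main obstacle. This is specifically the need for a failure probability as small as $n^{-25}$ with only $4+\varepsilon$ moments. The truncation in Step 1 is what makes this feasible, because it converts the heavy tails into a bounded number of rank-one defects, and those are harmless for the counting function.
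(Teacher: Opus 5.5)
The paper does not prove this statement. It imports it verbatim from \cite[Theorem 1.4]{GNTT18}, pointing to \cite[Theorem 1]{GNT18} for the $4+\varepsilon$ setting, so your opening sentence is exactly what the paper does. Your Step 1 is sound and explains where $C=C(\varepsilon)$ and the polynomial failure probability come from. At level $\tau_n=\sqrt n/\log^3 n$, outside an event of probability $n^{-25}$ there are at most $k(\varepsilon)$ large entries, and removing them moves the counting function by $O(k/n)$.

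Taken as a self-contained reproof, however, Steps 2--3 do not give the stated bound, and the first gap is the exponent. The Helffer--Sj\"ostrand (or smoothing-inequality) step costs one logarithm. Reaching $C\log^2 n/n$ therefore requires the averaged local law with error exactly $C\log n/(n\eta)$ for $\eta\gtrsim \log^2 n/n$, at failure probability $n^{-26}$. One way to get this is from moment bounds of the form $\mathbb{E}|s_n-s_{sc}|^p\le (Cp/(n\eta))^p$ with $p\asymp\log n$. Your Step 2 only asserts $\log^{c}n/(n\eta)$ and leaves $c$ to ``tuning''. With $c>1$ your Step 3 yields only $C\log^{c+1}n/n$, because both the Helffer--Sj\"ostrand integral and the window count in the smoothing step inherit the $\log^c n$. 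That sharp estimate is the substance of \cite{GNT18,GNTT18}, not bookkeeping.

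The sparse analogy does not supply it. Sparse local laws carry an additive error of order $q^{-2}$ from the fourth cumulant, which for $q=\log^3 n$ is $\log^{-6}n$. You must instead exploit that the truncated entries have $\mathbb{E}|\hat h_{ij}|^4=O(1)$, a profile far better than sparse. Relatedly, at probability level $n^{-26}$ a fixed row can contain an entry of size comparable to $\tau_n$. Then the diagonal part $\frac1n\sum_k(|\hat h_{ik}|^2-1)G^{(i)}_{kk}$ of that row's quadratic form is of order $q^{-2}$, far above its typical size $n^{-1/2}$. The smallness needed for the averaged law appears only after averaging over $i$ and using the bounded fourth moment, so the argument must be built around that averaging rather than an entrywise high-probability bootstrap.

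The second gap is the edges. The statement is a supremum over all $t\in\mathbb{R}$. A norm bound $\|\widehat M_n\|\le(2+o(1))\sqrt n$ says nothing about the roughly $\kappa^{3/2}n$ eigenvalues in $[(2-\kappa)\sqrt n,2\sqrt n]$, which must be counted to within $O(\log^2 n)$. Nor does an unquantified $o(1)$ show that at most $O(\log^2 n)$ eigenvalues exceed $2\sqrt n$, which the range $t\ge 2$ requires. You need the local law uniformly in $E=\Re z$ up to and slightly beyond $\pm 2$. There the stability of the self-consistent equation $s=-1/(z+s)$ degenerates like $(\kappa_E+\eta)^{-1/2}$, with $\kappa_E$ the distance from $E$ to $\{\pm2\}$, so obtaining $\log n/(n\eta)$ precision again relies on the averaging above. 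The norm bound can then only be used for $|t|\ge 2+\kappa'$ with $\kappa'>0$ fixed.
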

Next, we prove a helpful consequence of this theorem which we will apply repeatedly when taking asymptotics of the kernel in \cref{t:kernel}. The version of eigenvalue rigidity we state in this paper (\cref{thm:rigid}) relies on the bounded $4+\varepsilon$ moment condition in \cref{d:wigner}. This is the origin of that constraint in \cref{d:wigner}.

\begin{proposition}\label{t:local} 
Let $\mathbb{P}_{m}^{W}$ be the law of Wigner eigenvalues on $(\mathrm{Conf}^{0}_{m},\mathcal{F}_{m})$ and let  $T:=T(m)$ such that $\log^{2}{(m)}\ll T(m)\ll m$. For any $X\in (-2,2),$ we define 
\begin{align*}
    \mu_{m}(du)  := \frac{1}{T}\sum_{r=1}^{m}\delta_{\frac{x_{r}^{(m)}/\sqrt{m}-X}{T/m}}(du), & & d_{m}(R)  :=F_{m,R}(x^{(m)})  = \frac{1}{m}\sum_{\frac{|X-x_{r}^{(m)}/\sqrt{m}|}{T/m}\geq R}\frac{1}{X - x_{r}^{(m)}/\sqrt{m}},
\end{align*} where $d_{m}(R)$ has law $\nu_{m,R}=\mathbb{P}_{m}^{W}\circ F_{m,R}^{-1}$. There exists a sequence of events $\Omega_{m}\in\mathcal{F}_{m}$ $\mathbb{P}_{m}^{W}(\Omega_{m})\geq 1-m^{-25}$ such that for all $\alpha\in (0,2)$
\begin{enumerate}
    \item There exists $C:=C(\alpha)>0$ such that for any compactly supported continuously differentiable function $f\in C^{1}_{c}(\mathbb{R})$, and for all $x^{(m)}\in\Omega_{m},$ $X\in (-2+\alpha,2-\alpha),$
    \begin{align*} 
        \bigg|\int_{\mathbb{R}} f(u)\mu_{m}(du) - \int_{\mathbb{R}}f(u)\rho_{sc}(X+uT/m)du \bigg|< C \textsc{TV}(f)\log^{2}{(m)}T^{-1},
    \end{align*} where $\textsc{TV}(f):=\int_{\mathbb{R}}|f'(u)|du.$
    \item There exists $C:=C(\alpha)>0$ such that for all $x^{(m)}\in\Omega_{m}$, $X\in (-2+\alpha,2-\alpha)$, and $R\geq 1$,
    \begin{align*}
        \bigg|d_{m}(R)-\int_{|v-X|\geq RT/m}\frac{\rho_{sc}(v)}{X-v}dv \bigg| < CR^{-1}\log^{2}{(m)}T^{-1}.
    \end{align*} Consequently, for all $R:=R(m)\geq 1$ satisfying $RT/m=o(1)$, the law $\nu_{m,R}$ converges weakly to $\delta_{X/2}$ as $m\to\infty$.
\end{enumerate}
\end{proposition}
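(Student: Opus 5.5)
Take $\Omega_m$ to be the event of \cref{thm:rigid} applied to $M_m$. On $\Omega_m$, writing $N_m(t):=\#\{r: x_r^{(m)}\le t\sqrt m\}$ and $N_{sc}(t):=m\int_{-\infty}^t\rho_{sc}$, we have $\sup_t|N_m(t)-N_{sc}(t)|\le C\log^2 m$, and $\mathbb P_m^W(\Omega_m)\ge 1-m^{-25}$. Both parts then follow deterministically from this single uniform counting bound by integration by parts (Abel summation). The condition $X\in(-2+\alpha,2-\alpha)$ only enters to make $\rho_{sc}$ bounded and smooth near $X$, so that the constants depend on $\alpha$ alone.

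\textbf{Part (1).} Substitute $u=(t-X)m/T$, so that $\mu_m((-\infty,u])=T^{-1}N_m(X+uT/m)$. Similarly, the measure $\rho_{sc}(X+uT/m)\,du$ has distribution function $(m/T)\int_{-\infty}^{X+uT/m}\rho_{sc}=T^{-1}N_{sc}(X+uT/m)$. Denote the difference of these distribution functions by $D(u)$; on $\Omega_m$, $\sup_u|D(u)|\le C\log^2(m)/T$. Since $f\in C^1_c$, integrating by parts gives
\[
\int f\,d\mu_m-\int f(u)\rho_{sc}(X+uT/m)\,du=-\int f'(u)D(u)\,du,
\]
whose absolute value is at most $\textsc{TV}(f)\,C\log^2(m)T^{-1}$. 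No boundary terms arise because $f$ has compact support. This step is routine.

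\textbf{Part (2).} Here the test function $g(v)=(X-v)^{-1}\mathbf 1_{|X-v|\ge RT/m}$ is neither compactly supported nor continuous, so the main care is in the jumps at $v=X\pm RT/m$ and in the tails. In the variable $v=x/\sqrt m$ we have
\[
d_m(R)-\int g\rho_{sc}=\int g\,d\Big(\tfrac1m N_m(\cdot)-\tfrac1m N_{sc}(\cdot)\Big).
\]
Write $E(v):=m^{-1}(N_m(v)-N_{sc}(v))$, so $|E|\le C\log^2(m)/m$ on $\Omega_m$, and $E(v)=0$ for $v$ beyond the extreme eigenvalues and outside $[-2,2]$. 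Split the integral into the regions $v\ge X+RT/m$ and $v\le X-RT/m$, and integrate by parts on each. The boundary terms at $v=X\pm RT/m$ are bounded by $|g|\cdot|E|\le (m/(RT))\cdot C\log^2(m)/m=C\log^2(m)/(RT)$. The terms at infinity vanish, because $E$ vanishes for large $|v|$ (or is handled by $g\to0$). The bulk terms are at most
\[
\int_{|X-v|\ge RT/m}|g'(v)|\,|E(v)|\,dv\le \frac{C\log^2 m}{m}\int_{|s|\ge RT/m}s^{-2}\,ds=\frac{2C\log^2 m}{RT}.
\]
Adding these gives the claimed bound $CR^{-1}\log^2(m)T^{-1}$.

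\textbf{Consequence.} For $R\ge1$ with $RT/m=o(1)$, the truncated principal value $\int_{|v-X|\ge RT/m}\rho_{sc}(v)/(X-v)\,dv$ converges to $\mathrm{p.v.}\int\rho_{sc}(v)/(X-v)\,dv=X/2$, the Hilbert transform of the semicircle, i.e.\ the real part of the Stieltjes transform on the bulk. The truncation error is $O(RT/m)$ because $\rho_{sc}$ is Lipschitz near $X$. The error term $CR^{-1}\log^2(m)/T\to0$ because $T\gg\log^2 m$. Since $\mathbb P(\Omega_m^c)\to0$, it follows that $d_m(R)\to X/2$ in probability, which gives $\nu_{m,R}\Rightarrow\delta_{X/2}$.

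The only real obstacle is the boundary-term bookkeeping in Part (2) near the cutoff.
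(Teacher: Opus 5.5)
Your proof is correct and follows the same route as the paper: you take $\Omega_m$ to be the rigidity event of \cref{thm:rigid}, transfer its uniform bound on the distance between distribution functions to the test functions in parts (1) and (2), and then use $\mathrm{p.v.}\int\rho_{sc}(v)(X-v)^{-1}dv=X/2$ for the weak-convergence consequence. Your integration by parts, with the explicit boundary terms at $X\pm RT/m$ and the $\int s^{-2}\,ds$ tail bound, is a cleaner and more complete version of two steps the paper only sketches: its ``uniformly approximate $f$ by simple functions'' argument in part (1), and the ``same reasoning'' it invokes for part (2).
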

The $X/2$ in part $(2)$ of \cref{t:local} appears because $\mathrm{p.v.}\left(\int_{-2}^{2}\rho_{sc}(u)(X-u)^{-1}du\right)=X/2$.
\begin{proof}
To show $(1)$, we will use the notation $\mu_{m}$ for the random empirical measure
\begin{align*}
    \mu_{m}(du) & :=\frac{1}{T}\sum_{r=1}^{m}\delta_{(x_{r}^{(m)}/\sqrt{m}-X)/(T/m)}(du).
\end{align*} Fix $a<b$ and consider 
\begin{align*}
    \int_{a}^{b}\mu_{m}(du) & = \frac{m}{T}\left(\frac{1}{m}\sum_{j\in [m]}\mathbf{1}_{(x_j^{(m)}/\sqrt{m}-X)/(T/m)\le b} - \frac{1}{m}\sum_{j\in [m]}\mathbf{1}_{(x_j^{(m)}/\sqrt{m}-X)/(T/m)\le a}\right).
\end{align*}
    As a result of \cref{thm:rigid}, we know there exists a constant $C>0$ such that 
\begin{align*}\mathbb{P}_{m}^{W}\left(\sup_{t\in \mb{R}}\bigg|\frac{1}{m}\sum_{j\in [m]}\mathbf{1}_{x_j^{(m)}\le t\sqrt{m}} -\int_{-\infty}^{t} \rho_{\on{sc}}(x)dx\bigg|\leq \frac{C(\log m)^2}{m}\right)\geq 1- m^{-25}.
\end{align*}
On this event, therefore, for any $a,b\in\mathbb{R}$ such that $a<b$, 
\begin{align*}
    \bigg| \int_{a}^{b}\mu_{m}(du)  - \int_{a}^{b} \rho_{\on{sc}}(X+uT/m)du\bigg|\leq 2C(\log{(m)})^{2}T^{-1}.
\end{align*} 
Since, for any continuously differentiable compactly supported function, $f\in C^{1}_{c}(\mathbb{R})$, we can uniformly approximate it by a sequence of simple functions, we conclude that for any $f\in C_{c}^{1}(\mathbb{R})$, on the event given by \cref{thm:rigid}, there exists a constant $C:=C(\alpha)>0$ such that for all $X\in (-2+\alpha,2-\alpha),$
$$\bigg| \int_{\mathbb{R}}f(u)\mu_{m}(du)-\int_{\mathbb{R}}f(u)\rho_{sc}(X+uT/m)du \bigg|\leq C\textsc{TV}(f)(\log{(m)})^{2}T^{-1}.$$ 
To show $(2)$, we again apply \cref{thm:rigid}  and the same reasoning to conclude that there exists $C:=C(\alpha)>0$ such that for all $m\in\mathbb{N}$, $x^{(m)}\in\Omega_{m}$, and $X\in (-2+\alpha,2-\alpha)$,
\begin{align*}
    \bigg|\frac{1}{m} \sum_{|X-x_{r}^{(m)}/\sqrt{m}|\geq RT/m}\frac{1}{X-x_{r}^{(m)}/\sqrt{m}}-\int_{|u-X|\geq RT/m}\frac{\rho_{sc}(u)}{X-u}du \bigg| \leq CR^{-1}\log^{2}{(m)}T^{-1}.
\end{align*} 
Furthermore, when $R:=R(m)$ is allowed to scale with $m$ and $RT/m=o(1)$, then 
\begin{align*}
    \lim_{m\to\infty} \int_{|u-X|\ge RT/m}\frac{\rho_{sc}(u)}{X-u} du
 = 
\mathrm{p.v.}\int_{-2}^{2}\frac{\rho_{sc}(u)}{X-u} du,
\end{align*} which evaluates to $X/2$. Since $\log^{2}{(m)}\ll T$, the error term is $o(1)$ and thus, $d_{m}(R)\to X/2$ as $m\to\infty$, and since $\mathbb{P}_{m}^{W}(\Omega_{m})\to 1$ as $m\to\infty$, the $\nu_{m,R}$ converge weakly to $\delta_{X/2}$ as $m\to\infty$. 
\end{proof}

\section{Derivation of the Determinantal Kernel} \label{s:kernel}

In this section we will prove \cref{t:kernel}. In \cref{s:emsetup} we set up the families of functions \eqref{e:thefunctions} which we will need to apply \cref{t:em} to this problem, and that section concludes with the application of \cref{t:em}. In \cref{s:middleterm} and \cref{s:finalterm} we re-sum the terms of that expression to obtain the contour integral expression in \cref{t:kernel}.

\subsection{Notation and Conventions}  
For $x^{(m)}\in\mathrm{Conf}_{m}(\mathbb{R})$, we will use the notation $x^{(m)}\setminus x_{i}^{(m)}$ to denote the element of $\mathrm{Conf}_{m-1}(\mathbb{R})$ which is comprised of all entries of $x^{(m)}$ except $x_{i}^{(m)}.$

We will work with the ``probabilist's'' Hermite polynomials, defined
\begin{align}\label{e:hermite}
    h_{n}(x) & := (-1)^{n} e^{\frac{x^{2}}{2}}\frac{d^{n}}{dx^{n}}e^{-\frac{x^{2}}{2}}.
\end{align} We will frequently use formulas for these polynomials as complex contour or line integrals of varying forms. We note
\begin{align}\label{e:hermite1}
    h_{n}(x) & = \frac{n!}{2\pi i}\int_{C} \frac{e^{tx-\frac{t^{2}}{2}}}{t^{n+1}}dt,
\end{align} where the contour $C$ is a closed, positively oriented contour of arbitrarily small radius which contains $0$. Similarly, we will often use the following representation of the Hermite polynomials as line integrals,
\begin{align}\label{e:hermite2}\begin{split}
    h_{n}(x) & \overset{(*)}{=} \frac{(-i)^{n}e^{x^{2}/2}}{\sqrt{2\pi}}\int_{-\infty}^{\infty}dt \cdot t^{n}\exp{\left(-\frac{t^{2} }{2}+ ixt \right)},
    \\  & \overset{(**)}{=}  \frac{i^{n}e^{x^{2}/2}}{\sqrt{2\pi}}\int_{-\infty}^{\infty}dt \cdot t^{n}\exp{\left(-\frac{t^{2} }{2}- ixt \right)}.
    \end{split}
\end{align}
We will also need the following expression for Hermite polynomials as a sum,
\begin{align}\label{e:hermitesum}
    h_{n}(x) & = \sum_{r=0}^{\lfloor n/2\rfloor }\frac{2^{-r}(-1)^{r}n!}{r!(n-2r)!}x^{n-2r},
\end{align} as well as the following formula expanding the expression $h_{n}(x+y)$,
\begin{align}\label{e:hermitesumexpand}
    h_{n}(x+y) & = \sum_{r=0}^{n}{n\choose r}y^{n-r}h_{r}(x).
\end{align}
We also use the notation $e_{r}$ to denote the $r$th elementary symmetric polynomial, which may take arbitrarily many arguments, but which is identically zero on $k<r$ arguments. 

We will also use the convention that the Fourier transform is $$\mathcal{F}\left\{f(y)\right\}(x)=\frac{1}{\sqrt{2\pi}}\int_{-\infty}^{\infty}f(y)e^{iyx}dy.$$
A standard property of the Fourier transform is that if $f\in C^{n}(\mathbb{R})$ and if all derivatives up to and including the $n$-th are $L^{1}(\mathbb{R})$, then
\begin{align}\label{e:ftprop}
    \mathcal{F}\left\{y^{n}f(y)\right\}(x)
    & = (-i)^{n}\left(\frac{d}{dx}\right)^{n}\mathcal{F}\left\{f(y)\right\}(x).
\end{align}

\subsection{Setting up the Eynard-Mehta Theorem}\label{s:emsetup}  
For an element $X=(x^{(m+1)},\cdots,x^{(m+L)})\in\mathrm{Conf}^{0}(\mathbb{R})$ (meaning that  $x^{(r)}\in\mathrm{Conf}_{r}^{0}(\mathbb{R})$ for each $r\in [[m+1,m+L]]$), we can write the weight function associated with the $\textsc{GUE}$ rising process started from a fixed configuration $x^{(m)}\in\mathrm{Conf}^{0}_{m}(\mathbb{R})$ in the form $W(X,x^{(m)})$ as in \eqref{e:weight}. 

The transition density between levels $n$ and $n+1$ of the rising $\textsc{GUE}$ process is proportional to 
$$p_{n,n+1}(x^{(n)};x^{(n+1)}) \propto \mathbf{1}_{x^{(n)}\prec x^{(n+1)}}\frac{\prod_{k<j}(x_{k}^{(n+1)}-x_{j}^{(n+1)})}{\prod_{k<j}(x_{k}^{(n)}-x_{j}^{(n)})} \exp{\left(-\sum_{k=1}^{n+1}\frac{(x_{k}^{(n+1)})^{2}}{2} + \sum_{k=1}^{n}\frac{(x_{k}^{(n)})^{2}}{2}\right)}.$$
Thus, we can find the distribution of $x^{(m+1)}\prec \cdots \prec x^{(m+L)}$ starting from a fixed configuration $x^{(m)}$ at level $m$, 
\begin{align*}
    \mathbb{P}(x^{(m+1)}\prec \cdots \prec x^{(m+L)}|x^{(m)}) \propto \mathbf{1}_{x^{(m)}\prec\cdots\prec x^{(m+L)}} \frac{\prod_{i<j}(x_{i}^{(m+L)}-x_{j}^{(m+L)})\exp{\left(-\sum_{i=1}^{m+L}\frac{(x_{i}^{(m+L)})^{2}}{2}\right)}}{\prod_{i<j}(x_{i}^{(m)}-x_{j}^{(m)})\exp{\left(-\sum_{i=1}^{m}\frac{(x_{i}^{(m)})^{2}}{2}\right)}}.
\end{align*}
We define the interlacing function 
\begin{align*}
    \varphi(x,y)  := \mathbf{1}_{x\leq y}, 
    & & \varphi(\text{virt},y)  := 1,
\end{align*}
and note that 
\begin{align*}
   \mathbf{1}_{x^{(m)}\prec\cdots\prec x^{(m+L)}} & = \prod_{i=1}^{L} \det\left[\varphi(x_{j}^{(m+i-1)},x_{k}^{(m+i)})\right]_{j,k=1}^{m+i}.
\end{align*}
We likewise see that 
\begin{align*}
     \exp{\left(-\sum_{i=1}^{m+L}\frac{(x_{i}^{(m+L)})^{2}}{2}\right)}\prod_{k<j}(x_{k}^{(m+L)}-x_{j}^{(m+L)})
    &
    = \det\left[\psi_{k}(x_{j}^{(m+L)}|m+L)\right]_{k,j=1}^{m+L},
\end{align*} for a function $\psi(\cdot|m+L)$ defined
\begin{align*}
    \psi_{k}(x|m+L) & := h_{m+L-k}(x)e^{-\frac{x^{2}}{2}},
\end{align*} where $h_{k}(x)$ is the $k$-th probabilist's Hermite polynomial \eqref{e:hermite}.

We must still define the initial level functions $\upsilon_{k}(\cdot|m)$. This involves studying the inverse of the Vandermonde matrix. We note that the reciprocal of the Vandermonde determinant is the determinant of the inverse matrix, 
\begin{align*}
    \prod_{k<j}(x_{k}^{(m)}-x_{j}^{(m)})^{-1}  & = \frac{1}{\det[V(x_{1}^{(m)},\ldots,x_{m}^{(m)})]},
\end{align*}
where 
\begin{align*}
    V(x^{(m)}) : =  \left[\left(x_{j}^{(m)}\right)^{m-k}\right]_{kj}.
\end{align*}
The entries of the inverse of the Vandermonde matrix are 
    \begin{align*}
        [V(x^{(m)})^{-1}]_{k,j} & = \frac{(-1)^{j-1}e_{j-1}(x^{(m)}\setminus x_{k}^{(m)})}{\prod_{r\neq k}(x_{k}^{(m)}-x_{r}^{(m)})} = \frac{(-1)^{j-1}e_{j-1}(x^{(m)}\setminus x_{k}^{(m)})}{2\pi i}\int_{\mathfrak{c}(x_{k}^{(m)})}dz\frac{1}{\prod_{r=1}^{m}(z-x_{r}^{(m)})}.
    \end{align*} 
This formula is well-known, though it is more typical to represent it as a double contour integral \cite[Equation 4.14]{Pet14}. The form stated above is better suited to the computations in this article. Since we are using the probabilists' Hermite polynomials, $V(x^{(m)})$ has the same determinant as $h(x^{(m)}):=[h_{m-k}(x_{j}^{(m)})]_{kj}$ (the change of basis has determinant $1$). We note that $h(x^{(m)}) = R\cdot V(x^{(m)})$ for $R$ defined by
\begin{align*}
    R & := \left[\frac{(m-k)!(-1)^{\frac{j-k}{2}}2^{-\frac{j-k}{2}}}{((j-k)/2)!(m-j)!}\mathbf{1}_{j-k \text{ is even}, j\geq k}\right]_{kj}.
\end{align*}
The inverse is given by
\begin{align*}
    R^{-1} & = \left[ \frac{(m-k)!2^{-\frac{j-k}{2}}}{((j-k)/2)!(m-j)!}\mathbf{1}_{j-k\text{ is even}, j\geq k} \right]_{kj}.
\end{align*}
Therefore, $[h(x^{(m)})^{-1}]_{kj} = [ V(x^{(m)})^{-1}\cdot R^{-1}]_{kj},$
and the full expression for the entries of this matrix is
\begin{align*}
     \frac{1}{2\pi i}\int_{c(x_{k}^{(m)})}dz \frac{1}{\prod_{r=1}^{m}(z-x_{r}^{(m)})} \sum_{l=1}^{m} (-1)^{l-1}e_{l-1}(x^{(m)}\setminus x_{k}^{(m)}) \frac{(m-l)!2^{-\frac{j-l}{2}}}{((j-l)/2)!(m-j)!}\mathbf{1}_{j-l\text{ is even}, j\geq l}.
\end{align*}
We define the last unspecified function from \eqref{e:thefunctions} as
\begin{align*}
    \upsilon_{k}(x_{j}^{(m)}|m)  := [h(x^{(m)})^{-T}]_{kj}e^{(x_{j}^{(m)})^{2}/2}, & & \text{for all } j\in [[m]].
\end{align*}
With the notation we have now established, we can write 
\begin{multline}\label{e:meas}
    \mathbb{P}(x^{(m+1)}\prec \cdots \prec x^{(m+L)}|x^{(m)}) \\ \propto \det\left[\upsilon_{k}(x_{j}^{(m)}|m)\right]_{k,j=1}^{m} \det\left[\psi_{k}(x_{j}^{(m+L)}|m+L)\right]_{k,j=1}^{m+L} 
    \prod_{r=1}^{L}\det\left[\varphi(x_{k}^{(m+r-1)},x_{j}^{(m+r)})\right]_{k,j=1}^{m+r}.
\end{multline}
If we were to apply \cref{t:em} directly to \eqref{e:meas}, the Gram matrix would have entries which diverge due to the fact that taking convolutions with $\varphi$ to generate all elements of \eqref{e:thefunctions} would in some cases result in functions which are not integrable. A work of Metcalfe \cite{Met13} which studies the rising $\textsc{GUE}$ eigenvalue process conditioned on the top row, avoids this issue by truncating the state space (requiring $x_{j}^{(r)}$ to lie in a bounded interval). We will instead follow the approach of \cite{FN09} and deal with this issue by modifying the functions $\varphi$.
\begin{lemma}\label{l:detshift}
For any choice of the function $\kappa(x)$, the following determinants are equal
\begin{align}\label{e:deteq}
     \det\begin{bmatrix}[\varphi(x_{k}^{(r-1)},x_{j}^{(r)})]_{k\in[[r-1]],j\in[[r]]}\\ [1]_{k=1,j\in[[r]]}\end{bmatrix} 
    = 
    \det\begin{bmatrix}[\varphi(x_k^{(r-1)} , x_j^{(r)}) -\kappa(x_{k}^{(r-1)})]_{k\in[[r-1]],j\in [[r]]}\\ [1]_{k=1,j\in[[r]]}\end{bmatrix}.
\end{align} 
\end{lemma}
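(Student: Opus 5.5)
The identity follows from elementary row operations using the final row of ones. Both matrices are $r\times r$. The first $r-1$ rows are indexed by $k\in[[r-1]]$, and the $r$-th row consists entirely of ones; this last row plays the role of the virtual particle, since $\varphi(\text{virt},y)=1$. In the right-hand matrix, row $k$ equals row $k$ of the left-hand matrix minus $\kappa(x_k^{(r-1)})$ times the all-ones row. This holds because for each column $j$ the entry is $\varphi(x_k^{(r-1)},x_j^{(r)})-\kappa(x_k^{(r-1)})\cdot 1$. The subtracted amount depends only on the row index $k$ and not on the column $j$, so it is a scalar multiple of the last row.

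Accordingly, we pass from the right-hand matrix to the left-hand one by adding $\kappa(x_k^{(r-1)})$ times row $r$ to row $k$, for each $k\in[[r-1]]$ in turn. By multilinearity of the determinant in rows, each such operation adds $\kappa(x_k^{(r-1)})$ times the determinant of a matrix with two identical rows (row $k$ replaced by the all-ones row). That determinant vanishes, so each operation preserves the determinant. After all $r-1$ operations the two determinants agree, which is \eqref{e:deteq}.

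No analytic input is needed: $\kappa$ is an arbitrary function (it may be complex-valued, or chosen later to make the convolutions integrable, following \cite{FN09}), and the argument is purely algebraic. The only point needing care is bookkeeping. The matrix is square only because of the row of ones, and that row must itself be left unmodified, so the shift by $\kappa$ applies only to the genuine rows $k\in[[r-1]]$ and not to the virtual row. This is consistent with the statement, which keeps the row $[1]_{k=1,j\in[[r]]}$ unchanged on both sides. Once this is noted, the identity is immediate from the invariance of the determinant under adding multiples of one row to another.
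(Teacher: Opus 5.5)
Your proposal is correct and matches the paper's proof, which also subtracts $\kappa(x_k^{(r-1)})$ times the all-ones final row from each row $k\in[[r-1]]$ and notes that the determinant is unchanged. The only difference is that you run the row operations in the opposite direction (adding rather than subtracting), which makes no difference.
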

\begin{proof}
% We will refer to the matrix on the left-hand side as $\Phi^{(r)}$. The determinant on the right-hand side is given by $\det[\Phi^{(r)}+v\mathbf{1}^{T}]$ where $\mathbf{1}^{T}$ is a $1\times r$ row vector with all entries equal to $1$ and where $v$ is a $r\times 1$ column vector with $v_{k}=-\kappa(x_{k}^{(r-1)})$ for $k\in[[r-1]]$ and $v_{r}=0$. By the rank $1$ update formula, this determinant is equal to $\det[\Phi^{(r)}](1+\mathbf{1}^{T}(\Phi^{(r)})^{-1}v)$. We note that $(\Phi^{(r)})^{-1}_{sj} =  M_{sj}/\det[\Phi^{(r)}]$, where $M_{sj}$ is the $(s,j)$-th minor of $\Phi^{(r)}$. Furthermore, we note that when $s\neq r$
% \begin{multline*}\det\begin{bmatrix}[\varphi(x_{\ell}^{(r-1)},x_{k}^{(r)})]_{\ell\neq s, k\neq j} \\ [1]_{k\neq j}\end{bmatrix} \\ = (-1)^{r-1}\left(\sum_{k=1}^{j-1}(-1)^{k-1}\det[\varphi(x_{\ell}^{(r-1)},x_{r}^{(r)})]_{\ell\neq s ,r\neq k,j} + \sum_{k=j+1}^{r}(-1)^{k}\det[\varphi(x_{\ell}^{(r-1)},x_{r}^{(r)})]_{\ell\neq s ,r\neq k,j}\right).
% \end{multline*}

% In particular, when we sum this expression over $j\in[[r]]$, it is zero. Thus, 
% \begin{align*}
%     \mathbf{1}^{T}(\Phi^{(r)})^{-1}v & = -\frac{1}{\det[\Phi^{(r)}]} \sum_{j=1}^{r}\sum_{s=1}^{r-1}\kappa(x_{s}^{(r-1)})M_{sj} = 0,
% \end{align*} and we have verified \eqref{e:deteq}. 
The last row of the matrix on the left-hand side of \eqref{e:deteq} is comprised of entries of the form $\varphi(\text{virt},x_{k}^{(r)})=1$. For each $k\in [[r-1]]$, after subtracting $\kappa(x^{(r-1)}_k)$ times the final row from the $k$-th row, the determinant is unchanged and the $(k,j)$-entry in the first $r-1$ rows takes the form \begin{align*}\phi(x^{(r-1)}_k,x^{(r)}_j)-\kappa(x^{(r-1)}_k),
\end{align*}
while the final row remains the same. 
\end{proof}
We can expand $\varphi(x,y)$ in the basis of Hermite polynomials when $x,y\neq \text{virt}$ to obtain
\begin{align*}
    \varphi(x,y) & = \sum_{k=0}^{\infty} \frac{h_{k}(y)}{\sqrt{2\pi}k!}\int_{x}^{\infty}e^{-z^{2}/2}h_{k}(z)dz.
\end{align*}
Separating the $k=0$ term from the rest, which is equal to 
\begin{align*}
    & = \frac{1}{\sqrt{2\pi}}\int_{x}^{\infty}e^{-z^{2}/2}dz + e^{-x^{2}/2}\sum_{k=0}^{\infty}\frac{1}{\sqrt{2\pi}(k+1)!}h_{k}(x)h_{k+1}(y).
\end{align*}
Thus, setting 
\begin{align*}
    \kappa(x) & :=\frac{1}{\sqrt{2\pi}}\int_{x}^{\infty}e^{-z^{2}/2}dz,
\end{align*} and subtracting off the $\kappa(x)$ term, we define
\begin{align*}
    \widetilde{\varphi}(x,y) 
     := e^{-x^{2}/2}\sum_{k=0}^{\infty}\frac{h_{k}(x)h_{k+1}(y)}{\sqrt{2\pi}(k+1)!},
     & & \widetilde{\varphi}(\text{virt},y)  := \frac{1}{\sqrt{2\pi}}h_{0}(y).
\end{align*} The replacement of the virtual row $1$ by $1/\sqrt{2\pi}$ changes
the total weight by a configuration-independent constant, which is absorbed
into the normalization.
\cref{l:detshift} tells us that we can equivalently use $\widetilde{\varphi}$ in place of $\varphi$ in \eqref{e:meas}. Before we can apply the Eynard-Mehta theorem, we need to compute the Gram matrix and several other helpful functions \eqref{e:theconvolutions}.

\begin{lemma}\label{l:allfunctions} We compute the following helpful expressions:
\begin{enumerate}
    \item For $a,b\in [[m,m+L]]$,
\begin{align*}
        \varphi^{(a,b)}(x,y) & =\mathbf{1}_{a<b}\mathbf{1}_{x\leq y} \frac{(y-x)^{b-a-1}}{(b-a-1)!}.
    \end{align*}
    \item For $a,b\in [[m,m+L]]$,
\begin{align*}
        \widetilde{\varphi}^{(a,b)}(x,y)  = \mathbf{1}_{a<b} \sum_{k=0}^{\infty}\frac{e^{-x^{2}/2}h_{k}(x)h_{k+b-a}(y)}{\sqrt{2\pi}(k+b-a)!},
        & & \widetilde{\varphi}^{(a,b)}(\text{virt},y) 
        = \mathbf{1}_{a\leq b} \frac{1}{\sqrt{2\pi} (b-a)!}h_{b-a}(y).
    \end{align*}
    \item For $n\in [[m+1,m+L]]$ and $k\in [[1,m+L]]$,
    \begin{align*}\psi_{k}(x|n) & = \begin{cases}
        \frac{1}{(k-n-1)!}\int_{x}^{\infty}(y-x)^{k-n-1}e^{-y^{2}/2}dy & n <k
        \\ h_{n-k}(x)e^{-x^{2}/2} & n \geq k
    \end{cases}.
    \end{align*}  
    \item  For $n\in [[m+1,m+L]]$ and $k\in [[1,m+L]]$,
    \begin{align*}\widetilde{\psi}_{k}(x|n) & = \begin{cases}
       h_{n-k}(x)e^{-x^2/2}  & n\geq k
        \\ 0 & n <k
    \end{cases}. 
    \end{align*} In particular, when $n\geq k$, $\widetilde{\psi}_{k}(x|n)=\psi_{k}(x|n)$.
    \item For $n\in[[m+1,m+L]]$ and $k\in [[1,m]]$, 
    \begin{align*}
     \upsilon_{k}(x|n)
     & = \sum_{x_{j}^{(m)}\leq x} \frac{1}{2\pi i}\int_{c(x_{j}^{(m)})}dz \frac{(x-z)^{n -m-1}e^{z^{2}/2}}{(n -m-1)!\prod_{r=1}^{m}(z-x_{r}^{(m)})} 
     \\ & \cdot \sum_{r=1}^{m} (-1)^{r-1}e_{r-1}(x^{(m)}\setminus x_{j}^{(m)}) \frac{(m-r)!2^{-\frac{k-r}{2}}}{((k-r)/2)!(m-k)!}\mathbf{1}_{k-r\text{ is even}, k\geq r}.
    \end{align*}
    Similarly, when $n=m$ we define $\upsilon_{k}(x|m)$ as before, with $x\in \{x_{j}^{(m)}\}_{j\in [[m]]}$.
    \end{enumerate}
\end{lemma}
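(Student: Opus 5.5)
The proof is five direct computations, each by induction on the number of $\star$-convolutions using the definitions in \eqref{e:theconvolutions}. The tilde versions $\widetilde{\varphi}^{(a,b)}$ and $\widetilde{\psi}_k$ are understood to be built from $\widetilde{\varphi}$ exactly as the untilded ones are built from $\varphi$. The key tools are orthogonality of the Hermite polynomials, $\int h_k h_l e^{-z^2/2}dz=\sqrt{2\pi}\,k!\,\delta_{kl}$, and the derivative identity $h_{k+1}'=(k+1)h_k$, equivalently $\frac{d}{dx}\bigl(h_k(x)e^{-x^2/2}\bigr)=-h_{k+1}(x)e^{-x^2/2}$.

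\textbf{Parts (1) and (2).} For (1), induct on $b-a$. The base case is $\varphi(x,y)=\mathbf{1}_{x\le y}$. The inductive step is
\[
\int_x^y \frac{(z-x)^{b-a-2}}{(b-a-2)!}\,dz=\frac{(y-x)^{b-a-1}}{(b-a-1)!}.
\]
For (2), multiply the series in the two factors. The inner integral $\int e^{-z^2/2}h_{k'}(z)h_{k+1}(z)\,dz$ collapses the double sum by orthogonality, shifting the index by one at each step. This gives the stated formula after $b-a$ steps. The virtual entry is handled the same way: $\widetilde{\varphi}(\mathrm{virt},\cdot)=h_0/\sqrt{2\pi}$ convolved with $\widetilde{\varphi}$ produces $h_1/\sqrt{2\pi}$, and in general $h_{b-a}/(\sqrt{2\pi}(b-a)!)$. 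One must justify exchanging sums and integrals. I would do this by Mehler-type $L^2$ convergence of the Hermite expansion, noting that the functions being convolved against are Gaussian-decaying.

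\textbf{Parts (3) and (4).} Start from $\psi_k(\cdot|m+L)=h_{m+L-k}e^{-x^2/2}$ and convolve on the left, going down one level at a time.
\begin{itemize}
\item With $\varphi$ in (3): $\int_x^\infty h_j(y)e^{-y^2/2}\,dy=h_{j-1}(x)e^{-x^2/2}$ for $j\ge1$, by the derivative identity. So the Hermite index drops by one per level until it reaches $h_0$. After that, repeated integration of $e^{-y^2/2}$ against $\mathbf{1}_{x\le y}$ gives the Cauchy repeated-integral formula $\frac{1}{(k-n-1)!}\int_x^\infty (y-x)^{k-n-1}e^{-y^2/2}\,dy$.
\item With $\widetilde{\varphi}$ in (4): orthogonality gives $\widetilde{\varphi}\star(h_j e^{-y^2/2})=h_{j-1}(x)e^{-x^2/2}$. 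The terms are matched by $k+1=j$. When $j=0$ there is no matching index, since the sum starts at $h_{k+1}$ with $k\ge0$, so the result is $0$. This is precisely the point of subtracting $\kappa$, and it yields the truncation for $n<k$.
\end{itemize}

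\textbf{Part (5).} Use the discrete $\star$-convolution over $S=\{x_j^{(m)}\}$ followed by $n-m-1$ further convolutions. By part (1), this equals
\[
\sum_j \upsilon_k(x_j^{(m)}|m)\,\mathbf{1}_{x_j^{(m)}\le x}\,\frac{(x-x_j^{(m)})^{n-m-1}}{(n-m-1)!}.
\]
Now insert the residue formula for the entries of $[h(x^{(m)})^{-T}]$, which holds with $V^{-1}R^{-1}$. The factor $e^{(x_j^{(m)})^2/2}(x-x_j^{(m)})^{n-m-1}$ can be moved inside the contour integral around $x_j^{(m)}$ as $e^{z^2/2}(x-z)^{n-m-1}$, because the integrand has only a simple pole at $x_j^{(m)}$ and these factors are analytic. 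This gives the stated expression.

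\textbf{Main obstacle.} The only delicate point is analytic justification in (2) and (4): termwise integration of the infinite Hermite series defining $\widetilde{\varphi}$, which converges only conditionally or in a distributional sense. I would first try to avoid the series entirely. One can write $\widetilde{\varphi}(x,y)=\mathbf{1}_{x\le y}-\kappa(x)$ and verify the convolution identities directly, using the derivative identity and integration by parts with Gaussian decay. The series form then serves only to identify coefficients.
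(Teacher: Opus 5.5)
Your proposal is correct and follows the paper's proof essentially step for step. Part (1) uses iterated indicator integrals, part (2) uses Hermite orthogonality, parts (3) and (4) use integration by parts level by level, and part (5) combines (1) with the residue form of $[h(x^{(m)})^{-T}]$. Your alternative of writing $\widetilde{\varphi}(x,y)=\mathbf{1}_{x\le y}-\kappa(x)$ and integrating by parts directly is essentially the paper's terse integration-by-parts argument for (4). It also supplies the justification for termwise integration of the Hermite series that the paper leaves implicit: $\widetilde{\varphi}(z,y)$ decays like a Gaussian in $z$ in both directions, so every convolution involved converges absolutely.
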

\begin{proof} 
    To show part $(1)$, we see that
    \begin{align*}
        \varphi(x,z_{a+1})\star\cdots \star \varphi(z_{b-1},y) & = \int_{\mathbb{R}}\cdots\int_{\mathbb{R}} \mathbf{1}_{x\leq z_{a+1}}\mathbf{1}_{z_{a+1}\leq z_{2}}\cdots \mathbf{1}_{z_{b-1}\leq y}  dz_{a+1}\cdots dz_{b-1}
         = \frac{(y-x)^{b-a-1}}{(b-a-1)!}\mathbf{1}_{x\leq y},
    \end{align*} where we have implicitly assumed that $a<b$. 
    To show $(2)$, we note \begin{align*}
    \widetilde{\varphi}^{(a,b)}(x,y) & = \mathbf{1}_{a<b}\underset{b-a\text{ times}}{(\widetilde{\varphi}\star\cdots \star \widetilde{\varphi})} (x,y) = \sum_{k=0}^{\infty}\frac{e^{-x^{2}/2}h_{k}(x)h_{k+b-a}(y)}{\sqrt{2\pi}(k+b-a)!}.
\end{align*}
    To show $(3)$, we apply the expression from $(1)$ to obtain \begin{align}\label{e:psikbig}
     \psi_{k}(x|n) & = \frac{1}{(m+L-n-1)!}\int_{x}^{\infty}(y-x)^{m+L-n-1}h_{m+L-k}(y)e^{-y^{2}/2}dy,
\end{align}
and integrate by parts. In fact, we note that the expression above 
holds for any $L\geq k-m$, a fact which we will apply later on in the construction of the kernel. 
    To show $(4)$, we again simply integrate by parts, using the definition of $\psi_{k}(x|m+L)$. 
    To show $(5)$, we apply $(1)$ and the definition of $\upsilon_{k}(x|m)$. 
\end{proof}
We will apply \cref{t:em} using
\begin{align*}
   \upsilon_{k}(y|m)\star\widetilde{\varphi}^{(m,n)}(y,x), & & k\in [[m]],
    \\ 
    \widetilde{\varphi}\star\widetilde{\varphi}^{(k,n)}(\text{virt},x), & & k\in [[m+1,m+L]],
   \\ 
   \widetilde{\varphi}^{(n,m+L)}(x,y)\star\widetilde{\psi}_{k}(y|m+L), & & k\in [[m+L]],
\end{align*}  
as the functions \eqref{e:theconvolutions}. The Gram matrix associated with these functions is 
\begin{align*}
    G_{kj} & = \begin{cases}
        \upsilon_{k}(\cdot |m)\star\widetilde{\varphi}^{(m,m+L)}\star\widetilde{\psi}_{j}(\cdot |m+L) & k\in [[1,m]]
        \\ (\widetilde{\varphi}\star\widetilde{\varphi}^{(k,m+L)})(\text{virt},\cdot)\star\widetilde{\psi}_{j}(\cdot |m+L) & k\in [[m+1,m+L]]
    \end{cases}.
\end{align*}
\begin{lemma}\label{l:gid}
    $G$ is the $(m+L)\times (m+L)$ identity matrix. 
\end{lemma}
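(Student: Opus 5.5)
The plan is to compute each entry of $G$ directly. We reduce every convolution to the orthogonality relation
\begin{align*}
\int_{-\infty}^{\infty} h_{k}(y)h_{l}(y)e^{-y^{2}/2}\,dy=\sqrt{2\pi}\,k!\,\delta_{kl},
\end{align*}
which follows from \eqref{e:hermite} by repeated integration by parts. The first step is a reduction of the right-hand factor, which is common to both cases in the definition of $G$. Using the series of \cref{l:allfunctions}(2) with $a=n$, $b=m+L$, we pair $\widetilde{\varphi}^{(n,m+L)}(x,\cdot)$ against $\widetilde{\psi}_{j}(\cdot|m+L)=h_{m+L-j}e^{-y^{2}/2}$. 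Orthogonality keeps only the index $k+(m+L-n)=m+L-j$, and the factorial $(m+L-j)!$ cancels. This gives
\begin{align*}
\widetilde{\varphi}^{(n,m+L)}\star\widetilde{\psi}_{j}(\cdot|m+L)(x)=\mathbf{1}_{n\geq j}\,h_{n-j}(x)e^{-x^{2}/2}=\widetilde{\psi}_{j}(x|n),
\end{align*}
in agreement with \cref{l:allfunctions}(4).

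\textbf{Virtual rows} $k\in[[m+1,m+L]]$. Here $(\widetilde{\varphi}\star\widetilde{\varphi}^{(k,m+L)})(\text{virt},\cdot)$ is $\widetilde{\varphi}(\text{virt},\cdot)=h_{0}/\sqrt{2\pi}$ convolved with $m+L-k$ copies of $\widetilde{\varphi}$. Each convolution raises the Hermite index by one, by the same orthogonality computation, and the factorials combine. The result is $h_{m+L-k}(y)/(\sqrt{2\pi}(m+L-k)!)$, as recorded in \cref{l:allfunctions}(2). Pairing this with $\widetilde{\psi}_{j}(\cdot|m+L)$ and applying orthogonality once more gives $G_{kj}=\delta_{m+L-k,\,m+L-j}=\delta_{kj}$. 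The only care needed is to track the level convention, so that the number of convolutions is exactly $m+L-k$ and the Hermite index matches.

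\textbf{Initial rows} $k\in[[m]]$. By the reduction above with $n=m$, the $\star$ against the discrete function $\upsilon_{k}(\cdot|m)$ becomes a finite sum:
\begin{align*}
G_{kj}=\sum_{i=1}^{m}\upsilon_{k}(x_{i}^{(m)}|m)\,\mathbf{1}_{m\geq j}\,h_{m-j}(x_{i}^{(m)})e^{-(x_{i}^{(m)})^{2}/2}.
\end{align*}
For $j>m$ this vanishes. For $j\leq m$, we substitute $\upsilon_{k}(x_{i}^{(m)}|m)=[h(x^{(m)})^{-T}]_{ki}e^{(x_{i}^{(m)})^{2}/2}$. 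The Gaussian factors cancel, leaving
\begin{align*}
\sum_{i}[h(x^{(m)})]_{ji}[h(x^{(m)})^{-1}]_{ik}=\delta_{jk}.
\end{align*}
This uses that $h(x^{(m)})=[h_{m-j}(x_{i}^{(m)})]_{ji}$ is invertible, because its determinant equals the Vandermonde determinant and $x^{(m)}\in\mathrm{Conf}_{m}^{0}$.

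The main obstacle is analytic rather than algebraic. The series defining $\widetilde{\varphi}$ does not converge absolutely as a function, so interchanging the sum with the $y$-integral needs justification. I would handle this by noting that for fixed $x$ the series is the Hermite expansion of $y\mapsto\varphi(x,y)-\kappa(x)$ in $L^{2}(e^{-y^{2}/2}dy)$. Each $\widetilde{\psi}_{j}$ and its iterated convolutions are polynomials times $e^{-y^{2}/2}$, hence lie in the dual space, so each pairing is an $L^{2}$ inner product and can be evaluated termwise by Parseval. Alternatively, one can check directly that the convolutions agree with the closed forms obtained from $\varphi-\kappa$ by integration by parts, as in \cref{l:allfunctions}(3),(4). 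With this justified, both cases together show that $G$ is the $(m+L)\times(m+L)$ identity.
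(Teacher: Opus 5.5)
Your proof is correct and follows the same route as the paper. In both, Hermite orthogonality collapses the virtual rows to $\delta_{kj}$, and for $k\in[[m]]$ the entry reduces to $\upsilon_k(\cdot|m)\star\widetilde{\psi}_j(\cdot|m)=[h(x^{(m)})\,h(x^{(m)})^{-1}]_{jk}$. You are more explicit than the paper on three points it leaves unstated: the vanishing of these entries for $j>m$, the cancellation of the Gaussian factors, and the justification for integrating the Hermite series termwise.
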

\begin{proof} When $k\in [[m+1,m+L]]$ this reduces to the following calculation \begin{align}\label{e:delta}
     (\widetilde{\varphi} \star\widetilde{\varphi}^{(k,m+L)})(\text{virt},\cdot)\star\widetilde{\psi}_{j}(\cdot |m+L) & = \delta_{k,j}.
\end{align} When $k\in [[1,m]]$, we see that $G_{kj}  = \upsilon_{k}(\cdot |m)\star\widetilde{\psi}_{j}(\cdot |m)$, which is simply
\begin{align*}
    \sum_{r=1}^{m}[h(x^{(m)})^{-T}]_{kr}h_{m-j}(x_{r}^{(m)}) & =  \sum_{r=1}^{m}[h(x^{(m)})^{-1}]_{rk}[h(x^{(m)})]_{jr}  = \delta_{k,j}.
\end{align*}
\end{proof}
With these results in hand, we can now prove an explicit form for the kernel in \cref{t:kernel}, though more work is required to obtain the form of the kernel which appears in that theorem. 
\begin{lemma}\label{l:kernelsum}
    The determinantal kernel of the rising $\textsc{GUE}$ eigenvalue process started at a fixed configuration $x^{(m)}\in\mathrm{Conf}^{0}_{m}(\mathbb{R})$ at level $m$ is given, for all $n_{1},n_{2}>m$, by 
    \begin{align}\label{e:kernel2}
\begin{split}
    K_{x^{(m)}}^{\textsc{GUE}}(n_{1},x_{1};n_{2},x_{2}) & 
    = -\varphi^{(n_{1},n_{2})}(x_{1},x_{2})  
    \\ & + \sum_{\ell=1}^{m}\left(\upsilon_{\ell}(y|m)\star\varphi^{(m,n_{2})}(y,x_{2})\right)h_{n_{1}-\ell}(x_{1})e^{-x_{1}^{2}/2}  
    \\ & + \sum_{k=m+1}^{n_{2}}\frac{h_{n_{2}-k}(x_{2})\psi_{k}(x_{1}|n_{1})}{\sqrt{2\pi}(n_{2}-k)!}
    \\ & - \sum_{\ell=1}^{m}\upsilon_{\ell}(y|m)\star\sum_{k=m+1}^{n_{2}}\frac{h_{n_{2}-k}(x_{2})\psi_{k}(y|m)}{\sqrt{2\pi}(n_{2}-k)!}h_{n_{1}-\ell}(x_{1})e^{-x_{1}^{2}/2}.
    \end{split}
\end{align} 
\end{lemma}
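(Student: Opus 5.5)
The plan is to apply \cref{t:em} with the tilde functions: $\widetilde{\varphi}$ in place of $\varphi$, $\widetilde{\psi}_k$ for the final-level functions, and $\upsilon_\ell(\cdot|m)\star\widetilde{\varphi}^{(m,n)}$ for the initial-level functions. By \cref{l:detshift}, the measure \eqref{e:meas} can be rewritten with $\widetilde{\varphi}$ in place of $\varphi$ at every level; the change in the virtual row contributes only a constant factor. By \cref{l:gid}, the Gram matrix is the identity, so $G^{-t}=I$ and the double sums in \cref{t:em} collapse to diagonal sums. This gives
\begin{align*}
K(n_1,x_1;n_2,x_2)=-\widetilde{\varphi}^{(n_1,n_2)}(x_1,x_2)+\sum_{\ell=1}^{m}\big(\upsilon_\ell\star\widetilde{\varphi}^{(m,n_2)}\big)(x_2)\,\widetilde{\psi}_\ell(x_1|n_1)+\sum_{k=m+1}^{n_2}\widetilde{\varphi}^{(k-1,n_2)}(\text{virt},x_2)\,\widetilde{\psi}_k(x_1|n_1).
\end{align*}
For $\ell\le m<n_1$, \cref{l:allfunctions}(4) gives $\widetilde{\psi}_\ell(x_1|n_1)=h_{n_1-\ell}(x_1)e^{-x_1^2/2}$. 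The virtual terms are, by \cref{l:allfunctions}(2), $\frac{h_{n_2-k}(x_2)}{\sqrt{2\pi}(n_2-k)!}\widetilde{\psi}_k(x_1|n_1)$.

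The remaining work converts the tilde objects back into the untilded ones appearing in \eqref{e:kernel2}. The natural identity is
\[
\widetilde{\varphi}^{(a,b)}(x,y)=\varphi^{(a,b)}(x,y)-\sum_{k=a+1}^{b}\frac{h_{b-k}(y)}{\sqrt{2\pi}(b-k)!}\,\psi_k(x|a)\cdot(\text{suitably indexed}).
\]
It is the Hermite expansion of $(y-x)^{b-a-1}\mathbf{1}_{x\le y}/(b-a-1)!$, from \cref{l:allfunctions}(1), in the basis $h_j(y)$. The low-order terms $j<b-a$ have coefficients given by the integrals $\int_x^\infty(z-x)^{\cdot}e^{-z^2/2}dz$, which are exactly the $\psi_k(x|\cdot)$ for $n<k$ in \cref{l:allfunctions}(3). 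The high-order terms reproduce the series for $\widetilde{\varphi}^{(a,b)}$. I would verify this by computing $\langle\varphi^{(a,b)}(x,\cdot),h_j e^{-(\cdot)^2/2}\rangle/(\sqrt{2\pi}j!)$ and integrating by parts $b-a$ times.

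Applying the identity with $(a,b)=(m,n_2)$ inside the $\upsilon_\ell$ convolution produces the second line of \eqref{e:kernel2}. The correction pieces give the fourth line, after using that $\upsilon_\ell(\cdot|m)$ is supported on the points $x^{(m)}_j$, so the $\star$ is the finite sum.

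For the term $-\widetilde{\varphi}^{(n_1,n_2)}$ and the virtual sum, I expect the untilded $\psi_k(x_1|n_1)$ (including $k>n_1$) to arise naturally. The correction in $-\widetilde{\varphi}^{(n_1,n_2)}$ supplies the $k\in(n_1,n_2]$ terms $\frac{h_{n_2-k}(x_2)\psi_k(x_1|n_1)}{\sqrt{2\pi}(n_2-k)!}$. The virtual terms supply the $k\le n_1$ ones, where $\widetilde{\psi}_k=\psi_k$. Together these give the third line, a sum over $k\in[[m+1,n_2]]$ with untilded $\psi_k$, and $-\varphi^{(n_1,n_2)}$ appears as the first line. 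The case $n_1\ge n_2$ is consistent, since then both $\widetilde{\varphi}^{(n_1,n_2)}$ and $\varphi^{(n_1,n_2)}$ vanish.

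The main obstacle is the bookkeeping in the expansion identity: matching the indices and factorials, and fixing the convention for $\psi_k(\cdot|m)$ at level $m$ used in the fourth line. Convergence is a further concern, because $\varphi^{(a,b)}$ is not in $L^2(e^{-y^2/2})$-friendly form in $x$. I would sidestep this by proving the identity for fixed $x$ as a polynomial-times-indicator expansion in $y$, checked pairwise against $h_je^{-y^2/2}$.
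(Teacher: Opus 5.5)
Your proposal is correct and follows essentially the same route as the paper. You apply \cref{t:em} with the tilde functions and the identity Gram matrix, substitute $\widetilde{\psi}_k$, and then undo the tildes via the Hermite-expansion identity $\varphi^{(a,b)}=\widetilde{\varphi}^{(a,b)}+\sum_{k=a+1}^{b}\frac{h_{b-k}(y)\,\psi_k(x|a)}{\sqrt{2\pi}(b-k)!}$ for $a<b$, applied with $(a,b)=(n_1,n_2)$ and $(m,n_2)$; your integration-by-parts check of this identity gives more detail than the paper, and the only slip is that the virtual factor in \cref{t:em} is $(\widetilde{\varphi}\star\widetilde{\varphi}^{(k,n_2)})(\text{virt},x_2)$, not $\widetilde{\varphi}^{(k-1,n_2)}(\text{virt},x_2)$ in the indexing of \cref{l:allfunctions}(2), although the value $h_{n_2-k}(x_2)/(\sqrt{2\pi}(n_2-k)!)$ you actually use is the right one.
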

\begin{remark}
    From the expression in \eqref{e:kernel2} it is easy to see that when $m=0$, this reduces to the standard rising $\textsc{GUE}$ eigenvalue process kernel studied by Johansson and Nordenstam \cite{JN06}. All terms vanish except for the first and third, which are exactly the terms which appear in their expression.
\end{remark}
\begin{proof}[Proof of \cref{l:kernelsum}]
\cref{t:em}, along with the fact that the Gram matrix is the identity (\cref{l:gid}) implies that the determinantal kernel of the measure \cref{e:meas} is 
\begin{align}\label{e:kernel}
 -\widetilde{\varphi}^{(n_{1},n_{2})}(x_{1},x_{2}) + \sum_{k=1}^{m}\upsilon_{k}(x_{2}|n_{2})\widetilde{\psi}_{k}(x_{1}|n_{1})   + \sum_{k=m+1}^{n_{2}}(\widetilde{\varphi}\star\widetilde{\varphi}^{(k,n_{2})})(\text{virt},x_{2})\widetilde{\psi}_{k}(x_{1}|n_{1}).
\end{align} 

 Substituting in the expression for  $\widetilde{\psi}_{k}(x_{1}|n_{1})$ in the first sum and noting that $\widetilde{\psi}_{k}(x_{1}|n_{1})=0$ when $k>n_{1}$ and $\widetilde{\psi}_{k}(x_{1}|n_{1})=\psi_{k}(x_{1}|n_{1})$ when $k\leq n_{1}$, we may write
\begin{align*}
& -\widetilde{\varphi}^{(n_{1},n_{2})}(x_{1},x_{2}) + \sum_{k=1}^{m}\upsilon_{k}(x_{2}|n_{2})h_{n_{1}-k}(x_{1})e^{-x_{1}^{2}/2}   + \sum_{k=m+1}^{\min\{n_{1},n_{2}\}}\frac{h_{n_{2}-k}(x_{2})\psi_{k}(x_{1}|n_{1})}{\sqrt{2\pi}(n_{2}-k)!}.
\end{align*}
Following the approach of \cite{FN09}, we will demonstrate that this kernel, in terms of the interlacing function, $\widetilde{\varphi}$ can be expressed in terms of the original interlacing function $\varphi$. By taking the Hermite expansion of $\varphi^{(n_{1},n_{2})}(x_{1},x_{2})$, we find
\begin{align*}
    -\widetilde{\varphi}^{(n_{1},n_{2})}(x_{1},x_{2}) & = -\varphi^{(n_{1},n_{2})}(x_{1},x_{2}) + \mathbf{1}_{n_{1}<n_{2}}\sum_{k=n_{1}+1}^{n_{2}}\frac{h_{n_{2}-k}(x_{2})\psi_{k}(x_{1}|n_{1})}{\sqrt{2\pi}(n_{2}-k)!}.
\end{align*}
We plug this into the definition of $\upsilon_{\ell}(x_{2}|n_{2})$ function to obtain
\begin{align*}
    \upsilon_{\ell}(x_{2}|n_{2}) & = \upsilon_{\ell}(y|m)\star\varphi^{(m,n_{2})}(y,x_{2}) - \upsilon_{\ell}(y|m)\star\sum_{k=m+1}^{n_{2}}\frac{h_{n_{2}-k}(x_{2})\psi_{k}(y|m)}{\sqrt{2\pi}(n_{2}-k)!}.
\end{align*}
The full expression for the kernel \eqref{e:kernel} may therefore be written as
\begin{align*} 
\begin{split}
    K_{x^{(m)}}^{\textsc{GUE}}(n_{1},x_{1};n_{2},x_{2}) & 
    = -\varphi^{(n_{1},n_{2})}(x_{1},x_{2})  
    \\ & + \sum_{\ell=1}^{m}\left(\upsilon_{\ell}(y|m)\star\varphi^{(m,n_{2})}(y,x_{2})\right)h_{n_{1}-\ell}(x_{1})e^{-x_{1}^{2}/2}  
    \\ & + \sum_{k=m+1}^{n_{2}}\frac{h_{n_{2}-k}(x_{2})\psi_{k}(x_{1}|n_{1})}{\sqrt{2\pi}(n_{2}-k)!}
    \\ & - \sum_{\ell=1}^{m}\upsilon_{\ell}(y|m)\star\sum_{k=m+1}^{n_{2}}\frac{h_{n_{2}-k}(x_{2})\psi_{k}(y|m)}{\sqrt{2\pi}(n_{2}-k)!}h_{n_{1}-\ell}(x_{1})e^{-x_{1}^{2}/2}.
    \end{split}
\end{align*}
\end{proof} The next two lemmas show that it is possible to re-sum the terms into a double contour integral expression which is amenable to asymptotic analysis. 
\begin{lemma}\label{l:termsolve1} For any $m\in\mathbb{N}$ and $n_{1},n_{2}>m$, $x_{1},x_{2}\in \mathbb{R}$, and $x^{(m)}\in\mathrm{Conf}^{0}_{m}(\mathbb{R})$,
    \begin{multline*}
        \sum_{\ell=1}^{m}\upsilon_{\ell}(x_{2}|n_{2})h_{n_{1}-\ell}(x_{1})e^{-x_{1}^{2}/2} 
        \\ = \frac{(n_{1}-m)!}{(n_{2}-m-1)!}\frac{(-1)^{n_{2}-n_{1}-1}}{(2\pi i)^{2}}\int_{c(x_{j}^{(m)}\leq x_{2})}dz \int_{c(x_{1})}dw\frac{(z-x_{2})^{n_{2}-m-1}}{(w-x_{1})^{n_{1}-m+1}} \frac{e^{z^{2}/2-w^{2}/2}}{w-z} \prod_{r=1}^{m}\frac{w-x_{r}^{(m)}}{z-x_{r}^{(m)}}.
    \end{multline*}
    In this expression, $c(x_{j}^{(m)}\leq x_{2})$ is the positively oriented contour containing all of the points in $x^{(m)}$ which are less than or equal to $x_{2}$, and $c(x_{1})$ is the positively oriented contour containing $x_{1}$, and only containing $x_{2}$ or $x_{j}^{(m)}$ for some $j\in[[m]]$ if they are equal to $x_{1}$. 
\end{lemma}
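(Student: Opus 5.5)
We plan to compute the sum directly from the explicit formula for $\upsilon_{\ell}(x_{2}|n_{2})$ in \cref{l:allfunctions}(5), and then collapse the sum over $\ell$ using the inverse-Hermite structure. Write $\upsilon_\ell(x_2|n_2)=\sum_{x_j^{(m)}\le x_2}\frac{1}{2\pi i}\oint_{c(x_j^{(m)})}dz\,\frac{(x_2-z)^{n_2-m-1}e^{z^2/2}}{(n_2-m-1)!\prod_r(z-x_r^{(m)})}\,[V^{-1}R^{-1}]_{j\ell}$. The sum over $j$ with $x_j^{(m)}\le x_2$ of residues at the individual points merges into a single contour $c(x_j^{(m)}\le x_2)$, provided the $j$-dependence of $[V^{-1}R^{-1}]_{j\ell}$ (through $e_{r-1}(x^{(m)}\setminus x_j^{(m)})$) is replaced by an analytic function of $z$ agreeing with it at $z=x_j^{(m)}$. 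For this we use the identity $\sum_{r}(-1)^{r-1}e_{r-1}(x^{(m)}\setminus x_j^{(m)})\,w^{m-r}=\prod_{s\ne j}(w-x_s^{(m)})=\prod_{s}(w-x_s^{(m)})/(w-x_j^{(m)})$. This converts the entries of $V^{-1}$ into the coefficients of the polynomial $\prod_s(w-x_s^{(m)})/(w-z)$ evaluated at $z=x_j^{(m)}$, which is legitimate inside the $z$-residue.

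Next we handle the sum over $\ell$ together with $R^{-1}$ and $h_{n_1-\ell}(x_1)$. Rather than work with $R^{-1}$ entrywise, we use the meaning of $h(x^{(m)})^{-1}=V^{-1}R^{-1}$: it maps Hermite-basis coefficients to evaluations. Concretely, $\sum_\ell [h^{-1}]_{j\ell}\,h_{m-\ell}(u)$ is the Lagrange basis polynomial at $x_j^{(m)}$. Since our $h_{n_1-\ell}(x_1)$ has index shifted by $n_1-m$, we represent it via \eqref{e:hermite1}, or more conveniently through the generating-function identity $h_{n_1-\ell}(x_1)e^{-x_1^2/2}=\frac{(n_1-\ell)!}{2\pi i}\oint \frac{e^{-w^2/2}}{(w-x_1)^{n_1-\ell+1}}dw$ (Rodrigues' formula \eqref{e:hermite} via Cauchy's formula for derivatives, with a sign $(-1)^{n_1-\ell}$). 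This puts the $\ell$-dependence into the factor $(n_1-\ell)!/(w-x_1)^{n_1-\ell+1}$. Combining it with the $R^{-1}$ entries $\frac{(m-k)!2^{-(j-k)/2}}{((j-k)/2)!(m-j)!}$, we expect the Hermite-to-monomial change of basis to be absorbed. The factor $e^{-w^2/2}$ already accounts for the Gaussian shift, so the net effect of $\sum_\ell$ is to produce monomials $\frac{(n_1-m)!}{(w-x_1)^{n_1-m+1}}(w-x_1)^{m-k}$-type terms. These reassemble, via the first identity, into $\frac{(n_1-m)!}{(w-x_1)^{n_1-m+1}}\prod_s(w-x_s^{(m)})/(w-z)$.

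Finally, we collect the factors $e^{z^2/2-w^2/2}$, $(x_2-z)^{n_2-m-1}=(-1)^{n_2-m-1}(z-x_2)^{n_2-m-1}$, and the sign from Rodrigues' formula, which give $(-1)^{n_2-n_1-1}$ up to the $-1/(2\pi i)^2$ normalization. The $w$-contour must be small around $x_1$ and must not enclose the $z$-contour, so that $1/(w-z)$ contributes no extra residue. We check that the only $w$-singularity inside is at $x_1$ (the product $\prod(w-x_r)$ is polynomial), which justifies the stated choice of $c(x_1)$.

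The main obstacle is the second step: showing rigorously that $\sum_\ell R^{-1}$-weighted Hermite values with shifted index $n_1-\ell$ collapse to the clean kernel $\frac{(n_1-m)!}{(w-x_1)^{n_1-m+1}}\frac{1}{w-z}\prod(w-x_s)$. We would first verify it by checking that both sides, as polynomials in the auxiliary variable, agree. We would do this by testing against $h_{m-j}$, using $G=I$ from \cref{l:gid} as a consistency check, and the translation formula \eqref{e:hermitesumexpand} to move between $h_{n_1-\ell}(x_1)$ and expansions around $w$.
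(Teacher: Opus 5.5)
Your steps 1 and 3 are sound. Merging the residues at $x_j^{(m)}\le x_2$ into $c(x_j^{(m)}\le x_2)$ via $\prod_{r\neq j}(w-x_r^{(m)})=\prod_r(w-x_r^{(m)})/(w-z)$ at $z=x_j^{(m)}$ is exactly how the paper finishes; do it after the $w$-integral is formed, with $c(x_1)$ disjoint from the $z$-contour. The sign count $(-1)^{n_1-m}(-1)^{n_2-m-1}=(-1)^{n_2-n_1-1}$ is also right, with prefactor $1/(2\pi i)^2$ and no extra minus.

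Step 2, however, carries the entire content of the lemma, and it has a genuine gap: the mechanism you describe fails. Suppose you insert $h_{n_1-\ell}(x_1)e^{-x_1^2/2}=(-1)^{n_1-\ell}\frac{(n_1-\ell)!}{2\pi i}\oint_{c(x_1)}\frac{e^{-w^2/2}}{(w-x_1)^{n_1-\ell+1}}dw$ term by term and sum against $[h(x^{(m)})^{-1}]_{j\ell}$. The $\ell=1$ term has a pole of order $n_1$ at $w=x_1$, with coefficient $[h(x^{(m)})^{-1}]_{j1}=\prod_{r\neq j}(x_j^{(m)}-x_r^{(m)})^{-1}\neq0$ (the leading coefficient of the Lagrange polynomial). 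No term with $\ell\ge2$ can cancel it. So for $m\ge2$ the summed integrand is never $\frac{(n_1-m)!}{(w-x_1)^{n_1-m+1}}$ times a polynomial in $w$. The two expressions agree only after integrations by parts in $w$, which your plan does not supply. Separately, the elementary-symmetric identity reassembles powers of $w$, not of $w-x_1$. Your fallback of testing against $h_{m-j}$ and using $G=I$ from \cref{l:gid} does not address any of this.

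The missing idea is to strip off the index shift \emph{before} summing over $\ell$. Since $n_1>m\ge\ell$, Rodrigues' formula \eqref{e:hermite} gives $h_{n_1-\ell}(x)e^{-x^2/2}=(-1)^{n_1-m}\partial_x^{\,n_1-m}\big[h_{m-\ell}(x)e^{-x^2/2}\big]$. Your own Lagrange observation then yields
\[
\sum_{\ell=1}^{m}[h(x^{(m)})^{-1}]_{j\ell}\,h_{n_1-\ell}(x_1)e^{-x_1^2/2}=(-1)^{n_1-m}\partial_{x_1}^{\,n_1-m}\Big[e^{-x_1^2/2}\prod_{r\neq j}\frac{x_1-x_r^{(m)}}{x_j^{(m)}-x_r^{(m)}}\Big].
\]
Only then apply Cauchy's formula for the $(n_1-m)$-th derivative on $c(x_1)$. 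This produces $\frac{(n_1-m)!}{2\pi i}\oint_{c(x_1)}\frac{e^{-w^2/2}\prod_{r\neq j}(w-x_r^{(m)})}{(w-x_1)^{n_1-m+1}}dw$ times the prefactors.

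This is precisely the identity the paper reaches by a longer route: $(**)$ in \eqref{e:hermite2}, the sum formula \eqref{e:hermitesum}, $(*)$, and the double Fourier transform with \eqref{e:ftprop}. With it inserted, your steps 1 and 3 complete the proof. You would also bypass the paper's product-rule expansion, its re-application of \eqref{e:hermite1} around $0$, and its truncation modulo $w^{n_1-m+1}$.
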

\begin{lemma}\label{l:termsolve2} For any $d\in\mathbb{R}$ less than $x_{1},x_{2}$, and all elements of $x^{(m)}$, and for any $m\in\mathbb{N}$ and $n_{1},n_{2}>m$, $x_{1},x_{2}\in\mathbb{R},$ and $x^{(m)}\in\mathrm{Conf}^{0}_{m}(\mathbb{R})$,
    \begin{multline*}
        \sum_{k=m+1}^{n_{2}}\frac{h_{n_{2}-k}(x_{2})\psi_{k}(x_{1}|n_{1})}{\sqrt{2\pi}(n_{2}-k)!}
     - \sum_{\ell=1}^{m}\upsilon_{\ell}(y|m)\star\sum_{k=m+1}^{n_{2}}\frac{h_{n_{2}-k}(x_{2})\psi_{k}(y|m)}{\sqrt{2\pi}(n_{2}-k)!}h_{n_{1}-\ell}(x_{1})e^{-x_{1}^{2}/2}
    \\  =\frac{(-1)^{n_{1}-n_{2}-1}}{(2\pi i)^{2}}\frac{(n_{1}-m)!}{(n_{2}-m-1)!} \int\limits_{d-i\infty}^{d+i\infty} d z\int_{c(x_{1})}dw \frac{(z-x_{2})^{n_{2}-m-1}}{(w-x_{1})^{n_{1}-m+1}}
    \frac{e^{z^{2}/2-w^{2}/2}}{w-z}\prod_{r=1}^{m}\frac{w-x_{r}^{(m)}}{z-x_{r}^{(m)}}, 
    \end{multline*} 
    where $c(x_{1})$ is the positively oriented closed contour containing $x_{1}$, and only containing $x_{2}$ or $x_{j}^{(m)}$ for $j\in [[m]]$ if they are equal to $x_{1}$, and $d<\inf_{w\in c(x_{1})}\Re(w)$.
\end{lemma}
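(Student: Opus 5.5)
We start from the left-hand side and turn each ingredient into a contour integral, then resum. The statement to prove is \cref{l:termsolve2}.

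\textbf{Step 1: the first sum.} By \cref{l:allfunctions}(3), $\psi_k(x_1|n_1)=h_{n_1-k}(x_1)e^{-x_1^2/2}$ when $k\le n_1$. For $k>n_1$ it is the repeated integral $\frac{1}{(k-n_1-1)!}\int_{x_1}^\infty (y-x_1)^{k-n_1-1}e^{-y^2/2}dy$. We write both uniformly as a single contour integral in $w$ around $x_1$. For $k\le n_1$ this follows from \eqref{e:hermite1} after the shift $t=x_1-w$:
\[
h_{n}(x)e^{-x^2/2}=\frac{n!}{2\pi i}\oint_{c(x)}\frac{e^{-w^2/2}}{(x-w)^{n+1}}\,dw\cdot(\pm1).
\]
For the $h_{n_2-k}(x_2)/(n_2-k)!$ factor we use a vertical line representation in $z$. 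This comes from \eqref{e:hermite2}, rotated to $z=d+it$:
\[
\frac{h_n(x_2)}{n!}\propto\frac{1}{2\pi i}\int_{d-i\infty}^{d+i\infty}(z-x_2)^n e^{z^2/2}\,\frac{dz}{?}.
\]
More precisely, $e^{-x_2^2/2}h_n(x_2)$ is, up to normalisation, $\int (z-x_2)^n e^{z^2/2 - \dots}$, and we fix the Gaussian shift so that the exponents become $e^{z^2/2-w^2/2}$.

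\textbf{Step 2: the second sum.} Here $\upsilon_\ell(\cdot|m)$ is given by residues at the $x^{(m)}_j$, using the Vandermonde-inverse formula in the form of \cref{l:allfunctions}(5). The $\ell$-sum of $[h(x^{(m)})^{-T}]_{\ell j}h_{n_1-\ell}(x_1)$ is a Lagrange-interpolation-type sum. We resum it by writing $h_{n_1-\ell}(x_1)e^{-x_1^2/2}$ as the $w$-contour integral from Step 1, so that the sum over $\ell$ produces the polynomial
\[
\prod_{r}(w-x_r^{(m)})\big/(w-x^{(m)}_j)\cdot(\ldots).
\]
This is the same mechanism that must drive \cref{l:termsolve1}, which we take as a template: there the $\upsilon$ side produces $\prod_r (z-x_r^{(m)})^{-1}$ with residues at the $x_j^{(m)}$, and the $h$ side produces $\prod_r (w-x_r^{(m)})$ with the factor $(w-x_1)^{-(n_1-m+1)}$.

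\textbf{Step 3: combining the two pieces.} After Steps 1 and 2 the second sum becomes a sum of residues in $z$ at the points $x_j^{(m)}$ of the same integrand
\[
\frac{(z-x_2)^{n_2-m-1}}{(w-x_1)^{n_1-m+1}}\frac{e^{z^2/2-w^2/2}}{w-z}\prod_r\frac{w-x_r^{(m)}}{z-x_r^{(m)}},
\]
paired against the $\psi_k(y|m)$ factors. The first sum becomes the integral of this integrand over a vertical line to the right of all these poles, with the $k$-sum $\sum_{k=m+1}^{n_2}$ collapsed by the geometric-series identity
\[
\sum_k \frac{(z-x_2)^{n_2-k}}{(w-x_1)^{n_1-k+1}}\,\cdots=\frac{\cdots}{w-z}.
\]
Subtracting the residues moves the $z$-line from the right of all $x_j^{(m)}$ to $\Re z=d$, to the left of everything, which yields the stated formula.

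\textbf{Main obstacle.} The main obstacle is Step 3. We must get the geometric sum to telescope into exactly $(z-x_2)^{n_2-m-1}/((w-x_1)^{n_1-m+1}(w-z))$ with the prefactor $(n_1-m)!/(n_2-m-1)!$; the boundary terms at $k=m+1$ and $k=n_2$ must combine with the $\prod_r$ factors. We also need to justify the contour deformation: the Gaussian factor $e^{z^2/2}$ decays along vertical lines, so the integrand is integrable there and shifting the line is legitimate, and the only poles crossed are at the $x_j^{(m)}$ (and $z=w$ is avoided since $d<\inf\Re w$). We would first verify the identity for $m=0$, where it must reduce to the Johansson--Nordenstam double integral, and then track the signs $(-1)^{n_1-n_2-1}$.
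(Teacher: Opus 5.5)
Your Step 2 is essentially right, provided you do the $\ell$-sum before passing to contours: $h_{n_1-\ell}(x_1)e^{-x_1^2/2}=(-1)^{n_1-m}\partial_{x_1}^{n_1-m}[h_{m-\ell}(x_1)e^{-x_1^2/2}]$, the $\ell$-sum against $[h(x^{(m)})^{-1}]_{j\ell}$ gives the Lagrange polynomial, and then Cauchy's formula. This matches what the paper obtains.

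\textbf{The gap: the $k$-sum in Steps 1 and 3.} It cannot be collapsed by a geometric series. The vertical-line formula you were unsure of does exist: $\frac{h_n(x_2)}{n!}=\frac{(-1)^n\sqrt{2\pi}}{n!}\cdot\frac{1}{2\pi i}\int_{d-i\infty}^{d+i\infty}(z-x_2)^n e^{z^2/2}\,dz$. But pair it with $h_{n_1-k}(x_1)e^{-x_1^2/2}=(-1)^{n_1-k}\frac{(n_1-k)!}{2\pi i}\oint_{c(x_1)}e^{-w^2/2}(w-x_1)^{-(n_1-k+1)}dw$ and each summand keeps a $k$-dependent factor $(n_1-k)!/(n_2-k)!$.

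\begin{itemize}
\item The sum is therefore not geometric. Even a pure geometric ratio $(w-x_1)/(z-x_2)$ would give $1/((w-x_1)-(z-x_2))$, not $1/(w-z)$.
\item Each summand is a function of $z$ times a function of $w$. So no pointwise identity can turn a finite sum of them into an integrand containing $1/(w-z)$.
\item For $k>n_1$, which occurs whenever $n_2>n_1$, $\psi_k(x_1|n_1)$ is a Gaussian tail integral. It has no representation $\oint_{c(x_1)}e^{-w^2/2}(w-x_1)^{k-n_1-1}dw$, since that integral is zero.
\end{itemize}

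The missing idea is to use \eqref{e:psikbig} to write, uniformly in $k$,
\[
\psi_k(x_1|n_1)=\int_0^\infty \frac{y^{k-m-1}}{(k-m-1)!}\,h_{n_1-m}(x_1+y)\,e^{-(x_1+y)^2/2}\,dy ,
\]
so that all $k$-dependence sits in the monomial. Then:
\begin{itemize}
\item \eqref{e:hermitesumexpand} collapses $\sum_{k=m+1}^{n_2}\frac{h_{n_2-k}(x_2)}{(n_2-k)!}\frac{y^{k-m-1}}{(k-m-1)!}$ to $\frac{h_{n_2-m-1}(x_2+y)}{(n_2-m-1)!}$. This is where the prefactor comes from.
\item \eqref{e:hermite1} turns $h_{n_1-m}(x_1+y)e^{-(x_1+y)^2/2}$ into a $w$-integral with the fixed power $(w-x_1)^{-(n_1-m+1)}$ and a factor $e^{-yw}$.
\item $1/(w-z)$ comes from the Laplace integral $\int_0^\infty e^{-(w-z)y}dy$, which converges because $\Re(w-z)>0$. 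That is exactly why $d<\inf_{w\in c(x_1)}\Re(w)$ is assumed, and it is the content of \cref{l:annoying} with $r=w$.
\end{itemize}

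\textbf{The contour bookkeeping in Step 3 is also wrong.} The second sum is not a sum of residues at the $x_j^{(m)}$. It involves $\psi_k(x_j^{(m)}|m)=\frac{1}{(k-m-1)!}\int_{x_j^{(m)}}^\infty (y-x_j^{(m)})^{k-m-1}e^{-y^2/2}dy$, which are Mills-ratio type quantities. For example, with $m=1$, $n_2=2$ the term is a multiple of $e^{a^2/2}\int_a^\infty e^{-y^2/2}dy$, where $a=x_1^{(1)}$. The residues of your integrand at $z=x_j^{(m)}$ are instead $e^{(x_j^{(m)})^2/2}$ times rational functions of the $x_r^{(m)}$.

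The correct move is \cref{l:annoying} again, now with $r=x_j^{(m)}$. It turns the $j$-th term into $\int_{d-i\infty}^{d+i\infty}\frac{(z-x_2)^{n_2-m-1}e^{z^2/2}}{z-x_j^{(m)}}dz$ on the same line $\Re z=d$. The two pieces then combine via the partial-fraction identity
\[
\frac{1}{z-w}\prod_{r=1}^{m}\frac{w-x_r^{(m)}}{z-x_r^{(m)}}=\frac{1}{z-w}-\sum_{j=1}^{m}\frac{\prod_{r\neq j}(w-x_r^{(m)})}{\prod_{r\neq j}(x_j^{(m)}-x_r^{(m)})}\cdot\frac{1}{z-x_j^{(m)}},
\]
with no contour deformation at all.

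Your proposed shift from a line to the right of all the $x_j^{(m)}$ down to $\Re z=d$ has a further problem. Whenever $x_1<\max_j x_j^{(m)}$ it sweeps across $c(x_1)$ and picks up a residue at $z=w$, a $\varphi^{(n_1,n_2)}$-type term that you do not account for. That shift and its cancellations belong to the proof of \cref{t:kernel}, not to this lemma.
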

The proofs of these lemmas appear in \cref{s:middleterm} and \cref{s:finalterm}. Before proceeding to those proofs, we will prove \cref{t:kernel}. 
\begin{proof}[Proof of \cref{t:kernel}] We sum the terms involved in the expression \cref{e:kernel2}, using the expressions from \cref{l:termsolve1} and \cref{l:termsolve2},
\begin{align*}
    & -\mathbf{1}_{n_{1}<n_{2}}\mathbf{1}_{x_{1}\leq x_{2}} \frac{(x_{2}-x_{1})^{n_{2}-n_{1}-1}}{(n_{2}-n_{1}-1)!} 
    \\ & + \frac{(n_{1}-m)!}{(n_{2}-m-1)!}\frac{(-1)^{n_{2}-n_{1}-1}}{(2\pi i)^{2}}\int_{c(x_{j}^{(m)}\leq x_{2})}dz \int_{c(x_{1})}dw\frac{(z-x_{2})^{n_{2}-m-1}}{(w-x_{1})^{n_{1}-m+1}} \frac{e^{z^{2}/2-w^{2}/2}}{w-z} \prod_{r=1}^{m}\frac{w-x_{r}^{(m)}}{z-x_{r}^{(m)}}
    \\ & + \frac{(n_{1}-m)!}{(n_{2}-m-1)!} \frac{(-1)^{n_{2}-n_{1}-1}}{(2\pi i)^{2}}\int\limits_{d-i\infty}^{d+i\infty} d z\int_{c(x_{1})}dw \frac{(z-x_{2})^{n_{2}-m-1}}{(w-x_{1})^{n_{1}-m+1}}
    \frac{e^{z^{2}/2-w^{2}/2}}{w-z}\prod_{r=1}^{m}\frac{w-x_{r}^{(m)}}{z-x_{r}^{(m)}},
\end{align*} 
where $d\in\mathbb{R}$ is less than $x_{1},x_{2}$ and all elements of $x^{(m)}$; $c(x_{j}^{(m)}\leq x_{2})$ is the positively oriented contour containing all of the points in $x^{(m)}$ which are less than or equal to $x_{2}$; and $c(x_{1})$ is the positively oriented contour containing $x_{1}$, and only containing $x_{2}$ or $x_{j}^{(m)}$ for $j\in[[m]]$ if they are equal to $x_{1}$.  
We move the contour at $d$ over to the right until it sits at or slightly to the right of $x_{2}$ (without crossing any additional points in $x^{(m)}$ after it crosses $x_{2}$), which produces factors which exactly cancel the first two terms. This is because the deformation crosses all poles for which $x_{j}^{(m)}\leq x_{2}$, which cancel with the second term, and also crosses the $w$ contour if $x_{1}\leq x_{2}$, which cancels with the first term.
What remains is
\begin{align*}
    \frac{(n_{1}-m)!}{(n_{2}-m-1)!}
    \frac{(-1)^{n_{2}-n_{1}-1}}{(2\pi i)^{2}}
    \int\limits_{x_{2}+\delta-i\infty}^{x_{2}+\delta +i\infty} d z\oint\limits_{c(x_{1})}dw
    \frac{(z-x_{2})^{n_{2}-m-1}}{(w-x_{1})^{n_{1}-m+1}}
    \frac{e^{z^{2}/2-w^{2}/2}}{w-z}
    \prod_{r=1}^{m}
    \frac{w-x_{r}^{(m)}}{z-x_{r}^{(m)}},
\end{align*} for $\delta>0$ such that if $x_{1}>x_{2},$ then $x_{1}>x_{2}+\delta$, and if $x_{j}^{(m)}>x_{2}$ for any $j\in[[m]]$, then $x_{j}^{(m)}>x_{2}+\delta$. 
Finally, we make a gauge transformation to remove the factor of $(-1)^{n_{2}-n_{1}}$ (this function is non-vanishing on the state space). 
\end{proof}

\subsection{Integral Expression for the Second Term}\label{s:middleterm} 

\begin{proof}[Proof of \cref{l:termsolve1}]
We begin by plugging in the expression for $\upsilon_{\ell}(x_{2}|n_{2})$ from \cref{l:allfunctions},
\begin{align}\label{e:first}\begin{split}
    \sum_{\ell=1}^{m}\upsilon_{\ell}(x_{2}|n_{2})h_{n_{1}-\ell}(x_{1})e^{-x_{1}^{2}/2}
     & = \sum_{\ell=1}^{m} h_{n_{1}-\ell}(x_{1})e^{-x_{1}^{2}/2}\sum_{x_{j}^{(m)}\leq x_{2}} \frac{1}{2\pi i}\int_{c(x_{j}^{(m)})}dz \frac{(x_{2}-z)^{n_{2}-m-1}e^{z^{2}/2}}{(n_{2}-m-1)!\prod_{r=1}^{m}(z-x_{r}^{(m)})} 
     \\ & \cdot \sum_{k=1}^{m} (-1)^{k-1}e_{k-1}(x^{(m)}\setminus x_{j}^{(m)}) \frac{(m-k)!2^{-\frac{\ell-k}{2}}}{((\ell-k)/2)!(m-\ell)!}\mathbf{1}_{\ell-k\text{ is even}, \ell\geq k} ,  
\end{split}
\end{align} where $c(x_{j}^{(m)})$ is the positively oriented contour containing $x_{j}^{(m)}$ and no other points from $x^{(m)}$. 
We apply an integral expression for the Hermite polynomial, specifically identity $(**)$ from \eqref{e:hermite2}, to the factor of $h_{n_{1}-\ell}(x_{1})$ which appears in \eqref{e:first}, after which that expression becomes
\begin{align*}
    & = \sum_{x_{j}^{(m)}\leq x_{2}} \frac{1}{2\pi i}\int_{c(x_{j}^{(m)})}dz \frac{(x_{2}-z)^{n_{2}-m-1}e^{z^{2}/2}}{(n_{2}-m-1)!\prod_{r=1}^{m}(z-x_{r}^{(m)})}  \frac{1}{\sqrt{2\pi}}\int_{-\infty}^{\infty}dt \cdot \exp{\left(-\frac{t^{2} }{2}- ix_{1}t \right)}
     \\ & \cdot t^{n_{1}-m}\sum_{k=1}^{m} (-1)^{k-1}e_{k-1}(x^{(m)}\setminus x_{j}^{(m)}) i^{n_{1}-k}\sum_{\ell=0}^{\lfloor \frac{m-k}{2}\rfloor}t^{m-k-2\ell}\frac{(m-k)!(-1)^{\ell}2^{-\ell}}{\ell!(m-2\ell-k)!}.
\end{align*}
The final sum over $\ell$ is equal to $h_{m-k}(t)$ by the Hermite sum formula \eqref{e:hermitesum}, and thus our expression \eqref{e:first} is equal to
\begin{align}\label{e:second}\begin{split}
    & = \sum_{x_{j}^{(m)}\leq x_{2}} \frac{1}{2\pi i}\int_{c(x_{j}^{(m)})}dz \frac{(x_{2}-z)^{n_{2}-m-1}e^{z^{2}/2}}{(n_{2}-m-1)!\prod_{r=1}^{m}(z-x_{r}^{(m)})}  \frac{1}{\sqrt{2\pi}}\int_{-\infty}^{\infty}dt \cdot \exp{\left(-\frac{t^{2} }{2}- ix_{1}t \right)}
     \\ & \cdot t^{n_{1}-m}\sum_{k=1}^{m} (-1)^{k-1}e_{k-1}(x^{(m)}\setminus x_{j}^{(m)}) i^{n_{1}-k}h_{m-k}(t).
     \end{split}
\end{align}
We apply another integral expression for the Hermite polynomial, in this case identity $(*)$ from \eqref{e:hermite2}, to the factor of $h_{m-k}(t)$, after which 
the expression \eqref{e:second} becomes
\begin{align*}
    & = \sum_{x_{j}^{(m)}\leq x_{2}} \frac{1}{2\pi i}\int_{c(x_{j}^{(m)})}dz \frac{(x_{2}-z)^{n_{2}-m-1}e^{z^{2}/2}}{(n_{2}-m-1)!\prod_{r=1}^{m}(z-x_{r}^{(m)})}  \frac{1}{2\pi}\int_{-\infty}^{\infty}dt \int_{-\infty}^{\infty}dw \cdot \exp{\left(- ix_{1}t -\frac{w^{2} }{2}+ itw \right)}
     \\ & \cdot t^{n_{1}-m}i^{n_{1}-m}\sum_{k=1}^{m} (-1)^{k-1}e_{k-1}(x^{(m)}\setminus x_{j}^{(m)}) w^{m-k}.
\end{align*}
We can then apply the following identity for elementary symmetric polynomials to the sum which appears in the final line,
\begin{align*}
    \sum_{k=1}^{m} (-1)^{k-1}e_{k-1}(x^{(m)}\setminus x_{j}^{(m)}) w^{m-k} & = \prod_{r\neq j}(w-x_{r}^{(m)}).
\end{align*}
After this transformation, our expression is
\begin{align}\label{e:turtle}\begin{split}
    & = \sum_{x_{j}^{(m)}\leq x_{2}} \frac{1}{2\pi i}\int_{c(x_{j}^{(m)})}dz \frac{(x_{2}-z)^{n_{2}-m-1}e^{z^{2}/2}}{(n_{2}-m-1)!\prod_{r=1}^{m}(z-x_{r}^{(m)})}  i^{n_{1}-m}
     \\ & \cdot \frac{1}{2\pi}\int_{-\infty}^{\infty}dt \int_{-\infty}^{\infty}dw \cdot \exp{\left( -\frac{w^{2} }{2}+ itw - ix_{1}t\right)}t^{n_{1}-m}\prod_{r\neq j}(w-x_{r}^{(m)}).\end{split}
\end{align}
The last line of the equation above is equal to
\begin{align*}
   \mathcal{F}\left\{t^{n_{1}-m}\mathcal{F}\left\{e^{-w^{2}/2} \prod_{r\neq j}(w-x_{r}^{(m)})\right\}(t)\right\}(-x_{1}).
\end{align*} 
Using \eqref{e:ftprop}, applying the Fourier transform twice, and then applying the product rule, we can rewrite this expression as 
\begin{align*}
   & (-i)^{n_{1}-m}\left(\frac{d}{d(-x_{1})}\right)^{n_{1}-m}\mathcal{F}\left\{\mathcal{F}\left\{e^{-w^{2}/2} \prod_{r\neq j}(w-x_{r}^{(m)})\right\}(t)\right\}(-x_{1})
   \\ & = i^{n_{1}-m}\left(\frac{d}{dx_{1}}\right)^{n_{1}-m}e^{-x_{1}^{2}/2} \prod_{r\neq j}(x_{1}-x_{r}^{(m)})
   \\ & = i^{n_{1}-m}\sum_{k=0}^{n_{1}-m} {n_{1}-m \choose k} k! e_{m-1-k}(x_{1}-x^{(m)}\setminus x_{1}-x_{j}^{(m)}) (-1)^{n_{1}-m-k} h_{n_{1}-m-k}(x_{1})e^{-x_{1}^{2}/2}.
\end{align*}
Substituting this back into \eqref{e:turtle} results in
\begin{align*}
    & = \sum_{x_{j}^{(m)}\leq x_{2}} \frac{1}{2\pi i}\int_{c(x_{j}^{(m)})}dz \frac{(x_{2}-z)^{n_{2}-m-1}e^{z^{2}/2}}{(n_{2}-m-1)!\prod_{r=1}^{m}(z-x_{r}^{(m)})} 
     \\ & \cdot \sum_{k=0}^{n_{1}-m} {n_{1}-m \choose k}k! e_{m-1-k}(x_{1}-x^{(m)}\setminus x_{1}-x_{j}^{(m)}) (-1)^{-k} h_{n_{1}-m-k}(x_{1})e^{-x_{1}^{2}/2}.
\end{align*}
Applying the contour integral expression for Hermite polynomials, identity \eqref{e:hermite1}, to the factor of $h_{n_{1}-m-k}(x_{1})$ results in the expression
\begin{align}\label{e:chicken}
    & = \frac{(n_{1}-m)!}{(n_{2}-m-1)!}\sum_{x_{j}^{(m)}\leq x_{2}} \frac{1}{(2\pi i)^{2}}\int_{c(x_{j}^{(m)})}dz \frac{(x_{2}-z)^{n_{2}-m-1}e^{z^{2}/2}}{\prod_{r=1}^{m}(z-x_{r}^{(m)})} \int_{c(0)}dw \frac{e^{-(w-x_{1})^{2}/2}}{w^{n_{1}-m+1}}
     \\ & \cdot (-1)^{m-1}\sum_{k=0}^{n_{1}-m} e_{m-1-k}(x_{1}-x^{(m)}\setminus x_{1}-x_{j}^{(m)}) (-1)^{m-1-k} w^{k}.
\end{align}
We consider the following expression for the product $\prod_{r\neq j}(w-x_{1}+x_{r}^{(m)})$
\begin{align*}
    \prod_{r\neq j}(w-x_{1}+x_{r}^{(m)}) & = \sum_{k=0}^{\infty} e_{m-1-k}(x_{1}-x^{(m)}\setminus x_{1}-x_{j}^{(m)}) (-1)^{m-1-k} w^{k},
\end{align*} from which expression it is clear that the difference of the sum in the last line of \eqref{e:chicken} and the $\prod_{r\neq j}(w-x_{1}+x_{r}^{(m)})$ is divisible by $w^{n_{1}-m+1}$ and therefore, inside the contour integral, the terms contained in that difference vanish. Consequently, making that substitution and pulling the factor of $(-1)^{m-1}$ into the product, the expression above reduces to
\begin{align*}
    & = \frac{(n_{1}-m)!}{(n_{2}-m-1)!}\sum_{x_{j}^{(m)}\leq x_{2}} \frac{1}{(2\pi i)^{2}}\int_{c(x_{j}^{(m)})}dz \frac{(x_{2}-z)^{n_{2}-m-1}e^{z^{2}/2}}{\prod_{r=1}^{m}(z-x_{r}^{(m)})} \int_{c(0)}dw \frac{e^{-(w-x_{1})^{2}/2}}{w^{n_{1}-m+1}}\prod_{r\neq j}(x_{1}-w-x_{r}^{(m)}) .
\end{align*}
We make the change of variables $\widetilde{w}=x_{1}-w$ and relabel $\widetilde{w}$ as $w$ to obtain 
\begin{align*}
    & = \frac{(n_{1}-m)!}{(n_{2}-m-1)!}\sum_{x_{j}^{(m)}\leq x_{2}} \frac{(-1)^{n_{1}+n_{2}-1}}{(2\pi i)^{2}}\int_{c(x_{j}^{(m)})}dz \frac{(z-x_{2})^{n_{2}-m-1}e^{z^{2}/2}}{\prod_{r=1}^{m}(z-x_{r}^{(m)})} \int_{c(x_{1})}dw \frac{e^{-w^{2}/2}\prod_{r\neq j}(w-x_{r}^{(m)}) }{(w-x_{1})^{n_{1}-m+1}}.
\end{align*}
Furthermore, when the contours do not intersect, and when the contour of integration for $w$ does not contain $x_{j}^{(m)}$, we see that 
\begin{multline*}
    \int_{c(x_{j}^{(m)})}dz \frac{(z-x_{2})^{n_{2}-m-1}e^{z^{2}/2}}{\prod_{r=1}^{m}(z-x_{r}^{(m)})} \int_{c(x_{1})}dw \frac{e^{-w^{2}/2}\prod_{r\neq j}(w-x_{r}^{(m)}) }{(w-x_{1})^{n_{1}-m+1}} 
    \\ =  \int_{c(x_{1})}dw \int_{c(x_{j}^{(m)})}dz \frac{(z-x_{2})^{n_{2}-m-1}e^{z^{2}/2}}{\prod_{r=1}^{m}(z-x_{r}^{(m)})}\frac{e^{-w^{2}/2}\prod_{r=1}^{m}(w-x_{r}^{(m)}) }{(w-z)(w-x_{1})^{n_{1}-m+1}}.
\end{multline*}Thus, the entire expression is equal to
\begin{align*}
    & = \frac{(n_{1}-m)!}{(n_{2}-m-1)!}\frac{(-1)^{n_{2}-n_{1}-1}}{(2\pi i)^{2}}\int_{c(x_{j}^{(m)}\leq x_{2})}dz \int_{c(x_{1})}dw\frac{(z-x_{2})^{n_{2}-m-1}}{(w-x_{1})^{n_{1}-m+1}} \frac{e^{z^{2}/2-w^{2}/2}}{w-z} \prod_{r=1}^{m}\frac{w-x_{r}^{(m)}}{z-x_{r}^{(m)}}.
\end{align*} 
\end{proof}
\subsection{Integral Expression for the Final Two Terms}\label{s:finalterm}
Before we begin the proof of \cref{l:termsolve2}, we will demonstrate a nontrivial identity which will appear multiple times in that proof. 
\begin{lemma}\label{l:annoying} For $d,x_{2}\in\mathbb{R}$, $r\in\mathbb{C}$ such that $\Re(r)>d$ and for $m,n_{2}\in\mathbb{N}$ such that $n_{2}>m$, 
    \begin{multline}\label{e:tada}
        \frac{1}{2\pi i}\int\limits_{d-i\infty}^{d+i\infty} d z
    \frac{(z-x_{2})^{n_{2}-m-1}}{z-r}
    e^{z^{2}/2} \\ = (n_{2}-m-1)!\frac{(-1)^{n_{2}-m}}{\sqrt{2\pi}}\int_{-\infty}^{\infty} dy \mathbf{1}_{y>0}e^{-ry-y^{2}/2}\sum_{\ell=m+1}^{n_{2}} \frac{h_{n_{2}-\ell}(x_{2})}{(n_{2}-\ell)!}\cdot \frac{y^{\ell-m-1}}{(\ell-m-1)!}.
    \end{multline}
\end{lemma}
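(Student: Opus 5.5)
The plan is to turn the factor $(z-r)^{-1}$ into a Laplace integral in an auxiliary variable $y>0$, exchange the order of integration, and then recognize the remaining $z$-integral as a Hermite polynomial via \eqref{e:hermite2} and \eqref{e:hermitesumexpand}. Write $N:=n_{2}-m\geq 1$. On the contour $z=d+it$ we have $\Re(z-r)=d-\Re(r)<0$, so
\begin{align*}
\frac{1}{z-r}=-\int_{0}^{\infty}e^{(z-r)y}\,dy .
\end{align*}
Substituting this into the left-hand side of \eqref{e:tada} gives a double integral over $(t,y)\in\mathbb{R}\times(0,\infty)$. The integrand is bounded in absolute value by
\begin{align*}
|d+it-x_{2}|^{N-1}e^{(d^{2}-t^{2})/2}e^{(d-\Re r)y},
\end{align*}
which is integrable on this domain, so Fubini applies. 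The left-hand side therefore equals
\begin{align*}
-\int_{0}^{\infty}dy\, e^{-ry}\,I(y),\qquad I(y):=\frac{1}{2\pi i}\int_{d-i\infty}^{d+i\infty}(z-x_{2})^{N-1}e^{z^{2}/2+zy}\,dz .
\end{align*}

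The next step is to evaluate $I(y)$. The integrand is entire, and on vertical lines $\Re z=s$ with $s$ bounded it decays like $e^{-(\Im z)^{2}/2}$. Hence the horizontal connecting segments vanish in the limit, and the contour can be shifted from $\Re z=d$ to $\Re z=0$. Expand $(z-x_{2})^{N-1}$ binomially and compute each monomial with $z=it$:
\begin{align*}
\frac{1}{2\pi i}\int_{i\mathbb{R}}z^{k}e^{z^{2}/2+zy}dz=\frac{i^{k}}{2\pi}\int_{-\infty}^{\infty}t^{k}e^{-t^{2}/2+ity}dt .
\end{align*}
By identity $(*)$ in \eqref{e:hermite2}, the $t$-integral equals $\sqrt{2\pi}\,i^{k}e^{-y^{2}/2}h_{k}(y)$. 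The monomial integral is therefore $(-1)^{k}e^{-y^{2}/2}h_{k}(y)/\sqrt{2\pi}=e^{-y^{2}/2}h_{k}(-y)/\sqrt{2\pi}$, using the parity $h_{k}(-y)=(-1)^{k}h_{k}(y)$. Summing over $k$ with weights $\binom{N-1}{k}(-x_{2})^{N-1-k}$ and applying \eqref{e:hermitesumexpand} yields
\begin{align*}
I(y)=\frac{e^{-y^{2}/2}}{\sqrt{2\pi}}h_{N-1}(-y-x_{2})=\frac{(-1)^{N-1}e^{-y^{2}/2}}{\sqrt{2\pi}}h_{N-1}(x_{2}+y).
\end{align*}

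Finally, expand once more with \eqref{e:hermitesumexpand}, this time in the other order:
\begin{align*}
h_{N-1}(x_{2}+y)=\sum_{j=0}^{N-1}\binom{N-1}{j}y^{N-1-j}h_{j}(x_{2}).
\end{align*}
Reindex by $j=n_{2}-\ell$ with $\ell\in[[m+1,n_{2}]]$, so that $N-1-j=\ell-m-1$ and $\binom{N-1}{j}=(n_{2}-m-1)!/\bigl((n_{2}-\ell)!(\ell-m-1)!\bigr)$. The overall sign is $-(-1)^{N-1}=(-1)^{n_{2}-m}$. Restricting $y>0$ by the indicator $\mathbf{1}_{y>0}$ then gives exactly the right-hand side of \eqref{e:tada}.

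I expect the main obstacle to be sign and normalization bookkeeping rather than anything conceptual: choosing between $(*)$ and $(**)$ in \eqref{e:hermite2}, the factor $i^{k}$ arising from $dz=i\,dt$, and the parity of $h_{k}$. To make sure the constants and signs are right, I would check the case $N=1$ directly. There the $z$-integral is a Gaussian, and the right-hand side reduces to $-\frac{1}{\sqrt{2\pi}}\int_{0}^{\infty}e^{-ry-y^{2}/2}dy$.
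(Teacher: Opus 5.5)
Your proof is correct and is essentially the paper's argument. The paper computes $\mathcal{F}\{(iu+d-r)^{-1}\}(a)=-\sqrt{2\pi}\,e^{a(r-d)}\mathbf{1}_{a<0}$ and then applies the convolution theorem, which is exactly your Laplace representation $\frac{1}{z-r}=-\int_{0}^{\infty}e^{(z-r)y}\,dy$ followed by Fubini; both arguments then evaluate the remaining Gaussian integral as a multiple of $e^{-y^{2}/2}h_{n_{2}-m-1}(x_{2}+y)$ and finish with \eqref{e:hermitesumexpand}. Your version justifies the interchange directly by absolute integrability, whereas the paper's route implicitly relies on the Fourier transform of the non-integrable function $u\mapsto(iu+d-r)^{-1}$.
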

\begin{proof}
    For $\Re(r)>d$, we obtain a general form for the expression 
  \begin{align*}
      \frac{1}{2\pi i}\int\limits_{d-i\infty}^{d+i\infty} d z
    \frac{(z-x_{2})^{n_{2}-m-1}}{z-r}
    e^{z^{2}/2} & = \frac{e^{d^{2}/2}}{2\pi }\int\limits_{-\infty}^{\infty} d u
    \frac{(iu+d-x_{2})^{n_{2}-m-1}}{iu+d-r}
    e^{-u^{2}/2+iud} 
    \\ & = \frac{e^{d^{2}/2}}{\sqrt{2\pi} }\mathcal{F}\left\{
    \frac{(iu+d-x_{2})^{n_{2}-m-1}e^{-u^{2}/2}}{iu+d-r}\right\}(d) \nonumber
    \\ & =\frac{e^{d^{2}/2}}{2\pi }\left(\mathcal{F}\left\{
    \frac{1}{iu+d-r}\right\}(\cdot)*\mathcal{F}\left\{
    (iu+d-x_{2})^{n_{2}-m-1}e^{-u^{2}/2}\right\}(\cdot)\right)(d), 
  \end{align*} where the $*$ in the line directly above is the normal notion of convolution, distinct from $\star$.
  We compute these Fourier transforms, noting that
  \begin{align*}
      \mathcal{F}\left\{
    (iu+d-x_{2})^{n_{2}-m-1}e^{-u^{2}/2}\right\}(a) & = \frac{i^{n_{2}-m-1}e^{(d-x_{2})^{2}/2 -(d-x_{2})a}}{\sqrt{2\pi}}\int_{-\infty}^{\infty}
    u^{n_{2}-m-1}e^{-u^{2}/2-iu(d-x_{2}-a)}du
    \\ & = h_{n_{2}-m-1}(d-x_{2}-a)e^{ -a^{2}/2}.
  \end{align*}
  Since $\Re(r)>d$, we can write $c=r-d$ such that $\Re(c)>0$, and thus, 
\begin{align*}
    \mathcal{F}\left\{
    \frac{1}{iu+d-r}\right\}(a) & = \frac{1}{i\sqrt{2\pi}}\int_{-\infty}^{\infty}\frac{1}{u-id+ir}e^{iua}du
    \\ & = \frac{1}{i\sqrt{2\pi}}\int_{-\infty}^{\infty}\frac{1}{u+ic}e^{iua}du = -\sqrt{2\pi}e^{ca}\mathbf{1}_{a<0} = -\sqrt{2\pi}e^{a(r-d)}\mathbf{1}_{a<0}.
\end{align*}
Taking the convolution of these two functions, therefore, results in
  \begin{align*}
      \int_{-\infty}^{\infty}h_{n_{2}-m-1}(d-x_{2}-a)e^{-a^{2}/2}(-\sqrt{2\pi})e^{(d-a)(r-d)}\mathbf{1}_{0<a-d} da.
  \end{align*}
  Writing $y=a-d,$ this becomes
  \begin{align*}
      e^{-d^{2}/2}\sqrt{2\pi}(-1)^{n_{2}-m}\int_{-\infty}^{\infty}\mathbf{1}_{y>0} e^{-y^{2}/2-yr}h_{n_{2}-m-1}(x_{2}+y)dy.
  \end{align*} Finally, we apply the summation identity for Hermite polynomials, \eqref{e:hermitesumexpand} (under the change of summation variables $\ell\mapsto n_{2}-\ell$) to obtain 
  \begin{align*}
      e^{-d^{2}/2}\sqrt{2\pi}(-1)^{n_{2}-m}(n_{2}-m-1)!\int_{-\infty}^{\infty}\mathbf{1}_{y>0} e^{-y^{2}/2-yr}\sum_{\ell=m+1}^{n_{2}} \frac{h_{n_{2}-\ell}(x_{2})}{(n_{2}-\ell)!}\cdot \frac{y^{\ell-m-1}}{(\ell-m-1)!} dy
  \end{align*}

Thus the entire expression becomes \begin{multline*}
    \frac{e^{d^{2}/2}}{2\pi}\left(\mathcal{F}\left\{
    \frac{1}{iu+d-r}\right\}(\cdot)*\mathcal{F}\left\{
    (iu+d-x_{2})^{n_{2}-m-1}e^{-u^{2}/2}\right\}(\cdot)\right)(d)
    \\ = (n_{2}-m-1)!\frac{(-1)^{n_{2}-m}}{\sqrt{2\pi} }\int_{-\infty}^{\infty} dy \mathbf{1}_{y>0}e^{-ry-y^{2}/2}\sum_{\ell=m+1}^{n_{2}} \frac{h_{n_{2}-\ell}(x_{2})}{(n_{2}-\ell)!}\cdot \frac{y^{\ell-m-1}}{(\ell-m-1)!}. 
\end{multline*}
\end{proof}
\begin{proof}[Proof of \cref{l:termsolve2}]
We will start by obtaining an integral expression for the first term. We can plug in the expression for $\psi_{k}(x_{1}|n_{1})$ \eqref{e:psikbig} with $L=n_{1}+k-2m$ to rewrite this term as
\begin{align*}
     \sum_{k=m+1}^{n_{2}} \frac{h_{n_{2}-k}(x_{2})\psi_{k}(x_{1}|n_{1})}{\sqrt{2\pi}(n_{2}-k)!}
     & = \sum_{k=m+1}^{n_{2}} \frac{h_{n_{2}-k}(x_{2})}{\sqrt{2\pi}(n_{2}-k)!} \int_{-\infty}^{\infty}dy \mathbf{1}_{y>x_{1}}\frac{(y-x_{1})^{k-m-1}}{(k-m-1)!}h_{n_{1}-m}(y)e^{-y^{2}/2}
     \\ & = \sum_{k=m+1}^{n_{2}} \frac{h_{n_{2}-k}(x_{2})}{\sqrt{2\pi}(n_{2}-k)!} \int_{-\infty}^{\infty}dy \mathbf{1}_{y>0}\frac{(y)^{k-m-1}}{(k-m-1)!}h_{n_{1}-m}(y+x_{1})e^{-(y+x_{1})^{2}/2},
\end{align*}after which we apply \eqref{e:hermite1} to obtain the expression
\begin{align*}
    & = (-1)^{n_{1}-m}\frac{(n_{1}-m)!}{2\pi i\sqrt{2\pi}} \int_{c(x_{1})} \frac{dw}{(w-x_{1})^{n_{1}-m+1}}e^{-w^{2}/2}\int_{-\infty}^{\infty}dy \mathbf{1}_{y>0}e^{-y^{2}/2 -yw}\sum_{k=m+1}^{n_{2}} \frac{h_{n_{2}-k}(x_{2})}{(n_{2}-k)!} \frac{y^{k-m-1}}{(k-m-1)!}.
\end{align*} 
This expression contains a copy of \eqref{e:tada} with $r=w$. Thus, applying \cref{l:annoying}, we see that for all $d<\inf_{w\in c(x_{1})}\Re(w)$,
\begin{align*}
    & = \frac{(-1)^{n_{1}-n_{2}}}{(2\pi i)^{2}}\frac{(n_{1}-m)!}{(n_{2}-m-1)!} \int\limits_{d-i\infty}^{d+i\infty} d z\int_{c(x_{1})}dw \frac{(z-x_{2})^{n_{2}-m-1}}{(w-x_{1})^{n_{1}-m+1}}
    \frac{e^{z^{2}/2-w^{2}/2}}{z-w}.
\end{align*} 

To obtain an integral expression for the second term, we plug in the expression for $\upsilon_{\ell}(y|m)$ to obtain
\begin{align}\label{e:sheep}\begin{split} 
    &  -\sum_{\ell=1}^{m}h_{n_{1}-\ell}(x_{1})e^{-x_{1}^{2}/2} 
    \upsilon_{\ell}(y|m)\star
     \sum_{k=m+1}^{n_{2}} \frac{h_{n_{2}-k}(x_{2})\psi_{k}(y|m)}{\sqrt{2\pi}(n_{2}-k)!}  
   \\  & = -\sum_{j=1}^{m}\frac{e^{(x_{j}^{(m)})^{2}/2}}{\prod_{r\neq j}(x_{j}^{(m)}-x_{r}^{(m)})}
     \sum_{k=m+1}^{n_{2}} \frac{h_{n_{2}-k}(x_{2})\psi_{k}(x_{j}^{(m)}|m)}{\sqrt{2\pi}(n_{2}-k)!} 
     \\ & \cdot \sum_{\ell=1}^{m}h_{n_{1}-\ell}(x_{1})e^{-x_{1}^{2}/2}\sum_{r=1}^{m}(-1)^{r-1}e_{r-1}(x^{(m)}\setminus x_{j}^{(m)})\frac{(m-r)!2^{-\frac{\ell-r}{2}}}{((\ell-r)/2)!(m-\ell)!} \mathbf{1}_{\ell-r\text{ even};\ell\geq r}.
     \end{split}
\end{align}
By repeating a series of steps similar to those which appear in the proof of \cref{l:termsolve1}, the last line of \eqref{e:sheep} is equal to 
\begin{align*}
    & \sum_{\ell=1}^{m}h_{n_{1}-\ell}(x_{1})e^{-x_{1}^{2}/2}\sum_{r=1}^{m}(-1)^{r-1}e_{r-1}(x^{(m)}\setminus x_{j}^{(m)})\frac{(m-r)!2^{-\frac{\ell-r}{2}}}{((\ell-r)/2)!(m-\ell)!} \mathbf{1}_{\ell-r\text{ even};\ell\geq r}
    \\ & = \frac{1}{\sqrt{2\pi}}\int_{-\infty}^{\infty}dt e^{-t^{2}/2-ix_{1}t} t^{n_{1}-m}\sum_{r=1}^{m}(-1)^{r-1}e_{r-1}(x^{(m)}\setminus x_{j}^{(m)})i^{n_{1}-r}h_{m-r}(t)
    \\ & = i^{n_{1}-m}\frac{1}{2\pi}\int_{-\infty}^{\infty}dt\int_{-\infty}^{\infty}dw e^{-w^{2}/2+iwt-ix_{1}t}t^{n_{1}-m}\prod_{r\neq j}(w-x_{r}^{(m)})
    \\ & = i^{n_{1}-m}\mathcal{F}\left\{t^{n_{1}-m}\mathcal{F}\left\{ e^{-w^{2}/2}\prod_{r\neq j}(w-x_{r}^{(m)})\right\}(t)\right\}(-x_{1})
    \\ & =(-1)^{n_{1}-m}\left(\frac{d}{dx_{1}}\right)^{n_{1}-m}e^{-x_{1}^{2}/2}\prod_{r\neq j}(x_{1}-x_{r}^{(m)}).
\end{align*}
    Plugging this back into \eqref{e:sheep} and differentiating, we find that the left-hand side of that equation becomes  
    \begin{align*}
         & = (-1)^{n_{1}-m-1}\sum_{j=1}^{m}\frac{e^{(x_{j}^{(m)})^{2}/2}}{\prod_{r\neq j}(x_{j}^{(m)}-x_{r}^{(m)})}
     \sum_{k=m+1}^{n_{2}} \frac{h_{n_{2}-k}(x_{2})\psi_{k}(x_{j}^{(m)}|m)}{\sqrt{2\pi}(n_{2}-k)!} 
     \\ & \cdot \sum_{l=0}^{n_{1}-m}{n_{1}-m \choose l} (-1)^{n_{1}-m-l}l!e_{m-1-l}\left(x_{1}-x^{(m)}\setminus x_{1}-x_{j}^{(m)}\right)h_{n_{1}-m-l}(x_{1})e^{-x_{1}^{2}/2}  
    \end{align*}
    Applying integral identity \cref{e:hermite1} to $h_{n_{1}-m-l}(x_{1})$, this becomes
    \begin{align*}
         & = \sum_{j=1}^{m}\frac{e^{(x_{j}^{(m)})^{2}/2}}{\prod_{r\neq j}(x_{j}^{(m)}-x_{r}^{(m)})}
     \sum_{k=m+1}^{n_{2}} \frac{h_{n_{2}-k}(x_{2})\psi_{k}(x_{j}^{(m)}|m)}{\sqrt{2\pi}(n_{2}-k)!} 
     \\ & \cdot (-1)^{m}\frac{(n_{1}-m)!}{2\pi i}\int_{c(0)}dw\frac{e^{-(x_{1}-w)^{2}/2}}{w^{n_{1}-m+1}}\sum_{l=0}^{n_{1}-m}(-1)^{m-1-l}e_{m-1-l}\left(x_{1}-x^{(m)}\setminus x_{1}-x_{j}^{(m)}\right)w^{l}.
     \end{align*} As before, we note that 
     \begin{align*}
    \prod_{r\neq j}(w-x_{1}+x_{r}^{(m)}) & = \sum_{k=0}^{\infty} e_{m-1-k}(x_{1}-x^{(m)}\setminus x_{1}-x_{j}^{(m)}) (-1)^{m-1-k} w^{k},
\end{align*} and since all terms of order higher than $w^{n_{1}-m}$ vanish under our closed contour integral, the expression becomes 
\begin{align*}
     & = -\sum_{j=1}^{m}\frac{e^{(x_{j}^{(m)})^{2}/2}}{\prod_{r\neq j}(x_{j}^{(m)}-x_{r}^{(m)})}
     \sum_{k=m+1}^{n_{2}} \frac{h_{n_{2}-k}(x_{2})\psi_{k}(x_{j}^{(m)}|m)}{\sqrt{2\pi}(n_{2}-k)!} 
      \frac{(n_{1}-m)!}{2\pi i}\int_{c(0)}dw\frac{e^{-(x_{1}-w)^{2}/2}}{w^{n_{1}-m+1}}\prod_{r\neq j}(x_{1}-w-x_{r}^{(m)}).
\end{align*} Changing variables in the integral and plugging in the definition of $\psi_{k}(x_{j}^{(m)}|m)$ (and then making a further change of variables inside the integral in that expression) results in 
\begin{align*}
     & = (-1)^{n_{1}-m-1} \frac{(n_{1}-m)!}{2\pi i\sqrt{2\pi}}\sum_{j=1}^{m}\frac{1}{\prod_{r\neq j}(x_{j}^{(m)}-x_{r}^{(m)})}
     \int_{-\infty}^{\infty} dy \mathbf{1}_{y>0}e^{-y^{2}/2-yx_{j}^{(m)}}\sum_{k=m+1}^{n_{2}} \frac{h_{n_{2}-k}(x_{2})}{(n_{2}-k)!} \frac{y^{k-m-1}}{(k-m-1)!}
   \\ & \cdot \int_{c(x_{1})}dw\frac{e^{-w^{2}/2}}{(w-x_{1})^{n_{1}-m+1}}\prod_{r\neq j}(w-x_{r}^{(m)}).
\end{align*}  
We notice that this expression contains a copy of the right-hand side of \eqref{e:tada} with $r=x_{j}^{(m)}$. We can therefore exchange that factor for an expression of the form of the left-hand side of \eqref{e:tada}. Therefore, for all $d<x_{j}^{(m)}$,
\begin{align*}
     & = \frac{(-1)^{n_{1}-n_{2}-1}}{(2\pi i)^2} \frac{(n_{1}-m)!}{(n_{2}-m-1)!}\sum_{j=1}^{m}\frac{1}{\prod_{r\neq j}(x_{j}^{(m)}-x_{r}^{(m)})}
    \int\limits_{d-i\infty}^{d+i\infty} d z
    \frac{(z-x_{2})^{n_{2}-m-1} e^{z^{2}/2}}{z-x_{j}^{(m)}}
    \int_{c(x_{1})}dw\frac{e^{-w^{2}/2}\prod_{r\neq j}(w-x_{r}^{(m)})}{(w-x_{1})^{n_{1}-m+1}}.
\end{align*}  
Finally, to combine the terms, we note that 
  \begin{align*}
      & \frac{\prod_{r=1}^{m}(w-x_{r}^{(m)})}{(z-w)\prod_{r=1}^{m}(z-x_{r}^{(m)})}  = \frac{1}{z-w} + \sum_{j=1}^{m}\frac{C_{j}}{z-x_{j}^{(m)}}.
  \end{align*}
  For $C_{j}$ defined by 
  \begin{align*}
      C_{j} := -\frac{\prod_{r\neq j}(w-x_{r}^{(m)})}{\prod_{r\neq j}(x_{j}^{(m)}-x_{r}^{(m)})}.
  \end{align*}
We have shown that for $d<x_{1},x_{2}$ and less than all elements of $x^{(m)}$, the sum of the two terms in \cref{l:termsolve2} is equal to 
\begin{align*}
 & =\frac{(-1)^{n_{1}-n_{2}}}{(2\pi i)^{2}}\frac{(n_{1}-m)!}{(n_{2}-m-1)!} \int\limits_{d-i\infty}^{d+i\infty} d z\int_{c(x_{1})}dw \frac{(z-x_{2})^{n_{2}-m-1}}{(w-x_{1})^{n_{1}-m+1}}
    \frac{e^{z^{2}/2-w^{2}/2}}{z-w}
    \\ & +\sum_{j=1}^{m} \frac{(-1)^{n_{1}-n_{2}-1}}{(2\pi i)^2} \frac{(n_{1}-m)!}{(n_{2}-m-1)!}
   \int\limits_{d-i\infty}^{d+i\infty} d z
    \frac{(z-x_{2})^{n_{2}-m-1} e^{z^{2}/2}}{z-x_{j}^{(m)}}
    \int_{c(x_{1})}dw\frac{e^{-w^{2}/2}}{(w-x_{1})^{n_{1}-m+1}}\frac{\prod_{r\neq j}(w-x_{r}^{(m)})}{\prod_{r\neq j}(x_{j}^{(m)}-x_{r}^{(m)})}.
\end{align*}
As a consequence, we can combine the terms to obtain
\begin{align*}
 & =\frac{(-1)^{n_{1}-n_{2}}}{(2\pi i)^{2}}\frac{(n_{1}-m)!}{(n_{2}-m-1)!} \int\limits_{d-i\infty}^{d+i\infty} d z\int_{c(x_{1})}dw \frac{(z-x_{2})^{n_{2}-m-1}}{(w-x_{1})^{n_{1}-m+1}}
    \frac{e^{z^{2}/2-w^{2}/2}}{z-w}\prod_{r=1}^{m}\frac{w-x_{r}^{(m)}}{z-x_{r}^{(m)}},
    \\ & = \frac{(-1)^{n_{1}-n_{2}-1}}{(2\pi i)^{2}}\frac{(n_{1}-m)!}{(n_{2}-m-1)!} \int\limits_{d-i\infty}^{d+i\infty} d z\int_{c(x_{1})}dw \frac{(z-x_{2})^{n_{2}-m-1}}{(w-x_{1})^{n_{1}-m+1}}
    \frac{e^{z^{2}/2-w^{2}/2}}{w-z}\prod_{r=1}^{m}\frac{w-x_{r}^{(m)}}{z-x_{r}^{(m)}},
\end{align*} 
where $d$ is any value which lies to the left of $x_{1},x_{2}$ and all of the $x^{(m)}$. 
\end{proof}
\section{Convergence to the Extended Semi-Discrete Sine Kernel}\label{s:convrate}
In this section, we prove \cref{t:convrate}. Most of the section will be devoted to using the method of steepest descent to obtain pointwise convergence of the kernels.  
We use $\{\Omega_{m}\}_{m\in\mathbb{N}}$ to denote the intersection of the rigidity event of \cref{t:local} with the simple spectrum event of \cref{r:simple}. 
To set up the scaling in \cref{t:convrate}, we note that the eigenvalues in question live on scale $\sqrt{m}$, and we want to look at a short time $T$ above the level $m$. Up to gauge transformations and under the change of variables $z\mapsto \frac{T}{\sqrt{m}}z+X\sqrt{m},w\mapsto \frac{T}{\sqrt{m}}w +X\sqrt{m}$, the appropriately rescaled kernel is equivalent to 
\begin{multline}\label{e:presteepkernel}
   \frac{1}{\sqrt{m}}\widetilde{K}_{x^{(m)}}\left(n_{1}+T, X\sqrt{m}+\frac{x_{1}}{\sqrt{m}}; n_{2}+T, X\sqrt{m}+\frac{x_{2}}{\sqrt{m}}\right) \\ =\frac{n_{1}+T}{T} \frac{-1}{(2\pi i)^{2}}\int_{\frac{x_{2}}{T}+\varepsilon-i\infty}^{\frac{x_{2}}{T}+\varepsilon+i\infty} dz \oint_{c\left(\frac{x_{1}}{T}\right)}dw R_{m}(z,w) \frac{e^{T\left(S_{m}(z) -S_{m}(w)\right) }}{w-z},
\end{multline} 
where 
\begin{align*}
R_{m}(z,w) := \frac{\left(z-\frac{x_{2}}{T}\right)^{n_{2}+T-1}w^{T}}{z^{T}\left(w-\frac{x_{1}}{T}\right)^{n_{1}+T+1}}, 
& & S_{m}(z)  :=zX  + \log_{\mathbb{H}}{(z)} +\frac{Tz^{2}}{2m}-\frac{1}{T}\sum_{r=1}^{m}\log_{\mathbb{H}}{\left(z-u_{r}^{(m)}\right)}, 
\end{align*} with $u_{r}^{(m)}:=(x_{r}^{(m)}/\sqrt{m}-X)/(T/m)$, and, following \cite{GP19}, we use the notation $\log_{\mathbb{H}}(z) = \log{(ze^{-i\pi/2})} +i\pi/2$ to denote the complex logarithm defined with the cut along the negative imaginary axis so that $S_{m}(z)$ is holomorphic in the upper half-plane $\mathbb{H}:=\{z\in\mathbb{C}, \Im(z)>0\}$. We needed to make a choice of branch cut in order for the expression above to be well defined. In the integrand of the kernel, however, $S_{m}(z)$ only appears in an exponential, and, consequently, the choice of branch cut does not affect the integral. 

In this section we will establish bounds explicitly in the upper half-plane. The lower half-plane estimates follow in the same way. When we work in the upper half-plane, we formally define $S_{m}(x):=\lim_{r\to 0}S_{m}(x+ir)$ for all $x\in\mathbb{R}$. 

We also note the expression for $S_{m}'(z),$
\begin{align}\label{e:sderiv}
S_{m}'(z) & = X +\frac{1}{z} +  \frac{T}{m}z-\frac{1}{T} \sum_{r=1}^{m}\frac{1}{z-u_{r}^{(m)}}.
\end{align}

\begin{remark}\label{r:paramdep}
    We observe that $m^{-1/2}\widetilde{K}_{x^{(m)}}(n_{1}+T,Xm^{1/2}+x_{1}m^{-1/2};n_{2}+T,Xm^{1/2}+ x_{2}m^{-1/2})$ depends on $x_{1},x_{2},n_{1},n_{2}$ only through $R_{m}(z,w)$, the pre-factor $(n_{1}+T)T^{-1}$, and the placement and definition of the contours. 
\end{remark}

The proof of \cref{t:convrate} has three main parts. We begin by locating the unique critical point of $S_{m}(z)$ in the upper half-plane (\cref{s:zeros}). We construct local steepest descent
and ascent contours through this critical point (\cref{s:contours}) within a region defined by
\begin{align}\label{e:region}
    \mathcal{R}(c,\kappa ):=\{x+iy\in\mathbb{C}| |x|<(c\kappa )^{-1}, \kappa <y<\kappa ^{-1}\},
\end{align} for $\kappa \in(0,1)$ and $c\in (0,1)$. Then we construct the global steepest descent contours (\cref{s:contours2}). We deform the original contours to these steepest descent contours; and apply \cref{p:steepest}, see below, on $\mathcal{R}(c,\kappa)$, together with an estimate on the decay of the integral along the global steepest descent contours outside of $\mathcal{R}(c,\kappa)$ (\cref{l:errortermscross}, \cref{s:crossterms}) to show that the steepest descent contribution vanishes. Then we show that
the remaining residue contour converges to the extended semi-discrete sine kernel (\cref{s:proofofmain}).

To show that the contribution along the steepest descent contours vanishes, we will need the next proposition, which we prove in \cref{s:steepest}. 
\begin{proposition}\label{p:steepest}
       Let $D\subset \mathbb{C}$ be open and let $\{f_{n}\}_{n\in\mathbb{N}}$, $\{g_{n}\}_{n\in\mathbb{N}}$ be sequences of functions defined so that for all $n\in\mathbb{N}$, $f_{n}:\mathbb{C}^{2}\to\mathbb{C}$, $g_{n}:\mathbb{C}\to\mathbb{C}$ with $f_{n}$ holomorphic on $D\times D,$ and $g_{n}$ holomorphic on $D.$ 
    
    Suppose that for all $n\in\mathbb{N}$, $g_{n}$ has exactly one critical point $z_{0}^{(n)}\in D$, that $z_{0}^{(n)}$ is nondegenerate, and that the $z_{0}^{(n)}$ converge to a point $z_{0}\in D$ as $n\to\infty$. There exist neighborhoods $U_{n}$ of $z_{0}^{(n)}$ where the complex Morse lemma guarantees a local chart $\zeta_{n}$ such that $g_{n}(z)=g_{n}(z_{0}^{(n)})+\frac{1}{2}\zeta_{n}(z)^{2}$ and $C_{1}^{(n)},C_{2}^{(n)}\subset D$ to be contours such that for all $n\in\mathbb{N}$ there exists $\delta_{n}>0$ such that $C_{1}^{(n)}\cap U_{n}:=\{\zeta^{-1}_{n}(it):|t|<\delta_{n}\},C_{2}^{(n)}\cap U_{n}:=\{\zeta^{-1}_{n}(t):|t|<\delta_{n}\}$ are respectively the steepest descent and ascent branches of the set $\{\Im(g_{n}(z))=\Im(g_{n}(z_{0}^{(n)})\}$, and $C_{1}^{(n)}\setminus U_{n}$ and $C_{2}^{(n)}\setminus U_{n}$ are the contours obtained by continuing from those branches along the integral curves of $-\nabla \Re(g_{n}(z))$ and $\nabla \Re(g_{n}(z))$.
    \begin{enumerate}
        \item Further assume that there exists $c_{1}>0$ such that $\inf_{n}|g_{n}''(z_{0}^{(n)})|>c_{1}$ and that there exist $c_{2},r>0$ such that $B_{r}(z_{0}^{(n)})\subset D$ for all $n\in\mathbb{N}$ and  $\sup_{n}\sup_{|z-z_{0}^{(n)}|<r}|g_{n}'''(z)|<c_{2}$. Then it is possible to choose Morse charts $\zeta_{n}$ and neighborhoods $U_{n}$ so that there exists an open neighborhood $z_{0}\in U\subset D$  and $n_{0}\in\mathbb{N}$ such that $U\subset U_{n}$ for all $n>n_{0}$.
        \item Assuming $(1)$, and further assuming that there exists $M>0$ such that $\sup_{n}\|f_{n}\|_{L^{\infty}(C_{1}^{(n)}\times C_{2}^{(n)})}<M$ and that there exists $c_{3}>0$ such that for all $z\in C_{1}^{(n)}\setminus U$, $w\in C_{2}^{(n)}\setminus U$, and $n>n_{0},$
    \begin{align*}
        \Re\left(g_{n}(z_{0}^{(n)})-g_{n}(z)\right), \Re\left(g_{n}(w) - g_{n}(z_{0}^{(n)})\right)\geq c_{3},
    \end{align*} that $C_{1}^{(n)},C_{2}^{(n)}$ have uniformly bounded length, bounded by $L>0$, and that there exists $c_{4}>0$ such that 
    \begin{align*}
    \inf_{n>n_{0}}\min\left\{\inf_{\substack{z\in C_{1}^{(n)}\setminus U \\ w\in C_{2}^{(n)}\setminus U}}|w-z|, \inf_{\substack{z\in C_{1}^{(n)}\setminus U \\ w\in C_{2}^{(n)}\cap U}}|w-z|, \inf_{\substack{z\in C_{1}^{(n)}\cap U \\ w\in C_{2}^{(n)}\setminus U}}|w-z|\right\}>c_{4}.
    \end{align*}
    Then there exists $c>0$ such that 
    \begin{align*}
        \bigg|\int_{C_{1}^{(n)}}dz\int_{C_{2}^{(n)}}dw f_{n}(z,w)\frac{e^{n(g_{n}(z)-g_{n}(w))}}{w-z}\bigg|\leq cML^{2}c_{4}^{-1}  n^{-1/2},\end{align*} where we interpret the double contour integral as the limit 
        \begin{align*}
            \lim_{\varepsilon\to 0}\int_{C_{1}^{(n)}\setminus B_{\varepsilon,n}}dz\int_{C_{2}^{(n)}\setminus B_{\varepsilon,n}}dw f_{n}(z,w)\frac{e^{n(g_{n}(z)-g_{n}(w))}}{w-z}
        \end{align*} where $B_{\varepsilon,n}$ is a small ball of radius $\varepsilon$ containing $z_{0}^{(n)}$.

        Furthermore, we can obtain, from the same argument, the slightly stronger statement that there exists $c>0$ such that 
        \begin{align}\label{e:latter}
        \bigg|\int_{C_{1}^{(n)}}dz\int_{C_{2}^{(n)}}dw f_{n}(z,w)\frac{e^{n(g_{n}(z)-g_{n}(w))}}{w-z}\bigg| 
        & \leq cML^{2}c_{4}^{-1}e^{-c_{3}n} + c\sup_{n}\{\sup_{z,w\in U}|f_{n}(z,w)|\}\cdot n^{-1/2}.\end{align}
    \end{enumerate}
\end{proposition}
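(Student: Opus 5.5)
The plan is to prove the two parts separately: first a quantitative, $n$-uniform version of the holomorphic Morse lemma, and then a decomposition of $C_{1}^{(n)}\times C_{2}^{(n)}$ into four pieces according to whether each variable lies in $U$.

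For part $(1)$, I would build the chart explicitly. Write
\[
g_{n}(z)-g_{n}(z_{0}^{(n)})=(z-z_{0}^{(n)})^{2}h_{n}(z),\qquad h_{n}(z):=\int_{0}^{1}(1-t)\,g_{n}''\bigl(z_{0}^{(n)}+t(z-z_{0}^{(n)})\bigr)\,dt .
\]
This uses $g_{n}'(z_{0}^{(n)})=0$. Then $h_{n}(z_{0}^{(n)})=g_{n}''(z_{0}^{(n)})/2$ has modulus at least $c_{1}/2$. The bound on $g_{n}'''$ gives $|h_{n}(z)-h_{n}(z_{0}^{(n)})|\leq c_{2}|z-z_{0}^{(n)}|/6$ on $B_{r}(z_{0}^{(n)})$.

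Hence on the ball of radius $\rho:=\tfrac12\min\{r,\,3c_{1}/(2c_{2})\}$ the function $h_{n}$ stays in a disc around $h_{n}(z_{0}^{(n)})$ avoiding $0$. So $h_{n}$ has a holomorphic square root there, and I set $\zeta_{n}(z):=(z-z_{0}^{(n)})\sqrt{2h_{n}(z)}$. Then $g_{n}=g_{n}(z_{0}^{(n)})+\tfrac12\zeta_{n}^{2}$, and $\zeta_{n}'$ is uniformly close to $\zeta_{n}'(z_{0}^{(n)})=\sqrt{g_{n}''(z_{0}^{(n)})}$ on a possibly smaller ball of radius $\rho'$ depending only on $c_{1},c_{2},r$. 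The bound $\sup|g_{n}''|$ on that ball also follows from $c_{2}$ together with the value at $z_{0}^{(n)}$, which I would control via $z_{0}^{(n)}\to z_{0}$ and compactness.

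A standard argument then applies: $\Re\bigl(\zeta_{n}'(z)/\zeta_{n}'(z_{0}^{(n)})\bigr)>1/2$ on a convex set implies injectivity. So $\zeta_{n}$ is univalent and bi-Lipschitz, with constants independent of $n$, on $U_{n}:=B_{\rho'}(z_{0}^{(n)})$. Since $z_{0}^{(n)}\to z_{0}$, the fixed ball $U:=B_{\rho'/2}(z_{0})$ lies in every $U_{n}$ for $n>n_{0}$.

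For part $(2)$, I split the integral into four pieces.
\begin{itemize}
\item[(a)] Both $z\notin U$ and $w\notin U$. Here $\Re\bigl(n(g_{n}(z)-g_{n}(w))\bigr)\leq -2c_{3}n$ and $|w-z|\geq c_{4}$, so this piece is at most $ML^{2}c_{4}^{-1}e^{-2c_{3}n}$.
\item[(b)] Exactly one variable outside $U$. The outside variable contributes the factor $e^{-c_{3}n}$. The inside one satisfies $\Re\bigl(g_{n}(z)-g_{n}(z_{0}^{(n)})\bigr)\leq0$ or $\Re\bigl(g_{n}(w)-g_{n}(z_{0}^{(n)})\bigr)\geq0$, by the definition of the descent and ascent branches. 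The separation $c_{4}$ again controls $1/|w-z|$, giving $ML^{2}c_{4}^{-1}e^{-c_{3}n}$.
\item[(c)] Both variables in $U$. This is the main term. Parametrize $z=\zeta_{n}^{-1}(is)$ and $w=\zeta_{n}^{-1}(t)$, so the exponent becomes exactly $-n(s^{2}+t^{2})/2$. The Jacobians $|(\zeta_{n}^{-1})'|$ are uniformly bounded by part $(1)$. By the uniform bi-Lipschitz property, $|w-z|\geq c|is-t|=c\sqrt{s^{2}+t^{2}}$.
\end{itemize}

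For piece (c), the contribution is therefore bounded by
\[
C\sup_{U\times U}|f_{n}|\int\!\!\int_{\mathbb{R}^{2}}\frac{e^{-n(s^{2}+t^{2})/2}}{\sqrt{s^{2}+t^{2}}}\,ds\,dt
= 2\pi C\sup_{U\times U}|f_{n}|\int_{0}^{\infty}e^{-n\rho^{2}/2}\,d\rho
= O\Bigl(\sup_{U\times U}|f_{n}|\;n^{-1/2}\Bigr).
\]
This singularity is integrable in two dimensions, so the integrand is absolutely integrable. Hence the $\varepsilon\to0$ limit defining the integral exists and equals the absolutely convergent integral. Summing (a), (b) and (c) gives \eqref{e:latter}. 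The cruder bound $cML^{2}c_{4}^{-1}n^{-1/2}$ follows because $e^{-c_{3}n}\lesssim n^{-1/2}$ and, assuming $L$ and $c_{4}^{-1}$ are bounded below by absolute constants after harmless normalization, $\sup_{U\times U}|f_{n}|\leq M\lesssim ML^{2}c_{4}^{-1}$.

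I expect the main obstacle to be part $(1)$: making the Morse chart's domain of univalence and its bi-Lipschitz constants genuinely uniform in $n$, using only $c_{1}$, $c_{2}$ and $r$. Everything in piece (c) rests on it, namely the Jacobian bounds and the lower bound $|w-z|\gtrsim|is-t|$. The key is the explicit integral formula for $h_{n}$ together with the convex-domain injectivity criterion; I would verify the constants there carefully. A secondary point is that the branches $C_{1}^{(n)}\cap U$ and $C_{2}^{(n)}\cap U$ may exit $U$ at varying places. This only changes the range of $s,t$ to a subset of $\mathbb{R}$, which the bound above already covers.
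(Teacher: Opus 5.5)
Your proposal is correct and follows the paper's proof: it uses the same four-way split according to whether $z$ and $w$ lie in $U$, the same exponential bounds from $c_{3}$ and $c_{4}$ off $U$, and the same Morse-chart substitution reducing the main piece to $\iint e^{-n(s^{2}+t^{2})/2}(s^{2}+t^{2})^{-1/2}\,ds\,dt=O(n^{-1/2})$; the only difference is that you build the uniform chart by hand (Taylor remainder $h_{n}$ plus the convex-domain univalence criterion), where the paper cites a quantitative Morse lemma. One small correction: the hypotheses give no upper bound on $|g_{n}''(z_{0}^{(n)})|$ (e.g.\ $g_{n}(z)=n(z-z_{0})^{2}$), so ``compactness'' does not yield $n$-uniform bi-Lipschitz constants, but you do not need them, because your relative estimate $|\zeta_{n}'(z)/\zeta_{n}'(z_{0}^{(n)})-1|<1/2$ depends only on $c_{1},c_{2},r$, and the factor $\sup|\zeta_{n}'|/(\inf|\zeta_{n}'|)^{2}$ controlling piece (c) is then $\lesssim|g_{n}''(z_{0}^{(n)})|^{-1/2}\le c_{1}^{-1/2}$.
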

\begin{remark}\label{r:steepest}
    Similarly, if the same statements and bounds hold when we reflect the region $D$ over the real axis, and if the functions involved are symmetric above the real axis, and we call the resulting reflected contours $C_{1}^{(n),-},C_{2}^{(n),-}$, then  
    \begin{align*}\bigg|\int_{C_{1}^{(n)}}dz\int_{C_{2}^{(n),-}}dw f_{n}(z,w)\frac{e^{n(g_{n}(z)-g_{n}(w))}}{w-z}\bigg|, 
        & & \bigg|\int_{C_{1}^{(n),-}}dz\int_{C_{2}^{(n)}}dw f_{n}(z,w)\frac{e^{n(g_{n}(z)-g_{n}(w))}}{w-z}\bigg|,  
        \end{align*}  decay at the same rate as in \eqref{e:latter}.
\end{remark}
In all subsections of this section, it will be helpful to establish convergence results for $S_{m},S'_{m}$, therefore, before continuing into the proof of \cref{t:convrate}, we prove the following lemma.  
\begin{lemma}\label{l:sconv} Fix $\alpha\in (0,2)$. There exists $C:=C(\alpha)>0$ such that for all $z\in\mathbb{H}$, $x^{(m)}\in\Omega_{m}$, $X\in (-2+\alpha,2-\alpha)$,  $$ |S_{m}'(z)-S_{*}'(z)|\leq C\Im(z)^{-1}\log^{2}{(m)}T^{-1}+Tm^{-1}|z|,$$ where $S_{*}'(z)  :=\frac{1}{2}X+\frac{1}{z} +i\pi\rho_{sc}(X)$.
    
    Furthermore, there exists a measurable function $F:\mathrm{Conf}(\mathbb{R})\times (-2,2)\to\mathbb{C}$ and a sequence of random variables $C_{m}:=F(x^{(m)},X)$ and $C:=C(\alpha)>0$ such that for all $z\in\mathbb{H}$ for all $x^{(m)}\in\Omega_{m}$ and $X\in(-2+\alpha,2-\alpha)$, $$|S_{m}(z)-C_{m}-S_{*}(z)|\leq  C|z-i| \Im(z)^{-1/2}\log^{2}{(m)}T^{-1}+T|z|^{2}/2m,$$ where $S_{*}(z) := \frac{1}{2}X(z+i) +\log_{\mathbb{H}}{(z)} + i\pi\rho_{sc}(X)(z-i).$  

    Finally, there exists $C:=C(c,\alpha)>0$ such that for all $x^{(m)}\in\Omega_{m}$, $X\in (-2+\alpha,2-\alpha)$ and $z\in \mathcal{R}(c,\kappa),$
    $$ |S_{m}'(z)-S_{*}'(z)|\leq  C\kappa^{-1}e_{m},$$ where $e_{m}:=\log^2(m)T^{-1}+Tm^{-1}$. Similarly, there exists $C(c,\alpha)>0$ such that for all  $x^{(m)}\in\Omega_{m}$, $X\in (-2+\alpha,2-\alpha),$ and  $z\in \mathcal{R}(c,\kappa)$, $$|S_{m}(z)-C_{m}-S_{*}(z)|\leq C(\kappa^{-3/2}e_{m}+\kappa^{-2}Tm^{-1}).$$
\end{lemma}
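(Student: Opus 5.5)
The plan is to write $S_m'(z)-S_*'(z)$ as a deterministic piece plus a Stieltjes-transform fluctuation. By \eqref{e:sderiv},
\begin{align*}
S_m'(z)-S_*'(z) = \frac{X}{2} + \frac{T}{m}z - \Big(\frac1T\sum_{r=1}^m \frac{1}{z-u_r^{(m)}} + i\pi\rho_{sc}(X)\Big).
\end{align*}
The term $Tz/m$ is directly responsible for the $Tm^{-1}|z|$ error. For the remaining part I will write $\frac1T\sum_r (z-u_r^{(m)})^{-1}=\int (z-u)^{-1}\mu_m(du)$ and compare it with $\int (z-u)^{-1}\rho_{sc}(X+uT/m)\,du$. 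Since $\int(z-u)^{-1}\rho_{sc}(X+uT/m)du$ is, after the change of variables $v=X+uT/m$, exactly the Stieltjes transform $m_{sc}$ of the semicircle evaluated at $X+zT/m$, and $m_{sc}(X+i0)=X/2-i\pi\rho_{sc}(X)$ (with the paper's sign convention $\int\rho/(z-v)$), the deterministic piece gives $X/2 - i\pi\rho_{sc}(X)+O(|z|T/m)$ by Lipschitz continuity of $m_{sc}$ near a bulk point with $X\in(-2+\alpha,2-\alpha)$. This cancels the constants $X/2+i\pi\rho_{sc}(X)$ appearing above, up to sign bookkeeping.

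For the fluctuation I will apply the rigidity estimate from the proof of \cref{t:local}, namely $|\int_a^b\mu_m-\int_a^b\rho_{sc}(X+uT/m)du|\le C\log^2(m)T^{-1}$ uniformly in $a<b$. Integration by parts against $f(u)=(z-u)^{-1}$, which is not compactly supported, uses the counting-function difference $D(u)$. Its sup is bounded by $C\log^2 m/T$, and $D$ vanishes at $\pm\infty$. This yields a bound of $\sup|D|\cdot\int|f'|du=\sup|D|\int|z-u|^{-2}du=\pi\sup|D|/\Im z$, which is the claimed bound $C\Im(z)^{-1}\log^2(m)T^{-1}$.

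For $S_m$, I will take $C_m:=S_m(i)-S_*(i)$, which is measurable in $(x^{(m)},X)$. I then integrate the derivative bound along the straight segment from $i$ to $z$, so that $S_m(z)-C_m-S_*(z)=\int_i^z(S_m'-S_*')$; the branches of $\log_{\mathbb H}$ are consistent inside $\mathbb H$. The $Tm^{-1}|\zeta|$ term integrates to at most $T|z|^2/2m$ up to the $|i|$ contribution, which can be absorbed. The $\Im(\zeta)^{-1}$ term is the main obstacle, because it is not integrable enough to give $\Im(z)^{-1/2}$ directly. To handle it I will interpolate the rigidity bound: the same integration by parts also gives the trivial bound $|\int f\,d(\mu_m-\rho)|\lesssim \Im(\zeta)^{-1}$ times mass, and an alternative estimate uses $\int|f'|\le\pi/\Im\zeta$. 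I will therefore instead bound the primitive directly. The difference of $\frac1T\sum\log(z-u_r)$ and its continuum analogue is $\int \log(z-u)\,d(\mu_m-\rho)$, and integration by parts gives $\sup|D|\int|z-u|^{-1}du$ restricted to the effective support, with logarithms. Combined with the difference at $i$, this gives $\int|(z-u)^{-1}-(i-u)^{-1}|du\lesssim |z-i|\Im(z)^{-1/2}$ for $\Im z\le 1$ (by Cauchy--Schwarz on $|z-i|\int|z-u|^{-1}|i-u|^{-1}du$), and similar control holds elsewhere. I expect this to produce the stated $|z-i|\Im(z)^{-1/2}$ factor.

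Finally, on $\mathcal R(c,\kappa)$ we have $\Im z>\kappa$ and $|z|\lesssim \kappa^{-1}$, so the first bound becomes $C\kappa^{-1}(\log^2 m\,T^{-1}+Tm^{-1})=C\kappa^{-1}e_m$. For the second bound, $|z-i|\lesssim\kappa^{-1}$ and $\Im(z)^{-1/2}\le\kappa^{-1/2}$ give $\kappa^{-3/2}\log^2(m)T^{-1}$, while $T|z|^2/2m\lesssim\kappa^{-2}Tm^{-1}$. Constants depend on $c$ through $|x|<(c\kappa)^{-1}$.
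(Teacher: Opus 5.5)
Your proof is correct, but for the derivative bound it takes a different route from the paper.

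\textbf{The paper's route.} It splits $\frac1T\sum_r (z-u_r^{(m)})^{-1}=G_{R,m}(z)+d_m(R)$ with a cutoff $R$. It then:
\begin{itemize}
\item truncates $G_{R,m}$ smoothly so that \cref{t:local}(1) applies;
\item handles the far field $d_m(R)$ with \cref{t:local}(2);
\item replaces $\rho_{sc}(X+uT/m)$ by $\rho_{sc}(X)$ to evaluate $\rho_{sc}(X)\int_{\mathbb R}\bigl(\frac{1}{z-u}+\frac{\mathbf{1}_{|u|>R}}{u}\bigr)du=-i\pi\rho_{sc}(X)$;
\item sends $R\to\infty$.
\end{itemize}

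\textbf{Your route.} You read the sum as the empirical Stieltjes transform at $X+zT/m$ and do one integration by parts against the global discrepancy $D$. This is legitimate for four reasons: $D$ has bounded variation; $D(\pm\infty)=0$, since both measures have mass $m/T$; $\sup|D|\le C\log^2(m)T^{-1}$ follows directly from \cref{thm:rigid}; and $\int_{\mathbb R}|z-u|^{-2}du=\pi/\Im z$.

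\textbf{What this buys.} Your route needs neither the near/far splitting nor \cref{t:local}(2). In fact the $R$-dependent deterministic parts of $G_{R,m}$ and $d_m(R)$ cancel exactly and leave $m_{sc}(X+zT/m)$, so your formulation is what the paper's decomposition collapses to. It also makes the deterministic error explicit: $|m_{sc}(X+zT/m)-m_{sc}(X+i0)|\le C|z|T/m$, using Lipschitz continuity near the bulk point and $|m_{sc}|\le 1$ when $|z|T/m$ is not small. The paper, by contrast, passes from $\rho_{sc}(X+uT/m)$ to $\rho_{sc}(X)$ on all of $\mathbb R$ without accounting for that error.

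What the paper's decomposition buys in return is that it uses only local counts plus a far-field sum. That is the structure that survives under the weaker \cref{a:ld} and \cref{a:is} of \cref{a:fullgen}, where a global uniform bound on $D$ is unavailable.

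\textbf{The primitive.} Your final step is the same estimate as the paper's total-variation bound. You integrate $\log_{\mathbb H}(z-u)-\log_{\mathbb H}(i-u)$ by parts and apply Cauchy--Schwarz:
\[
|z-i|\int_{\mathbb R}|z-u|^{-1}|i-u|^{-1}du\le \pi|z-i|\Im(z)^{-1/2}.
\]
Your $C_m=S_m(i)-S_*(i)$ differs from the paper's $-\frac1T\sum_r\log_{\mathbb H}(i-u_r^{(m)})$ only by $-T/(2m)$. Both arguments actually give $C\,Tm^{-1}|z|$ and $C(1+|z|^2)Tm^{-1}$ rather than the literal constants $1$ and $\tfrac12$. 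This is harmless, since only the $\mathcal R(c,\kappa)$ forms are used later.

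\textbf{Two corrections.}
\begin{itemize}
\item Your worry that the $\Im(\zeta)^{-1}$ term is ``not integrable enough'' is unfounded. With $y=\Im z$,
\[
\int_{[i,z]}\Im(\zeta)^{-1}|d\zeta|=|z-i|\frac{\log y}{y-1}\le |z-i|\,y^{-1/2},
\]
because writing $y=e^{2s}$ the ratio is $s/\sinh s\le 1$. So integrating the first bound along the segment already yields the second. You should drop the ``interpolation'' passage and the bound by $\sup|D|\int|z-u|^{-1}du$, which diverges on its own.
\item The Cauchy--Schwarz bound holds for every $z\in\mathbb H$, because $\int_{\mathbb R}|i-u|^{-2}du=\pi$. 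No separate treatment of $\Im z>1$ is needed.
\end{itemize}
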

\begin{proof}[Proof of \cref{l:sconv}] 
To prove the first statement, we introduce the parameter $R>0$ and write the sum as 
\begin{align*}
    \frac{1}{T} \sum_{r=1}^{m}\frac{1}{z-u_{r}^{(m)}} & = G_{R,m}(z) + d_{m}(R),
\end{align*} where 
\begin{align*}
    G_{R,m}(z) := \frac{1}{T} \sum_{r=1}^{m}\left(\frac{1}{z-u_{r}^{(m)}} +\frac{\mathbf{1}_{|u_{r}^{(m)}|>R}}{u_{r}^{(m)}}\right), & & d_{m}(R)  = -\frac{1}{T}\sum_{|u_{r}^{(m)}|>R }\frac{1}{u_{r}^{(m)}},
\end{align*} noting that this $d_{m}(R)$ is the same as that which appeared in \cref{t:local}. Thus, for any $R>0$ we can write 
\begin{align*}
    S'_{m}(z) & = X+\frac{1}{z}+\frac{T}{m}z - G_{R,m}(z)-d_{m}(R). 
\end{align*} The reason to introduce $G_{R,m},d_{m}(R)$ is that for all $z\in\mathbb{H}$, $(z-u)^{-1}+\mathbf{1}_{|u|>R}u^{-1}$ is integrable with respect to the Lebesgue measure, and is continuous on any compact interval $[-M,M]\subset\mathbb{R}$. We use the notation $t_{M,s}(u)$ for some $s>0$ to denote a function which is $1$ on $[-M,M]$, $0$ outside $[-M-s,M+s]$ and smooth and bounded between $0$ and $1$ on $[-M-s,M+s]\setminus [-M,M]$. For all $z\in\mathbb{H}$, we define a truncation  
\begin{align*}
    G_{R,m}^{(M,s)}(z) := \int_{\mathbb{R}}t_{M,s}(u)\left(\frac{1}{z-u} +\frac{\mathbf{1}_{|u|>R}}{u}\right)\mu_{m}(du),
\end{align*} so that $\lim_{M\to\infty}G_{R,m}^{(M,s)}(z)=G_{R,m}(z)$.  We apply \cref{t:local} $(1)$ to conclude that for any $z\in\mathbb{C}\setminus\mathbb{R}$, there exists $C:=C(\alpha)>0$ such that for all $x^{(m)}\in\Omega_{m}$, $X\in (-2+\alpha,2-\alpha),$
\begin{align*}
    \big| G_{R,m}^{(M,s)}(z) - \int_{\mathbb{R}}t_{M,s}(u)  \left(\frac{1}{z-u}+\frac{\mathbf{1}_{|u|>R}}{u}\right) \rho_{sc}(X+uT/m)du \big| \leq C (\Im(z)^{-1}+R^{-1})\log^{2}{(m)}T^{-1}, 
\end{align*} where the factor of $(\Im(z)^{-1}+R^{-1})$ comes from the total variation in \cref{t:local}. Thus, for all $x^{(m)}\in\Omega_{m},$ $X\in (-2+\alpha,2-\alpha),$ and $M>0$, 
\begin{align*}
    \big| G_{R,m}(z) - \int_{\mathbb{R}}\left(\frac{1}{z-u}+\frac{\mathbf{1}_{|u|>R}}{u}\right) \rho_{sc}(X+uT/m)du \big|\leq C (\Im(z)^{-1}+R^{-1})\log^{2}{(m)}T^{-1} + O(M^{-1}), 
\end{align*} and thus for all $x^{(m)}\in\Omega_{m},$ $X\in (-2+\alpha,2-\alpha),$
\begin{align*}
    \big| G_{R,m}(z) - \int_{\mathbb{R}}\left(\frac{1}{z-u}+\frac{\mathbf{1}_{|u|>R}}{u}\right) \rho_{sc}(X+uT/m)du\big|\leq  C (\Im(z)^{-1}+R^{-1})\log^{2}{(m)}T^{-1}, 
\end{align*}
We combine this with \cref{t:local} $(2)$ to conclude that for all $x^{(m)}\in\Omega_{m}$, $X\in (-2+\alpha,2-\alpha),$ and for $C>0$ possibly distinct from before,  
\begin{align*}
    \big| S_{m}'(z) - \frac{1}{2}X-\frac{1}{z}+\int_{\mathbb{R}}\left(\frac{1}{z-u}+\frac{\mathbf{1}_{|u|>R}}{u}\right) \rho_{sc}(X)du \big|\leq  C (\Im(z)^{-1}+R^{-1})\log^{2}{(m)}T^{-1}+|z|Tm^{-1},
\end{align*} where the constant $C>0$ possibly takes a different value than in previous lines. Finally, we compute the integral \begin{align*}
\int_{\mathbb{R}}\left(\frac{1}{z-u}+\frac{\mathbf{1}_{|u|>R}}{u}\right) \mu_{\text{loc}}(du) & = \rho_{sc}(X)\int_{\mathbb{R}}\left(\frac{1}{z-u}+\frac{\mathbf{1}_{|u|>R}}{u}\right) du
\\ & = \rho_{sc}(X)\int_{\mathbb{R}\setminus(-R,R)}\left(\frac{1}{z-u}+\frac{1}{u}\right) du + \rho_{sc}(X)\int_{-R}^{R}\left(\frac{1}{z-u}\right) du
\\ & = \lim_{u\to\infty} \rho_{sc}(X)\log_{\mathbb{H}}{\left(\frac{u + z}{-u + z}\right)} = -i\pi \rho_{sc}(X).
\end{align*}
Therefore, we are justified in taking $R>0$ to be arbitrarily large, so that the multiplicative factor $\Im(z)^{-1}+R^{-1}$ is simply proportional to $\Im(z)^{-1}.$

To prove the second statement, we examine the sum term which appears in $S_{m}(z)$,
    \begin{align}\label{e:threeint}\begin{split}
        \frac{1}{T}\sum_{r=1}^{m}\log_{\mathbb{H}}{\left(z-u_{r}^{(m)}\right)} & = \int_{-\infty}^{\infty}\log_{\mathbb{H}}{(z-u)}\mu_{m}(du)
        \\ & =\int_{-\infty}^{\infty}\left(\log_{\mathbb{H}}{(z-u)}-\log_{\mathbb{H}}{(i-u)}+(z-i)\frac{\mathbf{1}_{|u|>R}}{u}\right)\mu_{m}(du) 
        \\ & + \int_{-\infty}^{\infty}\log_{\mathbb{H}}{(i-u)}\mu_{m}(du) - \int_{-\infty}^{\infty}(z-i)\frac{\mathbf{1}_{|u|>R}}{u}\mu_{m}(du) . 
        \end{split}
    \end{align} We define $C_{m}  := -\int_{-\infty}^{\infty}\log_{\mathbb{H}}{(i-u)}\mu_{m}(du).$ The integrand of the first term decays like $u^{-2}$ at infinity, and thus, applying \cref{t:local} $(1),$ and the same truncation argument as before, we find that there exists $C:=C(\alpha)>0$ such that for all $x^{(m)}\in\Omega_{m}$ and $X\in (-2+\alpha,2-\alpha),$
    \begin{multline*}
        \big|\int_{-\infty}^{\infty}\left(\log_{\mathbb{H}}{(z-u)}-\log_{\mathbb{H}}{(i-u)}+(z-i)\frac{\mathbf{1}_{|u|>R}}{u}\right)\mu_{m}(du)
        \\ -\int_{-\infty}^{\infty}\left(\log_{\mathbb{H}}{(z-u)}-\log_{\mathbb{H}}{(i-u)}+(z-i)\frac{\mathbf{1}_{|u|>R}}{u}\right)\rho_{sc}(X+uT/m)du\big| 
        \\ \leq C|z-i| (\Im(z)^{-1/2}+R^{-1})\log^{2}{(m)}T^{-1}. 
    \end{multline*} Combining this with \cref{t:local} $(2)$, and possibly using a distinct $C>0,$ we conclude that for all $x^{(m)}\in\Omega_{m}$ 
    \begin{multline*}
        \big|S_{m}(z)-C_{m} - \frac{1}{2}X(z+i) - \log_{\mathbb{H}}{(z)}
        \\ + \int_{-\infty}^{\infty}\left(\log_{\mathbb{H}}{(z-u)}-\log_{\mathbb{H}}{(i-u)}+(z-i)\frac{\mathbf{1}_{|u|>R}}{u}\right)\rho_{sc}(X)du
        \big|
        \\ \leq C|z-i| (\Im(z)^{-1/2}+R^{-1})\log^{2}{(m)}T^{-1}+|z|^{2}Tm^{-1}.
    \end{multline*} Finally, we evaluate the integral
    \begin{align*}
        \int_{-\infty}^{\infty}\left(\log_{\mathbb{H}}{(z-u)}-\log_{\mathbb{H}}{(i-u)}+(z-i)\frac{\mathbf{1}_{|u|>R}}{u}\right)\rho_{sc}(X)du =-i\pi(z-i)\rho_{sc}(X).
    \end{align*} Combining the estimates for $G_{R,m}(z)$ and $d_{m}(R)$ removes the cutoff from the deterministic main term, and the remaining dependence is $O(R^{-1})$. Thus, we can choose $R>0$ arbitrarily large so that the factor of $\Im(z)^{-1/2}+R^{-1}$ is simply proportional to $\Im(z)^{-1/2}.$
\end{proof}

\subsection{Critical Points}\label{s:zeros} 
In this section, we show that for all $x^{(m)}\in\Omega_{m}$, there is a unique critical point in the strict upper half-plane. As in \cref{l:sconv}, for the rest of \cref{s:convrate} we will use the notation $e_{m}:=\log^{2}{(m)}T^{-1} +Tm^{-1}$.
\begin{lemma}\label{l:unique} Fix $\alpha\in(0,2)$. There exists $m_{0}:=m_{0}(\alpha)\in\mathbb{N}$ such that for all $m>m_{0}$, $x^{(m)}\in\Omega_{m}$, and $X\in (-2+\alpha,2-\alpha),$ there exists a unique zero $z_{0}^{(m)}$ of $S_{m}'(z)$ such that $\Im(z_{0}^{(m)})>0$. Furthermore, for all $\delta>0$ there exists $m_{\delta}:=m_{\delta}(\delta,\alpha)\in\mathbb{N}$ such that for all $m>m_{\delta}$, $x^{(m)}\in\Omega_{m}$, and $X\in (-2+\alpha,2-\alpha)$, $|z_{0}^{(m)}-z_{0}|<\delta$, where $z_{0}:=-X/2 +i\sqrt{1-(X/2)^{2}}$. 
\end{lemma}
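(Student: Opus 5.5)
The plan is to combine a real-root counting argument, which gives uniqueness, with a Rouch\'e argument against the limiting function $S_{*}'$ from \cref{l:sconv}, which gives existence and localization. Multiplying \eqref{e:sderiv} by $z\prod_{r=1}^{m}(z-u_{r}^{(m)})$ turns $S_{m}'(z)=0$ into a polynomial equation $P_{m}(z)=0$. The term $\frac{T}{m}z\cdot z\prod_{r}(z-u_r^{(m)})$ shows that $P_m$ has degree $m+2$ with leading coefficient $T/m$. On $\Omega_{m}$ the spectrum is simple (\cref{r:simple}), so the poles $u_{1}^{(m)}>\cdots>u_{m}^{(m)}$ of $S_{m}'$ are distinct. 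If some $u_{r}^{(m)}$ coincided with the pole at $0$, the two residues would simply combine; this non-generic case can be handled by perturbing $X$ slightly and using continuity, or by running the same count with the merged pole.

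\textbf{Step 1 (real zeros).} Restricted to $\mathbb{R}$, $S_{m}'$ is real-valued. Each pole $u_{r}^{(m)}$ has residue $-1/T<0$, so $S_{m}'\to-\infty$ just to the right of $u_r^{(m)}$ and $\to+\infty$ just to the left. Hence on every gap $(u_{r+1}^{(m)},u_{r}^{(m)})$ that does not contain $0$, $S_{m}'$ runs from $-\infty$ to $+\infty$ and has a real zero. The pole at $0$ has residue $+1>0$. It therefore splits the gap containing it into two subintervals, with the limit values arranged so that each subinterval has a sign change, giving two zeros there. On $(u_{1}^{(m)},\infty)$, $S_{m}'$ goes from $-\infty$ to $+\infty$ because of the $\frac{T}{m}x$ term, and symmetrically on $(-\infty,u_{m}^{(m)})$; this gives one zero in each outer interval. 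The analogous accounting applies if $0$ lies outside $[u_m^{(m)},u_1^{(m)}]$. In all cases this yields at least $m$ real zeros of $P_{m}$, counted without multiplicity. I will recheck this bookkeeping carefully, and if the crude count only gives $m$ I will refine it so that at most two zeros of $P_m$ are non-real. Since $P_{m}$ has real coefficients, the non-real zeros form conjugate pairs, so at most one of them lies in $\mathbb{H}$.

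\textbf{Step 2 (existence and localization).} $S_{*}'(z)=\frac{X}{2}+i\pi\rho_{sc}(X)+\frac1z$ has the unique zero
\[
z=-\Big(\tfrac X2+i\sqrt{1-X^{2}/4}\Big)^{-1}=-\tfrac X2+i\sqrt{1-X^2/4}=z_{0},
\]
using $\pi\rho_{sc}(X)=\sqrt{1-X^2/4}$ and the fact that this complex number has modulus one. The zero is simple. For $X\in(-2+\alpha,2-\alpha)$, $\Im z_0$ is bounded below by a constant $c(\alpha)>0$. Fix $\delta<c(\alpha)/2$. On the circle $|z-z_0|=\delta$ one has $|S_*'(z)|=|z-z_0|/(|z||z_0|)\ge c'\delta$ uniformly in $X$. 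By \cref{l:sconv}, on this circle, which lies in a region $\mathcal R(c,\kappa)$ with $\kappa\asymp c(\alpha)$, we have $|S_m'-S_*'|\le C\kappa^{-1}e_m\to0$. For $m>m_\delta$ this error is smaller than $c'\delta$, so Rouch\'e's theorem gives exactly one zero of $S_m'$ in the disk, and this disk lies in $\mathbb H$. Combined with Step 1, this is the unique zero $z_0^{(m)}$ in $\mathbb H$, and $|z_0^{(m)}-z_0|<\delta$. Taking a fixed $\delta=\delta(\alpha)$ gives the first assertion with $m_0=m_{\delta(\alpha)}$.

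\textbf{Main obstacle.} I expect the obstacle to be the real-root count in Step 1, which must show that at most two zeros of $P_m$ are non-real. The extra pole at $0$, and its position relative to the $u_{r}^{(m)}$, affects the count, and I will handle each placement case explicitly. The Rouch\'e step is routine given the uniform bound in \cref{l:sconv}.
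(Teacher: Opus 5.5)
Your route is the paper's. \cref{l:maximaginary} uses a real-zero count to show that $S_m'$ has at most one conjugate pair of non-real zeros, and the proof of \cref{l:unique} then uses Hurwitz's theorem with \cref{l:sconv} to place exactly one zero near $z_0$. Your Rouch\'e version with the explicit bound $|S_*'(z)|=|z-z_0|/|z|$ (using $|z_0|=1$) is the same idea, and it makes the uniformity in $X$ and $x^{(m)}\in\Omega_m$ more transparent than the Hurwitz phrasing.

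However, Step 1 contains a false claim. As $x\uparrow 0$ the term $1/x$ gives $S_m'\to-\infty$, and just to the right of $u_{k+1}^{(m)}$ you also have $S_m'\to-\infty$. So on $(u_{k+1}^{(m)},0)$ the function runs from $-\infty$ to $-\infty$. Symmetrically, on $(0,u_k^{(m)})$ it runs from $+\infty$ to $+\infty$. No zero is forced in either piece. Had two zeros been forced there, your tally would be $(m-2)+2+2=m+2$ real zeros. Then every zero of $P_m$ would be real, contradicting the zero in $\mathbb{H}$ that Step 2 produces.

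The fix is to discard the gap containing $0$. The remaining $m-2$ gaps $(u_{r+1}^{(m)},u_r^{(m)})$ and the two outer half-lines give $m$ distinct real zeros.

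If $0\notin[u_m^{(m)},u_1^{(m)}]$, the outer half-line containing $0$ splits into two pieces whose ends have equal signs. For example, if $0>u_1^{(m)}$, the function runs from $-\infty$ to $-\infty$ on $(u_1^{(m)},0)$ and from $+\infty$ to $+\infty$ on $(0,\infty)$. So in that case you count the $m-1$ gaps plus the other half-line, again $m$.

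This is exactly the three-case count in the paper's proof of \cref{l:maximaginary}, written there in macroscopic coordinates, where your pole at $0$ becomes the ``negative'' pole at $X$. Since $P_m$ is real of degree $m+2$, $m$ distinct real zeros already leave at most one conjugate pair. No further ``refinement'' of the count is needed.

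Your treatment of the coincidence $u_k^{(m)}=0$ is fine. You can perturb $X$ and apply Hurwitz as the paper does. Alternatively, note that $P_m$ then has the real root $0$, and the corrected count still gives $m$ real roots.
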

We begin by showing that there can be at most one critical point in the strict upper half-plane. Then, we will prove that such a critical point exists.
\begin{lemma}\label{l:maximaginary} For all $x^{(m)}\in\Omega_{m}$ and $X\in\mathbb{R}$, the equation $S_{m}'(z)=0$ has at most two zeros, counted with multiplicity, which do not lie on the real line. 
\end{lemma}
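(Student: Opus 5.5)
We turn $S_m'(z)=0$ into a polynomial equation and count its real roots by intermediate value arguments. Write $u_1>u_2>\cdots>u_m$ for the points $u_r^{(m)}$; they are distinct because $x^{(m)}\in\Omega_m$ has simple spectrum. Suppose first that $0\notin\{u_r\}$. Multiplying \eqref{e:sderiv} by $z\prod_{r=1}^m(z-u_r)$ gives
\begin{align*}
P(z):=\Big(Xz+1+\tfrac{T}{m}z^{2}\Big)\prod_{r=1}^{m}(z-u_r)-\frac{z}{T}\sum_{r=1}^{m}\prod_{s\neq r}(z-u_s).
\end{align*}
This is a polynomial of degree exactly $m+2$, since $T/m>0$. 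Its zeros outside $\{0,u_1,\dots,u_m\}$ are exactly the zeros of $S_m'$ there. Moreover $P(u_r)=-\frac{u_r}{T}\prod_{s\neq r}(u_r-u_s)\neq0$ and $P(0)=\prod(-u_r)\neq 0$, so no spurious roots are introduced, and multiplicities of zeros of $S_m'$ agree with those of $P$ away from the poles. It therefore suffices to show that $P$ has at least $m$ real zeros counted with multiplicity. Then at most two zeros of the real polynomial $P$ lie off the real line, and these form a conjugate pair.

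To find $m$ real zeros, look at the real function $f(x)=S_m'(x)$ on the real line. Near each $u_r$, the term $-\frac1T(x-u_r)^{-1}$ dominates, so $f\to+\infty$ as $x\uparrow u_r$ and $f\to-\infty$ as $x\downarrow u_r$. The pole at $0$ has residue $+1$, the opposite sign, so $f\to -\infty$ as $x\uparrow 0$ and $f\to+\infty$ as $x\downarrow 0$. Now consider each of the $m-1$ open gaps $(u_{r+1},u_r)$.

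If a gap contains no $0$, then $f$ is continuous on it and runs from $-\infty$ at the left endpoint to $+\infty$ at the right endpoint. Hence it has a zero there, and this zero is a real zero of $P$.

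If $0$ lies in the gap $(u_{r+1},u_r)$, split it into $(u_{r+1},0)$ and $(0,u_r)$. On the first piece $f$ goes from $-\infty$ to $-\infty$, and on the second from $+\infty$ to $+\infty$, so this piece of the argument yields no forced zero. Instead, count zeros of $P$ by sign changes. At the endpoints of the unbounded intervals, the leading term $\frac{T}{m}z^{m+2}$ controls the sign at $\pm\infty$. Right of $u_1$, $f(x)\to -\infty$ as $x\downarrow u_1$, while $f(x)\sim \frac{T}{m}x\to+\infty$ as $x\to+\infty$, giving one zero in $(u_1,\infty)$. Symmetrically, $f\to+\infty$ as $x\uparrow u_m$ and $f\to-\infty$ as $x\to-\infty$, giving one zero in $(-\infty,u_m)$.

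In the case where $0$ splits a gap, we therefore obtain $m-2$ zeros from the other gaps plus $2$ from the unbounded intervals, for $m$ in total. In the case where $0$ is outside $[u_m,u_1]$, say $0>u_1$, we get $m-1$ gap zeros. On $(u_1,0)$, $f$ goes from $-\infty$ to $-\infty$, so that interval gives nothing forced, but on $(0,\infty)$ it goes from $+\infty$ to $+\infty$, which also gives nothing. Here we need one more zero, and we get it from parity: $P$ is a real polynomial of degree $m+2$, so its number of real zeros counted with multiplicity has the same parity as $m+2$, that is, as $m$. Since $m-1$ real zeros are forced, there must be at least $m$. This parity argument applies in every case, so it suffices to establish $m-1$ real zeros and then let parity supply the $m$-th. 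The case $0<u_m$ is symmetric. The degenerate case $0=u_r$ for some $r$ is handled the same way: the poles merge into a single pole of residue $1-\frac1T$, and the degree of $P$ drops by one after cancelling the common factor $z$. Alternatively, it follows by continuity of the roots of $P$ in the $u_r$, since the set of non-real roots can only shrink in such a limit.

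The main obstacle is the careful bookkeeping of where $0$ sits relative to the $u_r$, and making sure the parity argument is applied to $P$ rather than to $S_m'$ so that multiplicities are counted consistently. With at least $m$ real roots out of $m+2$, at most two zeros, counted with multiplicity, lie off $\mathbb{R}$.
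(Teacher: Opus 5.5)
Your proposal is correct and is essentially the paper's argument: you clear denominators to get a real polynomial of degree $m+2$, force at least $m$ real zeros by the intermediate value theorem on the gaps between consecutive poles and on the unbounded intervals, and handle the degenerate case where the extra pole coincides with some $u_r$ by continuity of roots. The paper does the same in the rescaled coordinate $X+\frac{T}{m}z$, and uses Hurwitz's theorem for the degenerate case. The only deviation is your parity step when $0\notin[u_m,u_1]$; it is valid but unnecessary, since for $0>u_1$ the function is continuous on $(-\infty,u_m)$ and runs from $-\infty$ to $+\infty$ there, which is exactly the extra zero the paper uses in that case.
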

\begin{proof} By our definition of $\Omega_{m}$, for all $x^{(m)}\in\Omega_{m}$, all of the eigenvalues are distinct. 

     Finding the zeros of $S'_{m}(z)$ is, away from the poles, equivalent to the problem of finding the zeros of a polynomial with leading term $z^{m+2}$, which therefore has $m+2$ roots, counted with multiplicity, over the complex plane. In this argument, we return to the macroscopic coordinate $X+\frac{T}{m}z\mapsto z$.
    Under this change in coordinates, the problem is equivalent to finding $z$ such that 
     \begin{align}\label{e:solns}
         z & = \frac{1}{m}\sum_{r=1}^{m}\frac{1}{z-x_{r}^{(m)}/\sqrt{m}} - \frac{T}{m}\cdot \frac{1}{z-X} =:f(z) =f_{X}(z).
     \end{align} 
     Along the real axis in $\mathbb{C}$, $f(z)$ is a real-valued function (except at the locations of the poles). The function $f(z)$ is the sum of $m+1$ functions which have a single simple pole and which are otherwise are holomorphic on the rest of $\mathbb{C}$.

     This argument is conducted in the case the $x_{j}^{(m)}/\sqrt{m}\neq X$ for all $j\in [[m]]$. If $x_{j}^{(m)}/\sqrt{m}=X$ for some $j\in[[m]]$, we consider a sequence $X^{(\varepsilon)}\to X$ as $\varepsilon\to 0$. On compact subsets of $\mathbb{C}\setminus \mathbb{R}$,  functions $z-f_{X^{(\varepsilon)}}(z)\to z- f_{X}(z)$ converge uniformly as $\varepsilon\to 0$ (and therefore, by Hurwitz' theorem the number of zeros in that set cannot increase in the limit). Therefore, the same bound that we prove below will also hold in this degenerate case. 
      
     Along the real axis, we will call a pole ``positive'' if the additive term corresponding to that pole, of the form $c(z-a)^{-1}$ for $c,a\in\mathbb{R}$, is negative for all $x<a$ and positive for all $x>a$. When the signs are reversed, we call a pole of that form ``negative.''

     We consider the right-hand side of this equation and note that it has $m$ positive poles at locations $z=x_{r}^{(m)}/\sqrt{m}$, 
     and one negative pole at $z=X$. 
     On the line segment with $y=0$ between any two positive poles, $f(z)$ takes all values between $-\infty$ and $\infty$. Therefore, on any interval of this type, there must be at least one solution to \eqref{e:solns}. We call this a solution of type $*$. Recall that the $x_{i}^{(m)}$ are ordered so that $x_{i}^{(m)}\geq x_{j}^{(m)}$ when $i<j$. We consider three cases:
     \begin{enumerate} 
        \item Case that $X\in \left(x_{m}^{(m)}/\sqrt{m},x_{1}^{(m)}/\sqrt{m}\right)$: There are at least $m-2$ solutions of type $*$. To the right of the pole at $x_{1}^{(m)}/\sqrt{m}$,  is a region in $x$ where $f(z)$ takes all values $(0,\infty)$. To the left of the pole at $x_{m}^{(m)}/\sqrt{m}$ is a region in $x$ where $f(z)$ takes all values $(-\infty,0)$. This guarantees at least two additional solutions to \eqref{e:solns}, bringing the total number of solutions up to at least $m$. 
            \item ($X<x_{m}^{(m)}/\sqrt{m}$): There are at least $m-1$ solutions of type $*$. To the right of the pole at $x_{1}^{(m)}/\sqrt{m}$ there is a region in $x$ where $f(z)$ takes all values in $(0,\infty)$, which guarantees at least an additional solution to \eqref{e:solns}, bringing the total number of solutions up to at least $m$.
            \item ($x_{1}^{(m)}/\sqrt{m}<X$): There are at least $m-1$ solutions of type $*$. To the left of the pole $x_{m}^{(m)}/\sqrt{m}$ there is a region in $x$ where $f(z)$ takes all values in $(-\infty,0)$. This latter region will necessarily contribute at least one solution of \eqref{e:solns}, which brings the total number of solutions up to at least $m$. 
        \end{enumerate} The coefficients of the equation \eqref{e:solns} are all real, meaning that any zero with a nonzero complex part must come with a complex conjugate zero. In all of the cases above, there are at most two total zeros remaining, making at most one pair of complex conjugate zeros. 
\end{proof}
Using this result, we can prove \cref{l:unique}.
\begin{proof}[Proof of \cref{l:unique}]
For $z_{0}=-X/2 +i\sqrt{1-(X/2)^{2}}$ as defined in \cref{l:unique}, we note that $S_{*}'(z_{0})=0$. By Hurwitz's theorem and \cref{l:sconv}, for all $\delta>0$ there exists $m_{\delta}\in\mathbb{N}$ such that for all $m>m_{\delta}$ and $x^{(m)}\in\Omega_{m}$, there is exactly one zero $z_{0}^{(m)}$ of multiplicity $1$ in the disc defined by $|z-z_{0}|<\delta$. By \cref{l:maximaginary}, this $z_{0}^{(m)}$ is the unique zero of $S_{m}'(z)$ in the upper half-plane.
\end{proof}

\subsection{Assumptions of \cref{p:steepest}} \label{s:contours} 

In this section, we study the steepest descent contours of $S_{m}(z)$ within the region $\mathcal{R}(c,\kappa)$. We will verify the assumptions of \cref{p:steepest}, roughly in the order they are stated.

We begin by commenting on the assumptions which follow immediately from what we have done so far, without further proof:
\begin{itemize}
    \item  Under our choice of branch cut, $S_{m}(z)$ and $R_{m}(w,z)$ are holomorphic on the strict upper half-plane, and therefore also for all $z,w\in \mathcal{R}(c,\kappa)$. 
    \item For $\kappa,c>0$ sufficiently small, this region must contain $z_{0}$, and therefore, for $m$ sufficiently large, must contain $z_{0}^{(m)}$. Furthermore, $z_{0}^{(m)}$ is the unique critical point in $\mathcal{R}(c,\kappa)$ by \cref{l:unique}. 
    \item We have also shown that the $z_{0}^{(m)}$ converge to $z_{0}$ as $m\to\infty$. 
\end{itemize}

Next, we will verify the assumptions of \cref{p:steepest} $(1)$ and show that for $m$ sufficiently large, the critical point is nondegenerate. 

\begin{proposition}\label{l:conditions1} Fix $\alpha\in (0,2)$. There exist $c_{1},c_{2},c_{3},r>0$ and $m_{0}\in\mathbb{N}$, depending only on $\alpha$, such that for all $m\geq m_{0}$, $z\in B_{r}(z_{0})$, $x^{(m)}\in \Omega_{m}$, and $X\in (-2+\alpha,2-\alpha),$ $c_{1}<|S_{m}''(z)|<c_{2}$ and $|S_{m}'''(z)|\leq c_{3}$, and likewise for $S_{*}''(z),S_{*}'''(z)$. 
\end{proposition}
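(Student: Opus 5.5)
The plan is to prove the bounds first for the limiting function $S_{*}$, where everything is explicit, and then transfer them to $S_{m}$ by combining the uniform derivative estimate of \cref{l:sconv} with Cauchy's integral formula.

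\textbf{Bounds for $S_{*}$.} Since $S_{*}'(z)=\tfrac12 X+\tfrac1z+i\pi\rho_{sc}(X)$, we have
\begin{align*}
S_{*}''(z)=-z^{-2}, \qquad S_{*}'''(z)=2z^{-3}.
\end{align*}
Because $|z_{0}|=1$ and $\Im(z_{0})=\sqrt{1-(X/2)^{2}}\geq c(\alpha)>0$ uniformly for $X\in(-2+\alpha,2-\alpha)$, I would choose $r=r(\alpha)\leq \min\{1/4,\Im(z_{0})/4\}$. Then every $z$ with $|z-z_{0}|<2r$ satisfies $|z|\in(1/2,3/2)$ and $\Im(z)>\Im(z_{0})/2$. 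On this ball $|S_{*}''|\in(4/9,4)$ and $|S_{*}'''|\leq 16$, which gives the claim for $S_{*}$ with constants depending only on $\alpha$.

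\textbf{Transfer to $S_{m}$.} Let $h_{m}:=S_{m}'-S_{*}'$, which is holomorphic on $\mathbb{H}$. On the larger ball $B_{2r}(z_{0})$ we have $\Im(z)^{-1}\leq 2/\Im(z_{0})$ and $|z|\leq 3/2$. The first statement of \cref{l:sconv} therefore gives
\begin{align*}
\sup_{B_{2r}(z_{0})}|h_{m}|\leq C(\alpha)\bigl(\log^{2}(m)T^{-1}+Tm^{-1}\bigr)=C(\alpha)e_{m},
\end{align*}
uniformly over $x^{(m)}\in\Omega_{m}$ and $X\in(-2+\alpha,2-\alpha)$. For $z\in B_{r}(z_{0})$, the disc $B_{r}(z)$ lies inside $B_{2r}(z_{0})$, so the Cauchy estimates yield
\begin{align*}
|h_{m}'(z)|\leq r^{-1}C e_{m}, \qquad |h_{m}''(z)|\leq 2r^{-2}C e_{m}.
\end{align*}
Since $\log^{4}(m)\ll T\ll m^{2/3}$, we have $e_{m}\to0$. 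Choose $m_{0}(\alpha)$ so that these two errors are below $\min\{2/9,\,1\}$. Then
\begin{align*}
c_{1}:=2/9<|S_{m}''(z)|<5=:c_{2}, \qquad |S_{m}'''(z)|\leq 17=:c_{3}.
\end{align*}

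The only delicate point is uniformity. All the constants above, namely $r$, the lower bound on $\Im(z_{0})$, and $C(\alpha)$ from \cref{l:sconv}, must be independent of $X$ and of $x^{(m)}\in\Omega_{m}$. This holds because $\Im(z_{0})$ is bounded below on $(-2+\alpha,2-\alpha)$, and \cref{l:sconv} is already uniform in both $X$ and $x^{(m)}$. Centering the balls at $z_{0}$ rather than at $z_{0}^{(m)}$ avoids any dependence on the random critical point. By \cref{l:unique}, $z_{0}^{(m)}\in B_{r/2}(z_{0})$ for large $m$, so the ball $B_{r/2}(z_{0}^{(m)})$ needed in \cref{p:steepest} is also covered.
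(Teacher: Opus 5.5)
Your proof is correct. It reaches the same conclusion by a more direct route than the paper.

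The paper first uses \cref{l:sconv} to get uniform convergence of $S_m''$ and $S_m'''$ on a compact set, leaving the passage to derivatives implicit. It then argues by contradiction at $z_0^{(m)}$. If $|S_m''(z_0^{(m)})|$ degenerated to $0$, then $z_0$ would be a degenerate critical point of $S_*$, and Hurwitz's theorem would produce two critical points of $S_m$ in $\mathbb{H}$, contradicting \cref{l:unique}. Only after that does it use continuity and non-vanishing of $S_*''$ at $z_0$ to get the bounds on a neighborhood.

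You instead compute $S_*''(z)=-z^{-2}$ and $S_*'''(z)=2z^{-3}$ and use $|z_0|=1$. This makes non-degeneracy of $z_0$ immediate, so the Hurwitz and uniqueness step becomes unnecessary. You also replace the qualitative convergence of derivatives with explicit Cauchy estimates for $S_m'-S_*'$ on $B_{2r}(z_0)\subset\mathbb{H}$.

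Your route gives you explicit constants. The radius $r$ depends only on the uniform lower bound for $\Im z_0$ on $(-2+\alpha,2-\alpha)$, and you get $c_1=2/9$, $c_2=5$, $c_3=17$. You also get a quantitative rate $O(e_m)$ for $|S_m''-S_*''|$ and $|S_m'''-S_*'''|$. Uniformity in $X$ and in $x^{(m)}\in\Omega_m$ is transparent, and \cref{l:unique} is needed only for your closing remark about balls centered at $z_0^{(m)}$.

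The paper's subsequence-and-contradiction structure is the one that carries over to the setting of \cref{a:fullgen}, where there is no explicit limiting $S_*$. In the rigidity setting of this proposition, your argument is simpler and more informative.
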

\begin{proof}
     Since $S'_{m},S'_{*}$ are holomorphic in the upper half-plane, we can apply \cref{l:sconv} on a compact region such as $\overline{\mathcal{R}(c,\kappa)}$ to  conclude that $S''_{m}(z)\to S_{*}''(z)$ and $S'''_{m}(z)\to S_{*}'''(z)$ uniformly along any sequence of $x^{(m)}$ with  $x^{(m)}\in\Omega_{m}$ at each step. 

     We first establish that these estimates hold at $z_{0}^{(m)}$. Suppose, by way of contradiction, that there exist sequences of constants $\{c_{1,m}\}_{m\in\mathbb{N}}$ and $\{c_{2,m}\}_{m\in\mathbb{N}}$ such that $c_{1,m}\to 0$ and $c_{2,m}\to\infty$ as $m\to\infty$ and such that (a) $|S_{m}''(z_{0}^{(m)})|<c_{1,m}$ or (b) $|S_{m}''(z_{0}^{(m)})|>c_{2,m}$ for some $x^{(m)}\in\Omega_{m}$ at infinitely many $m\in\mathbb{N}$. 
    
    In case $(a)$, it would follow that there exists a subsequence of $m$ and $x^{(m)}$ along which $S''_{*}(z_{0}^{(m)})\to 0$, in which case (relying also on the convergence of $z_{0}^{(m)}$ to $z_{0}$ from \cref{l:unique}) the order of the critical point $z_{0}$ would need to be at least two. By Hurwitz's theorem, this would imply that for all $m$ sufficiently large along this subsequence, there would exist two critical points (counted with multiplicity) of $S_{m}(z)$ in the upper half-plane, which contradicts \cref{l:unique}. In case $(b)$, the contradiction is immediate from the fact that $S_{*}''$ is bounded around $z_{0}$, and the uniform convergence in a compact neighborhood in the upper half-plane from \cref{l:sconv}. 

    Finally, since $S_{*}''$ is continuous and nonzero at $z_{0}$, and due to the uniform convergence of $S''_{m}$ to $S''_{*}$ from \cref{l:sconv}, and the uniformity of the constants over $X\in (-2+\alpha,2-\alpha),$ 
    we can conclude that there exists $c_{1},c_{2},r_{1}>0$ and $m_{1}\in\mathbb{N}$ such that for all $m>m_{0}$, $x^{(m)}\in\Omega_{m}$ and $z$ such that $|z-z_{0}|\leq r_{1}$, $c_{1}< |S_{m}''(z)|< c_{2}$. Since $S_{*}'''$ is bounded around $z_{0}$, and we likewise have uniform convergence on a compact region containing $z_{0}$, we can also conclude that there exists $c_{3},r_{2}>0$ and $m_{2}\in\mathbb{N}$ such that for all $m>m_{2}$ and $x^{(m)}\in\Omega_{m}$, for all $|z-z_{0}|<r_{2},$ $|S_{m}'''(z)|\leq c_{3}$. To conclude, we take $m_{0}=\max\{m_{1},m_{2}\}$ and $r=\min\{r_{1},r_{2}\}$.
\end{proof}
    
    \begin{remark}\label{r:c1c2}
        As a consequence of \cref{l:conditions1}, we can take a Morse chart on an open set $U_{m}$ containing $z_{0}^{(m)}$ and define contours $C_{1}^{(m)},C_{2}^{(m)}\subset \mathcal{R}(c,\kappa)$ (which implicitly depend on $c,\kappa$, though we suppress this in the notation) by setting them equal to, respectively, the steepest descent and ascent branches of $S_{m}(z)$ at $z_{0}^{(m)}$ in $U_{m}$, and then extending to the region $\mathcal{R}(c,\kappa)\setminus U_{m}$ by continuing respectively along the integral curves of $-\nabla \Re(S_{m}(z))$ and $\nabla \Re(S_{m}(z))$ and taking the intersection with $\mathcal{R}(c,\kappa)$ (or the connected component of that intersection which includes $z_{0}^{(m)}$ if there is more than one). We note that by 
        \cref{l:unique} and \cref{l:conditions1}, these charts can be chosen uniformly for all $X\in (-2+\alpha,2-\alpha)$. By the quantitative Morse lemma (for instance, see \cite[Theorem B.1]{DGBLR25}), we can choose the neighborhoods $U_{m}$ around $z_{0}^{(m)}$ so that the Morse chart around $z_{0}^{(m)}$ covers $U_{m}$, and such that there exists $m_{0}\in\mathbb{N},\kappa_{0},c\in (0,1)$ and an open set $U$ such that for all $m>m_{0}$ and $\kappa\in (0,\kappa_{0})$, $U\subset U_{m}\subset\mathcal{R}(c,\kappa)$ and $z_{0}^{(m)}\in U$. 
    \end{remark}

    Next, we verify the assumptions of \cref{p:steepest} $(2)$. In the next remark, we discuss an upper bound for $R_{m}(z,w)$ over $z,w\in\mathcal{R}(c,\kappa)$. Except for this remark, throughout this section all constants are independent of $n_{1},n_{2},x_{1},x_{2}$, since the kernel in \cref{t:convrate} depends on those values only through the configuration of the original contours and through $R_{m}(z,w)$. 
    \begin{remark}\label{r:Rbd} Let $D_{c}:=\sqrt{1+c^{-2}}$. For all $x_{1},x_{2}\in\mathbb{R}$ and $n_{1},n_{2}\in\mathbb{N}$, whenever $T\kappa\geq 2\max\{|x_{1}|,|x_{2}|,1\}$ and $T\geq \max\{n_{1},n_{2}\}$,
\begin{align*}\sup_{z,w\in\mathcal R(c,\kappa)}
|R_m(z,w)|
\le
D_c^{\,n_2-1}
\kappa^{-n_1-n_2}
e^{4(|x_1|+|x_2|)\kappa^{-1}}.
\end{align*}
  Furthermore, if $A_{m}(w):= w^{T}\left(w-\frac{x_{1}}{T}\right)^{-n_{1}-T-1}$ and $B_{m}(z)= z^{-T}\left(z-\frac{x_{2}}{T}\right)^{n_{2}+T-1}$, then, under the same assumption on $T\kappa$, 
  \begin{align*} 
  \sup_{w\in \mathcal{R}(c,\kappa)}|A_{m}(w)|\leq e^{4\kappa^{-1}|x_{1}|}\kappa^{-n_{1}-1} & & \sup_{z\in \mathcal{R}(c,\kappa)}|B_{m}(z)|\leq D_{c}^{n_{2}-1}e^{4\kappa^{-1}|x_{2}|}\kappa^{-n_{2}+1} \end{align*} 

    By similar arguments, when we allow $z,w\in U$ only, and note that $U$ is contained within a neighborhood of $z_{0}$ of fixed constant radius, then within that region $R_{m}(z,w)$ there exists a constant $C>0$ which is independent of $c,\kappa,$ (and can be chosen uniformly over $X\in (-2+\alpha,2-\alpha)$).  
\end{remark}
 The next remark verifies another assumption of \cref{p:steepest} $(2)$.
 \begin{remark}\label{r:extra} It is an immediate consequence of \cref{l:conditions1} and the definition of the contours that there exists $d:=d(\alpha)>0$ such that for all $m>m_{0}$, $x^{(m)}\in\Omega_{m},$ $X\in (-2+\alpha,2-\alpha)$, and $z\in C_{1}^{(m)}\setminus U,w\in C_{2}^{(m)}\setminus U$,
         \begin{align*}
        \Re\left(S_{m}(z_{0}^{(m)})-S_{m}(z)\right), \Re\left(S_{m}(w) - S_{m}(z_{0}^{(m)})\right)\geq d.
        \end{align*}
 \end{remark} 
 The next lemma also verifies an assumption of \cref{p:steepest} $(2)$. 
\begin{lemma}\label{l:lengthbound} Fix $\alpha\in (0,2),$ $c\in (0,1)$, and $\kappa\in (0,1)$. 
    There exists $C:=C(c,\alpha)>0$  and $m_{0}:=m_{0}(c,\kappa,\alpha)\in\mathbb{N}$ such that for all $m>m_{0}$, $x^{(m)}\in\Omega_{m}$, and $X\in (-2+\alpha,2-\alpha),$ the lengths of the contours $C_{1}^{(m)},C_{2}^{(m)}$ are uniformly bounded by $L(c,\kappa):=C\kappa^{-2}$.
\end{lemma}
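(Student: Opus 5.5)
The plan is to bound the length of each steepest descent/ascent curve by combining a bound on its total variation of $\Re S_m$ with a lower bound on $|S_m'|$ away from the critical point. Along an integral curve of $\mp\nabla\Re S_m$ we have $\Im S_m$ constant, so the arclength element satisfies $|dz| = |d\Re S_m(z)|/|S_m'(z)|$. Hence for the part of $C_j^{(m)}$ outside a small fixed disc $B_\rho(z_0^{(m)})$,
\begin{align*}
\mathrm{length}\big(C_j^{(m)}\setminus B_\rho(z_0^{(m)})\big)\le \frac{\sup_{z\in\mathcal R(c,\kappa)}\Re S_m(z)-\inf_{z\in\mathcal R(c,\kappa)}\Re S_m(z)}{\inf_{z\in\mathcal R(c,\kappa)\setminus B_\rho(z_0^{(m)})}|S_m'(z)|},
\end{align*}
because $\Re S_m$ is monotone along each of the (at most two) branches leaving the critical point. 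Inside $B_\rho$, the Morse chart from \cref{r:c1c2} and the derivative bounds of \cref{l:conditions1} give length $O(\rho)$ uniformly.

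For the numerator, I will use the second estimate of \cref{l:sconv} on $\mathcal R(c,\kappa)$. This replaces $\Re S_m - \Re C_m$ by $\Re S_*$ up to an error $C(\kappa^{-3/2}e_m+\kappa^{-2}Tm^{-1})$, which is $\le 1$ once $m>m_0(c,\kappa,\alpha)$. On $\mathcal R(c,\kappa)$ we have $|z|\le D_c\kappa^{-1}$ and $\Im z\ge\kappa$. Therefore
\begin{align*}
|\Re S_*(z)|\le |X|\,|z|+|\log|z||+\pi\rho_{sc}(X)(\Im z+1)=O_c(\kappa^{-1}+\log\kappa^{-1}),
\end{align*}
so the oscillation of $\Re S_m$ over the region is $O_c(\kappa^{-1})$.

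For the denominator, the first estimate of \cref{l:sconv} gives $|S_m'-S_*'|\le C\kappa^{-1}e_m$ on $\mathcal R(c,\kappa)$, which is small for $m$ large. It therefore suffices to lower bound $|S_*'(z)|=|z|^{-1}\,|a z+1|$, where $a:=X/2+i\pi\rho_{sc}(X)$ and the unique zero is $z_0=-1/a$. Since $|z|\le D_c\kappa^{-1}$, we get $|S_*'(z)|\ge |a|\,|z-z_0|\,D_c^{-1}\kappa$. Outside $B_{\rho/2}(z_0)$ this is $\gtrsim_{c,\alpha}\kappa$, uniformly in $X\in(-2+\alpha,2-\alpha)$ because $|a|=1$ and $\rho_{sc}$ is bounded below there. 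Together with $|z_0^{(m)}-z_0|<\rho/2$ from \cref{l:unique}, the denominator is $\gtrsim\kappa$. Dividing the numerator bound by this gives length $O_c(\kappa^{-2})$, as claimed.

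The main obstacle is justifying the monotonicity and single-passage structure used in the first display. A flow line of $\nabla\Re S_m$ cannot revisit a level of $\Re S_m$, but the connected component of $C_j^{(m)}\cap\mathcal R(c,\kappa)$ might a priori wind or approach another critical point. Uniqueness of the critical point in $\mathcal R(c,\kappa)$ (\cref{l:unique}) together with the strict lower bound on $|S_m'|$ excludes stagnation. Monotonicity of $\Re S_m$ along each branch then turns the arclength integral into the oscillation bound. I would write the argument branch by branch to make this precise.
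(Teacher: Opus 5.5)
Your proof is correct and follows the paper's argument. Outside the Morse neighborhood, each monotone branch has length at most the change of $\Re S_m$ along it divided by $\inf|S_m'|=\inf|\nabla\Re S_m|$, and the piece near $z_0^{(m)}$ is controlled by the quantitative Morse lemma. The only difference is bookkeeping. The paper bounds the change in $\Re S_m$ crudely by $\mathrm{diam}\,\mathcal R(c,\kappa)\cdot\sup|S_m'|\lesssim_c\kappa^{-2}$, and uses a $\kappa$-independent lower bound on $|S_*'|$ away from $z_0$; this holds because $|a+1/z|\to|a|=1$ as $|z|\to\infty$. You instead get the sharper oscillation bound $O_c(\kappa^{-1})$ from the $S_*$ approximation but only $|S_*'|\gtrsim\kappa$. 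Both routes give $C\kappa^{-2}$; your display omits a harmless factor $2$ for the two branches.
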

\begin{proof} For $m_{0}\in\mathbb{N}$ as in the definition of the contours, so that $U\subset U_{m}$ for all $m>m_{0}$, we separately consider the parts of $C_{1}^{(m)},C_{2}^{(m)}$ which lie inside $U$ and the parts which lie in $\mathcal{R}(c,\kappa)\setminus U$. In the former, the uniform boundedness of the contours for all $m>m_{0}$ is guaranteed by the quantitative Morse lemma.

%By the fact that $\overline{\mathcal{R}(c,\kappa)}\setminus U$ is compact and contains no critical points and by the fact that $|S''_{*}(z)|$ is bounded away from zero for $z\in U$, there exists 
Since $S'_{*}(z)$ has a unique zero in the upper half-plane at $z_{0}(X)$, there is a $\delta:=\delta(c,\alpha)>0$ such that $\inf_{X\in[-2+\alpha,2-\alpha]}\inf_{z\in \overline{\mathcal{R}(c,\kappa)}\setminus U}|S_{*}'(z)|>\delta$. Furthermore, by the uniform convergence of the $S'_{m}(z)$ to $S_{*}'(z)$ on compact sets, there exists $\delta':=\delta'(c,\alpha)>0$ such that for all $m>m_{0}$, 
\begin{align*}
    \inf_{X\in [-2+\alpha,2-\alpha]}\inf_{z\in\overline{\mathcal{R}(c,\kappa)}\setminus U} |\nabla \Re(S_{m}(z))|>\delta.
\end{align*}
We write $C_{1}^{(m)}\setminus U,C_{2}^{(m)}\setminus U$ as the union of two curves each, defining $\gamma_{i,j}^{(m)}$ for $i,j\in\{1,2\}$ so that $\gamma_{1,1}^{(m)}\cup\gamma_{1,2}^{(m)}=C_{1}^{(m)}\setminus U,$ and $\gamma_{2,1}^{(m)}\cup\gamma_{2,2}^{(m)}=C_{2}^{(m)}\setminus U.$ We assume that each of these curves is parameterized by its arc length, so that $(\gamma_{i,j}^{(m)})'(t)=\pm \nabla \Re(S_{m}(\gamma_{i,j}^{(m)}(t)) )|\nabla \Re(S_{m}(\gamma_{i,j}^{(m)}(t))|^{-1}.$

For arbitrary $i,j\in\{1,2\}$ we let $\ell$ denote the length of $\gamma_{ij}^{(m)}$. We compute 
\begin{align*}
    \ell \delta\leq \int_{0}^{\ell }\big|\nabla\Re(S_{m}(\gamma_{i,j}^{(m)}(t))\big|dt & = \int_{0}^{\ell }\big|\frac{d}{dt}\Re(S_{m}(\gamma_{i,j}^{(m)}(t))\big|dt 
    \\ & \overset{*}{=} |\Re(S_{m}(\gamma_{i,j}^{(m)}(\ell)))-\Re(S_{m}(\gamma_{i,j}^{(m)}(0)))| 
    \\ & \leq \max_{z_{1},z_{2}\in\mathcal{R}(c,\kappa)}|z_{1}-z_{2}|\sup_{z\in\mathcal{R}(c,\kappa)}|S'_{m}(z)|
     \leq C\kappa^{-1}\sqrt{4(c\kappa)^{-2}+\kappa^{-2}}.
\end{align*} The equality $*$ uses the monotonicity of $\Re(S_{m})$ along a steepest descent or ascent curve. Due to the uniform convergence over $\mathcal{R}(c,\kappa)$ from \cref{l:sconv}, the right-hand side is uniformly bounded for all fixed $\kappa\in(0,1)$. This calculation holds equally well for each of the $i,j\in\{1,2\}$. 
\end{proof}
The next remark discusses the last remaining assumption required in \cref{p:steepest} $(2)$.
\begin{remark}\label{r:extra2} We established as part of \cref{r:extra} that there exists $\delta>0$ such that for all $m>m_{0}$, 
\begin{align*}
    \min_{z\in\overline{\mathcal{R}(c,\kappa)}\setminus U} |\nabla \Re(S_{m}(z))|>\delta.
\end{align*} Furthermore, by \cref{l:conditions1} and the quantitative Morse lemma, we can conclude that when we evaluate $\Re(S_{m}(z))$ at the points in $C_{1}^{(m)}\cap\partial U,C_{2}^{(m)}\cap \partial U$, there is a uniform constant order discrepancy of the value of $\Re(S_{m}(z))$ at each $z\in (C_{1}^{(m)}\cap\partial U)\cup(C_{2}^{(m)}\cap \partial U)$ and its closest neighbors in that set. 

Therefore, by the monotonicity of $\Re(S_{m}(z))$ along the steepest descent and ascent contours, there exists $c:=c(\alpha,c)>0$ such that 
\begin{align*}
    \inf_{m>m_{0}}\inf_{\substack{z\in C_{1}^{(m)}\setminus U \\ w\in C_{2}^{(m)}\setminus U}}|\Re(S_{m}(z))-\Re(S_{m}(w))| >c.
\end{align*}
Consequently, appealing to the fact that $|\Re(S_{m}(z))-\Re(S_{m}(w))|\leq |z-w|\sup_{z\in\mathcal{R}(c,\kappa)}|S_{m}'(z)|$, and the uniform convergence of $S_{m}'(z)$ to $S_{*}'(z)$ for $z\in\mathcal{R}(c,\kappa)$ following from \cref{l:sconv}, we conclude that there exists $c':=c'(\alpha,c)>0$ such that 
\begin{align*} \inf_{m>m_{0}}\min\left\{\inf_{\substack{z\in C_{1}^{(m)}\setminus U \\ w\in C_{2}^{(m)}\setminus U} }|w-z|,\inf_{\substack{z\in C_{1}^{(m)}\cap U \\ w\in C_{2}^{(m)}\setminus U} }|w-z|,\inf_{\substack{z\in C_{1}^{(m)}\setminus U \\ w\in C_{2}^{(m)}\cap U} }|w-z|\right\}>c'\kappa.
\end{align*} In particular, to obtain the latter two bounds (on the mixed terms), we use the quantitative Morse lemma \cite[Theorem B.1]{DGBLR25} and analyze those expressions using the local chart.
This verifies another assumption which is required for \cref{p:steepest} $(2)$. 
\end{remark}

\subsection{Contour Configuration}\label{s:contours2} In this section we study the locations of the points where the steepest descent contours of $S_{m}(z)$ intersect with $\partial\mathcal{R}(c,\kappa)$. Through this section, we will use the notation $e_{m}:=\log^{2}{(m)}T^{-1} +Tm^{-1}$. 

\begin{proposition}\label{l:conditions3} Fix $\alpha\in(0,2)$. There exists $\kappa_{0}>0$ such that for all $\kappa\in (0,\kappa_{0}),$ there exists $m_{0}:=m_{0}(\kappa,\alpha)\in\mathbb{N}$ and $c_{0}:=c_{0}(\alpha)\in (0,1)$ and constants $c,C,R>0$, which depend on $\alpha$, such that for all $m>m_{0}$, $x^{(m)}\in\Omega_{m}$, and $X\in (-2+\alpha,2-\alpha),$ the following hold:
    \begin{enumerate}
        \item The set $\partial \mathcal{R}(c_{0},\kappa)\cap (C_{1}^{(m)}\cup C_{2}^{(m)})$ consists of four points. Three of these points lie on the line $y=\kappa$ and one lies along the line $y=\kappa^{-1}$.  These points can be chosen and labeled in a clockwise order $z_{1}^{(m)},z_{2}^{(m)},z_{3}^{(m)},z_{4}^{(m)}$, so that $z_{4}^{(m)}$ lies along $y=\kappa^{-1} $ and such that $$\Re(S_{m}(z_{1}^{(m)})),\Re(S_{m}(z_{3}^{(m)}))> \Re(S_{m}(z_{0}^{(m)}))>\Re(S_{m}(z_{2}^{(m)})),\Re(S_{m}(z_{4}^{(m)})).$$  
        \item The three points which intersect $y=\kappa$ satisfy 
        \begin{align*}
            |\Re(z_{2}^{(m)})|\le R\kappa,
                & &
            -C\leq \Re(z_{1}^{(m)}) \leq -c\kappa^{1/2}, & & c\kappa^{1/2}\leq \Re(z_{3}^{(m)})\le C.
        \end{align*} Consequently, for any $x_{1},x_{2}\in\mathbb{R}$, and $i\in \{1,2\}$, 
        \begin{align*}
            |x_{i}T^{-1}-\Re(z_2^{(m)})|
            \le |x_{i}|T^{-1}+R\kappa.
    \end{align*}
    \end{enumerate}
    \end{proposition}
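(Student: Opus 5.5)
The plan is to treat $S_m$ on $\overline{\mathcal{R}(c_0,\kappa)}$ as a small perturbation of the explicit function $S_*(z)=\tfrac12 X(z+i)+\log_{\mathbb{H}}(z)+i\pi\rho_{sc}(X)(z-i)$. We first determine the full level-set picture of $\Re S_*$ through $z_0$, and then transfer it to $S_m$ using \cref{l:sconv}. On a fixed $\mathcal{R}(c_0,\kappa)$ that lemma gives
\[
\sup_{\mathcal{R}(c_0,\kappa)}|S_m'-S_*'|\le C\kappa^{-1}e_m\to 0 ,
\]
and similarly $S_m-C_m\to S_*$ uniformly. For fixed $\kappa$ we then choose $m_0(\kappa,\alpha)$ so that these errors are much smaller than every $\kappa$-dependent margin that appears below.

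\textbf{Limiting picture.} Write $z=x+iy$. Then
\[
\Re S_*(z)=\tfrac12 Xx-\pi\rho_{sc}(X)\,y+\log|z|+\text{const}.
\]
Two features of this function drive the argument. First, as $y\to\infty$ the term $-\pi\rho_{sc}(X)y$ dominates, so $\Re S_*\to-\infty$ at the top. Second, near the real axis we have $\Re S_*\approx \tfrac12 Xx+\log|x|$, which tends to $-\infty$ near $x=0$ and is large for moderate $|x|$.

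The saddle $z_0$ has four arms. Two are descent arms along which $\Re S_*$ decreases: one reaches the top edge $y=\kappa^{-1}$, and the other goes down to the log singularity at $0$, meeting $y=\kappa$ at a point with $|x|\lesssim\kappa$. Two are ascent arms, which hit $y=\kappa$ on the two sides of the origin.

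To locate these crossings we use the fact that the steepest-descent and ascent curves lie on $\Im S_*=\Im S_*(z_0)$, where
\[
\Im S_*=\tfrac12 Xy+\arg z+\pi\rho_{sc}(X)\,x+\text{const}.
\]
Setting $y=\kappa$ and solving for $x$ gives a one-variable equation on the bottom edge. Near $x=0$ the term $\arg z$ changes by $\pi$ over a window of width $O(\kappa)$. This yields the crossing $|\Re z_2|\le R\kappa$ for the descent arm. The two ascent crossings sit at $x$ of order one, and in any case at distance at least $c\kappa^{1/2}$ from the origin; the $\kappa^{1/2}$ leaves room for the perturbation error. The bound $|\Re z_{1,3}|\le C$ together with the choice of $c_0$ ensures that no arm exits through the vertical sides $|x|=(c_0\kappa)^{-1}$. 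On those sides the linear terms control the sign of $\partial_y\Re S_*$, so a curve reaching them would have to be monotone in a way that contradicts the level-set equation.

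\textbf{Counting and ordering.} There are exactly four crossings because $S_*$, and $S_m$ for large $m$ by \cref{l:unique}, has no other critical point in $\mathbb{H}$. Gradient curves therefore cannot branch or close up. Each arm leaves $\mathcal{R}$, since $\Re S$ is strictly monotone along it and $\mathcal{R}$ is bounded, and distinct arms do not meet. The inequalities
\[
\Re S(z_1),\,\Re S(z_3)>\Re S(z_0)>\Re S(z_2),\,\Re S(z_4)
\]
follow immediately from this monotonicity, and the clockwise labeling follows from the local Morse picture, in which arms alternate between ascent and descent. The final inequality in part (2) is just the triangle inequality.

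\textbf{Transfer to $S_m$ and the main obstacle.} This transfer is the step I expect to be hardest. Uniform convergence of $S_m'$ does not by itself control where gradient curves end, because curves can drift over long distances. To handle this, I would use the uniform lower bound $|\nabla\Re S_m|>\delta$ on $\overline{\mathcal{R}}\setminus U$ from the proof of \cref{l:lengthbound}, together with the length bound $C\kappa^{-2}$. A Gr\"onwall comparison of the normalized gradient flows of $S_m$ and $S_*$ over this bounded length then shows that the exit points move by $o(1)$ as $m\to\infty$ with $\kappa$ fixed.

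A subtlety near $y=\kappa$ is that $S_m$ contains the discrete sum over the $u_r^{(m)}$, and the errors there scale like $\kappa^{-1}e_m$. Choosing $m_0(\kappa)$ large makes these negligible relative to the $\kappa$-scale and $\kappa^{1/2}$-scale margins established above, which preserves $|\Re z_2|\le R\kappa$ and $|\Re z_{1,3}|\ge c\kappa^{1/2}$. The other delicate point is that the top edge must be hit only by the descent arm $z_4$. For this I would rely on the dominance of $-\pi\rho_{sc}(X)y$ when $y\approx\kappa^{-1}$, which survives the $S_m$ perturbation because of the error bound $C(\kappa^{-3/2}e_m+\kappa^{-2}Tm^{-1})$.
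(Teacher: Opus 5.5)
Your route is sound, but it differs from the paper's in where the perturbation is done.

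\textbf{Your route versus the paper's.} You build the level-set picture for the explicit $S_*$ and then transfer it to $S_m$ by a Gr\"onwall comparison of normalized gradient flows. The paper never follows trajectories. It uses only that each exit point solves $\Im S_m(z)=\Im S_m(z_0^{(m)})$ on $\partial\mathcal{R}(c_0,\kappa)$, and counts such solutions edge by edge for $S_m$ itself.
\begin{itemize}
\item On the top edge, $\Im S_m$ is strictly increasing in $x$ (\cref{l:6162}(3)), so there is at most one exit.
\item On the vertical sides, $\Im S_m-\Im S_m(z_0^{(m)})$ has a fixed sign (\cref{l:6162}(1),(2)), so there are no exits.
\item On $y=\kappa$, $\partial_x\Im S_m(x+i\kappa)=\pi\rho_{sc}(X)-\kappa/(x^2+\kappa^2)+O(\kappa^{-1}e_m)$ changes sign only near $a\approx-\sqrt{\kappa/\pi\rho_{sc}(X)}$ and $b\approx\sqrt{\kappa/\pi\rho_{sc}(X)}$, so there are at most three exits.
\end{itemize}
Four distinct exits then force the $3+1$ split. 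The three monotone pieces give $\Re z_1^{(m)}\le -c\kappa^{1/2}$ and $\Re z_3^{(m)}\ge c\kappa^{1/2}$; this is where the $\kappa^{1/2}$ in the statement comes from. A sign change on $[\kappa\tilde x_-,\kappa\tilde x_+]$ (your ``$\arg z$ jumps by $\pi$ on a window of width $O(\kappa)$'') gives $|\Re z_2^{(m)}|\le R\kappa$. So the step you flag as the main obstacle can be bypassed: controlling $\Im S_m$ on $\partial\mathcal{R}$ already determines where the arms end, without following them.

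\textbf{What each approach buys.} Your approach gives sharper information: each exit converges to the corresponding $S_*$ exit, so $|\Re z_{1,3}^{(m)}|$ is even bounded below by a constant. It does, however, need ingredients you only sketch.
\begin{itemize}
\item The normalized field is singular at the saddle, so the comparison must start on $\partial U$, with the arms matched through the Morse charts.
\item The $S_*$ arms must cross $\partial\mathcal{R}$ transversally at their first exit and stay away from $\partial\mathcal{R}$ before it. This does hold: the $S_*$ crossings of $y=\kappa$ sit at distance $O(\kappa)$ and $O(1)$ from the origin, avoiding the critical points $\approx\mp\sqrt{\kappa/\pi\rho_{sc}(X)}$ of $x\mapsto\Im S_*(x+i\kappa)$, and $\partial_x\Im S_*>0$ on the top edge.
\item With the crude bounds you cite (length $C\kappa^{-2}$, Lipschitz constant of order $\kappa^{-2}/\delta$), the Gr\"onwall factor is of order $e^{C\kappa^{-4}}$, so your $m_0(\kappa)$ is huge.
\end{itemize}
The last point is allowed by the statement. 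Still, the paper's argument only needs errors polynomial in $\kappa^{-1}$ (such as $\kappa^{-1}e_m$ and $\kappa^{-2}Tm^{-1}$) to be small, which fits more naturally with the later choice $\kappa=\kappa(m)\to0$.

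\textbf{A correction on the vertical sides.} Your sentence about the sign of $\partial_y\Re S_*$ there does not by itself exclude exits. The working mechanism is different. For $c_0<2\pi\rho_{sc}(X)/|X|$, the term $\pi\rho_{sc}(X)x$ dominates $\tfrac12 Xy$ on $|x|=(c_0\kappa)^{-1}$. Hence $\Im S_*-\Im S_*(z_0)$ has a fixed sign there, and no arm can reach those sides.
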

    The argument follows the framework established by Gorin and Petrov \cite[Section 6]{GP19}, with significant simplifications. In order to prove \cref{l:conditions3}, we will need several estimates on the real and imaginary parts of $S_{m}(z)$. The next lemma plays a similar role to \cite[Lemma 6.1, Lemma 6.2, Lemma 6.4]{GP19}.
\begin{proposition}\label{l:6162} Fix $\alpha\in(0,2)$ and $c\in (0,1),$
    \begin{enumerate}
        \item There exists $C:=C(\alpha)>0$ such that for all $\kappa\in(0,1)$ there exists  $m_{0}:=m_{0}(c,\kappa)\in\mathbb{N}$ such that for all $x+iy\in\mathcal{R}(c,\kappa),$ $m>m_{0}$, $x^{(m)}\in\Omega_{m}$, and $X\in (-2+\alpha,2-\alpha)$,
        \begin{align*}
            \big|\Im(S_{m}(x+iy))-\Im(S_{m}(x))\big|  \leq  C(y+\arctan(y|x|^{-1})+T|xy|m^{-1}+e_{m}).
        \end{align*}
        We take the convention that at $x=0$ and for all $y>0$,  $\arctan(y/|x|)=\pi/2$. 
        \item For every $R>0$ and $x\in\mathbb{R}$ there  exists $m_{0}\in\mathbb{N}$ such that for all $m>m_{0}$, $x^{(m)}\in\Omega_{m},$ and $X\in (-2+\alpha,2-\alpha)$,
        \begin{align*}
            \Im(S_{m}(x +R))-\Im(S_{m}(x ))>\pi\rho_{sc}(X)R -\pi-1.
        \end{align*} 
        \item There exists $\kappa_{0}>0$ such that for all $\kappa\in (0,\kappa_{0})$ and $y\geq \kappa^{-1}$ there exists $d:=d(\alpha)>0$ and  $m_{0}:=m_{0}(c,\kappa,d)$ such that for all $m>m_{0}$ $|x|\leq (c\kappa)^{-1}$, $x^{(m)}\in\Omega_{m}$, and $X\in (-2+\alpha,2-\alpha)$,
        \begin{align*}
            -\frac{\partial}{\partial y}\Re(S_{m}(x+iy))=\frac{\partial}{\partial x}\Im(S_{m}(x+iy))>d.
        \end{align*}
    \end{enumerate}
\end{proposition}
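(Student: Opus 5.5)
Each of the three parts reduces to computing $\Im S_m$ (or $\Re S_m$) term by term from the explicit formula
\[
S_m(z)=zX+\log_{\mathbb H}(z)+\frac{Tz^2}{2m}-\frac1T\sum_r\log_{\mathbb H}(z-u_r^{(m)}),
\]
and then controlling the sum $\frac1T\sum_r \arg(z-u_r^{(m)})$ through the empirical measure $\mu_m$, using \cref{t:local} (1). Write $z=x+iy$.

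\textbf{Part (1).} The difference $\Im S_m(x+iy)-\Im S_m(x)$ splits into four contributions:
\begin{itemize}
\item the linear term gives $Xy$, which is bounded by $2y$;
\item the $\log_{\mathbb H}(z)$ term gives $\arg(x+iy)-\arg(x)$, which is bounded by $\arctan(y/|x|)$ (equal to $\pi/2$ when $x=0$);
\item the quadratic term gives $Txy/m$;
\item the sum gives $-\int[\arg(x+iy-u)-\arg(x-u)]\,\mu_m(du)$.
\end{itemize}
For the last contribution, set $\theta_y(u):=\arg(x+iy-u)-\arg(x-u)=\arctan\bigl(y/|x-u|\bigr)$ (up to the $\pi$ jump at $u=x$, which is exactly the boundary value convention for $S_m(x)$). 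This function is bounded by $\pi/2$ and has total variation $O(1)$. Replace $\mu_m$ by $\rho_{sc}(X+uT/m)\,du$. The error is controlled by \cref{t:local} (1); since $\theta_y$ is not compactly supported, truncate with a smooth cutoff $t_{M,s}$ as in \cref{l:sconv}. This error is $O(\log^2(m)T^{-1})\le O(e_m)$. For the main term, use $\rho_{sc}(X+uT/m)=\rho_{sc}(X)+O(|u|T/m)$ together with $\int\arctan(y/|x-u|)\,du=\pi y$. The bulk then contributes $O(y)$, and the density correction contributes at most $O(e_m)$ after the same truncation, using $|x|\le (c\kappa)^{-1}$. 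Collecting the four contributions gives the bound in (1).

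\textbf{Part (2).} On the real line, $\Im S_m$ jumps only at the atoms. For real $x$ with the upper boundary convention, $\Im S_m(x)=\pi\mathbf 1_{x<0}-\frac{\pi}{T}\#\{r:u_r^{(m)}>x\}$. Hence
\[
\Im S_m(x+R)-\Im S_m(x)=\frac{\pi}{T}\#\{r: u_r^{(m)}\in(x,x+R]\}-\pi\,\mathbf 1_{0\in(x,x+R]}.
\]
By \cref{t:local} (1), applied to interval counts exactly as in its proof, the first term equals $\pi\int_x^{x+R}\rho_{sc}(X+uT/m)\,du+O(\log^2(m)T^{-1})$. This is $\pi\rho_{sc}(X)R+o(1)$ for fixed $x,R$. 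Taking $m_0$ large makes the error less than $1$, which yields $>\pi\rho_{sc}(X)R-\pi-1$. The only care needed is the boundary convention at atoms; a closed versus half-open interval changes the count by $O(1/T)$.

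\textbf{Part (3).} By Cauchy–Riemann, $\partial_x\Im S_m=\Re S_m'$. From \cref{l:sconv},
\[
S_m'(z)=S_*'(z)+O\bigl(\Im(z)^{-1}\log^2(m)T^{-1}+T|z|/m\bigr),
\]
and $\Re S_*'(z)=X/2+x/(x^2+y^2)$. This is not positive when $X<0$, so $X/2$ alone cannot be the source of $d$. I would therefore go back to \eqref{e:sderiv} directly:
\[
\Re S_m'(z)=X+\frac{x}{|z|^2}+\frac{Tx}{m}-\frac1T\sum_r\frac{x-u_r}{|z-u_r|^2}.
\]
For $y\ge\kappa^{-1}$, the sum is approximately $-\int\frac{x-u}{(x-u)^2+y^2}\rho_{sc}(X+uT/m)\,du$ over scale $y$, which is $\rho_{sc}(X)\cdot O(\text{odd})\approx 0$ plus the macroscopic contribution $-X/2$ coming from $d_m(R)$. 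The total is then $X-X/2=X/2$, still of sign $X$. That means the claimed positivity must come from the orientation conventions (the identity $-\partial_y\Re=\partial_x\Im$ with $\Im$ of $-S_m$) or from the fact that the sign of the real part depends on branch choices. I would recheck this sign first, and then obtain $d$ as $|\,\cdot\,|$ bounded below by $\min(\alpha/4,\,\cdot)$ once the $x/|z|^2$ term is made $O(\kappa)$ by choosing $\kappa_0$ small. The errors $\kappa\log^2(m)/T$ and $T/(c\kappa m)$ vanish for $m\ge m_0(\kappa)$.

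The main obstacle is this sign and uniformity issue in (3), along with handling the non-compactly-supported test functions in (1) with uniform constants. The latter I expect to be routine truncation as in \cref{l:sconv}.
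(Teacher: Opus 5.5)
\textbf{Part (3).} This part rests on a misapplied Cauchy--Riemann identity. For holomorphic $S_m$ one has $\partial_x\Im S_m=\Im S_m'$, whereas $\Re S_m'=\partial_x\Re S_m=\partial_y\Im S_m$. So $\Re S_m'$, and with it $\Re S_*'(z)=X/2+x/|z|^2$, is the wrong quantity. That is the entire source of the sign problem you ran into. No orientation or branch convention is involved, since the branch of $\log_{\mathbb H}$ does not affect $S_m'$. From \eqref{e:sderiv},
\[
\partial_x\Im S_m(x+iy)=\Im S_m'(x+iy)=-\frac{y}{x^2+y^2}+\frac{Ty}{m}+\frac1T\sum_{r=1}^{m}\frac{y}{(x-u_r^{(m)})^2+y^2}.
\]
For $y\ge\kappa^{-1}$ the first term is at least $-\kappa$ and the second is nonnegative. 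The third is a Poisson average of $\mu_m$, equal to $\pi\rho_{sc}(X)+o(1)$ by \cref{t:local}~(1). Equivalently, $\Im S_*'(z)=\pi\rho_{sc}(X)-y/|z|^2$ in \cref{l:sconv}, which is exactly what the paper uses. Since $\rho_{sc}(X)\ge\rho_{sc}(2-\alpha)>0$ on $(-2+\alpha,2-\alpha)$, choose $\kappa_0$ small compared with $\rho_{sc}(2-\alpha)$, then $m$ large. This gives the claim with $d$ a fixed multiple of $\rho_{sc}(2-\alpha)$. As written, your part (3) does not establish the statement.

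\textbf{Part (1).} This part has a genuine error in its key term.
\begin{itemize}
\item With $\log_{\mathbb H}$ and the upper boundary value, for $u>x$ one has $\arg(x-u)=\pi$ and $\arg(x+iy-u)=\pi-\arctan\bigl(y/(u-x)\bigr)$. So $\theta_y(u)=\arctan\bigl(y/(x-u)\bigr)$, which is odd in $x-u$, not $\arctan(y/|x-u|)$.
\item The integral $\int\arctan(y/|x-u|)\,du$ diverges logarithmically. The value $\pi y$ is $\int(\log|x+iy-u|-\log|x-u|)\,du$, which belongs to $\Re S_m$, not $\Im S_m$.
\item Taken literally, your $\theta_y$ would make $\int\theta_y\,d\mu_m$ of order $y\log(m/T)$, and the bound would fail.
\item With the correct signed $\theta_y$, the tail $\theta_y(u)\approx y/(x-u)$ is not integrable. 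Its pairing with the density over the macroscopic range $|u|\lesssim m/T$ is a Hilbert transform of $\rho_{sc}$, of size about $yX/2$. So your claim that the variation $\rho_{sc}(X+uT/m)-\rho_{sc}(X)$ costs only $O(e_m)$ is false.
\end{itemize}
The $O(y)$ bound therefore requires controlling this principal-value cancellation, which your argument does not do. The paper handles it as in \cref{l:sconv}, by adding and subtracting $y\mathbf{1}_{|u_r|>R}/u_r$. Then $\arctan(y/(x-u))+y\mathbf{1}_{|u|>R}/u$ decays like $u^{-2}$ and is compared via \cref{t:local}~(1); its constant-density main term tends to $0$ as $R\to\infty$. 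The compensating term is $y\,d_m(R)$, controlled by \cref{t:local}~(2). This yields $y(X-d_m(R))\approx yX/2=O(y)$.

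\textbf{Part (2).} This part is correct and coincides with the paper's argument.
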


\begin{proof} 
    To show part $(1)$, we see that 
    \begin{multline*}
        \Im(S_{m}(x+iy)) -\Im(S_{m}(x)) 
        \\ = yX + \arctan{\left(\frac{y}{x}\right)}  + \frac{Txy}{m} -\frac{1}{T}\sum_{r=1}^{m}\left(\Im\left(\log_{\mathbb{H}}(x+iy-u_{r}^{(m)})\right)-\Im\left(\log_{\mathbb{H}}(x-u_{r}^{(m)})\right)\right). 
    \end{multline*} 
    To deal with the sum, we note 
    \begin{align*}
        \frac{1}{T}\sum_{r=1}^{m}\left(\Im\left(\log_{\mathbb{H}}(x+iy-u_{r}^{(m)})\right)-\Im\left(\log_{\mathbb{H}}(x-u_{r}^{(m)})\right)\right) 
        & = \frac{1}{T}\sum_{r=1}^{m}\arctan{\left(\frac{y}{x-u_{r}^{(m)}}\right)}.
    \end{align*}
    Thus, taking $d_{m}(R)$ defined as in \cref{t:local}, we can write
    \begin{align*}
        \Im(S_{m}(x+iy)) -\Im(S_{m}(x)) & = yX + \arctan{\left(\frac{y}{x}\right)}  + \frac{Txy}{m} 
        \\ & -\frac{1}{T}\sum_{r=1}^{m}\left(\arctan{\left(\frac{y}{x-u_{r}^{(m)}}\right)}+\frac{y\mathbf{1}_{|u_{r}^{(m)}|>R}}{u_{r}^{(m)}}\right) - yd_{m}(R).
    \end{align*} 
    Therefore, by \cref{t:local}, for all $x^{(m)}\in\Omega_{m}$, there exists a constant $C:=C(\alpha)>0$ such that
    \begin{align*}
        \Im(S_{m}(x+iy)) -\Im(S_{m}(x)) & = y\frac{X}{2} + \arctan{\left(\frac{y}{x}\right)}  + \frac{Txy}{m} + (1+yR^{-1})O(\log^{2}{(m)}T^{-1}+Tm^{-1}).
    \end{align*} 
    Therefore, taking $R>0$ to be arbitrarily large, there exists $C:=C(\alpha)>0$ and $m_{0}:=m_{0}(c,\kappa)\in\mathbb{N}$ such that for all $m>m_{0}$, $x+iy\in\mathcal{R}(c,\kappa)$ and $x^{(m)}\in\Omega_{m}$, 
    \begin{align*}
        \big|\Im(S_{m}(x+iy)) -\Im(S_{m}(x))\big| & \leq C(y+\arctan(y|x|^{-1})+e_{m} +T|xy|m^{-1}),
    \end{align*} where $e_{m}:=\log^{2}{(m)}T^{-1} +Tm^{-1}$ as in previous results.
    
    To show part $(2)$, we see that 
    \begin{align*}
        \Im(S_{m}(x_{1}+R))-\Im(S_{m}(x_{1})) & = \pi\left(\mathbf{1}_{x_{1}+R<0} -\mathbf{1}_{x_{1}<0}\right)  + \frac{\pi}{T}\sum_{r=1}^{m}\mathbf{1}_{x_{1}<u_{r}^{(m)}<x_{1}+R}.
    \end{align*}
    By \cref{thm:rigid}, for all $x^{(m)}\in\Omega_{m}$, there exists $C:=C(\alpha)>0$  such that 
    \begin{align*}
        \bigg|\frac{\pi}{T}\sum_{r=1}^{m}\mathbf{1}_{x_{1}<u_{r}^{(m)}<x_{1}+R} - \pi\rho_{sc}(X)R \bigg| \leq  Ce_{m}. 
    \end{align*}
    Therefore, 
    \begin{align*}
        \Im(S_{m}(x_{1}+R))-\Im(S_{m}(x_{1})) & =  \pi\rho_{sc}(X)R - \pi \mathbf{1}_{x_{1}<0<x_{1}+R}+Ce_{m}. 
    \end{align*}
    For all $R>0$, there exists $m_{0}:=m_{0}(\alpha)\in\mathbb{N}$ such that for all $m>m_{0},$ $Ce_{m}$ is of magnitude less than $1$. Thus, for all $x_{1}\in\mathbb{R}$, and $m>m_{0}$,
    \begin{align*}
        \Im(S_{m}(x_{1}+R))-\Im(S_{m}(x_{1})) & \geq   \pi\rho_{sc}(X)R -\pi-1.
    \end{align*}
    
    To show part $(3)$, it follows directly from \cref{l:sconv} that for all $m$ sufficiently large and $x^{(m)}\in \Omega_{m},$ 
     \begin{align*}
        \frac{\partial}{\partial x}\Im(S_{m}(x+iy)) = \pi\rho_{sc}(X)-\frac{y}{x^{2}+y^{2}} + \frac{Ty}{m} + O(y^{-1}e_{m}).
    \end{align*} The estimates in the statement of part $(3)$ follow immediately since $\pi\rho_{sc}(X)>0$ uniformly for all $X\in (-2+\alpha,2-\alpha)$.
\end{proof}
Finally, we can prove \cref{l:conditions3}. 
    \begin{proof}[Proof of \cref{l:conditions3}]
        To show $(1)$, we note that because the critical point $z_{0}^{(m)}$ is unique in the upper half-plane, it is impossible for there to be another point in $\overline{\mathcal{R}(c,\kappa)}$ with $S_{m}'(z)=0$ and consequently there can be no additional points where the contours $C_{1}^{(m)},C_{2}^{(m)}$ intersect. Furthermore, by \cref{l:lengthbound}, the contours have uniformly bounded length. Therefore, both ends of $C_{1}^{(m)},C_{2}^{(m)}$ must intersect $\partial\mathcal{R}(c,\kappa)$ or form an accumulation point, which would also be a disallowed critical point, and therefore a contradiction. 

        Next we study the location of these intersection points. 
        
        By \cref{l:6162} $(3)$, there exists $\kappa_{0}>0,$ such that for all $\kappa\in (0,\kappa_{0})$ there exists $m_{0}:=m_{0}(\kappa)\in\mathbb{N}$ such that for all $m>m_{0}$ and $x^{(m)}\in\Omega_{m}$, $\Im(S_{m}(z))$ is strictly increasing in the positive real direction along $y=\kappa^{-1}$, and therefore any contour along which $\Im(S_{m}(z))=\Im(S_{m}(z_{0}^{(m)})$ may intersect this part of $\partial\mathcal{R}(c,\kappa)$ in at most one point. 

        We will adjust $c_{0}$ show that no intersection points can lie on the vertical sides of $\mathcal{R}(c_{0},\kappa)$. As a consequence of \cref{l:6162} $(1)$ and $(2)$, there exist $\kappa_{0}>0$ and $c\in (0,1)$ such that for all $\kappa\in (0,\kappa_{0})$ there exist $m_{0}:=m_{0}(\kappa)\in\mathbb{N}$, such that for all $m>m_{0}$ $y\in[\kappa,\kappa^{-1}]$, and $x^{(m)}\in\Omega_{m}$, 
        \begin{align*}\Im(S_{m}(-(c\kappa)^{-1}+iy))<-2|\Im(S_{m}(z_{0}^{(m)})|, & & 2|\Im(S_{m}(z_{0}^{(m)})|<\Im(S_{m}((c\kappa)^{-1}+iy)).
        \end{align*} Therefore, none of the intersection points can lie on the vertical sides of $\mathcal{R}(c,\kappa)$. Thus either three or four of these points lie on the line $y=\kappa$. By \cref{l:sconv}, for all $x\in [-(c_{0}\kappa)^{-1},(c_{0}\kappa)^{-1}]$, 
        \begin{align}\label{e:xderiv}
            \frac{\partial}{\partial x}\Im(S_{m}(x+i\kappa)) & =\pi\rho_{sc}(X) -\frac{\kappa}{x^{2}+\kappa^{2}}+O(\kappa^{-1}e_{m}).
        \end{align} For fixed $\kappa$,  we take $m$ sufficiently large so that $\kappa^{-1}e_{m}$ is sufficiently small. This function changes sign at most twice, so there can only be a maximum of three intersection points of the steepest descent and ascent contours with $y=\kappa$. 
        
        With this now established, we label the point on the line $y=\kappa^{-1}$ as $z_{4}^{(m)}$ and then proceed clockwise, labeling $z_{1}^{(m)},z_{2}^{(m)},z_{3}^{(m)}$ along $y=\kappa$. By \cref{l:6162} $(3)$, $\Re(S_{m}(x+iy))$ must be decreasing along any  contour that includes $y$ sufficiently large, which means that $z_{2}^{(m)},z_{4}^{(m)}$ lie along the descent contour of $S_{m}(z)$ and $z_{1}^{(m)},z_{3}^{(m)}$ lie along the ascent contour. This finishes the proof of part $(1)$.

        The zeros of \eqref{e:xderiv} are located at 
        \begin{align*}
             a  := -\sqrt{\frac{\kappa}{\pi\rho_{sc}(X)}}\sqrt{1-\pi\rho_{sc}(X)\kappa} - C\kappa^{-1/2}e_{m}, 
              & & b  := \sqrt{\frac{\kappa}{\pi\rho_{sc}(X)}}\sqrt{1-\pi\rho_{sc}(X)\kappa} + C\kappa^{-1/2}e_{m}. 
        \end{align*} Consequently, $\Re(z_{1}^{(m)})\leq a$, $\Re(z_{2}^{(m)})\in [a,b]$ and $\Re(z_{3}^{(m)})\geq b$. We also conclude that for $m$ sufficiently large, there exists $c>0$ such that $a\leq -c\kappa^{1/2}, b\geq c\kappa^{1/2}$.  
        
    We will show that there exists $R>0$ such that $|\Re(z_{2}^{(m)})|\leq R\kappa$. We consider $\Im(S_{*}(x+i\kappa)-\Im(S_{*}(z_{0}))$. If we change coordinates so that $x=\widetilde{x}\kappa$, 
    \begin{align*}
        \Im(S_{*}(\kappa\widetilde{x}+i\kappa) - \Im(S_{*}(z_{0})) = \mathrm{Arg}(\widetilde{x}+i)-\mathrm{Arg}(z_{0})  + \kappa(X/2 +\widetilde{x}\pi\rho_{sc}(X)).
    \end{align*}
    There is a unique value $\widetilde{x}_{0}:=\Re(z_{0})/\Im(z_{0})$ (and a compact set of $\widetilde{x}_{0}$ for $X\in (-2+\alpha,2-\alpha)$) where $ \mathrm{Arg}(\widetilde{x}+i)-\mathrm{Arg}(z_{0})=0$. For some sufficiently small $\delta>0$, we can choose $\widetilde{x}_{-}<\widetilde{x}_{0}<\widetilde{x}_{+}$ so that 
    \begin{align*}
        \mathrm{Arg}(\widetilde{x}_{-}+i)-\mathrm{Arg}(z_{0}) >2\delta & & \mathrm{Arg}(\widetilde{x}_{+}+i)-\mathrm{Arg}(z_{0})<-2\delta.
    \end{align*} There exists $\kappa_{0}>0$ such that for all $0<\kappa<\kappa_{0}$, 
    \begin{align*}
        \Im(S_{*}(\kappa\widetilde{x}_{-}+i\kappa))-\Im(S_{*}(z_{0}))\geq \delta & & \Im(S_{*}(\kappa\widetilde{x}_{+}+i\kappa))-\Im(S_{*}(z_{0}))\leq -\delta 
    \end{align*}
    Therefore for $m$ sufficiently large, 
    \begin{align*}
        \Im(S_{m}(\kappa\widetilde{x}_{-}+i\kappa))-\Im(S_{m}(z_{0}^{(m)}))>0 & & \Im(S_{m}(\kappa\widetilde{x}_{+}+i\kappa))-\Im(S_{m}(z_{0}^{(m)}))<0
    \end{align*} 
    Therefore, there exists $x\in [\kappa\widetilde{x}_{-},\kappa\widetilde{x}_{+}]$, such that $\Im(S_{m}(x+i\kappa))-\Im(S_{m}(z_{0}^{(m)}))=0$, and since any such region lies inside $[a,b]$ for $\kappa$ sufficiently small, we conclude that $|\Re(z_{2}^{(m)})|\leq R\kappa$ for some $R>0$. The choices of $\widetilde{x}_{-},\widetilde{x}_{+},R$ can also be made uniformly over $X\in (-2+\alpha,2-\alpha)$. 

        Finally, to show that there exists $C:=C(\alpha)>0$ such that $|\Re(z_{1}^{(m)})|,|\Re(z_{3}^{(m)})|<C$, we see that up to an error of order $O(\kappa^{-1}e_{m})$, $\Im(S_{m}(x+i\kappa))-\Im(S_{m}(z_{0}^{(m)}))$ is given by $\pi\rho_{sc}(X)x + \frac{X}{2}(1+\kappa)-\Im(S_{*}(z_{0})) +\mathrm{Arg}(x+i\kappa)$.  Since the last term is bounded between $\pi$ and $0$ we conclude that there exists $R>0$ such that this expression is bounded below by $\pi\rho_{sc}(X)x-R$ and above by $\pi\rho_{sc}(X)x+R$. Since $\rho_{sc}(X)$ is bounded uniformly away from $0$ for $X\in (-2+\alpha,2-\alpha)$, this implies that there exists $C:=C(\alpha)>0$ such that 
        \begin{align*}
            \Im(S_{*}(-C+i\kappa))-\Im(S_{*}(z_{0} ))<-2 & & \Im(S_{*}(C+i\kappa))-\Im(S_{*}(z_{0} ))>2
        \end{align*}
        Therefore, for $m$ sufficiently large
         \begin{align*}
            \Im(S_{m}(-C+i\kappa))-\Im(S_{m}(z_{0}^{(m)}))<-1 & & \Im(S_{m}(C+i\kappa))-\Im(S_{m}(z_{0}^{(m)}))>1
        \end{align*}
        We can now use the expression for $\frac{\partial}{\partial x}\Im(S_{m}(x+i\kappa))$ and the bounds above to conclude that for $C,m$ sufficiently large, $\frac{\partial}{\partial x}\Im(S_{m}(x+i\kappa))>0$ for all $|x|>C$. From this, we conclude that $\Re(z_{1}^{(m)}),\Re(z_{2}^{(m)})$, and $\Re(z_{3}^{(m)})$ must all lie within $[-C,C]$. 
    \end{proof}

   We conclude the section by defining the global steepest descent contours. Recalling the definitions of $C_{1}^{(m)},C_{2}^{(m)}$ from \cref{r:c1c2}, we define a contour $C_{1}^{(m),-}$ by reflecting $C_{1}^{(m)}$ across the real axis, and then define 
\begin{align*}
    C_{z}^{(m)} & := C_{1}^{(m)} \cup C_{1}^{(m),-} \cup \{\Re(z_{4}^{(m)})+iy||y|\geq \kappa^{-1}\} \cup v \cup h_{1}\cup h_{2}, 
\end{align*} where $v,h_{1},h_{2}$ are defined
\begin{align}\label{e:vh1h2}\begin{split}
    v & :=\{x_{2}T^{-1}+ it\kappa| t\in [-1,1] \}, 
    \\ h_{1} & :=\{x_{2}T^{-1}(1-t) +t\Re(z_{2}^{(m)}) +i\kappa|t\in [0,1]\},
    \\ h_{2} & :=\{x_{2}T^{-1}t +(1-t)\Re(z_{2}^{(m)}) -i\kappa|t\in [0,1]\},
    \end{split}
\end{align} and both connected components of $\{\Re(z_{4}^{(m)})+iy||y|\geq \kappa^{-1}\}$ are oriented in the positive imaginary direction, and the segments defined above are oriented in the positive $t$ direction. 

The contour $C_{z}^{(m)}$ follows the steepest descent contour inside
$\mathcal R(c_{0},\kappa)$ and its reflected copy in the lower
half-plane. Outside this region it continues vertically for all $z$ such that 
$|\Im(z)|\geq\kappa^{-1}$. For all $z$ such that $|\Im z|\leq \kappa$, it takes a
small detour so that the contour crosses the real axis at $x_{2}T^{-1}$. By
\cref{l:conditions3}, the lengths of $h_{1}$ and $h_{2}$ are bounded by $|x_{2}|T^{-1}+R\kappa.$ 

Similarly, defining $C_{2}^{(m),-}$ as the reflection of $C_{2}^{(m)}$ across the real axis, we define 
\begin{align*}
    C_{w}^{(m)} & := C_{2}^{(m)}\cup C_{2}^{(m),-}\cup v_{1}\cup v_{2}\cup p_{1}\cup p_{2},
\end{align*} where 
\begin{align*}
    v_{1} & := \begin{cases}
        \{\Re(z_{1}^{(m)})- it\kappa| t\in [-1,1]\} & x_{1}T^{-1}-\kappa>\Re(z_{1}^{(m)})
        \\ \{x_{1}T^{-1}-\kappa-it\kappa| t\in [-1,1]\} & \text{else}
    \end{cases},
    \\ v_{2} & := \begin{cases}
        \{\Re(z_{3}^{(m)})+ it\kappa| t\in [-1,1]\} & x_{1}T^{-1}+\kappa<\Re(z_{3}^{(m)})
        \\ \{x_{1}T^{-1}+\kappa+ it\kappa| t\in[-1,1]\} & \text{else}
    \end{cases},
    \\ p_{1}  & := \begin{cases} 
        \{(x_{1}T^{-1}-\kappa)(1-t)+\Re(z_{3}^{(m)})t+ i\kappa| t\in [0,1] \} & x_{1}T^{-1}+\kappa<\Re(z_{1}^{(m)})
        \\ \{(x_{1}T^{-1}-\kappa)t+\Re(z_{1}^{(m)})(1-t)+ i\kappa| t\in [0,1]\} & x_{1}T^{-1}-\kappa>\Re(z_{3}^{(m)})
        \\ \emptyset & \text{else}
    \end{cases} ,
    \\ p_{2} & := \begin{cases} 
        \{(x_{1}T^{-1}+\kappa)t+\Re(z_{3}^{(m)})(1-t)- i\kappa| t\in [0,1] \} & x_{1}T^{-1}+\kappa<\Re(z_{1}^{(m)})
        \\ \{(x_{1}T^{-1}-\kappa)(1-t)+\Re(z_{1}^{(m)})t- i\kappa| t\in [0,1]\} & x_{1}T^{-1}-\kappa>\Re(z_{3}^{(m)})
        \\ \emptyset & \text{else}
    \end{cases} .
\end{align*} Each of these contours is oriented in the positive $t$ direction, and when we take the union to form $C_{w}^{(m)}$, the closed contour is oriented counterclockwise. 

The contours
$v_{1},v_{2}$ are line segments parallel to the imaginary axis spanning $|\Im(z)|\leq \kappa$ such that $\Re(z_{1})<x_{1}T^{-1}$ for all $z_{1}\in v_{1}$ and $\Re(z_{2})>x_{1}T^{-1}$ for all $z_{2}\in v_{2}.$ The location of the real part of $v_{1},v_{2}$ will depend on the location of $x_{1}$. 
One of the two segments $v_{1}$ or $v_{2}$ will have real part equal to either $\Re(z_{1}^{(m)})$ or $\Re(z_{3}^{(m)})$, and the other will have real part separated from the other of these points by a maximum displacement of $ \kappa+|x_{1}|T^{-1}$. If $x_{1}T^{-1}$ is already between $\Re(z_{1}^{(m)})+\kappa$ and $\Re(z_{3}^{(m)})-\kappa$, then the real parts of $v_{1}$ and $v_{2}$ are respectively equal to $\Re(z_{1}^{(m)})$ and $\Re(z_{3}^{(m)})$. We can also choose $v_{1}$ and $v_{2}$ so that they are guaranteed to be separated from $x_{1}T^{-1}$ by at least $ \kappa$, and, if either one does not have real part equal to the corresponding $\Re(z_{1}^{(m)}),\Re(z_{3}^{(m)})$, we can require that that contour is separated from $x_{1}T^{-1}$ by exactly $\kappa$. The lines parallel to the real axis at $y=\pm \kappa$ which connect this segment to $C_{2}^{(m)}$ and $C_{2}^{(m),-}$ are the sets labeled by $p_{1},p_{2}$, which therefore have length bounded by $|x_{1}|T^{-1}+\kappa$.  

For $m$ sufficiently large, and under the assumption that $Tm^{-1}\ll \log^{2}{(m)}T^{-1/2}\ll \kappa$ and $\kappa\to 0$ as $m\to\infty$, in this range of parameters, $p_{1},p_{2}$ have length $0$. The definitions also imply that $C_{z}^{(m)}$ is separated from $v_{1}\cup v_{2}\cup p_{1}\cup p_{2}$ by at least $c\kappa$ for some $c>0$.

\subsection{Proof of \cref{t:convrate}}\label{s:proofofmain}
Using the global steepest descent contours constructed in the previous section, we will prove \cref{t:convrate}. We will state one additional result before that proof. The next lemma is proved in \cref{s:crossterms}.
\begin{lemma}\label{l:errortermscross} Assume that $\kappa\to 0$ as $m\to\infty$ and that $Tm^{-1}\ll \log^{2}{(m)}T^{-1/2}\ll \kappa$. Also assume that $\log^{4}{(m)}\ll T\ll m^{2/3}$. For all $\alpha\in (0,2)$, and $K\subset \mathbb{N}\times \mathbb{R}$ compact, there exist $C,c>0,m_{0}\in\mathbb{N}$ such that for all $(n_{1},x_{1}),(n_{2},x_{2})\in K$, $m>m_{0},$ $x^{(m)}\in\Omega_{m}$, and $X\in (-2+\alpha,2-\alpha),$
\begin{align*}
    \bigg| \frac{n_{1}+T}{T} \frac{-1}{(2\pi i)^{2}}\int_{C_{z}^{(m)}\setminus (C_{1}^{(m)}\cup C_{1}^{(m),-})} dz \int_{C_{w}^{(m)}}dw R_{m}(z,w)  \frac{e^{T\left(S_{m}(z) -S_{m}(w)\right) }}{w-z}\bigg|  & \leq Ce^{-cT},
    \\ 
        \bigg|\frac{n_{1}+T}{T} \frac{-1}{(2\pi i)^{2}}\int_{C_{z}^{(m)}} dz \int_{C_{w}^{(m)}\setminus (C_{2}^{(m)}\cup C_{2}^{(m),-})}dw R_{m}(z,w)  \frac{e^{T\left(S_{m}(z) -S_{m}(w)\right) }}{w-z}\bigg| & \leq  Ce^{-cT}.
    \end{align*}

\end{lemma}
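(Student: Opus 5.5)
The plan is to bound each cross term by a crude product estimate: the length of the $z$-piece, times the length of $C_w^{(m)}$, times $\sup|R_m|/|w-z|$, times $\sup e^{T\Re(S_m(z)-S_m(w))}$. The goal is to show that on every piece of $C_z^{(m)}\setminus(C_1^{(m)}\cup C_1^{(m),-})$ we have $\Re S_m(z)\le \Re S_m(z_0^{(m)})-c$. Symmetrically, on every piece of $C_w^{(m)}\setminus(C_2^{(m)}\cup C_2^{(m),-})$ we want $\Re S_m(w)\ge\Re S_m(z_0^{(m)})+c$. On the steepest descent and ascent portions themselves, $\Re S_m(z)\le \Re S_m(z_0^{(m)})\le\Re S_m(w)$ by monotonicity along these curves. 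Hence each cross term carries a factor $e^{-cT}$. The remaining factors must then be at most polynomial in $\kappa^{-1}$ and $T$, so that they are absorbed into $e^{-cT/2}$ under the scaling assumptions.

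\textbf{Vertical rays.} I would handle the rays $\{\Re(z_4^{(m)})+iy:|y|\ge\kappa^{-1}\}$ first, working in the upper half-plane; the lower half follows by reflection. By \cref{l:6162}~(3), $\partial_y\Re S_m<-d$ for $y\ge\kappa^{-1}$ in the range where \cref{l:sconv} is uniform. Since $z_4^{(m)}$ lies on the descent contour, \cref{r:extra} gives $\Re S_m(z_4^{(m)})\le \Re S_m(z_0^{(m)})-d$. Integrating in $y$, this yields $\Re S_m(z)\le \Re S_m(z_0^{(m)})-d-d(y-\kappa^{-1})$.

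For very large $y$, the estimate of \cref{l:sconv} degrades because of the $T|z|m^{-1}$ error. There I would instead use the explicit form of $S_m$. The term $\Re(Tz^2/2m)=T(x^2-y^2)/2m$ is decreasing in $y$. The logarithmic terms contribute $\log y-\frac1T\sum_r\log|z-u_r^{(m)}|$, and since $\frac1T\sum_r 1=m/T$ is large, this is decreasing in $y$ as well. Thus $e^{T\Re S_m(z)}$ decays at least like $e^{-dTy}$. This beats the polynomial growth $|R_m(z,w)|\lesssim |z|^{n_2}\cdot\sup_w|A_m(w)|$, where the bound on $A_m$ comes from \cref{r:Rbd}. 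Finally, $|w-z|\ge c\kappa$ and $C_w^{(m)}$ has length $O(\kappa^{-2})$ by \cref{l:lengthbound}.

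\textbf{Pieces near the real axis.} The pieces $v,h_1,h_2$ near $x_2T^{-1}$, and $v_1,v_2,p_1,p_2$, have length $O(\kappa+|x_i|T^{-1})$. On them I would not bound $S_m$ and $R_m$ separately. Instead I would regroup: $e^{TS_m(z)}$ and the factor $z^{-T}$ in $R_m$ cancel the $e^{T\log_{\mathbb H} z}$ term.

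For the $z$-pieces, the leftover factor $(z-x_2/T)^{n_2+T-1}$ has modulus at most $(C(\kappa+|x_2|/T))^{T}$ by \cref{l:conditions3}~(2). The remaining part of the exponent consists of $zX$, the Gaussian term, and the sum $-\frac1T\sum_r\log|z-u_r^{(m)}|$. I would compare this at $z$ and at the point $x_2T^{-1}+iy_0$, with $y_0$ of order $1$, using rigidity (\cref{t:local}~(1) applied to the test function $\log|\cdot|$ integrated across a window of width $\kappa$). The sum over the $O(T)$ eigenvalues within distance $O(1)$ changes by $O(\kappa|\log\kappa|)$ plus $O(e_m\kappa^{-1})$. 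So the total exponent is $T\log(C\kappa)+O(T)$, which gives $e^{-cT}$ for $\kappa$ small.

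For the $w$-pieces, the analogous regrouping produces $(w-x_1/T)^{-n_1-T-1}$, which is of size $\kappa^{-T}$ and therefore harmful. Here I would instead compare the full $\Re S_m(w)$ on $v_1,v_2$ with its values at the ascent endpoints $z_1^{(m)},z_3^{(m)}$, which exceed $\Re S_m(z_0^{(m)})+c$ by \cref{r:extra}. Since $\Re z_1^{(m)}\le -c\kappa^{1/2}$ and $\Re z_3^{(m)}\ge c\kappa^{1/2}$ by \cref{l:conditions3}~(2), the segments stay at distance $\gtrsim\kappa^{1/2}$ from $0$. There $\log|w|$ varies by $O(\kappa^{1/2})$ over a vertical segment of length $2\kappa$, and the eigenvalue log-sum is controlled by rigidity as above. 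We also have $|w-x_1/T|\ge\kappa$, $|w|\lesssim 1$, and $p_1=p_2=\emptyset$ in the stated parameter range. Together these give $\sup|A_m(w)e^{-TS_m(w)}|\le e^{-T(\Re S_m(z_0^{(m)})+c/2)}$, and the second bound follows in the same way.

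\textbf{Main obstacle.} The hard step is the uniform control of $\frac1T\sum_r\log|z-u_r^{(m)}|$ at height $\lesssim\kappa$ above the real axis, where \cref{l:sconv} gives errors of size $\kappa^{-1/2}e_m$. I expect the hypothesis $\log^2(m)T^{-1/2}\ll\kappa$ is exactly what keeps $T\cdot\kappa^{-1/2}e_m$ below $cT$. I would verify this first, using the integrable logarithmic singularity and the count bound from \cref{thm:rigid}.
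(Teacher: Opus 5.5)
Most of your plan follows the paper: the same decomposition, the same separation $|w-z|\gtrsim\kappa$, monotone decay along the vertical rays, and comparison of $v_1,v_2$ with the ascent endpoints $z_1^{(m)},z_3^{(m)}$ via \cref{r:extra}. On $h_1,h_2$ you use a different but legitimate route: the smallness $|z-x_2T^{-1}|^{n_2+T-1}\le (C\kappa)^{n_2+T-1}$ against an $e^{O(T)}$ bound on the rest, instead of the paper's comparison with $z_2^{(m)}$. This works at height exactly $\kappa$, where rigidity controls $\frac1T\sum_r\log|z-u_r^{(m)}|$ up to $O(\log(1/\kappa)\log^2(m)/T)$.

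The gap is the segment $v=\{x_2T^{-1}+iy:|y|\le\kappa\}$, which crosses the real axis. There you bound the zero factor by its supremum, and separately bound the regrouped remainder, which contains $\prod_r|z-u_r^{(m)}|^{-1}$. Nothing on $\Omega_m$ prevents some $u_r^{(m)}$ from lying at distance $\delta$ from $x_2T^{-1}$ with $\delta$ arbitrarily small, since rigidity only controls counts. Near $z=x_2T^{-1}$ the remainder is then of size $\delta^{-1}$, so the product of suprema is not uniform over $\Omega_m$. Your proposed resolution of the main obstacle cannot fix this for any choice of $\kappa$. The error in \cref{l:sconv} carries $\Im(z)^{-1/2}$, and \cref{t:local} applied to $u\mapsto\log|z-u|$ gives an error of order $\log(1/|\Im z|)\log^2(m)/T$; both diverge as $\Im z\to0$. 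The logarithmic singularity is integrable against $\rho_{sc}$, but a single atom of $\mu_m$ near $z$ is not smoothed out, so uniform control of the log-sum near the axis is false.

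The missing idea is to let the zero of order $n_2+T-1$ at $x_2T^{-1}$ absorb the nearby poles pointwise. On $v$ you have $|z-x_2T^{-1}|=|y|\le|z-u_r^{(m)}|$ for every $r$, and only $O(\log^2 m)$ of the $u_r^{(m)}$ lie in the rigidity window. This gives $\prod_r|x_2T^{-1}+i\kappa-u_r^{(m)}|/|x_2T^{-1}+iy-u_r^{(m)}|\le(\kappa/|y|)^{C\log^2 m}e^{CT\kappa}$, which is \eqref{e:bd1}. Since $T\gg\log^2 m$, you get $|y|^{n_2+T-1}(\kappa/|y|)^{C\log^2 m}\le\kappa^{n_2+T-1}$. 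Hence the regrouped integrand on $v$ is at most $e^{CT\kappa}$ times its value at $x_2T^{-1}+i\kappa$. From there either your height-$\kappa$ argument or the paper's comparison with $z_2^{(m)}$ via \eqref{e:bd3} and \cref{r:extra} finishes.

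On the $w$ side no such problem arises, because $\prod_r|w-u_r^{(m)}|$ sits in the numerator and decreases as $|\Im w|\downarrow 0$. You should invoke that monotonicity (quantified in \eqref{e:bd2}) rather than two-sided rigidity, which degenerates near the axis in the same way. Finally, several factors you call polynomial are not. $|A_m(w)|$ on $v_1,v_2$ is only bounded by $e^{O(T\kappa)}\kappa^{-(n_1+1)/2}$, and \cref{r:Rbd} contributes $e^{O(\kappa^{-1})}$. These are harmless because $\kappa\to0$ and $\kappa^{-1}=o(T)$ under the stated scaling, but the bookkeeping has to say so.
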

Now we can prove the main result of the section.
\begin{proof}[Proof of \cref{t:convrate}]
We separate the kernel into two terms using our new contours $C_{z}^{(m)}$ and $C_{w}^{(m)}$, taking the positive (counter-clockwise) orientation of $C_{w}^{(m)}$, and orienting $C_{z}^{(m)}$ in the positive imaginary direction,
\begin{multline}\label{e:bigequation}
    \frac{n_{1}+T}{T} \frac{-1}{(2\pi i)^{2}}\int_{\frac{x_{2}}{T}+\varepsilon-i\infty}^{\frac{x_{2}}{T}+\varepsilon+i\infty} dz \oint_{c\left(\frac{x_{1}}{T}\right)}dw \frac{\left(z-\frac{x_{2}}{T}\right)^{n_{2}+T-1}w^{T}}{z^{T}\left(w-\frac{x_{1}}{T}\right)^{n_{1}+T+1}}  \frac{e^{T\left(S_{m}(z) -S_{m}(w)\right) }}{w-z} 
    \\ = \frac{n_{1}+T}{T} \frac{-1}{(2\pi i)^{2}}\int_{C_{z}^{(m)}} dz \oint_{C_{w}^{(m)}}dw \frac{\left(z-\frac{x_{2}}{T}\right)^{n_{2}+T-1}w^{T}}{z^{T}\left(w-\frac{x_{1}}{T}\right)^{n_{1}+T+1}}  \frac{e^{T\left(S_{m}(z) -S_{m}(w)\right) }}{w-z} \\ + \frac{n_{1}+T}{T} \frac{1}{2\pi i}\int_{\gamma_{m}} dz \frac{\left(z-\frac{x_{2}}{T}\right)^{n_{2}+T-1}}{\left(z-\frac{x_{1}}{T}\right)^{n_{1}+T+1}}. 
\end{multline}
Where $\gamma_{m}$ is a contour which starts at $\overline{z_{0}^{(m)}}$, ends at $z_{0}^{(m)}$, and crosses the real line at $x_{2}T^{-1}+\varepsilon$, where $\varepsilon$ can be infinitesimally small (so that if $x_{2}T^{-1}<x_{1}T^{-1}$, then for all finite $m$, $x_{2}T^{-1}+\varepsilon<x_{1}T^{-1}$).   

We consider the first term on the right-hand side of \eqref{e:bigequation}, splitting it into several components, 
\begin{align*}
    & \bigg|\frac{n_{1}+T}{T} \frac{-1}{(2\pi i)^{2}}\int_{C_{z}^{(m)}} dz \oint_{C_{w}^{(m)}}dw \frac{\left(z-\frac{x_{2}}{T}\right)^{n_{2}+T-1}w^{T}}{z^{T}\left(w-\frac{x_{1}}{T}\right)^{n_{1}+T+1}}  \frac{e^{T\left(S_{m}(z) -S_{m}(w)\right) }}{w-z} \bigg|
    \\ & \leq \bigg|\frac{n_{1}+T}{T} \frac{-1}{(2\pi i)^{2}}\int_{C_{1}^{(m)}} dz \oint_{C_{2}^{(m)}}dw \frac{\left(z-\frac{x_{2}}{T}\right)^{n_{2}+T-1}w^{T}}{z^{T}\left(w-\frac{x_{1}}{T}\right)^{n_{1}+T+1}}  \frac{e^{T\left(S_{m}(z) -S_{m}(w)\right) }}{w-z} \bigg|
    \\ & + \bigg|\frac{n_{1}+T}{T} \frac{-1}{(2\pi i)^{2}}\int_{C_{1}^{(m),-}} dz \oint_{C_{2}^{(m),-}}dw \frac{\left(z-\frac{x_{2}}{T}\right)^{n_{2}+T-1}w^{T}}{z^{T}\left(w-\frac{x_{1}}{T}\right)^{n_{1}+T+1}}  \frac{e^{T\left(S_{m}(z) -S_{m}(w)\right) }}{w-z} \bigg|
    \\ & + \bigg|\frac{n_{1}+T}{T} \frac{-1}{(2\pi i)^{2}}\int_{C_{1}^{(m)}} dz \oint_{C_{2}^{(m),-}}dw \frac{\left(z-\frac{x_{2}}{T}\right)^{n_{2}+T-1}w^{T}}{z^{T}\left(w-\frac{x_{1}}{T}\right)^{n_{1}+T+1}}  \frac{e^{T\left(S_{m}(z) -S_{m}(w)\right) }}{w-z} \bigg|
    \\ & + \bigg|\frac{n_{1}+T}{T} \frac{-1}{(2\pi i)^{2}}\int_{C_{1}^{(m),-}} dz \oint_{C_{2}^{(m)}}dw \frac{\left(z-\frac{x_{2}}{T}\right)^{n_{2}+T-1}w^{T}}{z^{T}\left(w-\frac{x_{1}}{T}\right)^{n_{1}+T+1}}  \frac{e^{T\left(S_{m}(z) -S_{m}(w)\right) }}{w-z}\bigg|
     \\ & + \bigg|\frac{n_{1}+T}{T} \frac{-1}{(2\pi i)^{2}}\int_{C_{z}^{(m)}\setminus (C_{1}^{(m)}\cup C_{1}^{(m),-})} dz \int_{C_{w}^{(m)}}dw \frac{\left(z-\frac{x_{2}}{T}\right)^{n_{2}+T-1}w^{T}}{z^{T}\left(w-\frac{x_{1}}{T}\right)^{n_{1}+T+1}}  \frac{e^{T\left(S_{m}(z) -S_{m}(w)\right) }}{w-z} \bigg|
     \\ & + \bigg|\frac{n_{1}+T}{T} \frac{-1}{(2\pi i)^{2}}\int_{C_{z}^{(m)}} dz \int_{C_{w}^{(m)}\setminus (C_{2}^{(m)}\cup C_{2}^{(m),-})}dw \frac{\left(z-\frac{x_{2}}{T}\right)^{n_{2}+T-1}w^{T}}{z^{T}\left(w-\frac{x_{1}}{T}\right)^{n_{1}+T+1}}  \frac{e^{T\left(S_{m}(z) -S_{m}(w)\right) }}{w-z}\bigg|.
\end{align*} The expression above has an inequality instead of an equality because the final two terms double count one part of the contour. 

Using \cref{r:Rbd}, \cref{l:unique}, \cref{l:conditions1}, \cref{r:extra}, \cref{l:lengthbound}, \cref{r:extra2}, and \cref{l:conditions3}, (and their implicit analogues in the lower half-plane) we can apply \cref{p:steepest} and \cref{r:steepest} to conclude that the first four terms on the right-hand side above converge to $0$ as $m\to\infty$ at a rate bounded above by 
\begin{align}\label{e:finalbds}
    (1+c^{-2})^{(n_2-1)/2}
\kappa^{-n_1-n_2-5}
e^{4(|x_1|+|x_2|)\kappa^{-1}}e^{-dT} + C T^{-1/2}
\end{align} where $d$ is as in \cref{r:extra}, and where $C>0$ does not depend on $x_{1},x_{2},n_{1},n_{2}.$

Thus, for any $\kappa$ asymptotically dominating  $(dT-\frac{1}{2}\log{(T)})^{-1}4(|x_{1}|+|x_{2}|)$, the entire expression tends to $0$ at rate $T^{-1/2}$, up to constants depending on $x_{1},x_{2},n_{1},n_{2}$. Similarly, we require that $\kappa^{-1}e_{m}\ll T^{-1/2},$ which is satisfied whenever $\kappa$ asymptotically dominates $\log^{2}{(m)}/T^{1/2} + T^{3/2}/m $. Therefore, we choose  $\kappa=\kappa(m)\to 0$ so that
$\log^{2}{(m)} T^{-1/2}+T^{3/2}m^{-1}\ll \kappa$. 
This is possible because we have only required $\log^{4}{(m)}\ll T\ll m^{2/3}$. 
We combine this with the result of \cref{l:errortermscross} to see that for all compact $K\subset \mathbb{N}\times \mathbb{R}$, there exists $C''>0$ such that the first term of \eqref{e:bigequation} is bounded by $C''T^{-1/2}$.

Now we focus on the second term of \eqref{e:bigequation}. We will deal with this term in two cases. \newline\hfill\newline 
\textbf{Case 1:} In this case we assume $\Re(z_{0})\neq 0$. 
In this case, for all $m$ sufficiently large, $|x_{1}|T^{-1},|x_{2}|T^{-1}<|\Re(z_{0}^{(m)})|$.

When $\Re(z_{0}^{(m)})>0$ and  $x_{1}\leq x_{2}$, for $m$ sufficiently large, it is therefore equivalent to take $\gamma_{m}$ to be the straight segment between $\overline{z_{0}^{(m)}}$ and $z_{0}^{(m)}$. When $x_{2}<x_{1}$, the integral along $\gamma_{m}$ is equal to the integral along the straight segment between $\overline{z_{0}^{(m)}}$ and $z_{0}^{(m)}$ minus the residue at $x_{1}T^{-1}.$  
Thus, these terms become 
\begin{align*}
       -\mathbf{1}_{n_{2}\geq n_{1}+1}\mathbf{1}_{x_{1}>x_{2}}\frac{n_{1}+T}{T}{ n_{2}+T-1 \choose n_{1}+T} \left(\frac{x_{1}-x_{2}}{T}\right)^{n_{2}-n_{1}-1} + \frac{n_{1}+T}{T} \frac{1}{2\pi i}\int_{\overline{z_{0}^{(m)}}}^{z_{0}^{(m)}} dz \frac{\left(z-\frac{x_{2}}{T}\right)^{n_{2}+T-1} }{\left(z-\frac{x_{1}}{T}\right)^{n_{1}+T+1} }.
 \end{align*}
 When $\Re(z_{0}^{(m)})<0$ and $x_{2}<x_{1}$, we can replace the contour $\gamma_{m}$ with the segment between $\overline{z_{0}^{(m)}}$ and $z_{0}^{(m)}$, and when $x_{1}\leq x_{2}$, it can be replaced with that contour, plus the residue at $x_{1}T^{-1}$. 
\begin{align*}
       \mathbf{1}_{n_{2}\geq n_{1}+1}\mathbf{1}_{x_{1}\leq x_{2}}\frac{n_{1}+T}{T}{ n_{2}+T-1 \choose n_{1}+T} \left(\frac{x_{1}-x_{2}}{T}\right)^{n_{2}-n_{1}-1} + \frac{n_{1}+T}{T} \frac{1}{2\pi i}\int_{\overline{z_{0}^{(m)}}}^{z_{0}^{(m)}} dz \frac{\left(z-\frac{x_{2}}{T}\right)^{n_{2}+T-1} }{\left(z-\frac{x_{1}}{T}\right)^{n_{1}+T+1} }.
 \end{align*}

By Stirling's formula and dominated convergence, and since $\Re(z_{0})\neq 0$ by assumption, as $m\to\infty,$ this goes to
\begin{align*}
   -\mathbf{1}_{n_{2}\geq n_{1}+1}\mathbf{1}_{\Re(z_{0})>0;x_{1}>x_{2}}\frac{(x_{1}-x_{2})^{n_{2}-n_{1}-1}}{(n_{2}-n_{1}-1)!}+\mathbf{1}_{n_{2}\geq n_{1}+1}\mathbf{1}_{\Re(z_{0})<0;x_{1}\leq x_{2}}\frac{(x_{1}-x_{2})^{n_{2}-n_{1}-1}}{(n_{2}-n_{1}-1)!}  \\ + \frac{1}{2\pi i}\int_{\overline{z_{0}}}^{z_{0}} dz z^{n_{2}-n_{1}-2}e^{(x_{1}-x_{2})/z}. 
\end{align*} The error rate of this convergence is  $O(T^{-1/2})$, up to constants which may depend on $x_{1},x_{2},n_{1},n_{2},$ and which may be chosen uniformly over $(n_{1},x_{1}),(n_{2},x_{2})\in K$ for any compact $K\subset\mathbb{N}\times\mathbb{R}$.  
We perform the change of variables $u=-1/z$ to obtain, up to conjugation by non-vanishing functions, and deformation of the contours
\begin{align*}
    -\mathbf{1}_{n_{2}\geq n_{1}+1}\mathbf{1}_{\Re(z_{0})>0;x_{1}>x_{2}}\frac{(x_{1}-x_{2})^{n_{2}-n_{1}-1}}{(n_{2}-n_{1}-1)!}+\mathbf{1}_{n_{2}\geq n_{1}+1}\mathbf{1}_{\Re(z_{0})<0;x_{1}\leq x_{2}}\frac{(x_{1}-x_{2})^{n_{2}-n_{1}-1}}{(n_{2}-n_{1}-1)!} \\ -\frac{1}{2\pi i}\int_{-1/u \in [\overline{z_{0}},z_{0}]}du u^{n_{1}-n_{2}}e^{u(x_{2}-x_{1})}, 
\end{align*} We then deform the contour of the integral to $\Re(-1/z_{0})+i\mathbb{R}\setminus [\Im(-\overline{1/z_{0}}),\Im(-1/z_{0})]$, which produces two factors when $u$ crosses $0$, which cancel exactly with the first two terms, sending the entire expression to
\begin{align*}
    -\frac{1}{2\pi i}\int_{\Re(-1/z_{0})+i\mathbb{R}\setminus [\Im(-\overline{1/z_{0}}),\Im(-1/z_{0})]}du u^{n_{1}-n_{2}}e^{u(x_{2}-x_{1})}.
\end{align*}
When $n_{2}\leq n_{1}$, this is equal to 
\begin{align*}
    \frac{1}{2\pi i}\int_{-\overline{1/z_{0}}}^{-1/z_{0}}du u^{n_{1}-n_{2}}e^{u(x_{2}-x_{1})},
\end{align*} and these two cases together exactly match 
$K^{\text{sine}}_{-1/z_{0}}(n_{1},x_{1};n_{2},x_{2})$ as defined in \cref{d:extendedsine}.
\newline\hfill\newline
\textbf{Case 2:} In this case, we allow $\Re(z_{0}^{(m)})\to 0$ as $m\to\infty$. Without loss of generality, by deforming the contours, we can assume that $\Re(z_{0}^{(m)})=0$ for all $m$ sufficiently large (in this case, if $x_{1}=0$, we allow the contour to deviate towards the positive real axis around $z=0$ to avoid the singularity). With this simplification, the term in question takes the form
\begin{align*}
    \frac{n_{1}+T}{T} \frac{1}{2\pi i}\int_{-i\Im(z_{0}^{(m)})}^{i\Im(z_{0}^{(m)})} dz \frac{\left(z-\frac{x_{2}}{T}\right)^{n_{2}+T-1}}{\left(z-\frac{x_{1}}{T}\right)^{n_{1}+T+1} }.
\end{align*} If $x_{1}> x_{2}$, then for some $c>0$, this is equivalent to 
\begin{multline*}
    \frac{n_{1}+T}{T} \frac{1}{2\pi i}\int_{-c-i\Im(z_{0}^{(m)})}^{-c+i\Im(z_{0}^{(m)})} dz \frac{\left(z-\frac{x_{2}}{T}\right)^{n_{2}+T-1}}{\left(z-\frac{x_{1}}{T}\right)^{n_{1}+T+1} } +\frac{n_{1}+T}{T} \frac{1}{2\pi i}\int_{-c+i\Im(z_{0}^{(m)})}^{i\Im(z_{0}^{(m)})} dz \frac{\left(z-\frac{x_{2}}{T}\right)^{n_{2}+T-1}}{\left(z-\frac{x_{1}}{T}\right)^{n_{1}+T+1} } \\ +\frac{n_{1}+T}{T} \frac{1}{2\pi i}\int_{-i\Im(z_{0}^{(m)})}^{-c-i\Im(z_{0}^{(m)})} dz \frac{\left(z-\frac{x_{2}}{T}\right)^{n_{2}+T-1}}{\left(z-\frac{x_{1}}{T}\right)^{n_{1}+T+1} }.
\end{multline*} As $m\to\infty$, by dominated convergence, this expression goes to 
\begin{multline*}
    \frac{1}{2\pi i}\int_{-c-i\Im(z_{0} )}^{-c+i\Im(z_{0} )} dz z^{n_{2}-n_{1}-2}e^{(x_{1}-x_{2})/z}  +  \frac{1}{2\pi i}\int_{-c+i\Im(z_{0} )}^{i\Im(z_{0} )}   dz z^{n_{2}-n_{1}-2}e^{(x_{1}-x_{2})/z} \\ +  \frac{1}{2\pi i}\int_{-i\Im(z_{0} )}^{-c-i\Im(z_{0} )}  dz z^{n_{2}-n_{1}-2}e^{(x_{1}-x_{2})/z}.
\end{multline*} The error rate of this convergence is  $O(T^{-1}+e_{m})$. We can do the same when $x_{1}\leq x_{2}$ by shifting the contour by a fixed amount away from $0$. Then under the change of variables $z=-1/u$, up to conjugation by non-vanishing functions and sending $c\to 0$ after changing variables, this becomes 
\begin{align*}
    -  \frac{1}{2\pi i}\int_{i\mathbb{R}\setminus [-i/\Im(z_{0}),i/\Im(z_{0})]}   du u^{n_{1}-n_{2}}e^{u(x_{2}-x_{1})}.
\end{align*}
As before, when $n_{2}\leq n_{1}$, this is equal to 
\begin{align*}
     \frac{1}{2\pi i}\int_{-\overline{1/z_{0}}}^{-1/z_{0}}   du u^{n_{1}-n_{2}}e^{u(x_{2}-x_{1})}.
\end{align*}
\end{proof}

\subsection{Proof of \cref{l:errortermscross}}\label{s:crossterms}
To prove \cref{l:errortermscross}, we will need estimates on how the product, which appears in the integrand, \cref{t:kernel} behaves close to the real line. 

\begin{lemma} For all $\alpha\in (0,2),$  there exist $C:=C(\alpha),C':=C'(\alpha)>0$ such that for all $m\in \mathbb{N}$, $x^{(m)}\in\Omega_{m}$, 
   $|y|\in (0,\kappa)$, and $x\in\mathbb{R}$ such that $X,X+xT/m\in (-2+\alpha,2-\alpha)$, 
    \begin{align}\label{e:bd1}
        \bigg| \prod_{r=1}^{m}\frac{x+i\kappa- u_{r}^{(m)}}{x+iy-u_{r}^{(m)}}\bigg| 
        & \leq \left(\frac{\kappa}{|y|}\right)^{C\log^{2}{(m)}}  
    \exp{\left(\frac{C\pi }{2}T(\kappa-|y|) \right)},
    \end{align} and 
    \begin{align}\label{e:bd2}
    \bigg| \prod_{r=1}^{m}\frac{x+iy- u_{r}^{(m)}}{x+i\kappa-u_{r}^{(m)}}\bigg| &  \leq \exp{\left(-  \frac{\pi\rho_{sc}(X)T(\kappa^{2}-y^{2})}{2\kappa } + \frac{C'(\kappa^{2}-y^{2})\log^{2}{(m)}}{\kappa^{2}} \right)}.
\end{align} Furthermore, there exists $C'':=C''(\alpha)>0$ such that for all $m\in\mathbb{N}$, $x^{(m)}\in\Omega_{m}$, $x\in\mathbb{R}$, $\delta>0$, with $X,X+xT/m,X+(x+\delta)T/m\in (-2+\alpha,2-\alpha)$,  
\begin{align}\label{e:bd3}
     \bigg| \prod_{r=1}^{m}\frac{x+\delta+i\kappa-u_{r}^{(m)}}{x+i\kappa-u_{r}^{(m)}}\bigg|  & \leq   \exp{\left(\frac{T|X|\delta}{2}+\frac{T^{2}}{2m}(x\delta+\delta^{2}/2) + C''\delta T e_{m}\kappa^{-1} \right)}.
\end{align}
\end{lemma}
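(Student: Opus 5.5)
All three bounds will come from taking logarithms of the modulus and rewriting each product as a sum over the rescaled eigenvalues $u_r^{(m)}$. Each sum will then be controlled by splitting it into a near part (the eigenvalues closest to $x$) and a far part, with the far part handled by integrating against the empirical measure $\mu_m$ of \cref{t:local}.

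\textbf{Bounds \eqref{e:bd1} and \eqref{e:bd2}.} Set $a_r:=x-u_r^{(m)}$. Then
\[
\log\Big|\frac{x+i\kappa-u_r^{(m)}}{x+iy-u_r^{(m)}}\Big|=\tfrac12\log\frac{a_r^2+\kappa^2}{a_r^2+y^2},
\]
so the log of the left side of \eqref{e:bd1} is $T\int \tfrac12\log\frac{(x-u)^2+\kappa^2}{(x-u)^2+y^2}\,\mu_m(du)$. Writing this as $\int_{|y|}^{\kappa}\frac{s}{(x-u)^2+s^2}\,ds$ gives
\[
\log(\text{left side of \eqref{e:bd1}})=T\int_{|y|}^{\kappa}\Im\big(G_m(x+is)\big)\,ds,
\qquad G_m(z):=-\int\frac{\mu_m(du)}{z-u}.
\]
So everything reduces to a bound on the Poisson integral $\int\frac{s}{(x-u)^2+s^2}\mu_m(du)$ for $s\in(|y|,\kappa)$.
\begin{itemize}
\item For $s\gtrsim \log^2(m)/T$, \cref{t:local}(1) compares $\mu_m$ with $\rho_{sc}(X+uT/m)\,du$. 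Since the density is bounded above by a constant, and bounded below by $\rho_{sc}(X)-O(\cdot)$ in the bulk, the integral is $\pi\rho_{sc}(\cdot)+O(\log^2(m)/(Ts))$.
\item For smaller $s$, the counting estimate from \cref{thm:rigid} says any window of length $\ell$ holds at most $C(\ell T+\log^2 m)$ rescaled eigenvalues. Dyadically decomposing in $|x-u|$ then bounds the Poisson integral by $C\pi/2+C\log^2(m)/(Ts)$.
\end{itemize}
Integrating the second bound over $s\in(|y|,\kappa)$ gives exactly the two factors in \eqref{e:bd1}: $\exp(\tfrac{C\pi}{2}T(\kappa-|y|))$ and $(\kappa/|y|)^{C\log^2 m}$.

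For \eqref{e:bd2} the sign flips, so I need a \emph{lower} bound on the same Poisson integral. I will use the cheaper inequality $\log\frac{a^2+\kappa^2}{a^2+y^2}\ge \frac{\kappa^2-y^2}{a^2+\kappa^2}$. This reduces the task to a lower bound on the Poisson integral at the single height $s=\kappa$, which \cref{t:local}(1) provides as $\pi\rho_{sc}(X)-C\log^2(m)/(T\kappa)$. Multiplying by $T(\kappa^2-y^2)/(2\kappa)$ yields \eqref{e:bd2}.

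\textbf{Bound \eqref{e:bd3}.} This is the increment of $\Re$ of $-T^{-1}\sum\log(z-u_r)$ along the horizontal segment from $x+i\kappa$ to $x+\delta+i\kappa$. I will write it as $T\int_0^\delta \Re\big(\tfrac1T\sum_r (x+t+i\kappa-u_r)^{-1}\big)\,dt$. By \eqref{e:sderiv}, the integrand equals $X+\tfrac1z+\tfrac{T}{m}z-S_m'(z)$. The first statement of \cref{l:sconv} gives $S'_m=S'_*+O(\kappa^{-1}e_m+T|z|/m)$, and $\Re S'_*(z)=X/2+\Re(1/z)$. So the real part of the sum is $X/2+\tfrac{T}{m}(x+t)+O(\kappa^{-1}e_m)$, with the $1/z$ terms cancelling. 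Integrating over $t\in[0,\delta]$ gives $\tfrac{T|X|\delta}{2}+\tfrac{T^2}{m}(x\delta+\delta^2/2)+C''\delta Te_m\kappa^{-1}$.

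\textbf{Main obstacle.} The stated factor $\tfrac{T^2}{2m}$ in \eqref{e:bd3} may require care, or the statement may tolerate a constant factor. The harder issue is the small-$s$ regime in \eqref{e:bd1}, where rigidity at scale $\log^2(m)/T$ must be made uniform in $x$. I expect this to work via \cref{thm:rigid}'s supremum over $t$, which holds uniformly on $\Omega_m$.
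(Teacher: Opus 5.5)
Your arguments for \eqref{e:bd1} and \eqref{e:bd2} are essentially the paper's. For \eqref{e:bd2} you use the same inequality $\log(1-t)\le -t$ and the same rigidity lower bound on $\sum_r\big((x-u_r^{(m)})^2+\kappa^2\big)^{-1}$. For \eqref{e:bd1} you bound the Poisson integral at each height $s$ by $C+C\log^2(m)/(Ts)$ using dyadic shells at scale $s$. The paper instead bounds the $O(\log^2 m)$ eigenvalues within $\log^2(m)/T$ of $x$ factor by factor by $\kappa/|y|$, and treats the rest with shells of width $\log^2(m)/T$. The input and the two resulting factors are the same, and the uniformity in $x$ you worry about is automatic from the supremum over $t$ in \cref{thm:rigid}.

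For \eqref{e:bd3} the route is also the paper's, but one step is wrong. From $S_m'=S_*'+O(\kappa^{-1}e_m+T|z|/m)$ you keep the term $\frac{T}{m}z$ of \eqref{e:sderiv} exactly, and then drop the error $T|z|/m$, which is of the same size. That error is not negligible. The quantity $\frac1T\sum_r(z-u_r^{(m)})^{-1}$ is the empirical Stieltjes transform $\frac1m\sum_r(\zeta-x_r^{(m)}/\sqrt m)^{-1}$ at $\zeta=X+zT/m$, and its real part is close to $\Re\zeta/2$. Hence $S_m'-S_*'\approx\frac{T}{2m}z$, and half of the $\frac{T}{m}z$ term cancels. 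The real part of the normalized sum is $\frac X2+\frac{T}{2m}(x+t)+O(\kappa^{-1}e_m)$, not $\frac X2+\frac{T}{m}(x+t)+O(\kappa^{-1}e_m)$. Your version fails as an upper bound whenever $x+t<0$ and $\frac{T}{m}|x+t|\gg\kappa^{-1}e_m$, and the hypothesis $X+xT/m\in(-2+\alpha,2-\alpha)$ allows this. So the mismatch with the stated $\frac{T^2}{2m}$ is not a harmless constant.

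The repair is to bypass \cref{l:sconv}, which freezes the density at $X$, and compare with the semicircle at the moving point. Set $g_{sc}(\zeta):=\int\frac{\rho_{sc}(v)}{\zeta-v}\,dv$ and $\zeta=X+(x+t+i\kappa)T/m$. Integrating by parts against the distribution-function discrepancy controlled by \cref{thm:rigid} gives
\begin{align*}
\Big|\frac{1}{T}\sum_{r=1}^{m}\frac{1}{x+t+i\kappa-u_r^{(m)}}-g_{sc}(\zeta)\Big|\le\frac{C\log^{2}(m)}{m}\int_{\mathbb{R}}\frac{dv}{|\zeta-v|^{2}}=\frac{C\pi\log^{2}(m)}{\kappa T}.
\end{align*}
In the bulk, $\Re g_{sc}(E+i\eta)=E/2+O(\eta)$ uniformly for $E\in(-2+\alpha,2-\alpha)$. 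The whole segment from $X+xT/m$ to $X+(x+\delta)T/m$ lies in this interval. So the integrand is $\frac X2+\frac{T}{2m}(x+t)+O(\kappa T/m+\kappa^{-1}\log^2(m)/T)=\frac X2+\frac{T}{2m}(x+t)+O(\kappa^{-1}e_m)$. Integrating over $t\in[0,\delta]$ and using $X\le|X|$ gives exactly \eqref{e:bd3}.

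The same unfreezing affects the Poisson-integral lower bound behind \eqref{e:bd2}, in your argument and the paper's alike. It really yields $\rho_{sc}(X+xT/m)$, which can be replaced by $\rho_{sc}(X)$ only when $|x|T/m$ is small. That is the case in every later application.
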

\begin{proof}
    \cref{thm:rigid} shows that the number of eigenvalues within any interval of length $\log^{2}{(m)}m^{-1}$ is $O(\log^{2}{(m)})$. Consequently, there are $O(\log^{2}{(m)})$ of the $u_{r}^{(m)}$ within $\log^{2}{(m)}T^{-1}$ of $\Re(z)$, and $O(\log^{2}{(m)})$ which are separated by at least $\log^{2}{(m)}T^{-1}$ and at most $2\log^{2}{(m)}T^{-1}$ and similarly for separations of at least $(k-1)\log^{2}{(m)}T^{-1}$ and at most $k\log^{2}{(m)}T^{-1}$ for $k\in [[3,\lfloor m\log^{-2}{(m)}\rfloor ]]$. Therefore, for all $x+iy$ such that $X+xT/m\in (-2+\alpha,2-\alpha)$ and $|y|\in (0,\kappa)$,  there exists a collection $\{c_{k}(x)\}_{k=1}^{m\log^{-2}{(m)}-1},$ defined by 
    \begin{align*}
        c_{k}:= c_{k}(x) & = \frac{1}{\log^{2}{(m)}}|\{ u_{r}^{(m)} | |x- u_{r}^{(m)}| \in [(k-1)\log^{2}{(m)}T^{-1},k\log^{2}{(m)}T^{-1}] \}|.
    \end{align*}  Furthermore, there exists $C_{\text{rig}}>0$ such that for all admissible $x,k,m$, $c_{k}(x)\leq C_{\text{rig}}$. 
    
We conclude that for all $x^{(m)}\in  \Omega_{m}$, $x\in\mathbb{R}$, and $y$ such that $|y|\in (0, \kappa)$,
\begin{align*}
    & \bigg| \prod_{r=1}^{m}\frac{x+i\kappa-u_{r}^{(m)}}{x+iy- u_{r}^{(m)}}\bigg| 
    \\ & \leq \left(\prod_{r=1}^{c_{1}\log^{2}{(m)}}\frac{(x-\hat{u}_{r}^{(m)})^{2}+\kappa^{2}}{(x-\hat{u}_{r}^{(m)})^{2}+y^{2}}\right)^{1/2} 
    \exp{\left(\log^{2}{(m)}\int_{|y|}^{\kappa}\left( \sum_{k=1}^{\lfloor m\log^{-2}{(m)}-1\rfloor }\frac{sc_{k+1}}{s^{2}+k^{2}\log^{4}{(m)}T^{-2}}\right)ds\right)},
\end{align*} where the $\{\hat{u}_{r}^{(m)}\}_{r=1}^{c_{1}\log^{2}{(m)}}$ are the relabeled $c_{1}\log^{2}{(m)}$ eigenvalues within $\log^{2}T^{-1}$ of $x$. This expression is bounded by 
\begin{align*}
    & \leq \left(\frac{\kappa}{|y|}\right)^{C_{\text{rig}}\log^{2}{(m)}} \exp{\left(C_{\text{rig}}\log^{2}{(m)}\int_{|y|}^{\kappa}\left( \sum_{k=1}^{\lfloor m\log^{-2}{(m)}-1\rfloor }\frac{s}{s^{2}+k^{2}\log^{4}{(m)}T^{-2}}\right)ds\right)}.
\end{align*} We use the estimate 
\begin{align*}
    \sum_{k=1}^{m\log^{-2}{(m)}-1}\frac{1}{s^{2}+k^{2}\log^{4}{(m)}T^{-2}}  & \leq  \frac{\pi T}{2\log^{2}{(m)}|s|},
\end{align*} to conclude that 
\begin{align*}
   \bigg| \prod_{r=1}^{m}\frac{x+i\kappa-u_{r}^{(m)}}{x+iy- u_{r}^{(m)}}\bigg| & \leq \left(\frac{\kappa}{|y|}\right)^{C_{\text{rig}}\log^{2}{(m)}}  \exp{\left(\frac{C_{\text{rig}}\pi }{2}T(\kappa-|y|) \right)}.
\end{align*}
To show the second statement, we write 
\begin{align*}
    \bigg| \prod_{r=1}^{m}\frac{x+iy- u_{r}^{(m)}}{x+i\kappa-u_{r}^{(m)}}\bigg| &  = \exp{\left(\frac{1}{2}\sum_{r=1}^{m}\log{\left(1 -\frac{\kappa^{2}-y^{2}}{\kappa^{2}+(x-u_{r}^{(m)})^{2}}\right)}\right)}
     \leq  \exp{\left(-\frac{\kappa^{2}-y^{2}}{2}\sum_{r=1}^{m}\frac{1}{\kappa^{2}+(x-u_{r}^{(m)})^{2}} \right)}.
\end{align*} 
For $\mu_{m}$ as defined in \cref{t:local},
\begin{align*}
    \frac{1}{T}\sum_{r=1}^{m}\frac{1}{\kappa^{2}+(x-u_{r}^{(m)})^{2}} = \int_{\mathbb{R}}\frac{1}{\kappa^{2}+(x-u)^{2}}\mu_{m}(du),
\end{align*} and therefore, by eigenvalue rigidity, for all $x\in\mathbb{R}$ such that $X+xT/m\in (-2+\alpha,2-\alpha)$, we find that
\begin{align*}
    \sum_{r=1}^{m}\frac{1}{\kappa^{2}+(x-u_{r}^{(m)})^{2}} & = \frac{\pi\rho_{sc}(X)T}{\kappa} \pm O\left(\log^{2}{(m)}\mathrm{TV}\left(u\mapsto \frac{1}{\kappa^{2}+(x-u)^{2}}\right)\right)
    \\ & = \frac{\pi\rho_{sc}(X)T}{\kappa } \pm O\left(\frac{\log^{2}{(m)}}{\kappa^{2}} \right).
\end{align*} 
Therefore, we conclude
\begin{align*}
    \bigg| \prod_{r=1}^{m}\frac{x+iy- u_{r}^{(m)}}{x+i\kappa-u_{r}^{(m)}}\bigg|  
    & \leq \exp{\left(-  \frac{\pi\rho_{sc}(X)T(\kappa^{2}-y^{2})}{2\kappa } + O\left(\frac{(\kappa^{2}-y^{2})\log^{2}{(m)}}{\kappa^{2}} \right)\right)}.
\end{align*} 
To obtain the third bound, we perform a similar computation, 
\begin{align*}
    \bigg| \prod_{r=1}^{m}\frac{x+\delta+i\kappa-u_{r}^{(m)}}{x+i\kappa-u_{r}^{(m)}}\bigg| & =  \prod_{r=1}^{m}\left(\frac{(x+\delta-u_{r}^{(m)})^{2}+\kappa^{2}}{(x-u_{r}^{(m)})^{2}+\kappa^{2}}\right)^{1/2}
    \\  & =  \exp{\left(\int_{0}^{\delta}\Re\left(\sum_{r=1}^{m}\frac{1}{x+s+i\kappa-u_{r}^{(m)}} \right)ds\right)}.
\end{align*} By \cref{l:sconv}, this is bounded by 
\begin{align*}
    & \leq   \exp{\left(\int_{0}^{\delta} T\left(\frac{X}{2}+\frac{T}{2m}(x+s) + O(e_{m}\kappa^{-1})\right)ds\right)}  \leq   \exp{\left(\frac{T|X|\delta}{2}+\frac{T^{2}}{2m}(x\delta+\delta^{2}/2) + O(\delta T e_{m}\kappa^{-1}) \right)}.
\end{align*}
\end{proof}
We will use these bounds to demonstrate that the one dimensional integrals along the contours involved in \cref{l:errortermscross} are acceptably bounded. We begin by studying the $z$ contour.
\begin{lemma}\label{l:zrealbd} Assume that $\kappa:=\kappa(m)\to 0$ as $m\to\infty$ and that $Tm^{-1}\ll\log^{2}{(m)}T^{-1/2}\ll\kappa$. Also assume that $\log^{4}{(m)}\ll T\ll m^{2/3}$. For all $\alpha\in (0,2)$ and $K\subset\mathbb{N}\times \mathbb{R}$ compact, there exist $C:=C(K,\alpha)>0,d:=d(\alpha)>0,m_{0}:=m_{0}(K,\alpha)\in\mathbb{N}$ such that for all $m>m_{0}$, $x^{(m)}\in  \Omega_{m}$, $X\in (-2+\alpha,2-\alpha)$ and $(n_{2},x_{2})\in K$, each of the expressions 
    \begin{align*}
       & \bigg| \int_{v}dz \frac{\left(z-\frac{x_{2}}{T}\right)^{n_{2}+T-1}}{z^{T}}e^{TS_{m}(z)-TS_{m}(z_{0}^{(m)})}\bigg|,  \bigg| \int_{h_{1}\cup h_{2}} dz  \frac{\left(z-\frac{x_{2}}{T}\right)^{n_{2}+T-1}}{z^{T}}e^{TS_{m}(z)-TS_{m}(z_{0}^{(m)})}\bigg|,
       \\ & \bigg| \int_{\{\Re(z_{4}^{(m)})+iy| |y|\geq \kappa^{-1}\}} dz \frac{\left(z-\frac{x_{2}}{T}\right)^{n_{2}+T-1}}{z^{T}}e^{TS_{m}(z)-TS_{m}(z_{0}^{(m)})}\bigg|,
    \end{align*} is bounded by $Ce^{-dT }$. 
\end{lemma}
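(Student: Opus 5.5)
The plan is to compare $\Re(TS_m(z))$ on each of the three pieces with its value at nearby points of the steepest descent contour $C_1^{(m)}$, where \cref{r:extra} already gives $\Re(S_m(z))\le \Re(S_m(z_0^{(m)}))-d$. We may drop the prefactor $(z-x_2/T)^{n_2+T-1}z^{-T}$: writing it as $B_m(z)$, \cref{r:Rbd} bounds it by $D_c^{n_2-1}e^{4\kappa^{-1}|x_2|}\kappa^{-n_2+1}$ on $\mathcal R(c,\kappa)$ and on its closure near the real line. For $(n_2,x_2)\in K$ and $\kappa\gg T^{-1/2}\log^2 m$ this is $e^{o(T)}$, so it is absorbed into $e^{-dT/2}$. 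Throughout we work in the upper half-plane and obtain the lower half by conjugate symmetry.

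\textbf{Vertical rays.} For the rays $\{\Re(z_4^{(m)})+iy:|y|\ge\kappa^{-1}\}$, start from $z_4^{(m)}\in C_1^{(m)}$, where $\Re S_m(z_4^{(m)})-\Re S_m(z_0^{(m)})\le -d$. By \cref{l:6162}(3), $-\partial_y\Re S_m(x+iy)>d'$ for $y\ge\kappa^{-1}$, so $\Re S_m$ decreases at least linearly in $y$ along the ray. The integrand is therefore bounded by $e^{-dT}e^{-d'T(y-\kappa^{-1})}$ times the prefactor. For large $y$ the prefactor grows only like $|z|^{n_2-1}$; alternatively, the $Tz^2/(2m)$ term in $S_m$ gives Gaussian decay $e^{-T^2y^2/2m}$. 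Either way we get integrability and the bound $Ce^{-dT}$. One caveat is that \cref{l:6162}(3) is stated only for bounded $y$. For $y\gg\kappa^{-1}$ we would instead use $\Re S_m(iy)\sim \log y - \frac{T}{2m}y^2 - \frac1T\sum\log|iy-u_r|$ directly, which is decreasing since $\frac{m}{T}\log y$ dominates $\log y$.

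\textbf{Horizontal pieces $h_1,h_2$.} These have length at most $|x_2|T^{-1}+R\kappa$ by \cref{l:conditions3}, at height $\pm\kappa$, and start at $z_2^{(m)}\in C_1^{(m)}$. Write $e^{TS_m(z)}$ in terms of the product over $u_r$. Then \eqref{e:bd3} with $\delta\le |x_2|T^{-1}+R\kappa$ gives
\[
|e^{T(S_m(z)-S_m(z_2^{(m)}))}|\le \exp\bigl(CT\delta+CT^2m^{-1}\delta+C\delta Te_m\kappa^{-1}\bigr)\cdot(\text{ratio of }|z|^{T},|z|^{-T}\text{ factors}).
\]
The $z$-power factors combine with $B_m$ and $e^{TzX}$, and these are controlled the same way. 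This is $e^{O(T\kappa)+O(1)}$, which is dominated by $e^{-dT}$ once $\kappa$ is small. The constant $d$ comes from \cref{r:extra} and depends only on $\alpha$, and we first shrink $\kappa_0$ so that $CR\kappa<d/2$. This yields $Ce^{-dT/2}$, and we rename $d$.

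\textbf{Segment $v$ (main obstacle).} On $v$ the imaginary part goes down to $0$, where the product over $u_r$ can blow up near eigenvalues and $z^{-T}$ is singular at $z=0$. We compare with the endpoint $x_2T^{-1}+i\kappa$, which is controlled by the $h_1$ estimate. By \eqref{e:bd1}, the integrand at $x_2/T+iy$ is at most $(\kappa/|y|)^{C\log^2 m}e^{CT\kappa}$ times its value at height $\kappa$. On the other hand, the factor $(z-x_2/T)^{n_2+T-1}/z^T$ vanishes like $|y|^{n_2+T-1}$ at $y=0$ relative to its size at $y=\kappa$, up to $|z|^{-T}\approx\max(|y|,|x_2|/T)^{-T}$. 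I expect this combination to be the delicate point. The exponent $T$ from $(z-x_2/T)^{T}$ beats $C\log^2 m$ since $T\gg\log^4 m$, so $|y|^{n_2+T-1}(\kappa/|y|)^{C\log^2m}$ remains integrable and small. The $z^{-T}$ factor cancels against $(z-x_2/T)^T$ up to $(1+|x_2|/(T|z|))^T\le e^{|x_2|/|z|}$, which is problematic only for $|z|\lesssim 1/T$. In that range we would argue directly that the $(z-x_2/T)^{n_2-1}$ and $\prod(z-u_r)^{-1}$ bounds still give at most $e^{O(\log^2 m\log T)}$, which is negligible against $e^{-dT}$. Integrating $y\in[0,\kappa]$ then yields $Ce^{-dT}$ after again absorbing $e^{CT\kappa}$ by the smallness of $\kappa$.
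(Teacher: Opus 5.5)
The rays and $h_1,h_2$ are handled as in the paper. You compare with $z_4^{(m)},z_2^{(m)}\in C_1^{(m)}$, take the drop $d$ from \cref{r:extra}, use \cref{l:6162}(3) on the rays, and use \eqref{e:bd3} for the horizontal displacement. The segment $v$, however, has a genuine gap, caused by a mistake in tracking the factor $z^{-T}$. This factor is not singular and should not be cancelled against $(z-x_2/T)^T$. Because $S_m$ contains $\log_{\mathbb H}(z)$, one has exactly $z^{-T}e^{TS_m(z)}=e^{T(zX+Tz^2/(2m))}\prod_{r=1}^m(z-u_r^{(m)})^{-1}$. You instead apply \eqref{e:bd1} to the ratio $e^{TS_m(z)}/e^{TS_m(x_2T^{-1}+i\kappa)}$. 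But \eqref{e:bd1} controls only the product over the $u_r^{(m)}$, so your bound silently drops the factor $(|z|/|x_2T^{-1}+i\kappa|)^T\le 1$. You then spend $(z-x_2/T)^T$ twice: once to beat $(\kappa/|y|)^{C\log^2 m}$, and once to cancel $z^{-T}$.

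What your bounds actually give on $v$ is $|y|^{n_2+T-1}|z|^{-T}(\kappa/|y|)^{C\log^2 m}$ times the value at height $\kappa$. For $x_2=0$ this is $|y|^{n_2-1}(\kappa/|y|)^{C\log^2 m}$, which is not integrable at $y=0$ once $C\log^2 m\ge n_2$. For small $x_2\neq0$ it degenerates like $(\kappa T/|x_2|)^{C\log^2 m}$, so no bound uniform over $K$ follows. Your patch for $|z|\lesssim 1/T$ does not help. The factor $(z-x_2/T)^{n_2-1}$ alone cannot absorb $\prod_r|z-u_r^{(m)}|^{-1}$ as $y\to0$. Relative to height $\kappa$ that product may grow like $(\kappa/|y|)^{C\log^2 m}$, and it has an actual pole at $y=0$ if $x_2/T$ equals some $u_r^{(m)}$. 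So the claimed bound $e^{O(\log^2 m\log T)}$ is not available there.

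The repair is the paper's computation. Using the exact cancellation, the integrand on $v$ is $(iy)^{n_2+T-1}e^{T(zX+Tz^2/(2m))}\prod_r(z-u_r^{(m)})^{-1}e^{-TS_m(z_0^{(m)})}$. Normalizing at the reference point $x_2T^{-1}+i\kappa$ then produces $|x_2T^{-1}+i\kappa|^{-T}\le\kappa^{-T}$, rather than $|z|^{-T}$. Since $\Re z$ is constant on $v$ and $T^2\kappa^2/m\ll T\kappa$, \eqref{e:bd1} gives, on the upper half of $v$ (the lower half is symmetric),
\[
\Bigl|\int_v\cdots\Bigr|\le Ce^{CT\kappa}\,\frac{\kappa^{C\log^2 m}}{|x_2T^{-1}+i\kappa|^{T}}\int_0^\kappa y^{\,n_2+T-1-C\log^2 m}\,dy\,\bigl|e^{T(S_m(x_2T^{-1}+i\kappa)-S_m(z_0^{(m)}))}\bigr|\le Ce^{CT\kappa}\kappa^{n_2}\bigl|e^{T(S_m(x_2T^{-1}+i\kappa)-S_m(z_0^{(m)}))}\bigr|.
\]
The last factor is treated exactly as in your $h_1$ argument: apply \eqref{e:bd3} from $x_2T^{-1}+i\kappa$ to $z_2^{(m)}$, then \cref{r:extra}. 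The ratio $|x_2T^{-1}+i\kappa|^T/|z_2^{(m)}|^T\le e^{x_2^2/(2T\kappa^2)}$ is $O(1)$ because $T\kappa^2\to\infty$. The full power $|y|^T$ is usable precisely because it is normalized by $\kappa^T$ from the reference point, not by $|z|^T$.
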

\begin{proof} 
To deal with the first term, we simplify the expression using the assumptions on the relative rates of $\kappa,T,m$. There exist $C:=C(K,\alpha),C':=C'(K,\alpha)>0$ such that 
\begin{align*}
    & \bigg|\int_{v}dz \frac{\left(z-\frac{x_{2}}{T}\right)^{n_{2}+T-1}}{z^{T}}e^{TS_{m}(z)-TS_{m}(x_{2}T^{-1}+i\kappa)}\bigg| 
    \\ & \leq C'\frac{1}{|x_{2}T^{-1}+i\kappa|^{T}} \kappa^{C\log^{2}{(m)}} e^{\frac{C\pi }{2}T\kappa }\int_{0}^{\kappa} dy y^{n_{2}+T-1-C\log^{2}{(m)}}   \leq \frac{C'e^{\frac{C\pi }{2}T\kappa }\kappa^{n_{2}+T}}{|x_{2}T^{-1}+i\kappa|^{T}(n_{2}+T-C\log^{2}{(m)})} . 
\end{align*}
 Applying again the assumptions on the relative rates of $\kappa,T,m$, we find that for  any fixed compact $K$, for any  $(n_{2},x_{2})\in K$ there exist $C':=C'(K,\alpha)>0$ and $c>0$ such that this is bounded by 
\begin{align*}
     \bigg| \int_{v}dz \frac{\left(z-\frac{x_{2}}{T}\right)^{n_{2}+T-1}}{z^{T}}e^{TS_{m}(z)-TS_{m}(x_{2}T^{-1}+i\kappa)}\bigg|\leq \frac{C'  e^{cT\kappa} \kappa^{n_{2}}}{n_{2}+T-C\log^{2}{(m)}}\leq C'e^{cT\kappa}\kappa^{n_{2}}.
\end{align*}
Consequently, (and again applying the assumptions on the relative scale of $T,\kappa,m$), there exists $C:=C(K,\alpha)>0$ such that  
\begin{multline*}
     \bigg| \int_{v}dz \frac{\left(z-\frac{x_{2}}{T}\right)^{n_{2}+T-1}}{z^{T}}e^{TS_{m}(z)-TS_{m}(z_{0}^{(m)})}\bigg| 
     \\ \leq Ce^{T\Re(S_{m}(z_{2}^{(m)})-S_{m}(z_{0}^{(m)}))}\bigg| \int_{v}dz \frac{\left(z-\frac{x_{2}}{T}\right)^{n_{2}+T-1}}{z^{T}}e^{TS_{m}(z)-TS_{m}(x_{2}T^{-1}+i\kappa)}\bigg|
     \\ \cdot e^{x_{2}X-T\Re(z_{2}^{(m)})X}\left(1+\frac{x_{2}^{2}T^{-2}-\Re(z_{2}^{(m)})^{2}}{\Re(z_{2}^{(m)})^{2}+\kappa^{2}}\right)^{T/2}\prod_{r=1}^{m}\bigg|\frac{\Re(z_{2}^{(m)})+i\kappa-u_{r}^{(m)}}{x_{2}T^{-1}+i\kappa-u_{r}^{(m)}} \bigg|.
\end{multline*} Again applying the assumptions about the relative scale of $\kappa, T,m$,  which imply that  $|x_{2}T^{-1}-\Re(z_{2}^{(m)})|\leq \kappa$, and applying \eqref{e:bd3}, we conclude that there exist $C:=C(K,\alpha),C':=C'(K,\alpha), d:=d(\alpha)>0$ such that for all $(n_{2},x_{2})\in K$, 
\begin{multline*}
     \bigg| \int_{v}dz \frac{\left(z-\frac{x_{2}}{T}\right)^{n_{2}+T-1}}{z^{T}}e^{TS_{m}(z)-TS_{m}(z_{0}^{(m)})}\bigg|
     \\ \leq Ce^{T\Re(S_{m}(z_{2}^{(m)})-S_{m}(z_{0}^{(m)}))}\bigg| \int_{v}dz \frac{\left(z-\frac{x_{2}}{T}\right)^{n_{2}+T-1}}{z^{T}}e^{TS_{m}(z)-TS_{m}(x_{2}T^{-1}+i\kappa)}\bigg|\cdot \frac{|x_{2}T^{-1}+i\kappa|^{T}}{|z_{2}^{(m)}|^{T}} e^{\frac{3T|X|\kappa}{2} + C''T e_{m}}, 
\end{multline*} is bounded by $C'e^{-dT}$. 

Similarly, by the results of \cref{l:lengthbound}, the length of $ h_{1} $ is bounded by $R\kappa+|x_{2}|T^{-1}$ for some $R>0$. By applying the relative scale assumptions for all $z\in h_{1},$ we conclude that there exist $C:=C(K,\alpha),C':=C'(K,\alpha),C'':=C''(K,\alpha),C''':=C'''(K,\alpha),d:=d(\alpha)>0$ such that for all $(n_{2},x_{2})\in K$, 
\begin{multline*}
     \bigg|\int_{h_{1}}dz z^{-T}\left(z-\frac{x_{2}}{T}\right)^{n_{2}+T-1}e^{TS_{m}(z)-TS_{m}(z_{2}^{(m)})} \bigg| \\  \leq  Ce^{C'T\kappa + C''(R\kappa +|x_{2}|T^{-1}) T e_{m}\kappa^{-1}  } \bigg|\int_{h_{1}} dz(z_{2}^{(m)})^{-T}\left(z-\frac{x_{2}}{T}\right)^{n_{2}+T-1} \bigg|,
\end{multline*} is bounded $C''' e^{-dT}$. 

Finally, by \cref{l:6162}, for all $z$ with imaginary part sufficiently large, $\frac{\partial}{\partial y}\Re(S_{m}(z))$ is bounded above by a negative constant (possibly depending on $\alpha$). Applying the fact that $|\Re(z_{4}^{(m)})|\leq c\kappa^{-1}$, from which we conclude that for all $K\subset\mathbb{N}\times\mathbb{R}$ compact, there exist $C:=C(K,\alpha),C':=C'(K,\alpha),C'':=C''(K,\alpha),d:=d(\alpha),d':=d'(\alpha)>0$ such that
\begin{multline*}
    \bigg| \int_{\{\Re(z_{4}^{(m)})+iy| |y|\geq \kappa^{-1}\}} dz z^{-T}\left(z-\frac{x_{2}}{T}\right)^{n_{2}+T-1}e^{TS_{m}(z)-TS_{m}(z_{0}^{(m)})}\bigg| 
    \\ \leq e^{-dT}\bigg| \int_{\{\Re(z_{4}^{(m)})+iy| |y|\geq \kappa^{-1}\}} dz z^{-T}\left(z-\frac{x_{2}}{T}\right)^{n_{2}+T-1}e^{TS_{m}(z)-TS_{m}(z_{4}^{(m)})}\bigg|
    \leq \frac{C}{T}\kappa^{1-n_{2}}e^{-dT+C'\kappa}, 
\end{multline*} and such that this expression is bounded by $C''e^{-d'T}$.  
\end{proof}
We also prove a similar result for the $w$ contour. 
\begin{lemma}\label{l:wrealbd} Assume that $\kappa:=\kappa(m)\to 0$ as $m\to\infty$ and that $Tm^{-1}\ll\log^{2}{(m)}T^{-1/2}\ll \kappa$. Also assume that $\log^{4}{(m)}\ll T\ll m^{2/3}$. For all $\alpha\in (0,2)$ and $K\subset \mathbb{N}\times \mathbb{R}$ compact, there exist $m_{0}:=m_{0}(\alpha,K)\in\mathbb{N}$ and $C:=C(\alpha,K),d:=d(\alpha,K) >0$ such that for all $m>m_{0}$, $(n_{1},x_{1}) \in K$,  $x^{(m)}\in\Omega_{m}$, and $X\in (-2+\alpha,2-\alpha),$
    \begin{align*}
         \bigg| \int_{v_{1}\cup v_{2}} \frac{w^{T}}{\left(w-\frac{x_{1}}{T}\right)^{n_{1}+T+1}}e^{TS_{m}(z_{0}^{(m)})-TS_{m}(w)}\bigg| , \bigg| \int_{p_{1}\cup p_{2}} \frac{w^{T}}{\left(w-\frac{x_{1}}{T}\right)^{n_{1}+T+1}}e^{TS_{m}(z_{0}^{(m)})-TS_{m}(w)}\bigg| & \leq   C e^{-dT} .
    \end{align*}
\end{lemma}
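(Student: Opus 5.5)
I will mirror the proof of \cref{l:zrealbd}, with the roles of \eqref{e:bd1} and \eqref{e:bd2} swapped: on the $w$ contour the integrand carries $e^{-TS_{m}(w)}$, so the product $\prod_{r}(w-u_{r}^{(m)})$ now appears in the numerator. Near the real axis this product shrinks, and that shrinking is what produces the decay. Write $A_{m}(w)=w^{T}(w-x_{1}T^{-1})^{-n_{1}-T-1}$ as in \cref{r:Rbd}. For $w=x+iy$ with $|y|\le\kappa$, I factor
\begin{align*}
A_{m}(w)e^{TS_{m}(z_{0}^{(m)})-TS_{m}(w)} = A_{m}(w)e^{TS_{m}(z_{0}^{(m)})-TS_{m}(x+i\kappa)}\,\frac{(x+i\kappa)^{T}}{w^{T}}\,e^{-TX iy + Ty^{2}/2m-\dots}\prod_{r=1}^{m}\frac{w-u_{r}^{(m)}}{x+i\kappa-u_{r}^{(m)}}.
\end{align*}
In this factorization the $w^{T}$ from $A_{m}$ cancels against $e^{-T\log w}$ inside $S_{m}$. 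The explicit quadratic and linear factors are $e^{O(T\kappa)+O(T^{2}\kappa/m)}$. The product is controlled by \eqref{e:bd2}, which gives the factor $\exp(-\pi\rho_{sc}(X)T(\kappa^{2}-y^{2})/2\kappa+O(\log^{2}m))$; this is at most $e^{O(\log^{2}m)}$, so it causes no growth.

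\textbf{Segments $v_{1},v_{2}$.} The real part of each segment is either $\Re(z_{1}^{(m)})$ or $\Re(z_{3}^{(m)})$, or else it lies at distance exactly $\kappa$ from $x_{1}T^{-1}$. By \cref{l:conditions3}(2), $|\Re(z_{1,3}^{(m)})|\ge c\kappa^{1/2}$, and since $x_{1}T^{-1}\ll\kappa$, in fact always $|w-x_{1}T^{-1}|\ge c\kappa^{1/2}$ on $v_{i}$ for large $m$. (For the second alternative one uses that in the scaling regime $p_{1},p_{2}$ are empty, so this alternative occurs only at the endpoints $z_{1},z_{3}$.) Hence $|A_{m}(w)|\le (|w|/|w-x_{1}T^{-1}|)^{T}\,|w-x_{1}T^{-1}|^{-n_{1}-1}\le e^{C|x_{1}|\kappa^{-1/2}}\kappa^{-(n_{1}+1)/2}$.

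I then compare with the endpoint at height $\kappa$, namely $z_{1}^{(m)}$ or $z_{3}^{(m)}$. These lie on the ascent contour, so \cref{l:conditions3}(1) gives $\Re(S_{m}(z_{j}^{(m)})-S_{m}(z_{0}^{(m)}))\ge d$ by \cref{r:extra}. Thus $|e^{TS_{m}(z_{0})-TS_{m}(x+i\kappa)}|\le e^{-dT}$. The ratio $|(x+i\kappa)/w|^{T}\le(1+\kappa^{2}/x^{2})^{T/2}\le e^{CT\kappa}$ since $|x|\gtrsim\kappa^{1/2}$. Multiplying everything and integrating over a segment of length $2\kappa$ gives $C\kappa^{-n_{1}}e^{-dT+CT\kappa+C\log^{2}m}\le Ce^{-dT/2}$, because $\kappa\to0$ and $\log^{2}m\ll T$. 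The lower half plane is handled symmetrically.

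\textbf{Segments $p_{1},p_{2}$.} In the stated regime these have length $0$ for large $m$. For completeness, on a horizontal piece at height $\pm\kappa$ of length at most $|x_{1}|T^{-1}+\kappa$ starting at $z_{1}^{(m)}$ or $z_{3}^{(m)}$, I will use \eqref{e:bd3} (with $\delta\le|x_{1}|T^{-1}+\kappa$) to move $\Re(S_{m})$ from the endpoint at cost $e^{CT\kappa+CTe_{m}}$. Combined with $e^{-dT}$ at the endpoint, this gives the bound.

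The main obstacle is making the endpoint comparison rigorous with uniform constants, exactly as in the $v$ estimate of \cref{l:zrealbd}. This requires that $T\kappa\to0$ fast enough, or at least that $CT\kappa<d/2$. If only $\kappa\to0$ is available, I will use the stronger freedom in choosing $\kappa$ within the allowed window, and otherwise shrink $\kappa$ relative to $d$, which depends only on $\alpha$.
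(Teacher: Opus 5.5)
Your argument is correct and is essentially the paper's proof. You factor the integrand on $v_{1},v_{2}$ relative to the point at height $\kappa$, which in this regime is exactly $z_{1}^{(m)}$ or $z_{3}^{(m)}$. You then control the product via \eqref{e:bd2} (it is in fact trivially at most $1$), bound $A_{m}$ using $|\Re(z_{1,3}^{(m)})|\ge c\kappa^{1/2}$, gain $e^{-dT}$ from \cref{r:extra}, and treat $p_{1},p_{2}$ via \eqref{e:bd3}.

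Your closing ``obstacle'' is not a real one. The loss $e^{CT\kappa}$ competes with $e^{-dT}$, so you only need $C\kappa<d/2$, which holds because $\kappa\to0$. By contrast, $T\kappa\to0$ is neither needed nor true here, since $T\kappa\gg T^{1/2}\log^{2}(m)$.
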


\begin{proof} Starting with the first expression and applying \eqref{e:bd2}, we see that
\begin{multline*}
    \bigg|\int_{v_{1}}dw\frac{w^{T}}{\left(w-\frac{x_{1}}{T}\right)^{n_{1}+T+1}} e^{TS_{m}(\Re(v_{1})+i\kappa)-TS_{m}(w)}\bigg| 
    \\  \leq |\Re(v_{1})+i\kappa|^{T} \bigg|\int_{v_{1}}dw\frac{1}{\left(w-\frac{x_{1}}{T}\right)^{n_{1}+T+1}} \prod_{r=1}^{m}\frac{\Re(v_{1})+iy-u_{r}^{(m)}}{\Re(v_{1})+i\kappa-u_{r}^{(m)}}\bigg|  
    \\  \leq C\kappa|\Re(v_{1})+i\kappa|^{T} \left|\Re(v_{1})-\frac{x_{1}}{T}\right|^{-n_{1}-T-1}\frac{1 }{ \sqrt{\pi\rho_{sc}(X)T \kappa/2 - C'\log^{2}{(m)}  }}.
\end{multline*} 
Since $\rho_{sc}(X)$ is uniformly bounded away from $0$ for all $X\in (-2+\alpha,2-\alpha)$ , the denominator is bounded below uniformly for all $m$ sufficiently large. Thus, for $m$ sufficiently large so that $|x_{1}/T|<\kappa^{1/2}-\kappa$, and by the definition of the contour $v_{1}$, then for fixed compact $K\subset \mathbb{N}\times \mathbb{R}$, there exist $C,C'>0$ such that for all $(n_{1},x_{1}) \in K$, $\kappa|\Re(v_{1})+i\kappa|^{T} \left(\Re(v_{1})-\frac{x_{1}}{T}\right)^{-n_{1}-T-1}$ is bounded by $Ce^{C'T\kappa}\kappa^{-(n_{1}-1)/2},$  and thus there exist $C'',C'''>0$ such that under the same assumptions,
\begin{align*}
   \bigg|\int_{v_{1}}dw\frac{w^{T}}{\left(w-\frac{x_{1}}{T}\right)^{n_{1}+T+1}} e^{TS_{m}(\Re(v_{1})+i\kappa)-TS_{m}(w)}\bigg|  \leq C''e^{C'''T\kappa}\kappa^{-(n_{1}-1)/2}. 
\end{align*} 
Therefore, under the relative scale assumptions assumptions of $\kappa,T,m$, for fixed compact $K\subset \mathbb{N}\times \mathbb{R}$ there exists $C>0$ such that for all $(n_{1},x_{1}) \in K$, 
\begin{multline*}
     \bigg|\int_{v_{1}}dw\frac{w^{T}}{\left(w-\frac{x_{1}}{T}\right)^{n_{1}+T+1}} e^{TS_{m}(z_{0}^{(m)})-TS_{m}(w)}\bigg| 
     \leq Ce^{T\Re(S_{m}(z_{0}^{(m)}))-T\Re(S_{m}(z_{1}^{(m)}))}e^{T\Re(z_{1}^{(m)})X - T\Re(v_{1})X }
     \\ \cdot \bigg|\prod_{r=1}^{m}\frac{\Re(v_{1})+i\kappa-u_{r}^{(m)}}{\Re(z_{1}^{(m)})+i\kappa-u_{r}^{(m)}} \bigg| \bigg|\int_{v_{1}}dw\frac{w^{T}}{\left(w-\frac{x_{1}}{T}\right)^{n_{1}+T+1}}e^{TS_{m}(\Re(v_{1})+i\kappa)-TS_{m}(w)}\bigg|. 
\end{multline*}
Applying \eqref{e:bd3}, \cref{r:extra}, and since $|\Re(v_{1})-\Re(z_{1}^{(m)})|$ is bounded by $|x_{1}|T^{-1}+\kappa$, and $|\Re(z_{1}^{(m)})|\in (\kappa^{1/2},C)$ for some $C>0$, for fixed compact $K\subset \mathbb{N}\times\mathbb{R}$ there exist $C',d>0$ such that for all $(n_{1},x_{1}) \in K$, 
\begin{multline*}
    \leq e^{T\Re(S_{m}(z_{0}^{(m)}))-T\Re(S_{m}(z_{1}^{(m)}))}e^{T(|x_{1}|T^{-1}+\kappa)|X|}\bigg|\int_{v_{1}}dw\frac{w^{T}}{\left(w-\frac{x_{1}}{T}\right)^{n_{1}+T+1}}e^{TS_{m}(\Re(v_{1})+i\kappa)-TS_{m}(w)}\bigg| 
     \\ \cdot \exp{\left(\frac{T|X|(|x_{1}|T^{-1}+\kappa)}{2} + C''(|x_{1}|T^{-1}+\kappa) T e_{m}\kappa^{-1} \right)},
\end{multline*} is bounded by $C'e^{-dT},$ (all factors involving $X$ may be chosen uniformly over $X\in (-2+\alpha,2-\alpha)$.

   The same bound applies to the integral over $v_{2}$. We now address the integral over $p_{1}\cup p_{2}.$ Applying the fact that the length of $p_{1},p_{2}$ is bounded by $|x_{1}|T^{-1}+\kappa$, and the scale assumptions of $\kappa,T,m,$ for fixed compact $K\subset \mathbb{N}\times \mathbb{R}$ there exists $C,c>0$ such that for all $(n_{1},x_{1})\in K$,
   \begin{multline*}
        \bigg| \int_{p_{1}} \frac{w^{T}}{\left(w-\frac{x_{1}}{T}\right)^{n_{1}+T+1}}e^{TS_{m}(z_{1}^{(m)})-TS_{m}(w)}\bigg| 
        \\  \leq C\kappa e^{cT\kappa } \cdot \bigg|\prod_{r=1}^{m}\frac{\Re(z_{1}^{(m)})+|x_{1}|T^{-1}+\kappa+i\kappa-u_{r}^{(m)}}{\Re(z_{1}^{(m)})+i\kappa -u_{r}^{(m)}}\bigg|\sup_{w\in p_{1}}\frac{|w|^{T}}{|w-x_{1}/T|^{n_{1}+T+1}},
   \end{multline*} by applying \eqref{e:bd3}, there exist $C',c>0$ such that
   \begin{align*}
        \bigg| \int_{p_{1}} \frac{w^{T}}{\left(w-\frac{x_{1}}{T}\right)^{n_{1}+T+1}}e^{TS_{m}(z_{1}^{(m)})-TS_{m}(w)}\bigg| 
        \leq C'\kappa^{-(n_{1}-1)/2} e^{cT\kappa}.
   \end{align*}
   Therefore, applying \cref{r:extra}, for fixed compact $K\subset\mathbb{N}\times \mathbb{R}$ there exist $C'',d>0$ such that for all $(n_{1},x_{1}) \in K$, 
   \begin{multline*}
        \bigg| \int_{p_{1}} \frac{w^{T}}{\left(w-\frac{x_{1}}{T}\right)^{n_{1}+T+1}}e^{TS_{m}(z_{0}^{(m)})-TS_{m}(w)}\bigg| 
        \\ \leq e^{T\Re(S_{m}(z_{0}^{(m)})-S_{m}(z_{1}^{(m)}))}\bigg| \int_{p_{1}} \frac{w^{T}}{\left(w-\frac{x_{1}}{T}\right)^{n_{1}+T+1}}e^{TS_{m}(z_{1}^{(m)})-TS_{m}(w)}\bigg|, 
   \end{multline*}  is bounded by $C''e^{-dT}$.
   The same bound applies over $p_{2}$.
\end{proof}
We also comment on some bounds which apply over the contours inside $\mathcal{R}(c_{0},\kappa)$.
\begin{remark}\label{r:contourminus}
    Assume $\kappa\to 0$ as $m\to\infty$ and that $\log^{2}{(m)}T^{-1/2}\ll \kappa$. Also assume that $\log^{4}{(m)}\ll T\ll m^{2/3}$. By recalling the bounds on $A_{m}(w),B_{m}(z)$ from \cref{r:Rbd} and $L(c,\kappa)$ from \cref{l:lengthbound}, we conclude that for all $\alpha\in (0,2)$ and $K\subset\mathbb{N}\times \mathbb{R}$ compact, there exist $C:=C(K,\alpha),c:=c(K,\alpha)>0$ and $m_{0}:=m_{0}(K,\alpha)\in\mathbb{N}$ such that for all $m>m_{0}$, , $(n_{1},x_{1}),(n_{2},x_{2})\in K$, $x^{(m)}\in\Omega_{m}$, and $X\in (-2+\alpha,2-\alpha),$
    \begin{align*}
        \bigg| \int_{C_{1}^{(m)}}dz z^{-T}\left(z-\frac{x_{2}}{T}\right)^{n_{2}+T-1} e^{TS_{m}(z)- TS_{m}(z_{0}^{(m)})} \bigg| & \leq C\kappa^{-n_{2}-1} e^{c\kappa^{-1}},
        \\ \bigg| \int_{C_{1}^{(m),-}}dz z^{-T}\left(z-\frac{x_{2}}{T}\right)^{n_{2}+T-1} e^{TS_{m}(z)- TS_{m}(\overline{z_{0}^{(m)}})} \bigg| & \leq C\kappa^{-n_{2}-1} e^{c\kappa^{-1}},
        \\ \bigg| \int_{C_{2}^{(m)}}dw w^{T}\left(w-\frac{x_{1}}{T}\right)^{-n_{1}-T-1} e^{TS_{m}(z_{0}^{(m)})- TS_{m}(w)} \bigg|  & \leq C\kappa^{-n_{1}-3}e^{c\kappa^{-1}},
        \\ \bigg| \int_{C_{2}^{(m),-}}dw w^{T}\left(w-\frac{x_{1}}{T}\right)^{-n_{1}-T-1} e^{TS_{m}(\overline{z_{0}^{(m)}})- TS_{m}(w)} \bigg|  & \leq C\kappa^{-n_{1}-3}e^{c\kappa^{-1}}.
    \end{align*}
\end{remark} 

Finally, with all of these estimates now established, we can prove \cref{l:errortermscross}.
\begin{proof}[Proof of \cref{l:errortermscross}]  
In this argument, we will only display the calculations in full for the upper half-plane. The equivalent statements in the lower half-plane follow by essentially the same calculations.

We start with the first expression. We separate the left-hand side into four terms, which we will treat individually, 
\begin{align*}
    & \bigg|\frac{n_{1}+T}{T} \frac{-1}{(2\pi i)^{2}}\int_{\{\Re(z_{4}^{(m)})+iy||y|\geq \kappa^{-1}\} \cup v \cup h_{1}\cup h_{2}} dz \int_{C_{w}^{(m)}}dw \frac{\left(z-\frac{x_{2}}{T}\right)^{n_{2}+T-1}w^{T}}{z^{T}\left(w-\frac{x_{1}}{T}\right)^{n_{1}+T+1}}  \frac{e^{T\left(S_{m}(z) -S_{m}(w)\right) }}{w-z} \bigg|
    \\ & \leq  \bigg|\frac{n_{1}+T}{T} \frac{-1}{(2\pi i)^{2}}\int_{\{\Re(z_{4}^{(m)})+iy||y|\geq \kappa^{-1}\}} dz \int_{C_{2}^{(m)}\cup C_{2}^{(m),-}}dw \frac{\left(z-\frac{x_{2}}{T}\right)^{n_{2}+T-1}w^{T}}{z^{T}\left(w-\frac{x_{1}}{T}\right)^{n_{1}+T+1}}  \frac{e^{T\left(S_{m}(z) -S_{m}(w)\right) }}{w-z} \bigg|
    \\ & + \bigg|\frac{n_{1}+T}{T} \frac{-1}{(2\pi i)^{2}}\int_{\{\Re(z_{4}^{(m)})+iy||y|\geq \kappa^{-1}\}} dz \int_{C_{w}^{(m)}\setminus (C_{2}^{(m)}\cup C_{2}^{(m),-})}dw \frac{\left(z-\frac{x_{2}}{T}\right)^{n_{2}+T-1}w^{T}}{z^{T}\left(w-\frac{x_{1}}{T}\right)^{n_{1}+T+1}}  \frac{e^{T\left(S_{m}(z) -S_{m}(w)\right) }}{w-z} \bigg|
    \\ & + \bigg|\frac{n_{1}+T}{T} \frac{-1}{(2\pi i)^{2}}\int_{ v \cup h_{1}\cup h_{2}} dz \int_{C_{2}^{(m)}\cup C_{2}^{(m),-}}dw \frac{\left(z-\frac{x_{2}}{T}\right)^{n_{2}+T-1}w^{T}}{z^{T}\left(w-\frac{x_{1}}{T}\right)^{n_{1}+T+1}}  \frac{e^{T\left(S_{m}(z) -S_{m}(w)\right) }}{w-z}\bigg|
    \\ & + \bigg|\frac{n_{1}+T}{T} \frac{-1}{(2\pi i)^{2}}\int_{ v \cup h_{1}\cup h_{2}} dz \int_{C_{w}^{(m)}\setminus(C_{2}^{(m)}\cup C_{2}^{(m),-})}dw \frac{\left(z-\frac{x_{2}}{T}\right)^{n_{2}+T-1}w^{T}}{z^{T}\left(w-\frac{x_{1}}{T}\right)^{n_{1}+T+1}}  \frac{e^{T\left(S_{m}(z) -S_{m}(w)\right) }}{w-z}\bigg|.
\end{align*} 
To bound the first term, we note that  there exists $c>0$ such that $|w-z|\geq c^{-1}\kappa^{-1}$ for all $w,z$ on those contours, for some $c>0$ which can be chosen uniformly over $(n_{1},x_{1}),(n_{2},x_{2})\in K$ compact. Therefore, applying \cref{l:zrealbd} and \cref{r:contourminus}, the expression 
\begin{multline*}
    \bigg|\frac{n_{1}+T}{T} \frac{-1}{(2\pi i)^{2}}\int_{\{\Re(z_{4}^{(m)})+iy||y|\geq \kappa^{-1}\}} dz \int_{C_{2}^{(m)}\cup C_{2}^{(m),-}}dw \frac{\left(z-\frac{x_{2}}{T}\right)^{n_{2}+T-1}w^{T}}{z^{T}\left(w-\frac{x_{1}}{T}\right)^{n_{1}+T+1}}  \frac{e^{T\left(S_{m}(z) -S_{m}(w)\right) }}{w-z} \bigg|
    \\ \leq c'\kappa \bigg|\int_{C_{2}^{(m)}}dw \frac{ w^{T}}{ \left(w-\frac{x_{1}}{T}\right)^{n_{1}+T+1}}  e^{T\left(S_{m}(z_{0}^{(m)}) -S_{m}(w)\right) }\bigg|
    \\ \cdot \bigg|\int_{\{\Re(z_{4}^{(m)})+iy|y\geq \kappa^{-1}\}} dz \frac{\left(z-\frac{x_{2}}{T}\right)^{n_{2}+T-1}}{z^{T}}  e^{T\left(S_{m}(z) -S_{m}(z_{0}^{(m)})\right) } \bigg|, 
\end{multline*} is bounded by $C'\kappa^{-n_{1}-2}e^{c\kappa^{-1}}e^{-d'T}\leq C''e^{-d''T}$, with $C'',d''>0$ uniform over $(n_{1},x_{1}),(n_{2},x_{2})\in K$. 

To deal with the second term, we can apply the same reasoning to conclude that $|w-z|\geq c^{-1}\kappa^{-1}$ for some $c>0$ which can be chosen uniformly over all $(n_{1},x_{1}),(n_{2},x_{2})\in K$ compact, and thus, applying \cref{l:zrealbd} and \cref{l:wrealbd}, the expression
\begin{multline*}
    \bigg|\frac{n_{1}+T}{T} \frac{-1}{(2\pi i)^{2}}\int_{\{\Re(z_{4}^{(m)})+iy||y|\geq \kappa^{-1}\}} dz \int_{C_{w}^{(m)}\setminus (C_{2}^{(m)}\cup C_{2}^{(m),-})}dw \frac{\left(z-\frac{x_{2}}{T}\right)^{n_{2}+T-1}w^{T}}{z^{T}\left(w-\frac{x_{1}}{T}\right)^{n_{1}+T+1}}  \frac{e^{T\left(S_{m}(z) -S_{m}(w)\right) }}{w-z} \bigg|
    \\ \leq c'\kappa \bigg| \int_{\{\Re(z_{4}^{(m)})+iy|y\geq \kappa^{-1}\}} dz \frac{\left(z-\frac{x_{2}}{T}\right)^{n_{2}+T-1}e^{TS_{m}(z)-TS_{m}(z_{0}^{(m)})}}{z^{T}} \bigg|
    \\ \cdot \bigg|\int_{C_{w}^{(m)}\setminus (C_{2}^{(m)}\cup C_{2}^{(m),-})}dw \frac{w^{T}e^{TS_{m}(z_{0}^{(m)})-TS_{m}(w)}}{\left(w-\frac{x_{1}}{T}\right)^{n_{1}+T+1}}\bigg|,
\end{multline*} is bounded by $C'\kappa e^{-d'T}$, where $C',d'>0$ are chosen uniformly over $(n_{1},x_{1}),(n_{2},x_{2})\in K$. 

To deal with the third term, we note that we have defined the contours so that for $m$ sufficiently large, $|w-x_{1}/T|>c^{-1}\kappa$ for a constant $c>0$ which can be chosen uniformly over $(n_{1},x_{1})\in K$ compact. Therefore, we apply \cref{l:zrealbd} and \cref{r:contourminus} to conclude that  
 \begin{align*}
     \bigg|\frac{n_{1}+T}{T} \frac{-1}{(2\pi i)^{2}}\int_{ v \cup h_{1}\cup h_{2}} dz \int_{C_{2}^{(m)}\cup C_{2}^{(m),-}}dw \frac{\left(z-\frac{x_{2}}{T}\right)^{n_{2}+T-1}w^{T}}{z^{T}\left(w-\frac{x_{1}}{T}\right)^{n_{1}+T+1}}  \frac{e^{T\left(S_{m}(z) -S_{m}(w)\right) }}{w-z}\bigg|
     \\ \leq c'\kappa^{-1}\bigg|\int_{ v \cup h_{1}\cup h_{2}} dz z^{-T}\left(z-\frac{x_{2}}{T}\right)^{n_{2}+T-1}e^{T\left(S_{m}(z) -S_{m}(z_{0}^{(m)})\right) } \bigg| 
     \\ \cdot \bigg|\int_{C_{2}^{(m)}}dw \frac{w^{T}}{\left(w-\frac{x_{1}}{T}\right)^{n_{1}+T+1}}  e^{T\left(S_{m}(z_{0}^{(m)}) -S_{m}(w)\right) }\bigg|,
 \end{align*} is bounded by $C'\kappa^{-n_{1}-4}e^{c\kappa^{-1}}e^{-d'T}\leq C''e^{-d''T}$, where $C'',d''>0$ are chosen uniformly over $(n_{1},x_{1}),(n_{2},x_{2})\in K$.  
To bound the fourth term, we note that for $m$ sufficiently large, the contours are separated by $c^{-1}\kappa$, where, as before, we can choose this constant uniformly over $(n_{1},x_{1}),(n_{2},x_{2})\in K$ compact, and therefore we apply \cref{l:zrealbd} and \cref{l:wrealbd} to conclude that 
 \begin{align*}
    \bigg|\frac{n_{1}+T}{T} \frac{-1}{(2\pi i)^{2}}\int_{ v \cup h_{1}\cup h_{2}} dz \int_{C_{w}^{(m)}\setminus(C_{2}^{(m)}\cup C_{2}^{(m),-})}dw \frac{\left(z-\frac{x_{2}}{T}\right)^{n_{2}+T-1}w^{T}}{z^{T}\left(w-\frac{x_{1}}{T}\right)^{n_{1}+T+1}}  \frac{e^{T\left(S_{m}(z) -S_{m}(w)\right) }}{w-z}\bigg|
    \\ \leq C'\kappa^{-1} \bigg|\int_{ v \cup h_{1}\cup h_{2}} dz z^{-T}\left(z-\frac{x_{2}}{T}\right)^{n_{2}+T-1}e^{T\left(S_{m}(z) -S_{m}(z_{0}^{(m)})\right) }\bigg| \\ \cdot \bigg| \int_{C_{w}^{(m)}\setminus(C_{2}^{(m)}\cup C_{2}^{(m),-})}dw \frac{w^{T}}{\left(w-\frac{x_{1}}{T}\right)^{n_{1}+T+1}}  e^{T\left(S_{m}(z_{0}^{(m)}) -S_{m}(w)\right) }\bigg|,
\end{align*} is bounded by $C''\kappa^{-1}e^{-d'T}$ where $C'',d'>0$ are chosen uniformly over $(n_{1},x_{1}),(n_{2},x_{2})\in K$. 

Putting this together, we see that there exist $C,d>0,m_{0}\in\mathbb{N}$ such that for all $(n_{1},x_{1}),(n_{2},x_{2})\in K$, $x^{(m)}\in \Omega_{m}$ and $m>m_{0}$, 
\begin{align*}
    \bigg|\frac{n_{1}+T}{T} \frac{-1}{(2\pi i)^{2}}\int_{\{\Re(z_{4}^{(m)})+iy||y|\geq \kappa^{-1}\} \cup v \cup h_{1}\cup h_{2}} dz \int_{C_{w}^{(m)}}dw \frac{\left(z-\frac{x_{2}}{T}\right)^{n_{2}+T-1}w^{T}}{z^{T}\left(w-\frac{x_{1}}{T}\right)^{n_{1}+T+1}}  \frac{e^{T\left(S_{m}(z) -S_{m}(w)\right) }}{w-z} \bigg| \leq Ce^{-dT}.  
\end{align*}

We proceed to study the second expression. We expand the left-hand side into several terms, as before,
\begin{align*}
    & \bigg|\frac{n_{1}+T}{T} \frac{-1}{(2\pi i)^{2}}\int_{C_{z}^{(m)}} dz \int_{C_{w}^{(m)}\setminus (C_{2}^{(m)}\cup C_{2}^{(m),-})}dw R_{m}(z,w)  \frac{e^{T\left(S_{m}(z) -S_{m}(w)\right) }}{w-z}\bigg|
    \\ & \leq \bigg|\frac{n_{1}+T}{T} \frac{-1}{(2\pi i)^{2}}\int_{C_{z}^{(m)}\setminus (C_{1}^{(m)}\cup C_{1}^{(m),-})} dz \int_{v_{1}\cup v_{2}\cup p_{1}\cup p_{2}}dw R_{m}(z,w)  \frac{e^{T\left(S_{m}(z) -S_{m}(w)\right) }}{w-z}\bigg|
    \\ & + \bigg|\frac{n_{1}+T}{T} \frac{-1}{(2\pi i)^{2}}\int_{C_{1}^{(m)}\cup C_{1}^{(m),-}} dz \int_{v_{1}\cup v_{2}\cup p_{1}\cup p_{2}}dw R_{m}(z,w)  \frac{e^{T\left(S_{m}(z) -S_{m}(w)\right) }}{w-z}\bigg|.
\end{align*} The first term is already bounded by $Ce^{-dT}$ by the preceding discussion, and to bound the second term, we note that for $m$ sufficiently large there exists $c>0$ such that $|w-z|\geq c^{-1}\kappa$, and thus 
\begin{multline*}
    \bigg|\frac{n_{1}+T}{T} \frac{-1}{(2\pi i)^{2}}\int_{C_{1}^{(m)}\cup C_{1}^{(m),-}} dz \int_{v_{1}\cup v_{2}\cup p_{1}\cup p_{2}}dw \frac{\left(z-\frac{x_{2}}{T}\right)^{n_{2}+T-1}w^{T}}{z^{T}\left(w-\frac{x_{1}}{T}\right)^{n_{1}+T+1}}  \frac{e^{T\left(S_{m}(z) -S_{m}(w)\right) }}{w-z}\bigg|
    \\ \leq c'\kappa^{-1} \bigg|\frac{n_{1}+T}{T} \frac{-1}{(2\pi i)^{2}}\int_{C_{1}^{(m)}} dz z^{-T}\left(z-\frac{x_{2}}{T}\right)^{n_{2}+T-1}e^{T\left(S_{m}(z) -S_{m}(z_{0}^{(m)})\right) }\bigg| 
    \\ \cdot \bigg|\int_{v_{1}\cup v_{2}\cup p_{1}\cup p_{2}}dw \frac{w^{T}}{\left(w-\frac{x_{1}}{T}\right)^{n_{1}+T+1}}   e^{T\left(S_{m}(z_{0}^{(m)}) -S_{m}(w)\right) }\bigg|,
\end{multline*}
which is bounded by $C'\kappa^{-n_{2}-2} e^{c\kappa^{-1}}e^{-d'T}\leq C''e^{-d'T}$ for $C'',d'>0$ chosen uniformly over $(n_{1},x_{1}),(n_{2},x_{2})\in K$. This concludes the argument.
 \end{proof}

\section{Proof of \cref{t:gaps}}\label{s:carg}

In this section we prove \cref{t:gaps}. The main techniques in this section are inspired by or modified from those employed in other papers, especially \cite{EKYY12,EYY12,T13,TV11}. We outline modifications of the Lindeberg replacement argument in \cite{EYY12} which allow us to show that if two $n\times n$ Wigner matrices have entries matching up to two moments, and share all but $n\times\text{polylog}(n)$ entries, then the difference of the expected value of observables of each of these matrices decays in $n$.

We begin by introducing a helpful notation. 
\begin{definition}\label{d:2momentpair}
    We will say that $A,B$ are an $(m,T)$-pair if $A=(a_{ij})_{i,j=1}^{m+T}$ is an $(m+T)\times (m+T)$ Wigner matrix in the sense of \cref{d:wigner}, and $B=(b_{ij})_{i,j=1}^{m+T}$ is a $(m+T)\times (m+T)$ random matrix such that for all index pairs $(i,j)$ with $i\leq m$ and $j\leq m$, $b_{ij}=a_{ij}$, and for the remaining $(m+T)^{2}-m^{2}$ entries, 
    \begin{align*}
        b_{ij} & :=\begin{cases}
            2^{-1/2}(X_{ij} + i Y_{ij}) & i<j
            \\ 2^{-1/2}(X_{ji} -i Y_{ji}) & i>j
            \\ X_{ij} & i=j
        \end{cases}.
    \end{align*}
    with $\{X_{ij}\}_{1\leq i\leq j\leq m+T},\{Y_{ij}\}_{1\leq i<j\leq m+T}$ mutually independent (and also independent of all $\{a_{ij}\}_{i,j=1}^{m+T}$) standard Gaussian. Under these conditions, all $(m,T)$-pairs $A,B$ have all entries matching moments to order $2$ in the sense of \cref{d:matching}. 
\end{definition}

We obtain a comparison theorem for fixed-energy correlation measures of $(m,T)$-pairs. 
\begin{proposition}\label{t:couplingcost} Fix $\alpha\in (0,2)$ and let $A,B$ be an $(m,T)$-pair and set $n:=m+T$. Let $\widehat{\rho}_{k,X}^{A,(n)},\widehat{\rho}_{k,X}^{B,(n)}$ be the rescaled $k$-point correlation measures of the eigenvalues of $A$ and $B$, respectively (see \cref{d:cormeas}). Then, for all $k\in\mathbb{N}$, and for every compact set $U$ and differentiable compactly supported test function $F\in C^{1}_{c}(\mathbb{R}^{k})$ with $\mathrm{supp}(F)\subset U^{k}$, there exist $C:=C(U,k,\alpha)>0$ and $c:=c(k,\alpha)>0$ such that for all $X\in (-2+\alpha,2-\alpha)$, 
    \begin{align*}
    \left|\int_{\mathbb R^k} F(x_{1},\cdots x_{k})\widehat{\rho}_{k,X}^{A,(n)}(dx_{1},\cdots,dx_{k})-\int_{\mathbb R^k} F(x_{1},\cdots,x_{k}) \widehat{\rho}_{k,X}^{B,(n)}(dx_{1},\cdots, dx_{k})\right|\leq C T n^{-c}\|F\|_{C^{1}} .
    \end{align*}
\end{proposition}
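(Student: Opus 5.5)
We will run a Lindeberg replacement in the style of \cite[Theorem 6.4]{EYY12}, restricted to the $(m+T)^{2}-m^{2}=O(Tn)$ entries in which $A$ and $B$ differ. Order the independent upper-triangular index pairs $(i,j)$ with $\max(i,j)>m$ as $\gamma=1,\ldots,\gamma_{\max}$, with $\gamma_{\max}\le 2Tn$, and let $H_{\gamma}$ be the Hermitian matrix whose first $\gamma$ such entries come from $B$ and whose remaining ones come from $A$. Then $H_{0}=A$ and $H_{\gamma_{\max}}=B$, and the whole matter is a per-swap bound of order $n^{-1-c}\|F\|_{C^{1}}$, which sums to $CTn^{-c}\|F\|_{C^{1}}$.

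Following \cite{EYY12,EKYY12}, we first express the observable through resolvents. Write $G(z)=(H/\sqrt{n}-z)^{-1}$ at spectral parameters $z=X+\rho_{sc}(X)^{-1}n^{-1}(x_{j}+i\eta)$ with $\eta=n^{-\sigma}$ for small $\sigma>0$. Smoothing $F$ on scale $\eta$ changes $\int F\,d\widehat\rho^{(n)}_{k,X}$ by at most $C\|F\|_{C^{1}}n^{-c}$. This step uses rigidity (\cref{thm:rigid}) and the local semicircle law of \cite{GNT18}, which bound the number of eigenvalues in windows of size $n^{-1+\sigma}$ near $X\sqrt n$. After smoothing, the correlation functional becomes a smooth function $\Phi$ of finitely many quantities $\mathrm{Im}\,\mathrm{Tr}\,G(z_{j})$, up to an inclusion–exclusion of $k$-fold products. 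Each derivative of $\Phi$ is bounded by $n^{C\sigma}$ on the high-probability event where the local law holds.

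For a single swap at position $(a,b)$, we write $H_{\gamma}=Q+n^{-1/2}V$ and $H_{\gamma-1}=Q+n^{-1/2}W$, where $V$ and $W$ are supported on $\{(a,b),(b,a)\}$ and $Q$ is independent of both. We then resolvent-expand $\Phi$ around $Q$ to fourth order. The terms of order $1,2$ in $V,W$ cancel in expectation because the entries match two moments (\cref{d:matching}). The third-order terms carry a factor $n^{-3/2}$ times a product of resolvent entries. That product is $O(n^{C\sigma})$ for generic pairs by the local law's entrywise control $|G_{ij}-\delta_{ij}m_{sc}|\le n^{-1/2+C\sigma}/\sqrt{\eta n}$. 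The fourth-order remainder carries $n^{-2}\,\mathbb E|h|^{4}$, and the $4+\varepsilon$ moment (\cref{d:wigner}) makes it integrable.

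The main obstacle is the third-order term. With only two matching moments the generic per-swap error is $n^{-3/2+C\sigma}$, and over $Tn$ swaps this gives $Tn^{-1/2+C\sigma}$, which is acceptable since the statement only demands $Tn^{-c}$. The difficulty is that the diagonal-heavy contributions, for instance $G_{aa}G_{bb}G_{ab}$, must be kept at size $O(n^{C\sigma})$ and must not blow up as $\eta\to n^{-1}$. We will take $\eta=n^{-1+\sigma'}$ just above the microscopic scale and rely on the entrywise local law to handle these products. The events on which the $4+\varepsilon$ truncation or the local law fail have probability $O(n^{-D})$; on them we use the trivial bound $|\mathrm{Tr}\,G|\le n/\eta$. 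Summing the bounds $n^{-3/2+C\sigma}$, the fourth-order remainders, and these exceptional contributions over all $\gamma$ gives the claimed estimate with a constant $c=c(k,\alpha)>0$, uniformly in $X\in(-2+\alpha,2-\alpha)$.
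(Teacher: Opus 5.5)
Your overall architecture matches the paper's. It is a telescoping Green's function comparison over the $\Gamma(T,m)=mT+T(T+1)/2$ entries outside the shared block, with cancellation through second order from the two matching moments. The third-order error is $O(n^{-3/2+C\varepsilon_1})$ per swap, summing to $Tn^{-1/2+C\varepsilon_1}$, and the correlation measures are extracted as in \cite[Theorem 6.4]{EYY12}.

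The gap is in how you resolve what you call the main obstacle. The extraction step needs the imaginary part of the spectral parameter \emph{below} the level spacing, as you first set it ($\Im z=\rho_{sc}(X)^{-1}n^{-1-\sigma}$). In your last paragraph you instead take $\Im z=n^{-1+\sigma'}$ so that the entrywise local law applies. At that scale $\pi^{-1}\Im\,\mathrm{Tr}\,G$ is the empirical spectral measure convolved with a Cauchy kernel of width $\asymp n^{\sigma'}$ in microscopic units. Comparing such smoothed quantities between $A$ and $B$ says nothing about $\int F\,d\widehat\rho^{(n)}_{k,X}$ for $k\ge 2$: the smoothing error is $O(1)$, not $n^{-c}$. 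Conversely, below the spacing the local law is false ($|G_{ii}|$ can be of order $(n\eta)^{-1}\gg 1$), so it cannot be what bounds $G_{aa}G_{bb}G_{ab}$ there.

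The missing ingredient is the transfer built into \cref{l:2moment}. Since $y\mapsto y\,\Im G_{ii}(E+iy)$ is nondecreasing, a dyadic decomposition of $\sum_\alpha |u_\alpha(i)u_\alpha(j)|\,|\lambda_\alpha-E-i\eta|^{-1}$ converts the bound $|G_{kk}(E+iy)|\le n^{2\tau}$ for $y\ge n^{-1+\tau}$ (hypothesis \eqref{e:loglog}) into $\max_{i,j}|G_{ij}(E+i\eta)|\le n^{C(\tau+\varepsilon_1)}$ for every $\eta\in[n^{-1-\varepsilon_1},n^{-1}]$. That is why the paper, following \cite{EYY12}, works in exactly this window. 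There every third-order product is $O(n^{C(\tau+\varepsilon_1)})$, and no move above the spacing is needed.

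Your handling of heavy tails also does not close as written. With only $4+\varepsilon$ moments, the event that some entry exceeds a truncation level $n^{c_1}$ has probability of order $n^{2-c_1(4+\varepsilon)}$. This is polynomially small, not $O(n^{-D})$. Making it small enough to absorb trivial bounds of the type $(n/\eta)^{p}$ inside each of the $\sim Tn$ swaps would force truncation levels far above $n^{1/2}$. At such levels the truncated matrix no longer has a local law holding with overwhelming probability. Likewise, the fourth-order remainder is not simply $n^{-2}\mathbb E|h|^4$: it contains the resolvent of the matrix that contains $h$, which is controlled only on the good event.

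The paper keeps heavy tails out of the swap argument entirely. \cref{l:2moment} is proved under a uniform subexponential decay hypothesis, for which \eqref{e:loglog} holds with probability $1-Cn^{-c\log\log n}$. The $4+\varepsilon$ case is reached afterwards by replacing all entries with their truncations from \cite[Lemma 7.6]{EKYY12}. The cost $\mathbb P(E)\le Cn^{2-c_1(4+\varepsilon)}$ is paid once, in the final comparison of correlation integrals, rather than inside every swap.
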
 
We outline the proof of \cref{t:couplingcost} in two steps, first showing the same result for Wigner matrices with an added uniform exponential decay condition, and then showing we can remove that condition for the price of a small error. The comparison argument is a standard Green function/Lindeberg
replacement argument. Since the required modifications of the existing proofs
are minor, we
only record the precise comparison statement needed below and indicate the changes from the proof of \cite[Theorem 6.4]{EYY12}. The next lemma is similar to \cite[Theorem 2.3]{EYY12}, except that we require matching only to the second moment, rather than fourth, and we impose the condition of the large shared $m\times m$ submatrix.
\begin{lemma}\label{l:2moment}
    Set $n:=m+T$ and let $\widetilde{A},\widetilde{B}$ be an $(m,T)$-pair, set $n:=m+T$, and define $A:=n^{-1/2}\widetilde{A}$ and $B:=n^{-1/2}\widetilde{B}$, and impose the condition that the entries satisfy a uniform exponential decay condition, meaning that there exist $\alpha,\beta>0$ such that 
    \begin{align*}
        \mathbb{P}\left(|a_{ij}|\geq x^{\alpha}\right)\leq \beta e^{-x}, & & \mathbb{P}\left(|b_{ij}|\geq x^{\alpha}\right) \leq \beta e^{-x}.
    \end{align*} 
    
    If we fix a bijective ordering map on the index set of these elements outside of their $m\times m$ shared submatrix, 
    \begin{align*}
        \phi:\{\{i,j\}|1\leq i\leq j\leq n\}\setminus \{(i,j)|1\leq i\leq j\leq m\} \to \left\{1,\cdots ,\Gamma(T,m)\right\},
    \end{align*} where $\Gamma(T,m):=2^{-1}(n(n+1) - m(m+1)) = mT+T(T+1)/2$ and denote by $A_{\gamma}$ the matrix whose elements take the form $b_{ij}$ for all $\phi(i,j)\leq \gamma$ and $a_{ij}$ otherwise, so that $B=A_{\Gamma(T,m)},A=A_{0}$.

    Let $\kappa>0$ be arbitrary, and suppose that for any small parameter $\tau>0$ and $y\geq n^{\tau-1}$, we have 
    \begin{align}\label{e:loglog}
        \mathbb{P}\left(\max_{0\leq \gamma\leq \Gamma(T,m)}\max_{1\leq k\leq n}\sup_{|X|\leq 2-\kappa}\bigg|G(X+iy,A_{\gamma})_{kk} \bigg| \leq n^{2\tau}\right) \geq 1-Cn^{-c\log{\log{(n)}}}
    \end{align} with some constants $C,c>0$ depending on $\tau,\kappa$, and where $G$ is the Green's function. Furthermore, assume that $a_{ij}$ and $b_{ij}$ match moments to order $2$. 

    Let $\varepsilon_{1}>0$ be arbitrary and choose an $\eta$ so that $n^{-1-\varepsilon_{1}}\leq \eta\leq n^{-1}$. For any sequence of positive integers $k_{1},\cdots,k_{p}$ for some $p\in\mathbb{N}$, set complex parameters $z_{j}^{(r)}=E_{j}^{r}\pm i\eta$, $j\in [[k_{r}]],r\in[[p]]$, with $|E_{j}^{r}|\leq 2-\kappa$ and with arbitrary choice sign $\pm$ for each $z_{j}^{(r)}$. Let $F:\mathbb{C}^{p}\to\mathbb{R}$ be such that for any multi-index $\vec\alpha=(\alpha_{1},\cdots,\alpha_{p})$ with $1\leq |\vec\alpha|\leq 5$ and there exists $c_{0}>0$ such that for any $\varepsilon_{2}>0$ sufficiently small,
    \begin{align*}
        & \max\left\{|\partial^{\alpha}F(x_{1},\cdots,x_{p})|\bigg| \max_{j}|x_{j}|\leq n^{\varepsilon_{2}}\right\}\leq n^{c_{0}\varepsilon_{2}}, \\ & \max\left\{|\partial^{\alpha}F(x_{1},\cdots,x_{p})| \bigg| \max_{j}|x_{j}|\leq n^{2}\right\} \leq n^{c_{0}}.
    \end{align*}
    Then there is a constant $c_{1}$ depending on $\alpha,\beta,\sum_{r=1}^{p}k_{r},c_{0}$ such that for any $\eta$ with $n^{-1-\varepsilon_{1}}\leq \eta\leq n^{-1}$ and for any choices of the signs in the imaginary part of the $z^{(r)}_{j}$,
    \begin{multline*}
        \bigg|\mathbb{E}\left[F\left(n^{-k_{1}}\mathrm{Tr}\left[\prod_{j=1}^{k_{1}}G(z^{(1)}_{j},A)\right],\cdots,n^{-k_{p}}\mathrm{Tr}\left[\prod_{j=1}^{k_{p}}G(z^{(p)}_{j},A)\right]\right)\right] 
        \\ -\mathbb{E}\left[F\left(n^{-k_{1}}\mathrm{Tr}\left[\prod_{j=1}^{k_{1}}G(z^{(1)}_{j},B)\right],\cdots,n^{-k_{p}}\mathrm{Tr}\left[\prod_{j=1}^{k_{p}}G(z^{(p)}_{j},B)\right]\right)\right] \bigg| \\ \leq c_{1}T\left(1-\frac{T-1}{2n}\right)n^{-1/2+C\varepsilon_{1}}.
        \end{multline*}
\end{lemma}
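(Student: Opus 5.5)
We follow the Lindeberg replacement scheme of \cite[Theorem 2.3]{EYY12} and \cite[Theorem 6.4]{EYY12}, but we telescope only over the $\Gamma(T,m)=mT+T(T+1)/2$ entries outside the shared $m\times m$ block. Writing
\begin{align*}
\mathbb{E}[F(\cdots A)]-\mathbb{E}[F(\cdots B)]=\sum_{\gamma=1}^{\Gamma(T,m)}\left(\mathbb{E}[F(\cdots A_{\gamma-1})]-\mathbb{E}[F(\cdots A_{\gamma})]\right),
\end{align*}
we need to show that each one-entry swap costs at most $C n^{-5/2+C\varepsilon_{1}}$. Summing then gives
\begin{align*}
\Gamma(T,m)\, n^{-5/2+C\varepsilon_1}=T\left(m+\tfrac{T+1}{2}\right)n^{-5/2+C\varepsilon_1}=T\left(1-\tfrac{T-1}{2n}\right)n^{-3/2+C\varepsilon_1},
\end{align*}
using $m+(T+1)/2=n-(T-1)/2$. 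This is even better than the stated bound, so there is room to spare if the per-swap cost turns out to be only $n^{-2+C\varepsilon_1}$, which gives exactly $T(1-\frac{T-1}{2n})n^{-1+C\varepsilon_1}$ after summation. The target is therefore a per-entry cost of $n^{-3/2+C\varepsilon_1}$, which sums to $T(1-\frac{T-1}{2n})n^{-1/2+C\varepsilon_1}$.

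For the single swap at position $\gamma=\phi(i,j)$, we let $Q$ be $A_\gamma$ with the $(i,j),(j,i)$ entries set to zero. Then $A_{\gamma-1}=Q+V$ and $A_\gamma=Q+W$, where $V,W$ carry $a_{ij}/\sqrt n$ and $b_{ij}/\sqrt n$ respectively. We expand each resolvent by the resolvent identity, $G(z,Q+V)=\sum_{\ell=0}^{s}(-R V)^{\ell}R+(-RV)^{s+1}G(z,Q+V)$ with $R=G(z,Q)$, and take $s$ large. We substitute into the traces $n^{-k_r}\mathrm{Tr}\prod_j G(z_j^{(r)},\cdot)$ and Taylor expand $F$ to fifth order around its value at $Q$. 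The resulting terms are monomials in $v_{ij}=a_{ij}/\sqrt n$ with coefficients that are functions of $Q$ only, and $Q$ is independent of $a_{ij}$ and $b_{ij}$. Since $a_{ij}$ and $b_{ij}$ match to order $2$, the terms of degree $0,1,2$ cancel in expectation. The remaining terms are of degree at least $3$, so they carry a factor $n^{-3/2}$ times moments of $|h_{ij}|^{\ell}$. The uniform exponential decay gives all moments, up to $n^{\varepsilon}$ losses.

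The coefficients are controlled by \eqref{e:loglog}. This bound is uniform in $\gamma$ and holds at $y\ge n^{\tau-1}$; we transfer it down to $\eta\ge n^{-1-\varepsilon_1}$ using the standard monotonicity $y\mapsto y\,|G_{kk}(E+iy)|$, at a cost of $n^{\varepsilon_1+2\tau}$. It also transfers from $A_\gamma$ to $Q$ by the same resolvent expansion. Products of $G$ entries in a normalized trace are bounded by $n^{C\varepsilon_1}$, because $\|G\|\le\eta^{-1}\le n^{1+\varepsilon_1}$ only enters through an off-diagonal factor, which is offset by the $n^{-k_r}$ normalization. The derivatives of $F$ are at most $n^{c_0\varepsilon_2}$ on the high-probability event. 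Off that event we use the crude bounds $\|G\|\le n^{2}$ and $|\partial F|\le n^{c_0}$ together with the probability $n^{-c\log\log n}$, so the contribution is negligible.

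The main obstacle is that with only two matching moments the degree-$3$ term gives $n^{-3/2}$ per entry rather than the $n^{-5/2}$ available with four moments in \cite{EYY12}. We must check that the third-order coefficient has no hidden extra factor of $n$ coming from the trace. In \cite{EYY12} the trace normalization contributes an additional $n^{-1}$ through the off-diagonal structure of $RVR$, and we would verify that this bookkeeping survives with $\eta\sim n^{-1}$; if it does, the per-swap cost is $n^{-5/2+C\varepsilon_1}$. Even without that gain, the per-entry cost $n^{-3/2+C\varepsilon_1}$ summed over $\Gamma(T,m)$ entries yields exactly the claimed $c_1T(1-\frac{T-1}{2n})n^{-1/2+C\varepsilon_1}$. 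The constant $c_1$ depends only on the moment bounds, on $\sum_r k_r$, and on $c_0$, as required.
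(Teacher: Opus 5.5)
Your proposal follows the paper's argument: telescope over the $\Gamma(T,m)$ swaps outside the shared $m\times m$ block, cancel orders $0,1,2$ of the resolvent/Taylor expansion using second-moment matching and independence from $Q$, bound the surviving order-$\geq 3$ terms by $n^{-3/2+C\varepsilon_1}$ per swap via \eqref{e:loglog}, and sum using $m+(T+1)/2=n-(T-1)/2$. Your aside about an extra $n^{-1}$ from the trace normalization in \cite{EYY12} is mistaken: at $\eta\sim n^{-1}$ the Ward identity only offsets the $n^{-k_r}$ normalization, so there is no additional gain, and their $n^{-5/2}$ per swap comes from (near-)matching of four moments. You do not rely on that aside, though, so the per-swap bound $n^{-3/2+C\varepsilon_1}$ you settle on is exactly what is needed.
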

The proof is the standard telescoping Green's function
comparison. The only changes from \cite[Theorem 6.4]{EYY12} are that
the interpolation has $\Gamma(T,m)=mT+T(T+1)/2$ swaps rather than $O(n^{2})$
swaps. Since the swapped entries match to second order, the resolvent expansion cancels to second order, and the remaining terms give a single-swap error of 
$O(n^{-3/2+C\varepsilon_1})$. 
Summing over the $\Gamma(T,m)$ swaps gives the stated $Tn^{-1/2+C\varepsilon_1}$ 
bound. The Green's function bound \eqref{e:loglog} follows from the local semicircle law under the stated tail assumption,
as in \cite[Theorem 3.1]{EYY12}.
\begin{proof}[Proof of \cref{t:couplingcost}]
    By an argument identical to that in the proof of \cite[Theorem 6.4]{EYY12}, which explains how to extract uniform convergence of the $k$-point correlation functions from results of the form of \cref{l:2moment}, we conclude that for all $k\in\mathbb{N}$ and for every compact set $U$ and differentiable compactly supported test function $F\in C^{1}_{c}(\mathbb{R}^{k})$ with $\mathrm{supp}(F)\subset U^{k}$, there exists $C:=C(X,U,k)>0$ and $c:=c(X,k)>0$ such that 
    \begin{align}\label{e:swapproof}
    \left|\int_{\mathbb R^k} F(x_{1},\cdots x_{k})\widehat{\rho}_{k,X}^{A,(n)}(dx_{1},\cdots,dx_{k})-\int_{\mathbb R^k} F(x_{1},\cdots,x_{k}) \widehat{\rho}_{k,X}^{B,(n)}(dx_{1},\cdots, dx_{k})\right|\leq C T n^{-c}\|F\|_{C^{1}} .
    \end{align} If we instead allowed any $F\in C_{c}(\mathbb{R}^{k})$, we would possibly lose the quantitative error, though it is still possible to prove that the left-hand side above would be $o(1)$. 
    Finally, applying \cite[Lemma 7.6]{EKYY12} with  $\lambda=n^{c_{1}}$ for some $c_{1}>0$, we use the notation $\widetilde{a}_{ij},\widetilde{b}_{ij}$ to denote random variables which are equal to $a_{ij},b_{ij}$ except on an event of measure $O(\lambda^{-(4+\varepsilon)})$, and such that $|\widetilde{a}_{ij}|,|\widetilde{b}_{ij}|\leq \lambda$. This implies that there exists $C>0$ such that  
    \begin{align*}
        \mathbb{P}\left(E\right)\leq Cn^{2-c_{1}(4+\varepsilon)}, & & E:= \bigcup_{i,j} \{a_{ij}\neq \widetilde{a}_{ij}\}\cup \bigcup_{i,j}\{b_{ij}\neq \widetilde{b}_{ij}\}.
    \end{align*} We can apply the existing bound \eqref{e:swapproof}, on $E^{c}$. Therefore, since the left-hand side of \eqref{e:swapproof} is bounded by a constant,  all that is required is that $2(4+\varepsilon)^{-1}<c_{1}$, which holds for any $c_{1}\geq 1/2.$  We finish the proof by noting that for any fixed $\alpha\in (0,2)$ we can choose constants $C,c>0$ so that this bound is satisfied uniformly for all $X\in (-2+\alpha,2-\alpha).$
\end{proof}
We are now in a position to prove \cref{t:gaps}. 
\begin{proof} We set $n:=m+T$ for the duration of this argument. 
    Applying \cref{p:unif2jan} to the uniform convergence results in \cref{t:convrate} and \cref{l:gueunif}, and noting that $\mathbb{P}(\Omega_{m})\geq 1-m^{-25}$, we conclude that for any $\alpha\in (0,2)$,  $k\in\mathbb{N}$, $U\subset\mathbb{R}$ compact, and $F\in C^{1}_{c}(\mathbb{R}^{k})$ with $\mathrm{supp}(F)\subset U^{k}$, there exist $C:=C(U,k,\alpha)>0,c:=c(k,\alpha)>0,n_{0}:=n_{0}(U,k,\alpha)\in\mathbb{N}$ such that for all $X\in (-2+\alpha,2-\alpha)$, and $n>n_{0}$,
    \begin{multline*}
        \bigg|\int_{\mathbb R^k} F(x_{1},\ldots x_{k})\widehat{\rho}_{k,X;x^{(m)}}^{\textsc{GUE},(m,T)}(dx_{1},\ldots,dx_{k})
        \\ -\int_{\mathbb R^k} F(x_{1},\ldots,x_{k}) \widehat{\rho}_{k,X}^{\textsc{GUE},(n)}(dx_{1},\ldots, dx_{k})\bigg|\leq C( T^{-1/2}+n^{-1})\|F\|_{C^{1}}.
    \end{multline*}  In this expression $\widehat{\rho}_{k,X;x^{(m)}}^{\textsc{GUE},(m,T)}$ is the correlation measure associated to the kernel in \cref{d:cormeas}. 
    Since the pair of Wigner matrices formed by taking an $m\times m$ Wigner matrix and adding, in one case, $T$ Wigner rows and columns, and, in the other case, $T$ $\textsc{GUE}$ rows and columns, form an $(m,T)$-pair as in \cref{d:2momentpair}, we can apply \cref{t:couplingcost}, taking constants uniformly over $X\in (-2+\alpha,2-\alpha)$, to obtain 
    \begin{multline*}
        \bigg|\int_{\mathbb R^k} F(x_{1},\ldots x_{k})\widehat{\rho}_{k,X;x^{(m)}}^{W,(n)}(dx_{1},\ldots,dx_{k})
        \\ -\int_{\mathbb R^k} F(x_{1},\ldots,x_{k}) \widehat{\rho}_{k,X}^{\textsc{GUE},(n)}(dx_{1},\ldots, dx_{k})\bigg|\leq C( T^{-1/2}+n^{-1} + Tn^{-c})\|F\|_{C^{1}}.
    \end{multline*}
    Finally, taking $T=n^{d}$ for some $d\in (0,\min\{c,2/3\})$, we obtain an overall polynomial error rate in $n$. 
\end{proof}

\appendix

\section{Modified Eynard-Mehta Theorem}\label{a:em}
In the proof below, we use the notion of an $L$-ensemble and conditional $L$-ensemble, which are explained in \cite{Bor11}.
\begin{proof}[Proof of \cref{t:em}] 
    We apply the $L$-ensemble approach to the point process from level $m$ to level $m+L$, following \cite{BR05,Bor11}. 
We note that the weight $W(X,x^{(m)})$ is proportional to
    \begin{align*} 
\begin{split}
\det\begin{bmatrix}
[\upsilon_{i}(x_{j}^{(m)}|m) ]_{i,j=1}^{m}  & \mathbf{0}_{m\times L}\\
\mathbf{0}_{L \times m} & \mathbf{1}_{L\times L}
\end{bmatrix}\prod_{r=m+1}^{m+L}
&\det \begin{bmatrix}
[\varphi (x_i^{(r-1)} , x_j^{(r)}) ]_{
i,j=1}^{r} & \mathbf{0}_{r\times (m+L-r)}\\
\mathbf{0}_{(m+L-r)\times r} & \mathbf{1}_{(m+L-r)\times (m+L-r)} 
\end{bmatrix}
\det[\psi_{i}(x_j^{(m+L)}|m+L)]_{i,j=1}^{m+L}.
\end{split}
\end{align*}
We define
\begin{align*}
    \Upsilon(x^{(m)}) & :=\begin{bmatrix}
[\upsilon_{i}(x_{j}^{(m)}|m) ]_{i,j=1}^{m}  & \mathbf{0}_{m\times L}\\
\mathbf{0}_{L \times m} & \mathbf{1}_{L\times L}
\end{bmatrix} \\
    \Phi_{r}(x^{(r-1)},x^{(r)}) & : = \begin{bmatrix}
[\varphi (x_i^{(r-1)} , x_j^{(r)}) ]_{
i,j=1}^{r} & \mathbf{0}_{r\times (m+L-r)}\\
\mathbf{0}_{(m+L-r)\times r} & \mathbf{1}_{(m+L-r)\times (m+L-r)} 
\end{bmatrix}\\
 \Psi (x^{(m+L)}) & : = [\psi_{i}(x_j^{(m+L)}|m+L)]_{i,j=1}^{m+L}
\end{align*} 
We consider $\mathcal{X}=\{1,\ldots,m+L\}\sqcup \hat{\mathcal{X}}^{(m)}\sqcup\cdots\sqcup\hat{\mathcal{X}}^{(m+L)}$. Where $\hat{\mathcal{X}}^{(r)}:=\{r\}\times \mathcal{X}^{(r)}\sqcup\{\text{virt}_{r,1},\cdots,\text{virt}_{r,m+L-r}\}$ is the state-space of $r$-physical and $m+L-r$ virtual slots, so that elements of $\hat{\mathcal{X}}^{(r)}$ take the form $(r,x)$. Here, 
\begin{align*}
\mathcal{X}^{(m)}:=\{x_{j}^{(m)}\}_{j\in [[m]]},
 & & 
\mathcal{X}^{(r)}:=\mathbb R  \text{ for } r\in[[m+1,m+L]].
\end{align*}
Although the level $m$ configuration $x^{(m)}$ is fixed, we include
$\hat{\mathcal X}^{(m)}$ as an auxiliary layer in the $L$-ensemble
construction. The random particles live on levels \(m+1,\ldots,m+L\). After
computing the conditional $L$-ensemble kernel on the enlarged space, we
restrict the kernel to the coordinates $(r,x)$ with
$r\in[[m+1,m+L]]$.

We can likewise define the L-ensemble operator
\begin{align*}
    L & = \begin{bmatrix}
0 & \Upsilon & 0 & 0 & 0 & \ldots & 0 \\ 0 & 0 & -\Phi_{m+1} & 0 & 0 & \ldots & 0 \\ 0 & 0 & 0 & -\Phi_{m+2} & 0 & \ldots & 0 \\ 0 & 0 & 0 & 0 & \ldots & \ldots & 0 \\ 0 & 0 & 0 & 0 & 0 & \ldots & -\Phi_{m+L} \\ \Psi & 0 & 0 & 0 & 0 & \ldots & 0 
\end{bmatrix}.
\end{align*}
The conditional $L$-ensemble defined by $L$ on $\eta = \hat{\mathcal{X}}^{(m)}\sqcup\cdots\sqcup\hat{\mathcal{X}}^{(m+L)}$ is the point process defined by the weight $W(\cdot,x^{(m)})$ above. Elements of $\eta$ are either physical points $(r,x)$, with
$r\in[[m,m+L]]$ and $x\in\mathcal X^{(r)}$, or virtual points
$\mathrm{virt}_{r,a}$. 

To compute the correlation kernel, we will invert $\mathbf{1}_{\eta}+L$. We know that a conditional $L$-ensemble defines a determinantal point process with the correlation kernel given by $K_{ij}(x,y)=\left[\mathbf{1}_{\eta}-(\mathbf{1}_{\eta}+L)^{-1}|_{\eta\times\eta}\right]_{ij}(x,y)$~\cite[Proposition 1.2]{Bor11}. We will write $\mathbf{1}_{\eta} + L$ as 
\begin{align*}
    \mathbf{1}_{\eta}+L & = \begin{bmatrix}
        A & B \\ C & D
    \end{bmatrix},
\end{align*}
\begin{align*}
    A  := 0 , & & 
    B := \begin{bmatrix}
        \Upsilon & 0 & \ldots & 0
    \end{bmatrix},\\ 
    C  := \begin{bmatrix}
        0 \\ \ldots \\ 0 \\ \Psi
    \end{bmatrix},  & & 
    D  := \begin{bmatrix}
        \mathbf{1} & -\Phi_{m+1} & 0 & 0 & \ldots & 0 \\ 0 & \mathbf{1} & -\Phi_{m+2} & 0 & \ldots & 0 \\ 0 & 0 & \mathbf{1} & -\Phi_{m+3} & \ldots & 0 \\ 0 & 0 & 0 & \mathbf{1} & \ldots & 0 \\ \ldots & \ldots & \ldots & \ldots & \ldots & \ldots \\ 0 & 0  & 0 & 0 & \ldots & \mathbf{1}
    \end{bmatrix}.
\end{align*} In particular, we find that 
\begin{align*}
    [D^{-1}]_{ij} & = \begin{cases}
        0 & i>j 
        \\ \mathbf{1} & i=j
        \\ \prod_{\ell=0}^{j-i-1} \Phi_{m+i+\ell} & i< j
    \end{cases}.
\end{align*} To compute the inverse of $\mathbf{1}_{\eta}+L$ we define $M=BD^{-1}C-A = \Upsilon\left(\prod_{i=1}^{L}\Phi_{m+i}\right)\Psi$ (which we will label by $G$, to recognize that this is exactly the Gram matrix). By expanding this product, we find that 
\begin{align*}
    G_{ij} & = \begin{cases}
        \upsilon_{i}(\cdot|m)\star\underset{L \text{ times}}{\varphi\star\cdots \star \varphi}\star \psi_{j}(\cdot|m+L) &  i\in [[1,m]] \\ 
        \left(\underset{m+L-i+1\text{ times}}{\varphi \star \cdots \star \varphi} \star \psi_{j}(\cdot|m+L)\right)(\text{virt}) & i\in [[m+1,m+L]]
    \end{cases}
\end{align*} Then, assuming that the Gram matrix is
invertible, the inverse is given by 
\begin{align*}
    (\mathbf{1}_{\eta}+L)^{-1} & = \begin{bmatrix}
        -M^{-1} & M^{-1}BD^{-1} \\ D^{-1}CM^{-1} & D^{-1} - D^{-1}CM^{-1}BD^{-1}
    \end{bmatrix}
\end{align*} and thus $(\mathbf{1}_{\eta}+L)^{-1}|_{\eta\times\eta} = D^{-1}-D^{-1}CM^{-1}BD^{-1}$. 
Re-indexing the blocks of $D$ by their corresponding levels
$r,s\in[[m,m+L]]$, we have
$$
[D^{-1}]_{rs}
=
\begin{cases}
0, & r>s,\\
\mathbf 1, & r=s,\\
\Phi_{r+1}\Phi_{r+2}\cdots\Phi_s, & r<s.
\end{cases}
$$
Thus, setting $i=n_{1},j=n_{2}$ we can compute the kernel $K_{ij}(x_{1},x_{2})$ as 
\begin{align*}
    K(n_{1},x_{1};n_{2},x_{2}) & = \left(\mathbf{1}_{\eta}-(\mathbf{1}_{\eta}+L)^{-1}|_{\eta\times\eta}\right)_{n_{1}n_{2}}(x_{1},x_{2}) 
    \\ & = \left(\prod_{r=n_{1}+1}^{m+L}\Phi_{r}\right)\Psi M^{-1} \Upsilon \left(\prod_{r=m+1}^{n_{2}}\Phi_{r}\right)(x_{1},x_{2}) - \mathbf{1}_{n_{2}>n_{1}}\left(\prod_{r=n_{1}+1}^{n_{2}}\Phi_{r}\right)(x_{1},x_{2}). 
\end{align*}
Which we can rewrite as 
\begin{align*}
    K(n_{1},x_{1};n_{2},x_{2}) & = -\mathbf{1}_{n_{2}>n_{1}}\left(\underset{n_{2}-n_{1}\text{ times}}{\varphi\star\cdots \star\varphi}\right)(x_{1},x_{2}) \\ & + \sum_{i=1}^{m}\sum_{j=1}^{m+L}[G^{-t}]_{ij}\left(\upsilon_{i}(\cdot|m)\star\underset{n_{2}-m\text{ times}}{\varphi \star \cdots \star \varphi}\right)(x_{2})\cdot \left(\underset{m+L-n_{1}\text{ times}}{\varphi \star\cdots\star \varphi} \star \psi_{j}(\cdot|m+L)\right)(x_{1}) 
    \\ & + \sum_{i=m+1}^{n_{2}}\sum_{j=1}^{m+L}[G^{-t}]_{ij}\left(\underset{n_{2}-i+1\text{ times}}{\varphi \star\cdots \star \varphi}\right)(\text{virt},x_{2})\cdot\left(\underset{m+L-n_{1}\text{ times}}{\varphi\star\cdots \star\varphi}\star \psi_{j}(\cdot|m+L)\right)(x_{1}).
\end{align*} This concludes the proof. 
\end{proof}

\section{Convergence of the Polygon Lozenge Kernel}\label{s:leolimit}
In this section, we demonstrate that, for an appropriate choice of parameters, the kernel constructed by Petrov \cite{Pet14} converges to the kernel that we construct in \cref{s:kernel}. When $m=0$ (for the standard rising $\textsc{GUE}$ eigenvalue process), an existing result of Aggarwal and Gorin \cite{AG22} would allow us to construct the $\textsc{GUE}$ kernel as the appropriate limit of the kernel of the uniform measure on lozenge tilings of a hexagonal domain. 
In this appendix, we demonstrate a variant of that result: that the kernel of the rising $\textsc{GUE}$ eigenvalue process started from a fixed configuration arises under a similar scaling limit to that in \cite{AG22}, using instead Petrov's general kernel for lozenge tilings of fairly general polygon domains. We will show pointwise convergence, but this could be upgraded to uniform-on-compact convergence without much additional work.

To set up this result, we briefly discuss the kernel in \cite[Theorem 1]{Pet14}. For the duration of this section, we always take $N$ to be even, and when we take $N\to\infty$ we do so along a sequence comprised of even integers.

For $1\leq n_{1}\leq N+m$, and $1\leq n_{2}\leq N+m-1$ and $x_{1},x_{2}\in\mathbb{Z}$, Petrov constructs the correlation kernel of the uniform measure on tilings of a polygon parameterized by $N,k\in\mathbb{N}$ and a collection of half-integers $A_{1}<B_{1}<\cdots <A_{k}<B_{k}$ with $A_{i},B_{i}\in \mathbb{Z}':=\mathbb{Z}+\frac{1}{2}$ under the constraint that $\sum_{i=1}^{k}(B_{i}-A_{i})=N+m$. When we take asymptotics in $N$ later in this section, we will leave $m$ fixed, so it is equivalent to scale by either $N$ or $N+m$. For simplicity, we will simply scale by $N$. 

We will set $k=m+2$ and we will consider the polygon embedded in $\mathbb{R}^{2}$ such that one face lies on the horizontal axis $n=0$, and such that $k-1$ faces lie along the line $n=N+m$ between each $B_{i}$ and $A_{i+1}$, see \cite[Fig. 2, Fig. 3]{Pet14}. We embed the polygon so that it is symmetric about $x=0$, and assume $A_{1}=-N-1/2, B_{1}=-N/2-1/2, A_{m+2}=N/2-1/2,B_{m+2}=N-1/2$. Between these endpoints there are $(A_{i},B_{i})_{i\in [[2,m+1]]}$  defined by 
\begin{align*}
A_i=\left\lfloor \sqrt{N/2}x_{m+2-i}^{(m)}\right\rfloor-\frac{1}{2},
 & & 
B_i=A_i+1.
\end{align*}We refer to the kernel corresponding to this domain as $K_{\mathbf{P}(N)}(\cdot)$. Applying these specifications to the expression in \cite[Theorem 1]{Pet14}, we obtain
\begin{multline*} 
     K_{\mathbf{P}(N)}(n_1,x_1;n_2,x_2)  =
    -1_{n_2<n_1}1_{x_2\leq x_1}\frac{\prod_{\ell=1}^{n_{1}-n_{2}-1}(x_1-x_2+\ell)}{(n_1-n_2-1)!}
    + \frac{(N-n_1)!}{(N-n_2-1)!} \\ \cdot
    \frac1{(2\pi i)^{2}}
    \oint\limits_{\{z\}}dz\oint\limits_{\{w\}}dw
    \frac{\prod_{\ell=1}^{N-n_{2}-1}(z-x_2+\ell)}{\prod_{\ell=0}^{N-n_{1}}(w-x_1+\ell)}
    \frac{1}{w-z}
    \prod_{r=1}^{m}
    \frac{\lfloor \sqrt{N/2}x_{r}^{(m)}\rfloor -w}{\lfloor \sqrt{N/2} x_{r}^{(m)}\rfloor -z} 
    \frac{\left(-N -w\right)_{\frac{N}{2}}\left(\frac{N}{2}  -w\right)_{\frac{N}{2}}}{\left(-N -z\right)_{\frac{N}{2}}\left(\frac{N}{2}-z\right)_{\frac{N}{2}}},
  \end{multline*} where we have represented each of the Pochhammer symbols in terms of their product definition for the sake of convenience. In that expression, the contour in $z$ contains all poles on the real line which are greater than or equal to $x_{2}$. The contour in $w$ contains the interval $[-N,N]$ on the real line (and thus all of the $\sqrt{N/2} x_{r}^{(m)}$). We assume that neither contour crosses the real axis at integer values. 

  Before we prove the convergence to \cref{t:kernel}, we state a pointwise convergence result and a bound which will be used in the argument. 
  \begin{remark}\label{l:fasm} For all $w\in\mathbb{C}$ and $z =a+it$ for $t\in\mathbb{R}$, it follows immediately from the product formula for the Pochhammer sybmol that
      \begin{align}\label{e:al1}
          \lim_{N\to\infty} \frac{\left(-N -w\sqrt{N/2}\right)_{\frac{N}{2}}\left(\frac{N}{2}  -w\sqrt{N/2}\right)_{\frac{N}{2}}}{\left(-N -z\sqrt{N/2}\right)_{\frac{N}{2}}\left(\frac{N}{2}-z\sqrt{N/2}\right)_{\frac{N}{2}}} = e^{-w^{2}/2+z^{2}/2}.
      \end{align}
      We write \begin{align*}
        P_{N}(z) & := \prod_{k=0}^{N/2-1}(-N -z\sqrt{N/2}+k)\prod_{k=0}^{N/2-1}(N/2-z\sqrt{N/2}+k).
    \end{align*} Let $K\subset\mathbb{C}$ be compact.  From this representation we can also conclude that for some $p,C>0$
    \begin{align*}
        \bigg| \frac{P_{N}(a)}{P_{N}(a+it)}\bigg| \leq \left(1+\frac{p t^{2}}{N/2}\right)^{-N/2} & & \sup_{w\in K}\bigg|\frac{P_{N}(w)}{P_{N}(a)} \bigg|\leq C.
    \end{align*} As a consequence of these bounds, for all $R>0$ there exist $C,c>0$ such that for all $N$ sufficiently large, and for  all $w\in K$ and $z=a+it$ for $t\in\mathbb{R}$, 
      \begin{align}\label{e:al2}
          \bigg|\frac{\left(-N -w\sqrt{N/2}\right)_{\frac{N}{2}}\left(\frac{N}{2}  -w\sqrt{N/2}\right)_{\frac{N}{2}}}{\left(-N -z\sqrt{N/2}\right)_{\frac{N}{2}}\left(\frac{N}{2}-z\sqrt{N/2}\right)_{\frac{N}{2}}}\bigg| \leq C\left(e^{-ct^{2}}+(1+|t|)^{-R}\right).
      \end{align}
  \end{remark}
\begin{proposition}\label{p:asymeq}
Up to gauge transformation, for all $n_{1},n_{2}>m$, $x_{1},x_{2}\in\mathbb{R}$, and $x^{(m)}\in\mathrm{Conf}_{m}^{0}(\mathbb{R})$,
\begin{align*}
      \lim_{N\to\infty} \sqrt{N/2} K_{\mathbf{P}(N)}(N+m-n_{1},\lfloor x_{1}\sqrt{N/2}\rfloor;N+m-n_{2},\lfloor x_{2}\sqrt{N/2}\rfloor)= K_{x^{(m)}}^{\textsc{GUE}}(n_{1},x_{1};n_{2},x_{2}).
     \end{align*} 
\end{proposition}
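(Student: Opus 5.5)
We substitute $n_i\mapsto N+m-n_i$ and $x_i\mapsto\lfloor x_i\sqrt{N/2}\rfloor$ into Petrov's formula, and rescale both integration variables by $z\mapsto z\sqrt{N/2}$, $w\mapsto w\sqrt{N/2}$. After this substitution, $N-n_i$ becomes $n_i-m$.

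\textbf{The correction term.} The first term becomes
$$-\mathbf 1_{n_1<n_2}\mathbf 1_{x_1\le x_2}\frac{\prod_{\ell=1}^{n_2-n_1-1}(x_2\sqrt{N/2}-x_1\sqrt{N/2}+\ell)}{(n_2-n_1-1)!}.$$
Multiplied by $\sqrt{N/2}$, this is of order $(N/2)^{(n_2-n_1)/2}$. Conjugating by the gauge factor $(N/2)^{n/2}$, i.e.\ $f(n,x)=(N/2)^{n/2}$ up to sign, turns it into $-\varphi^{(n_1,n_2)}(x_1,x_2)$ in the limit. The same gauge normalizes the prefactors $(n_1-m)!/(n_2-m-1)!$ and the rational parts of the integrand. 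Each factor in
$$\prod_{\ell=1}^{n_2-m-1}(z\sqrt{N/2}-x_2\sqrt{N/2}+\ell)\Big/\prod_{\ell=0}^{n_1-m}(w\sqrt{N/2}-x_1\sqrt{N/2}+\ell)$$
is $\sqrt{N/2}(z-x_2)+O(1)$ or $\sqrt{N/2}(w-x_1)+O(1)$. The product over $r$ converges to $\prod_{r=1}^m (w-x_r^{(m)})/(z-x_r^{(m)})$. The Pochhammer ratio converges to $e^{z^2/2-w^2/2}$ by \eqref{e:al1}. The powers of $\sqrt{N/2}$ from the Jacobians $dz\,dw$, from $1/(w-z)$, and from the rational factors combine with the outer $\sqrt{N/2}$ into exactly $(N/2)^{(n_2-n_1)/2}$, which the gauge absorbs.

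\textbf{Contour matching.} In Petrov's formula the $w$-contour encloses all of $[-N,N]$. After rescaling, this is a contour around $[-\sqrt{2N},\sqrt{2N}]$, growing with $N$, whereas in \cref{t:kernel} the $w$-contour is a small circle $c(x_1)$. So I would first deform the contours at finite $N$. The $w$-integrand has poles only at $w=x_1\sqrt{N/2}-\ell$ for $\ell=0,\dots,N-n_1$; the factors $\lfloor\cdot\rfloor-w$ and the numerator Pochhammers cancel the rest. The $z$-integrand has poles at $\lfloor \sqrt{N/2}x_r^{(m)}\rfloor$ and at the zeros of the denominator Pochhammers, namely the integers in $[-N,-N/2-1]\cup[N/2,N-1]$. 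Following Petrov's standard manipulation (and the argument in \cref{s:compare}), the $z$-contour around the poles $\ge x_2$ can be exchanged for a vertical line $\Re z = b\sqrt{N/2}$, with $b$ as in \cref{t:kernel}, at the cost of residues. In particular, the residue at $z=w$ cancels the enclosed $w$-poles to the left of $x_1$ except those near $x_1$. After this step the $w$-contour can be shrunk to a contour of size $O(1)$ (after rescaling) around $x_1$, while keeping poles near $x_1$ in the discrete picture: the poles $x_1\sqrt{N/2}-\ell$ with $\ell\le n_1-m$ collapse to $x_1$. The remaining discrepancy between the two pictures consists of residue terms, which should either cancel against the first term or match $-\varphi^{(n_1,n_2)}$.

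\textbf{Dominated convergence.} On the vertical line $z=b+it$, the bound \eqref{e:al2}, with $R$ larger than $n_2$ plus a margin, dominates the integrand uniformly in $N$. The polynomial prefactor $(z-x_2)^{n_2-m-1}$ and the product over $r$ grow at most polynomially in $t$, and $1/|w-z|$ is bounded. This gives convergence of the $z$-integral; the $w$-integral is over a compact set, where the convergence is uniform. The floor functions change only $O(N^{-1/2})$ quantities after rescaling.

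\textbf{Main obstacle.} I expect the main obstacle to be the contour deformation at finite $N$. One must check that moving the $z$-contour to a vertical line picks up no residues from the far Pochhammer poles, which requires the line to stay between $-N/2$ and $N/2$ on the original scale. One must also check that the residues produced at $z=w$ and at the $x_r^{(m)}$ reorganize exactly into the form of \cref{t:kernel}. I would first reproduce the argument of \cref{t:kernel}, where residues at $x_j^{(m)}\le x_2$ and at the $w$-contour cancel the first terms, in the discrete setting, and only then pass to the limit.
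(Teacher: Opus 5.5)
Your rescaling, the power count giving the gauge factor $(N/2)^{(n_2-n_1)/2}$, and the dominated-convergence step via \eqref{e:al1} and \eqref{e:al2} agree with the paper's proof. The gap is in the finite-$N$ contour bookkeeping. This is exactly the step that identifies the limit with $K^{\textsc{GUE}}_{x^{(m)}}$, and the one concrete claim you make there is wrong.

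Substituting into Petrov's formula gives the correction term $-\mathbf 1_{n_1<n_2}\mathbf 1_{x_2\le x_1}\prod_{\ell=1}^{n_2-n_1-1}(\lfloor\sqrt{N/2}x_1\rfloor-\lfloor\sqrt{N/2}x_2\rfloor+\ell)/(n_2-n_1-1)!$. You have interchanged $x_1$ and $x_2$. After the gauge, this term tends to $-\mathbf 1_{n_1<n_2}\mathbf 1_{x_2\le x_1}(x_1-x_2)^{n_2-n_1-1}/(n_2-n_1-1)!$, not to $-\varphi^{(n_1,n_2)}(x_1,x_2)$. Moreover, the kernel in \cref{t:kernel} has no free-standing $-\varphi$ term; it was absorbed into the placement of $b$ (in particular $b<x_1$ when $x_2<x_1$). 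So the split ``correction term $\to-\varphi$, double integral $\to$ double integral'' cannot be right. Your proposal also leaves open whether the residues ``cancel against the first term or match $-\varphi$''.

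The missing idea is to deform in the opposite order from yours.

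First, shrink the $w$-contour, which encloses the $z$-contour, to a small loop around the $n_1-m+1$ points $\sqrt{2/N}(\lfloor\sqrt{N/2}x_1\rfloor-\ell)$, $\ell\in[[0,n_1-m]]$. Apart from $w=z$, these are the only $w$-poles, because the $x^{(m)}$-factors and the Pochhammer symbols in $w$ sit in the numerator. So there are no ``$w$-poles to the left of $x_1$'' to cancel. The only residue picked up is at $w=z$, where the product over $r$ and the Pochhammer ratio both equal $1$. What remains is a single $z$-integral of $\prod_{\ell=1}^{n_2-m-1}(\cdot)/\prod_{\ell=0}^{n_1-m}(\cdot)$. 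Its value is exactly the negative of Petrov's correction term, so the two cancel.

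Only then open the closed $z$-contour into the line $\Re z=a$. The integrand decays like a large negative power of $|z|$. The line lies between the two clusters of Pochhammer poles and separates the same rescaled points $x_r^{(m)}$ as Petrov's contour. So this step costs only the orientation sign $-1$ and produces no residues at the $x_r^{(m)}$, contrary to what your last paragraph anticipates.

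Opening the $z$-contour first, as you propose, forces the infinite vertical line across the large $w$-contour around $[-\sqrt{2N},\sqrt{2N}]$. The residue accounting you sketch for that crossing, namely that the residue at $z=w$ ``cancels the enclosed $w$-poles to the left of $x_1$'', does not correspond to an actual computation.
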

\begin{proof} We take $K_{\mathbf{P}(N)}(n_{1},x_{1};n_{2},x_{2})$ and apply the change of variables \begin{align*}z \mapsto \sqrt{N/2}z,
     & & w \mapsto \sqrt{N/2}w,
    & & x_{1} \mapsto \lfloor \sqrt{N/2} x_{1}\rfloor ,
     & & x_{2}\mapsto \lfloor \sqrt{N/2} x_{2}\rfloor ,
     \end{align*}\vspace{-2mm}
\begin{align*}
    n_{1} \mapsto N+m-n_{1}, & & n_{2}\mapsto N + m -n_{2}.
\end{align*} After making a gauge transformation by $(N/2)^{(n_{1}-n_{2})/2}$ and multiplying by $\sqrt{N/2}$, we obtain the following expression for the kernel
    \begin{align*}
        & =
    -1_{n_2>n_1}1_{x_2\leq x_1}\frac{ \prod_{\ell=1}^{n_{2}-n_{1}-1}(\lfloor \sqrt{N/2}x_{1}\rfloor-\lfloor \sqrt{N/2}x_{2}\rfloor+\ell)}{(n_2-n_1-1)!}
      \\ &   +\frac{(n_{1}-m)!}{(n_{2}-m-1)!}\frac{1}{(2\pi i)^{2}}
    \oint\limits_{c(x_{j}^{(m)}\geq x_{2})}dz\oint\limits_{c(\text{large})}dw
    \frac{\prod_{\ell=1}^{n_{2}-m-1}(\sqrt{N/2}z-\lfloor \sqrt{N/2}x_{2}\rfloor+\ell)}{\prod_{\ell=0}^{n_{1}-m}(\sqrt{N/2}w-\lfloor \sqrt{N/2}x_{1}\rfloor+\ell)}
    \frac{1}{w-z}
    \\ & \cdot  \prod_{r=1}^{m}
    \frac{\sqrt{N/2}w- \lfloor \sqrt{N/2}x_{r}^{(m)}\rfloor  }{\sqrt{N/2}z-  \lfloor \sqrt{N/2}x_{r}^{(m)} \rfloor }
    \frac{\left(-N -w\sqrt{N/2}\right)_{\frac{N}{2}}\left(\frac{N}{2}  -w\sqrt{N/2}\right)_{\frac{N}{2}}}{\left(-N -z\sqrt{N/2}\right)_{\frac{N}{2}}\left(\frac{N}{2}-z\sqrt{N/2}\right)_{\frac{N}{2}}}.
    \end{align*} 
    The contour in $z$ contains all poles on the real line which are greater than or equal to $x_{2}$. The contour in $w$ contains the interval $[-\sqrt{2N},\sqrt{2N}]$ on the real line (and thus all of the $\sqrt{2/N}\lfloor\sqrt{N/2} x_{r}^{(m)}\rfloor$).
    
    We deform the contour in $w$ so that it does not enclose the contour in $z$ but instead simply encircles  the set $\{\sqrt{2/N}\lfloor \sqrt{N/2}x_{1}\rfloor -\ell\sqrt{2/N}|\ell\in [[0,n_{1}-m]]\}$ and no other $w$ poles. The residue that we pick up through this deformation cancels with the first term. Then we deform the $z$ contour (we are still treating $N$ as finite) so that it becomes an open contour, picking up a factor of $-1$ to account for orientation. What remains is 
    \begin{align*}
        & \frac{(n_{1}-m)!}{(n_{2}-m-1)!}\frac{-1}{(2\pi i)^{2}}
    \int\limits_{a-i\infty}^{a+i\infty}dz\oint\limits_{c(x_{1})}dw
    \frac{\prod_{\ell=1}^{n_{2}-m-1}(\sqrt{N/2}z-\lfloor \sqrt{N/2}x_{2}\rfloor+\ell)}{\prod_{\ell=0}^{n_{1}-m}(\sqrt{N/2}w-\lfloor \sqrt{N/2}x_{1}\rfloor+\ell)}
    \frac{1}{w-z}
    \\ & \cdot  \prod_{r=1}^{m}
    \frac{\sqrt{N/2}w- \lfloor \sqrt{N/2}x_{r}^{(m)}\rfloor  }{\sqrt{N/2}z-  \lfloor \sqrt{N/2}x_{r}^{(m)} \rfloor }
    \frac{\left(-N -w\sqrt{N/2}\right)_{\frac{N}{2}}\left(\frac{N}{2}  -w\sqrt{N/2}\right)_{\frac{N}{2}}}{\left(-N -z\sqrt{N/2}\right)_{\frac{N}{2}}\left(\frac{N}{2}-z\sqrt{N/2}\right)_{\frac{N}{2}}},
    \end{align*} where $a>x_{2}$ and for all $x_{j}^{(m)}>x_{2}$, then  $x_{j}^{(m)}>a$, and similarly, if $x_{1}>x_{2},$ then $x_{1}>a$. 
    
    We can write this as 
    \begin{align*}
        & \frac{(n_{1}-m)!}{(n_{2}-m-1)!}\frac{-1}{(2\pi i)^{2}}
    \int\limits_{a-i\infty}^{a+i\infty}dz\oint\limits_{c(x_{1})}dw
    \frac{(z-x_2)^{n_{2}-m-1}}{(w-x_{1})^{n_{1}-m+1}}
    \frac{1}{w-z}
    \prod_{r=1}^{m}
    \frac{w- x_{r}^{(m)}  }{z-  x_{r}^{(m)} }
    \\ & \cdot  \frac{\left(-N -w\sqrt{N/2}\right)_{\frac{N}{2}}\left(\frac{N}{2}  -w\sqrt{N/2}\right)_{\frac{N}{2}}}{\left(-N -z\sqrt{N/2}\right)_{\frac{N}{2}}\left(\frac{N}{2}-z\sqrt{N/2}\right)_{\frac{N}{2}}}
     \textsc{ERR}^{1}(z,w)\textsc{ERR}^{2}(z,w),
    \end{align*} 
    where
  \begin{align*}
      \textsc{ERR}^{1}(z,w)    := \frac{\prod_{\ell=1}^{n_{2}-m-1}\left(1+\frac{O(1)+\ell}{\sqrt{N/2}(z-x_{2})}\right)}{\prod_{\ell=0}^{n_{1}-m}\left(1+\frac{O(1)+\ell}{\sqrt{N/2}(w-x_{1})} \right)}, & &  \textsc{ERR}^{2}(z,w) := \prod_{r=1}^{m}
    \frac{w-\sqrt{2/N}\lfloor\sqrt{N/2}x_{r}^{(m)}\rfloor }{z-\sqrt{2/N}\lfloor \sqrt{N/2} x_{r}^{(m)}\rfloor } \prod_{r=1}^{m}\frac{z-x_{r}^{(m)}}{w-x_{r}^{(m)}}. 
  \end{align*} 
   Since $x_{1},x_{2},x^{(m)}$ are all fixed, we can assume that their separation from the contours of integration is bounded below by a fixed positive constant. Therefore, we have the following pointwise convergence for all $z,w$ in the contours of integration,
  \begin{align*}
       \lim_{N\to\infty}\textsc{ERR}^{1}(z,w) =1,
      &  & \lim_{N\to\infty}\textsc{ERR}^{2}(z,w) = 1. 
    \end{align*} We also have pointwise convergence of the integrand due to \eqref{e:al1}. Finally, noting that for fixed $m, x_{1},x_{2},n_{1},n_{2},x^{(m)}$, the expression
    \begin{align*}
        \frac{(z-x_2)^{n_{2}-m-1}}{(w-x_{1})^{n_{1}-m+1}}
    \frac{1}{w-z}
    \prod_{r=1}^{m}
    \frac{w- x_{r}^{(m)}  }{z-  x_{r}^{(m)} },
    \end{align*} at most polynomial in $|t|$ for all $z=a+it$, we can bound this factor by, for instance, the expression $C(1+|t|)^{d}$ for some $C,d>0$. Then we apply \eqref{e:al2} for any $R>d+2$ and dominated convergence to finish the proof. 
     \end{proof}

\section{Conditioning on the Top Level}\label{s:compare}
In this section, we prove the formula developed by Metcalfe for the kernel of the $\textsc{GUE}$ corners process with fixed top level, using the same methods we developed in \cref{s:kernel}.
\begin{proposition}[Proposition 2.4, \cite{Met13}]\label{p:closedkernel}
    The kernel of the rising $\textsc{GUE}$ eigenvalue process with a fixed top configuration $x^{(m)}\in\mathrm{Conf}^{0}_{m}(\mathbb{R})$, for $n_{1}\in [[m-1]],n_{2}\in[[m]]$ and $x_{1},x_{2}\in\mathbb{R}$, is given by 
    \begin{align*}
        \widehat{K}_{x^{(m)}}^{\textsc{GUE}}(n_{1},x_{1};n_{2},x_{2}) & = \frac{(m-n_{2})!}{(m-n_{1}-1)!}\frac{1}{(2\pi i)^{2}}\oint_{c(x_{1}\leq x_{k}^{(m)})}dz \oint_{c(x_{2})}dw \frac{(z-x_{1})^{m-n_{1}-1}}{(w-x_{2})^{m-n_{2}+1}}\frac{1}{w-z}\prod_{r=1}^{m}\frac{w-x_{r}^{(m)}}{z-x_{r}^{(m)}},
    \end{align*} where $c(x_{1}\leq x_{k}^{(m)})$ is the contour which contains all $x_{k}^{(m)}$ $k\in[[m]]$ such that $x_{1}\leq x_{k}^{(m)}$, and where $c(x_{2})$ is the contour which contains $x_{2}$ and no points $x_{k}^{(m)}$ unless $x_{2}=x_{k}^{(m)}$. Both contours are positively oriented.
\end{proposition}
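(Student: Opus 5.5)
We will reuse the Eynard--Mehta framework of \cref{t:em}, run downward from the fixed top level $m$ to level $1$. Conditioned on $x^{(m)}$, the GUE corners process on levels $1,\dots,m-1$ is uniform on the interlacing (Gelfand--Tsetlin) polytope. Its density is
\[
\prod_{r=1}^{m-1}\det\big[\varphi(x^{(r)}_i,x^{(r+1)}_j)\big]
\]
with $\varphi(x,y)=\mathbf{1}_{x\le y}$ and virtual rows equal to $1$, divided by the normalization $\prod_{i<j}(x_i^{(m)}-x_j^{(m)})/\prod_{k=1}^{m-1}k!$. In particular, the Gaussian weights cancel. This is a product of determinants as in \eqref{e:weight}, with the roles of the top and bottom levels reversed.

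\textbf{Choice of boundary functions.} We take $\psi_j(x\mid m)=x^{m-j}$ (or equivalently $h_{m-j}$) on the fixed top level and pair it with the inverse Vandermonde $[V(x^{(m)})^{-1}]$, exactly as $\upsilon_k$ was built from $h(x^{(m)})^{-T}$ in \cref{s:emsetup}. Here no modification analogous to \cref{l:detshift} is required. The reason is that every convolution $\varphi^{(a,b)}(x,y)=\mathbf{1}_{x\le y}(y-x)^{b-a-1}/(b-a-1)!$ from \cref{l:allfunctions}(1) is integrated only against the finitely many atoms $x^{(m)}_j$, and virtual rows convolve to $\int_{-\infty}^{y}$ over a bounded region. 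Hence all entries of the Gram matrix are finite. We then choose the functions at the bottom levels (polynomials $x^{k}/k!$ coming from the virtual rows) so that the Gram matrix is triangular and, after an upper-triangular change of basis, equal to the identity. We expect this to follow from the identity $\sum_j \mathbf{1}_{y\le x_j}(x_j-y)^{k}[V^{-1}]_{j\ell}=\,$divided differences of $(\cdot-y)_+^{k}$, which vanish for $k<m-1$ once $y$ lies below all $x_j$.

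\textbf{Resumming the kernel.} Once \cref{t:em} has been applied, the kernel becomes $-\varphi^{(n_1,n_2)}$ plus a finite double sum. The main computational step is to resum this sum into a contour integral. For this we write $[V^{-1}]_{kj}$ as the residue $\frac{1}{2\pi i}\oint_{c(x_k^{(m)})}\frac{dz}{\prod_r(z-x_r^{(m)})}$, in the manner of \cref{l:termsolve1}. We use $\sum_k(-1)^{k-1}e_{k-1}(x^{(m)}\setminus x_j^{(m)})w^{m-k}=\prod_{r\ne j}(w-x_r^{(m)})$, and we encode the polynomial in $x_2$ as $\frac{(m-n_2)!}{2\pi i}\oint_{c(x_2)}\frac{\cdot\,dw}{(w-x_2)^{m-n_2+1}}$. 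The factor $(x_j-x_1)^{m-n_1-1}\mathbf{1}_{x_1\le x_j}$ then produces $(z-x_1)^{m-n_1-1}$, evaluated at residues only at the $x_k^{(m)}\ge x_1$. Combining $\prod_{r\ne j}(w-x_r)/(z-x_j)$ by the same partial-fraction identity used at the end of \cref{l:termsolve2} gives the factor $\frac{1}{w-z}\prod_r\frac{w-x_r}{z-x_r}$. Since no Gaussian factor is present, all integrals are over closed contours and no Fourier step as in \cref{l:annoying} is needed.

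\textbf{Main obstacle.} The main obstacle is the $-\varphi^{(n_1,n_2)}$ term together with the bookkeeping of the virtual-particle contributions. We expect that, when $c(x_2)$ is made disjoint from $c(x_1\le x_k^{(m)})$, the $z$-residue at $z=w$ generated in the partial-fraction step reproduces $\mathbf{1}_{x_1\le x_2}(x_2-x_1)^{n_2-n_1-1}/(n_2-n_1-1)!$ and cancels $-\varphi^{(n_1,n_2)}$, as in the proof of \cref{t:kernel}. Checking that this cancellation is exact, and that the sign and the gauge factor $(-1)^{n_1-n_2}$ come out correctly, is the point at which the argument is most delicate. As a sanity check on the final formula, we will verify the case $m=2$ directly, using the uniform distribution on $[x_2^{(2)},x_1^{(2)}]$.
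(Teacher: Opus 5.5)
Your architecture matches the paper's: Eynard--Mehta run down from the fixed level $m$, inverse-Vandermonde boundary functions, residue resummation, and cancellation of $-\varphi^{(n_1,n_2)}$. There is, however, a genuine gap: your claim that no regularization is needed is false, and your Gram-matrix step depends on it.

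\textbf{The Gram matrix diverges.} On the state space $\mathbb{R}$ the virtual-row functions are
\[
(\varphi(\text{virt},\cdot)\star\varphi^{(i,m)})(y)=\int_{-\infty}^{y}\frac{(y-x)^{m-i-1}}{(m-i-1)!}\,dx=+\infty,\qquad i<m,
\]
so the integration region is not bounded. Suppose you first sum over $j$ and then integrate, setting $G_{i\ell}=\int_{\mathbb{R}}\Psi_\ell(y\mid i)\,dy$ with
\[
\Psi_\ell(y\mid i)=\sum_j\mathbf{1}_{y\le x^{(m)}_j}\frac{(x^{(m)}_j-y)^{m-i-1}}{(m-i-1)!}[V^{-1}]_{j\ell}.
\]
The divided-difference vanishing you invoke holds only for the top coefficient. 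For $y<x^{(m)}_m$ one has $\Psi_\ell(y\mid i)=\binom{m-i-1}{m-\ell}\frac{(-y)^{\ell-i-1}}{(m-i-1)!}$. This is zero for $\ell\le i$ but a nonzero polynomial for $\ell>i$. Hence every strictly upper-triangular Gram entry diverges; already for $m=2$ you get $G_{12}=+\infty$, while $G_{11}=G_{22}=1$ and $G_{21}=0$. These entries enter the columns of $G^{-1}$ that the kernel uses as soon as $n_2\ge 2$. This is exactly why the paper switches to $\widetilde\varphi$ ``for the sake of eliminating divergences'', and why Metcalfe truncates the state space. Your polynomials $x^k/k!$ only appear after such a modification.

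\textbf{How to repair it along your lines.} Truncate to $[a,\infty)$ with $a<x^{(m)}_m$. This leaves the measure unchanged, because interlacing with the fixed top row confines every particle to $[x^{(m)}_m,x^{(m)}_1]$. The virtual rows then give $(y-a)^{n-i}/(n-i)!$; your choice corresponds to $a=0$. You get
\[
G_{i\ell}=\frac{(-a)^{\ell-i}}{(m-\ell)!(\ell-i)!}\mathbf{1}_{\ell\ge i},\qquad [G^{-1}]_{\ell i}=(m-\ell)!\frac{a^{i-\ell}}{(i-\ell)!}\mathbf{1}_{i\ge \ell},
\]
and the binomial theorem gives
\[
\sum_{i=\ell}^{n_2}[G^{-1}]_{\ell i}\frac{(x_2-a)^{n_2-i}}{(n_2-i)!}=\frac{(m-\ell)!}{(n_2-\ell)!}\,x_2^{n_2-\ell}.
\]
So $a$ drops out, and the kernel becomes
\[
-\varphi^{(n_1,n_2)}(x_1,x_2)+\sum_{\ell\le n_2}\frac{(m-\ell)!}{(n_2-\ell)!}x_2^{n_2-\ell}\sum_k\varphi^{(n_1,m)}(x_1,x^{(m)}_k)[V^{-1}]_{k\ell}.
\]
This is exactly the intermediate expression the paper reaches via $\widetilde\varphi$, the Hermite basis $h(x^{(m)})^{-T}$ and the Hermite sum formula. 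Your monomial/truncation route gets there with no Gaussian regularization and no back-conversion from $\widetilde\varphi$ to $\varphi$. Once repaired, it is the more elementary of the two, and your resummation then goes through as described.

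\textbf{Contour geometry.} Pin down the point you flagged. The $\frac{1}{z-w}$ term from the partial-fraction identity contributes $2\pi i\,(w-x_1)^{m-n_1-1}$, and hence reproduces $\varphi^{(n_1,n_2)}(x_1,x_2)$, only when $w$ lies inside the $z$-contour. So $c(x_1\le x^{(m)}_k)$ must be a single contour around the segment from $x_1$ to $x^{(m)}_1$, excluding every $x^{(m)}_k<x_1$, with $c(x_2)$ nested inside it when $x_1\le x_2$. Equivalently, start with $w$ on a large circle and shrink it to $c(x_2)$, as the paper does. With side-by-side small circles the term vanishes and $-\varphi^{(n_1,n_2)}$ is not cancelled. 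No $(-1)^{n_1-n_2}$ gauge factor arises in this case.
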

\begin{proof}
The product determinant measure takes the form
\begin{align*}
    \mathbb{P}\left(\emptyset\prec x^{(1)}\prec\cdots\prec x^{(m)}\big|x^{(m)}\right) & = \mathbf{1}_{\emptyset\prec x^{(1)}\prec \cdots \prec x^{(m)}}\prod_{i<j}(x_{i}^{(m)}-x_{j}^{(m)})^{-1}
    \\ & = \prod_{k=1}^{m} \mathrm{det}\left[\varphi (x_{i}^{(k-1)},x_{j}^{(k)})\right]_{i,j=1}^{k} \det\left[\rho_{i}(x_{j}^{(m)}|m)\right]_{i,j=1}^{m}.
\end{align*} This is the uniform distribution over the Gelfand-Tsetlin pattern with top row given by $x^{(m)}$. The functions $\varphi(\cdot,\cdot),\varphi(\text{virt},\cdot)$ are the same as those defined in \cref{s:kernel}, and we define 
\begin{align*}
    \rho_{i}(\cdot |m) :\{x_{j}^{(m)}\}_{j\in [[m]]}\to\mathbb{C}, & & i\in [[m]],
\end{align*} by 
\begin{align*}\rho_{i}(x_{j}^{(m)}|m) := [h(x^{(m)})^{-T}]_{ij} \text{ for all $i,j\in[[m]]$.}
\end{align*} We likewise define the $\star$-convolution operation for a function $f:\mathbb{R}^{2}\to\mathbb{R}$ and $h:S\to\mathbb{R}$ for $S\subset\mathbb{R}$, $|S|\in\mathbb{N}$, as
\begin{align*}
    (f\star h)(x) & = \sum_{y\in S}f(x,y)h(y).
\end{align*}
We will use the expression 
\begin{align*}
    \rho_{i}(x_{j}^{(m)}|m)  & = \frac{1}{\prod_{r\neq j}(x_{j}^{(m)}-x_{r}^{(m)})}\sum_{l=1}^{m} (-1)^{l-1}e_{l-1}(x^{(m)}\setminus x_{j}^{(m)}) \frac{(m-l)!2^{-\frac{i-l}{2}}}{((i-l)/2)!(m-i)!}\mathbf{1}_{i-l\text{ is even}, i\geq l}.
\end{align*}
We again change to use the functions $\widetilde{\varphi}(\cdot,\cdot)$ for the sake of eliminating divergences. We find 
\begin{align*}
    G_{ij} & = \widetilde{\varphi}\star\widetilde{\varphi}^{(i,m)}(\text{virt},y) \star \rho_{j}(y|m) = \frac{1}{\sqrt{2\pi}(m-i)!}\sum_{k=1}^{m} h_{m-i}(x_{k}^{(m)})[(h^{(m)})^{-T}]_{jk} 
    = \frac{1}{\sqrt{2\pi}(m-i)!}\mathbf{1}_{ij}.
\end{align*}This Gram matrix is invertible, and $G^{-t}_{ij}=\sqrt{2\pi}(m-i)!\mathbf{1}_{ij}.$ Furthermore, we note the following transformation, 
\begin{align*}
    \widetilde{\rho}_{i}(x_{1}|n_{1}) & = \sum_{j=1}^{m} \widetilde{\varphi}^{(n_{1},m)}(x_{1},x_{j}^{(m)})\rho_{i}(x_{j}^{(m)}|m) 
    \\ & = \sum_{j=1}^{m}\varphi^{(n_{1},m)}(x_{1},x_{j}^{(m)})\rho_{i}(x_{j}^{(m)}|m)  - \mathbf{1}_{n_{1}<m}\sum_{j=1}^{m}\sum_{k=n_{1}+1}^{m}\frac{h_{m-k}(x_{j}^{(m)})\psi_{k}(x_{1}|n_{1})}{\sqrt{2\pi}(m-k)!}\rho_{i}(x_{j}^{(m)}|m) 
    \\ & = \rho_{i}(x_{1}|n_{1})  - \mathbf{1}_{n_{1}<m}\sum_{j=1}^{m}\sum_{k=n_{1}+1}^{m}\frac{h_{m-k}(x_{j}^{(m)})\psi_{k}(x_{1}|n_{1})}{\sqrt{2\pi}(m-k)!}\rho_{i}(x_{j}^{(m)}|m).
\end{align*}
We can apply the standard version of the rising Eynard-Mehta theorem \cite[Theorem 4.4]{Bor11} to see that 
\begin{align*}
    \widehat{K}^{\textsc{GUE}}_{x^{(m)}}(n_{1},x_{1};n_{2},x_{2}) & = -\widetilde{\varphi}^{(n_{1},n_{2})}(x_{1},x_{2}) + \sum_{i=1}^{n_{2}}\sum_{j=1}^{m} [G^{-t}]_{ij}\left(\widetilde{\varphi}\star \widetilde{\varphi}^{(i,n_{2})}\right)(\text{virt},x_{2})\cdot \widetilde{\rho}_{j}(x_{1}|n_{1}). 
\end{align*}
Noting that 
\begin{align*}
    \widetilde{\varphi}\star\widetilde{\varphi}^{(i,n_{2})}(\text{virt},x_{2}) & = \frac{h_{n_{2}-i}(x_{2})}{\sqrt{2\pi}(n_{2}-i)!},
    \\  -\widetilde{\varphi}^{(n_{1},n_{2})}(x_{1},x_{2}) & = -\varphi^{(n_{1},n_{2})}(x_{1},x_{2}) + \mathbf{1}_{n_{1}<n_{2}}\sum_{k=n_{1}+1}^{n_{2}}\frac{h_{n_{2}-k}(x_{2})\psi_{k}(x_{1}|n_{1})}{\sqrt{2\pi}(n_{2}-k)!},
\end{align*}
the kernel can be expressed as
\begin{align*}
    \widehat{K}^{\textsc{GUE}}_{x^{(m)}}(n_{1},x_{1};n_{2},x_{2}) & = -\varphi^{(n_{1},n_{2})}(x_{1},x_{2}) + \mathbf{1}_{n_{1}<n_{2}}\sum_{k=n_{1}+1}^{n_{2}}\frac{h_{n_{2}-k}(x_{2})\psi_{k}(x_{1}|n_{1})}{\sqrt{2\pi}(n_{2}-k)!}
    \\ & + \sum_{i=1}^{n_{2}}\sqrt{2\pi}(m-i)!\frac{h_{n_{2}-i}(x_{2})}{\sqrt{2\pi}(n_{2}-i)!}\sum_{l=1}^{m}\varphi^{(n_{1},m)}(x_{1},x_{l}^{(m)})\rho_{i}(x_{l}^{(m)}|m) 
    \\ & - \mathbf{1}_{n_{1}<n_{2}}\sum_{i=1}^{n_{2}}\sqrt{2\pi}(m-i)!\frac{h_{n_{2}-i}(x_{2})}{\sqrt{2\pi}(n_{2}-i)!} \sum_{l=1}^{m}\sum_{k=n_{1}+1}^{m}\frac{h_{m-k}(x_{l}^{(m)})\psi_{k}(x_{1}|n_{1})}{\sqrt{2\pi}(m-k)!}\rho_{i}(x_{l}^{(m)}|m) .
\end{align*}
We deal first with the first and third sums. In particular, in the third sum, after we take the sum over $l\in[[m]]$, what remains is
\begin{align*}
    - \mathbf{1}_{n_{1}<n_{2}}\sum_{i=n_{1}+1}^{n_{2}}\frac{h_{n_{2}-i}(x_{2})\psi_{i}(x_{1}|n_{1})}{\sqrt{2\pi}(n_{2}-i)!}, 
\end{align*} which exactly cancels with the first sum which appears in the kernel. We must still deal with the remaining terms
\begin{align*}
    \widehat{K}^{\textsc{GUE}}_{x^{(m)}}(n_{1},x_{1};n_{2},x_{2}) & = -\varphi^{(n_{1},n_{2})}(x_{1},x_{2})  + \sum_{i=1}^{n_{2}}\sqrt{2\pi}(m-i)!\frac{h_{n_{2}-i}(x_{2})}{\sqrt{2\pi}(n_{2}-i)!}\sum_{k=1}^{m}\varphi^{(n_{1},m)}(x_{1},x_{k}^{(m)})\rho_{i}(x_{k}^{(m)}|m). 
\end{align*}
We focus on the sum, 
and expand the contour integral expression for $\rho_{i}(x_{l}^{(m)}|m)$ and substitute the expression for $\varphi^{(a,b)}(x,y)$,
\begin{align*}
    & =\sum_{k=1}^{m} \frac{\varphi^{(n_{1},m)}(x_{1},x_{k}^{(m)})}{\prod_{r\neq k}(x_{k}^{(m)}-x_{r}^{(m)})} \sum_{j=1}^{n_{2}}(-1)^{j-1}e_{j-1}(x^{(m)}\setminus x_{k}^{(m)})(m-j)!\sum_{i=1}^{n_{2}} \frac{h_{n_{2}-i}(x_{2})2^{-(i-j)/2}}{((i-j)/2)!(n_{2}-i)!}\mathbf{1}_{i-j\text{ even};i\geq j}
    \\ & =\sum_{k=1}^{m} \frac{\varphi^{(n_{1},m)}(x_{1},x_{k}^{(m)})}{\prod_{r\neq k}(x_{k}^{(m)}-x_{r}^{(m)})} \sum_{j=1}^{n_{2}}(-1)^{j-1}e_{j-1}(x^{(m)}\setminus x_{k}^{(m)})(m-j)!\sum_{i=0}^{\lfloor \frac{n_{2}-j}{2}\rfloor} \frac{h_{n_{2}-j-2i}(x_{2})2^{-i}}{i!(n_{2}-j-2i)!}
    \\ & = \sum_{k=1}^{m} \frac{\varphi^{(n_{1},m)}(x_{1},x_{k}^{(m)})}{\prod_{r\neq k}(x_{k}^{(m)}-x_{r}^{(m)})} \sum_{j=1}^{n_{2}}(-1)^{j-1}e_{j-1}(x^{(m)}\setminus x_{k}^{(m)})\frac{(m-j)!}{(n_{2}-j)!}x_{2}^{n_{2}-j}
    \\ & = \sum_{k=1}^{m} \frac{\varphi^{(n_{1},m)}(x_{1},x_{k}^{(m)})}{\prod_{r\neq k}(x_{k}^{(m)}-x_{r}^{(m)})} \frac{d^{m-n_{2}}}{(dw)^{m-n_{2}}}\prod_{r\neq k}(w-x_{r}^{(m)})\bigg|_{w=x_{2}}
    \\ & =\sum_{k=1}^{m} \frac{\varphi^{(n_{1},m)}(x_{1},x_{k}^{(m)})}{\prod_{r\neq k}(x_{k}^{(m)}-x_{r}^{(m)})}\frac{(m-n_{2})!}{2\pi i} \oint_{c(\infty)}dw\frac{\prod_{r\neq k}(w-x_{r}^{(m)})}{(w-x_{2})^{m-n_{2}+1}}
    \\ & =\sum_{k|x_{1}\leq x_{k}^{(m)}}  \frac{(m-n_{2})!}{(m-n_{1}-1)!}\frac{1}{2\pi i} \oint_{c(\infty)}dw\frac{(x_{k}^{(m)}-x_{1})^{m-n_{1}-1}}{(w-x_{2})^{m-n_{2}+1}}\prod_{r\neq k}\frac{w-x_{r}^{(m)}}{x_{k}^{(m)}-x_{r}^{(m)}}
    \\ & = \frac{(m-n_{2})!}{(m-n_{1}-1)!}\frac{1}{(2\pi i)^{2}} \oint_{c(\infty)}dw\oint_{c(x_{1}\leq x_{k}^{(m)})}dz \frac{(z-x_{1})^{m-n_{1}-1}}{(w-x_{2})^{m-n_{2}+1}}\frac{1}{w-z}\prod_{r=1}^{m}\frac{w-x_{r}^{(m)}}{z-x_{r}^{(m)}}.
\end{align*}
Finally, we change the $w$ contour to $c(x_{2}),$ which cancels with the $-\varphi^{(n_{1},n_{2})}(x_{1},x_{2})$. Thus,
\begin{align*}
    \widehat{K}^{\textsc{GUE}}_{x^{(m)}}(n_{1},x_{1};n_{2},x_{2}) & = \frac{(m-n_{2})!}{(m-n_{1}-1)!}\frac{1}{(2\pi i)^{2}} \oint_{c(x_{1}\leq x_{k}^{(m)})}dz \oint_{c(x_{2})}dw\frac{(z-x_{1})^{m-n_{1}-1}}{(w-x_{2})^{m-n_{2}+1}}\frac{1}{w-z}\prod_{r=1}^{m}\frac{w-x_{r}^{(m)}}{z-x_{r}^{(m)}}.
\end{align*}
\end{proof}
\section{Proof of \cref{p:steepest}} \label{s:steepest}
 
\begin{proof}[Proof of \cref{p:steepest}]
We begin by proving part $(1)$. The quantitative Morse lemma (see, for example, \cite[Theorem B.1]{DGBLR25}) tells us that, under the assumption of the Lipschitz bound on $g_{n}''$, then the size of the neighborhood $U_{n}$ is bounded below by a function which is proportional to $|g''(z_{0}^{(n)})|$. Since this latter expression is uniformly bounded below in our setting, we conclude that we can choose a sequence of $U_{n}$ with radius uniformly bounded below by some fixed $r>0$. Therefore, for all $r'\in (0,r)$ there exists $n_{r}\in\mathbb{N}$ and open set $U$ of radius $r'$ such that $U\subset U_{n}$ and $z_{0}^{(n)}\in U$ for all $n>n_{r}$.

We proceed to prove part $(2)$. We use the set $U$ to separate the integral into four terms
\begin{align*}
    \int_{C_{1}^{(n)}}dz\int_{C_{2}^{(n)}}dw f_{n}(z,w)\frac{e^{n(g_{n}(z)-g_{n}(w))}}{w-z} & = \int_{C_{1}^{(n)}\cap U }dz\int_{C_{2}^{(n)}\cap U}dw f_{n}(z,w)\frac{e^{n(g_{n}(z)-g_{n}(w))}}{w-z} 
    \\ & + \int_{C_{1}^{(n)}\cap U }dz\int_{C_{2}^{(n)}\setminus U}dw f_{n}(z,w)\frac{e^{n(g_{n}(z)-g_{n}(w))}}{w-z} 
    \\ & + \int_{C_{1}^{(n)}\setminus U }dz\int_{C_{2}^{(n)}\cap U }dw f_{n}(z,w)\frac{e^{n(g_{n}(z)-g_{n}(w))}}{w-z} 
    \\ & + \int_{C_{1}^{(n)}\setminus U }dz\int_{C_{2}^{(n)}\setminus U }dw f_{n}(z,w)\frac{e^{n(g_{n}(z)-g_{n}(w))}}{w-z}.
\end{align*} 
 
Using the assumptions of part $(2)$, we conclude that 
\begin{align*}
    \bigg| \int_{C_{1}^{(n)}\cap U }dz\int_{C_{2}^{(n)}\setminus U}dw f_{n}(z,w)\frac{e^{n(g_{n}(z)-g_{n}(w))}}{w-z}\bigg|  &  \leq  L Mc_{4}^{-1}e^{-nc_{3}},
    \\   \bigg|\int_{C_{1}^{(n)}\setminus U }dz\int_{C_{2}^{(n)}\cap U }dw f_{n}(z,w)\frac{e^{n(g_{n}(z)-g_{n}(w))}}{w-z} \bigg| & \leq LMc_{4}^{-1}e^{-nc_{3}},
    \\  \bigg| \int_{C_{1}^{(n)}\setminus U }dz\int_{C_{2}^{(n)}\setminus U }dw f_{n}(z,w)\frac{e^{n(g_{n}(z)-g_{n}(w))}}{w-z}\bigg| &  \leq  L^{2}Mc_{4}^{-1}e^{-2nc_{3}} . 
\end{align*}
We now appeal to the definition 
\begin{align*}
    \int_{C_{1}^{(n)}\cap U }dz\int_{C_{2}^{(n)}\cap U}dw f_{n}(z,w)\frac{e^{n(g_{n}(z)-g_{n}(w))}}{w-z} & = \lim_{\varepsilon\to 0}\int_{C_{1}^{(n)}\cap U \setminus B_{\varepsilon,n}}dz\int_{C_{2}^{(n)}\cap U\setminus B_{\varepsilon,n}}dw f_{n}(z,w)\frac{e^{n(g_{n}(z)-g_{n}(w))}}{w-z}, 
\end{align*} where $B_{\varepsilon,n}$ is a ball of radius $\varepsilon>0$ around $z_{0}^{(n)}$, 

There exists $\delta>0$ such that
    \begin{multline*}
    \lim_{\varepsilon\to 0}\bigg|\int_{C_{1}^{(n)}\cap U \setminus B_{\varepsilon,n}}dz\int_{C_{2}^{(n)}\cap U\setminus B_{\varepsilon,n}}dw f_{n}(z,w)\frac{e^{n(\zeta_{n}(z)^{2}-\zeta_{n}(w)^{2})/2}}{w-z}\bigg| 
    \\ \leq  \lim_{\varepsilon\to 0}M\int_{(-\delta,\delta)  \setminus \zeta_{n}(B_{\varepsilon,n})}dt\int_{(-\delta,\delta)\setminus \zeta_{n}(B_{\varepsilon,n})}ds e^{-n(t^{2}+s^{2})/2}\bigg|\frac{(\zeta^{-1}_{n})'(it)(\zeta^{-1}_{n})'(s)}{\zeta_{n}^{-1}(s)-\zeta_{n}^{-1}(it)}\bigg| 
\end{multline*}
Using the assumptions that there exists $c_{1}>0$ such that $\inf_{n}|g_{n}''(z_{0}^{(n)})|>c_{1}$ and that there exist $c_{2}>0$ and for all $n>n_{0}$ and $\sup_{n}\sup_{|z-z_{0}^{(n)}|<r}|g_{n}'''(z)|<c_{2}$, there exist $C,C',\varepsilon_{0}>0$ such that for all $n>n_{0}$ and $\varepsilon\in (0,\varepsilon_{0}]$,
\begin{multline*}
    \lim_{\varepsilon\to 0}M\int_{(-\delta,\delta)  \setminus \zeta_{n}(B_{\varepsilon,n})}dt\int_{(-\delta,\delta)\setminus \zeta_{n}(B_{\varepsilon,n})}ds e^{-n(t^{2}+s^{2})/2}\bigg|\frac{(\zeta^{-1}_{n})'(it)(\zeta^{-1}_{n})'(s)}{\zeta_{n}^{-1}(s)-\zeta_{n}^{-1}(it)}\bigg| 
    \\ \leq \lim_{\varepsilon\to 0}C\cdot M\int_{(-\delta,\delta)  \setminus \zeta_{n}(B_{\varepsilon,n})}dt\int_{(-\delta,\delta)\setminus \zeta_{n}(B_{\varepsilon,n})}ds \frac{e^{-n(t^{2}+s^{2})/2}}{\sqrt{s^{2}+t^{2}}} \leq C'\cdot Mn^{-1/2}.
\end{multline*}
Thus, there exists $c>0$ such that the entire integral is bounded by $cML^{2}c_{4}^{-1}n^{-1/2}.$
The proof of \cref{r:steepest} is identical. 
\end{proof}

\section{Generalization of \cref{t:convrate}}\label{a:fullgen}
For the purpose of this section, we set $T(m)$ to be a sequence of integers such that $T(m)\ll m$ and $T(m)\to\infty$ as $m\to\infty$. As before, the notation  $x^{(m)}$ will denote an element of $\mathrm{Conf}_{m}^{0}(\mathbb{R})$.
\begin{assumption}[Local Density]\label{a:ld} We say $x^{(m)}\in\mathrm{Conf}_{m}^{0}(\mathbb{R})$ satisfies the \textit{local density assumption} if there exist scales $D(m),Q(m)$ satisfying $ D(m)\ll T(m)\ll Q(m)\ll m$; and $\rho_{*},\rho^{*}>0$ satisfying $\rho_{*}\leq \rho^{*}<\infty$ such that 
\begin{enumerate}
    \item For any interval $I\subset (X-Q(m)/m,X+Q(m)/m)$ of length $D(m)/m$, $$N_{I}(x^{(m)}/\sqrt{m})\in [\rho_{*}D(m),\rho^{*}D(m)].$$ 
    \item For any interval $I$ of length $Q(m)/m$, $N_{I}(x^{(m)}/\sqrt{m})\leq \rho^{*} Q(m)$. 
\end{enumerate} 
\end{assumption}

\begin{assumption}[Intermediate Scale]\label{a:is} We say $x^{(m)}\in\mathrm{Conf}_{m}^{0}(\mathbb{R})$ satisfies the \textit{intermediate scale assumption} if for all $X\in(-2,2)$ and $R,\delta>0$ there exist $A_{R,\delta}:=A_{R,\delta}(X)>0$ such that
\begin{align*}\bigg|\frac{1}{m}\sum_{RT/m\le|X-x_{r}^{(m)}/\sqrt{m}|\le \delta }\frac{1}{X - x_{r}^{(m)}/\sqrt{m}}\bigg|\leq A_{R,\delta}.
 \end{align*} 
\end{assumption} 
It is a straightforward consequence of the local semicircle law and eigenvalue rigidity that \cref{a:ld} and \cref{a:is} are satisfied for eigenvalues $x^{(m)}$ of Wigner matrices satisfying \cref{d:wigner} on a high probability event in $\mathbb{P}_{m}^{W}$ for all $\log^{2(4+\varepsilon)/\varepsilon}{(m)}\ll T(m)\ll m$, where $\varepsilon>0$ is as in \cref{d:wigner}.

In the case that the starting configuration $x^{(m)}$ cannot be assumed to satisfy a rigidity condition like that in \cref{thm:rigid}, we can still obtain a result like \cref{t:convrate} using \cref{a:ld} and \cref{a:is}. First we state the following result, analogous to \cite[Proposition 2.6]{GP19}. 
\begin{proposition}\label{p:unique2}
    Under \cref{a:ld} and \cref{a:is} there exists $m_{0}\in\mathbb{N}$ such that for all $m>m_{0}$, $S'_{m}(z)=0$, \eqref{e:sderiv}, has a unique root $z_{0}^{(m)}$ in the upper half-plane. Moreover, there exists a compact set $K$ such that $z_{0}^{(m)}\in K$ for all $m>m_{0}$. 
\end{proposition}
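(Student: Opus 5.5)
The plan is to split the statement into two parts, as in the proof of \cref{l:unique}. The "at most one root" half comes from the real-line counting argument of \cref{l:maximaginary}, and the "exists and stays bounded" half comes from a Hurwitz-type comparison with a limiting function.

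\textbf{Step 1 (at most one root).} \cref{l:maximaginary} only uses that the $x^{(m)}_r$ are distinct and that, in macroscopic coordinates, the equation $S_m'(z)=0$ is equivalent to $z=f_X(z)$ with $m$ positive poles and one negative pole. It uses no rigidity at all, so it applies verbatim for $x^{(m)}\in\mathrm{Conf}^0_m(\mathbb{R})$. Hence at most one conjugate pair of non-real zeros exists, and at most one of them lies in $\mathbb{H}$.

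\textbf{Step 2 (a limiting profile).} Without rigidity we cannot identify the limit of $S_m'$ with $S_*'$. I would therefore argue by compactness.

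Write
\[
\frac1T\sum_{r}\frac{1}{z-u_r^{(m)}} = G_{R,m}(z)+d_m(R)
\]
as in \cref{l:sconv}. By \cref{a:ld}(1), the rescaled empirical measure $\mu_m$ has density bounded between $\rho_*$ and $\rho^*$ on windows of size $D/T\to0$, within distance $Q/T\to\infty$ of the origin. Together with \cref{a:ld}(2) on larger scales, this gives the following.
\begin{itemize}
\item $\mu_m$ is tight locally, and every subsequential vague limit is absolutely continuous with density in $[\rho_*,\rho^*]$ on all of $\mathbb{R}$.
\item $G_{R,m}$ is uniformly bounded on compact subsets of $\mathbb{H}$. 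The tail $|u|\ge Q/T$ contributes $O(T/Q)$ after the $1/u$ compensation, using \cref{a:ld}(2) summed dyadically.
\end{itemize}

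\cref{a:is}, together with the local density for the range $|u|\le \delta m/T$, bounds $d_m(R)$. The contribution from $|X-x_r/\sqrt m|>\delta$ is $O(\delta^{-1})$ trivially.

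Thus along subsequences $d_m(R)\to d$ and $G_{R,m}\to G$ locally uniformly on $\mathbb{H}$, where $G$ is the compensated Stieltjes transform of a measure with density $\rho\in[\rho_*,\rho^*]$. The Montel theorem gives the convergence of $G_{R,m}$. The limit is
\[
S_\infty'(z)=X - d + \frac1z - G(z),
\]
since the term $Tz/m$ vanishes.

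\textbf{Step 3 (existence and compactness).} This is the main obstacle: showing that every limit $S_\infty'$ has a zero in a fixed compact $K\subset\mathbb{H}$, with constants uniform over subsequences.

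I would use the argument principle on the imaginary part. Write $z=x+iy$. The term $-\Im G(x+iy)$ equals $\pi$ times the Poisson average of $\rho$, which lies in $[\pi\rho_*,\pi\rho^*]$. The term $\Im(1/z)=-y/(x^2+y^2)$ dominates near $0$. Consequently:
\begin{itemize}
\item for $y$ large or $|x|$ large, $\Im S_\infty'>0$ uniformly;
\item near $z=0$, $S'_\infty\approx 1/z$.
\end{itemize}
I would then compute the winding number of $S_\infty'$ around a large rectangle in $\mathbb{H}$ with small bottom height $\eta$ and a small semicircular indent at $0$. On the bottom edge the real part of $S'_\infty$ is controlled by the Hilbert transform, which may be large but is real. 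I plan to follow \cite[Proposition 2.6]{GP19}, which handles exactly this by viewing $S'_\infty$ as the Stieltjes transform of the measure $\rho(u)du-\delta_0$ plus a constant, and counting sign changes of the imaginary part along the boundary. The expected winding number is $1$.

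By Hurwitz, $S_m'$ then has a zero in $K$ for $m$ large along the subsequence. Combined with Step 1, this zero is unique in $\mathbb{H}$. A contradiction argument over subsequences then gives a single $m_0$ and a single $K$. The delicate part is making $K$ uniform, i.e. showing that $\Im z_0$ is bounded below. For this I would use that the bound $|S_\infty'(z)-1/z|\le C(1+\log(1/|z|))$ near $0$ precludes zeros in a small disc, and that $\Im S'_\infty\ge \pi\rho_*-y/(x^2+y^2)>0$ away from a bounded set, with $C$ depending only on $\rho_*,\rho^*,A_{R,\delta}$.
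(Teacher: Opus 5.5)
Your proposal is correct in outline, but for the existence half it takes a different route from the paper.

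\textbf{The paper's route.} Following \cite[Section 5]{GP19}, the paper stays at finite $m$. \cref{l:im1} gives the sign of $\Im S_m'$ on $\partial\mathcal{R}(\kappa_1,\kappa_2)$. On the bottom edge it is negative for $|x|<a_m(\kappa_2)$ and positive for $|x|>b_m(\kappa_2)$, with $a_m,b_m\asymp\kappa_2^{1/2}$; on the large arc it is positive. \cref{l:curve} then builds a curve $\gamma^{(m)}\subset\mathcal{R}(\kappa_1,\kappa_2)$ on which $\Im S_m'=0$, joining the two transition windows. \cref{l:re} shows that $\Re S_m'$ has opposite signs at the two ends of $\gamma^{(m)}$. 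The intermediate value theorem then gives the zero, and $K=\overline{\mathcal{R}(\kappa_1,\kappa_2)}$ comes for free.

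\textbf{Your route.} You instead pass to subsequential limits $S'_\infty$, count zeros there by the argument principle, and return to $S_m'$ via Hurwitz and a contradiction argument. The limit gives you the exact identity $\Im S'_\infty(x+iy)=\pi\bigl(P_y*(\rho-\delta_0)\bigr)(x)$, with no discretization error at scale $D/T$. The winding-number count also spares you the topological construction of $\gamma^{(m)}$. The price is the compactness machinery of your Step 2, a non-effective $m_0$, and a separate argument that $\Im z_0^{(m)}$ stays bounded below.

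The sign information your winding count needs is exactly the content of \cref{l:im1} and \cref{l:re}. On the bottom edge at height $\eta$, $\Im S'_\infty$ can vanish only where $\eta/(x^2+\eta^2)\in[\pi\rho_*,\pi\rho^*]$, i.e.\ where $|x|\asymp\eta^{1/2}$. There $x/(x^2+\eta^2)\asymp\pm\eta^{-1/2}$ dominates both the $O(\log(1/\eta))$ conjugate-Poisson term and the bounded constant $X-d$. So $\Re S'_\infty$ has the sign of $x$ at every such point, and the winding number is $1$. The same count could therefore be run directly at finite $m$, without taking limits.

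\textbf{Points to tidy.}

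\emph{The bound near the origin.} The correct estimate is $|S'_\infty(z)-1/z|\le C(1+\log(1/\Im z))$, not $C(1+\log(1/|z|))$. For merely bounded $\rho$, the conjugate Poisson integral can grow like $\log(1/y)$ at points $x\neq 0$. The estimate reduces to your bound only on the disc $\{\Im z\ge\pi\rho_*|z|^2\}$. That disc contains every zero, because $\Im S'_\infty(z)\ge\pi\rho_*-\Im z/|z|^2$. State this restriction explicitly; with it, your exclusion of zeros from a small disc at $0$ goes through.

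\emph{The contour.} Use the rectangle with bottom edge at height $\eta>0$ and drop the indent at $0$. The pole at $0$ is not enclosed anyway. On $\mathbb{R}$ itself, $\Re S'_\infty$ need not have continuous or bounded boundary values, so a contour along the real axis would cause trouble.

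\emph{Controlling $d_m(R)$.} \cref{a:is} together with the trivial $\delta^{-1}$ bound already controls $d_m(R)$. The local density assumption is not needed for that step.
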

There is also an analogous statement to \cite[Theorem 2.7]{GP19}.
\begin{theorem}\label{t:27}
There exists $m_{0}\in\mathbb{N}$ and $r(n)=o(1)$ such that for all $K\subset\mathbb{N}\times \mathbb{R}$ compact, there exists $C:=C(K)>0$ such that for all $m>m_{0}$, and for all $(n_{1},x_{1}),(n_{2},x_{2})$ and $x^{(m)}$ satisfying \cref{a:ld} and \cref{a:is},
\begin{align*}
    \bigg|\frac{1}{\sqrt{m}}\widetilde{K}_{x^{(m)}}^{\textsc{GUE}}\left(n_{1}+T,X\sqrt{m}+\frac{x_{1}}{\sqrt{m}};n_{2}+T,X\sqrt{m}+\frac{x_{2}}{\sqrt{m}}\right) -K^{\text{sine}}_{-1/z_{0}^{(m)}}(n_{1},x_{1};n_{2},x_{2}) \bigg| \leq C\cdot r(n),
\end{align*} where $z_{0}^{(m)}$ is the root from \cref{p:unique2}.
\end{theorem}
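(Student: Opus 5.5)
The plan is to repeat the steepest descent proof of \cref{t:convrate}, with the rigidity input \cref{t:local} replaced everywhere by \cref{a:ld} and \cref{a:is}. The starting point is again the rescaled representation \eqref{e:presteepkernel} with the phase $S_{m}$ and the prefactor $R_{m}$. Since no deterministic limit $S_{*}$ is available now, we do not compare $S_{m}$ with a single limiting function. Instead we compare $S_{m}$, along subsequences, with a family of limit phases of the form
\begin{align*}
S_{\infty}(z)=c_{\infty}z+\log_{\mathbb{H}}(z)-\int\log_{\mathbb{H}}(z-u)\,\mu_{\infty}(du).
\end{align*}
Here $\mu_{\infty}$ is a vague subsequential limit of the rescaled empirical measures $\mu_{m}$. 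By \cref{a:ld}(1), any such limit has density between $\rho_{*}$ and $\rho^{*}$ on compacts. The constant $c_{\infty}$ is a subsequential limit of $X+d_{m}(R)$ after the near field has been truncated. \cref{a:is}, together with \cref{a:ld}(2) for the far field at scales beyond $Q(m)/m$, keeps these quantities bounded, which gives precompactness.

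\textbf{Step 1: critical point.} Following \cite[Proposition 2.6]{GP19}, we prove \cref{p:unique2}. The counting argument of \cref{l:maximaginary} is purely algebraic and uses only simple spectrum, so it still shows there is at most one root in $\mathbb{H}$. For existence and confinement to a compact set $K$, we argue by contradiction using compactness. Along any subsequence, $S_{m}'\to S_{\infty}'$ locally uniformly on $\mathbb{H}$. The function $S_{\infty}'(z)=c_{\infty}+1/z-\int(z-u)^{-1}\mu_{\infty}(du)$ has a unique nondegenerate zero in $\mathbb{H}$. This is because the imaginary part of the Stieltjes term is bounded below by roughly $\pi\rho_{*}$, so $1/z$ must balance it at bounded height. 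Hurwitz's theorem then transfers this zero to $S_{m}'$, uniformly in the subsequence.

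\textbf{Step 2: contours.} Next we redo \cref{l:conditions1}, \cref{l:lengthbound}, \cref{l:conditions3} and \cref{l:6162} with $S_{*}$ replaced by the subsequential $S_{\infty}$. Uniform bounds on $|S_{m}''(z_{0}^{(m)})|$ from below and on $|S_{m}'''|$ from above again follow by compactness. The boundary estimates of \cref{l:6162} also carry over. Part (2) there used only interval counting, which \cref{a:ld}(1) supplies with $\rho_{*}$ in place of $\rho_{sc}(X)$. Part (3) needs $\partial_{x}\Im S_{m}\ge \pi\rho_{*}/2$ at heights $y\ge\kappa^{-1}$. The near-real-axis bounds \eqref{e:bd1}--\eqref{e:bd3} are replaced by versions using counts on intervals of length $D(m)/T$ in $u$-coordinates. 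In those versions, $\log^{2}(m)$ becomes $D(m)$ and the error $e_{m}$ becomes some $o(1)$ quantity $e_{m}'$ built from $D/T$, $T/Q$ and $T/m$.

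\textbf{Step 3: conclusion.} With these pieces in place, \cref{p:steepest} and \cref{l:errortermscross} kill the double integral over the steepest descent contours. The residue term is handled exactly as in the proof of \cref{t:convrate}, Cases 1 and 2. The only change is that the endpoints are now $z_{0}^{(m)}$ and its conjugate, which is why the limit is $K^{\text{sine}}_{-1/z_{0}^{(m)}}$ with $z_{0}^{(m)}$ itself rather than its limit. Since $z_{0}^{(m)}$ lies in the compact set $K$, the Stirling/dominated convergence error there is uniform. The rate $r(n)$ is taken to be $T^{-1/2}+\kappa^{-1}e_{m}'$ plus the exponentially small terms, with $\kappa\to 0$ chosen so that $\kappa^{-1}e_{m}'\to 0$.

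\textbf{Main obstacle.} The hardest part will be the quantitative control near the real axis, namely the analogues of \eqref{e:bd1}--\eqref{e:bd2} and \cref{l:errortermscross}. Only mesoscopic density bounds, not rigidity, are available, so the bound on $\prod|(x+i\kappa-u_{r})/(x+iy-u_{r})|$ becomes $(\kappa/|y|)^{CD(m)}e^{CT\kappa}$. This factor must still be beaten by $e^{-dT}$, which forces $D(m)\log(1/\kappa)\ll T$. My first attempt will be to take $\kappa$ tending to $0$ slowly enough that this holds while $\kappa^{-1}e_{m}'\to 0$. Because we do not track constants beyond $o(1)$, this should suffice.
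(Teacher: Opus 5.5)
Your overall architecture matches the paper's outline for this theorem. Uniqueness of the critical point comes from the algebraic count in \cref{l:maximaginary}. Subsequence/compactness arguments (as in \cite[Lemmas 6.5, 6.8]{GP19}) give the uniform bounds on $S_m''$, $S_m'''$ and the contour lengths. Then \cref{p:steepest}, \cref{l:errortermscross} and the residue term finish. You depart from the paper on the \emph{existence} part of \cref{p:unique2}, and there the argument has a gap.

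You reduce existence for $S_m'$ to existence for a subsequential limit $S_\infty'$ via Hurwitz. The reason you give for $S_\infty'$ having a zero in $\mathbb{H}$ does not produce one. The lower bound $\int y((x-u)^2+y^2)^{-1}\mu_\infty(du)\ge\pi\rho_*$ only shows that every zero lies in the disc $x^2+y^2\le y/(\pi\rho_*)$, which is tangent to $\mathbb{R}$ at $0$. So it neither gives a zero nor keeps zeros away from the real axis. Hurwitz says nothing about zeros of $S_m'$ escaping to $\mathbb{R}$ when $S_\infty'$ has no zero in $\mathbb{H}$. Uniqueness and nondegeneracy of a zero of $S_\infty'$ do follow from the count for $S_m'$ plus Hurwitz, but only once existence is known. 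Existence is therefore the real content of \cref{p:unique2}, and compactness does not remove it; it only moves it to $S_\infty'$.

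The missing ingredient is the sign analysis the paper takes from \cite[Section 5]{GP19} (\cref{l:im1}, \cref{l:curve}, \cref{l:re}). On the line $y=\kappa$,
\[
\Im S'(x+i\kappa)=-\kappa/(x^2+\kappa^2)+P_\kappa\mu(x),\qquad P_\kappa\mu(x)=\int\kappa((x-u)^2+\kappa^2)^{-1}\mu(du)\in[\pi\rho_*,\pi\rho^*]
\]
(up to $o(1)$). Hence $\Im S'<0$ for $|x|<a$ and $\Im S'>0$ for $|x|>b$, with $a,b\asymp\kappa^{1/2}$. On $a\le|x|\le b$ one has $\Re S'=x/(x^2+\kappa^2)+O(\log(1/\kappa))$, using \cref{a:is} for the far field and the bounded density for the near field; this has the sign of $x$. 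Also $\Im S'>0$ on the rest of the boundary of a region such as $\mathcal{R}(\kappa_1,\kappa_2)$. The paper turns this into a curve $\gamma^{(m)}$ with $\Im S_m'=0$, joining the two windows, along which $\Re S_m'$ changes sign. Equivalently, the winding number of $S'$ around $0$ along that boundary is $1$, which gives existence, uniqueness and nondegeneracy at once, uniformly over admissible configurations. You can do this for $S_m'$ directly, so Hurwitz is not needed for existence, or for $S_\infty'$ to repair your route.

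Two smaller points. Without a limiting profile there is no quantity $e_m'$ measuring the distance of $S_m$ from anything, so compactness yields only an unquantified $o(1)$ rate; that is all the statement requires. Also, $\Re z_0^{(m)}$ need not converge, so the Case 1/Case 2 split in the residue term must be replaced by a uniform deformation of the segment $[\overline{z_0^{(m)}},z_0^{(m)}]$ away from a fixed neighbourhood of $0$.
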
 We also have the following result, analogous to \cite[Theorem 2.10]{GP19}, with additional conditions which ensure that the sequence of $z_{0}^{(m)}$ converge as $m\to\infty$.  
\begin{theorem}\label{t:210}
    Suppose that for a sequence $\{x^{(m)}\}_{m\in\mathbb{N}}$, \cref{a:ld} and \cref{a:is} hold at each step, and further assume that there exists a $\sigma$-finite measure $\mu_{\text{loc}}$ such that the following vague limit holds, 
    \begin{align}\label{e:weak}
        \lim_{m\to\infty}\frac{1}{T}\sum_{r=1}^{m}\delta_{(x_{r}^{(m)}/\sqrt{m}-X)/(T/m)} = \mu_{\text{loc}},
    \end{align} and such that for all $R>0,$ there exists a limit $d(R)$ such that 
    \begin{align}\label{e:dlimit}
        \lim_{m\to\infty}\frac{1}{m}\sum_{|x_{r}^{(m)}/\sqrt{m}-X|\geq RT/m}\frac{1}{X-x_{r}^{(m)}/\sqrt{m}} = d(R).
    \end{align}
    Then, for all $n_{1},n_{2}>0,x_{1},x_{2}\in\mathbb{R},$
    \begin{align*}
        \lim_{m\to\infty} \frac{1}{\sqrt{m}}\widetilde{K}_{x^{(m)}}^{\textsc{GUE}}\left(n_{1}+T,X\sqrt{m}+\frac{x_{1}}{\sqrt{m}};n_{2}+T,X\sqrt{m}+\frac{x_{2}}{\sqrt{m}}\right) = K^{\text{sine}}_{-1/z_{0}}(n_{1},x_{1};n_{2},x_{2}),
    \end{align*} 
    where $z_{0}$ is the unique root in the upper-half plane of the equation
    \begin{align*}
       X+\frac{1}{z} -d(R) -\int_{\mathbb{R}}\left(\frac{1}{z-u}+\frac{\mathbf{1}_{|u|>R}}{u}\right)\mu_{\text{loc}}(du)=0.
   \end{align*}
\end{theorem}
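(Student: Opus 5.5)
The plan is to run the argument of \cref{t:convrate} again, but to stop at the point where rigidity enters and feed in the hypotheses \eqref{e:weak} and \eqref{e:dlimit} instead. Since those hypotheses are qualitative, the outcome is a limit with no rate, which is all \cref{t:210} claims. Start from the rescaled kernel \eqref{e:presteepkernel}. Write $S_m'(z)=X+\frac1z+\frac{T}{m}z-G_{R,m}(z)-d_m(R)$ with the same split used in the proof of \cref{l:sconv}. The first step is to show that on compact subsets of $\mathbb{H}$ we have $S_m'\to S_\infty'(z):=X+\frac1z-d(R)-\int(\frac{1}{z-u}+\frac{\mathbf{1}_{|u|>R}}{u})\mu_{\text{loc}}(du)$ uniformly. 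Convergence of $d_m(R)$ is exactly \eqref{e:dlimit}, and $\frac{T}{m}z\to0$. For $G_{R,m}$ we use the vague convergence \eqref{e:weak} after truncating with $t_{M,s}$, as in \cref{l:sconv}. The tails $|u|>M$ are handled by \cref{a:ld}(2) and \cref{a:is}: these give $\sum_{|u_r|>M}|(z-u_r)^{-1}+u_r^{-1}|/T\lesssim\sum |z|/u_r^2T$, and this is small uniformly in $m$ once $M$ is large, by dyadic counting with the density bound at scale $Q$ together with the bounded intermediate-scale sum. The same estimates, integrated, give $S_m-C_m\to S_\infty$ locally uniformly. Note that $S_\infty'$ does not depend on $R$, since changing $R$ moves mass between $d(R)$ and the integral.

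The second step is the critical point. \cref{p:unique2} already gives a unique root $z_0^{(m)}$ of $S_m'$ in $\mathbb{H}$, lying in a fixed compact $K\subset\mathbb{H}$. By Hurwitz, every subsequential limit of $z_0^{(m)}$ is a zero of $S_\infty'$. Conversely, every zero of $S_\infty'$ in $\mathbb{H}$ is a limit of zeros of $S_m'$, and multiplicity is preserved, so $S_\infty'$ has exactly one zero $z_0\in\mathbb{H}$, it is simple, and $z_0^{(m)}\to z_0$. This settles the uniqueness claim in the statement as well. As in \cref{l:conditions1}, $|S_m''|$ is then bounded below near $z_0$ and $|S_m'''|$ is bounded above there. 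These are the hypotheses of \cref{p:steepest}(1).

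The third step is to reuse the contour machinery of Sections \ref{s:contours}--\ref{s:crossterms} with $S_*$ replaced by $S_\infty$. Inside $\mathcal R(c,\kappa)$ I would fix $\kappa$ first, let $m\to\infty$, and only afterwards send $\kappa\to0$. With $\kappa$ fixed, the uniform convergence of step one gives the analogues of \cref{l:lengthbound}, \cref{r:extra}, \cref{r:extra2} and \cref{l:conditions3}. For \cref{l:6162}(3) and the bound on the vertical sides, we need $\frac{\partial}{\partial x}\Im S_\infty>0$ at large $|x|$ or large $y$; this should follow from $\Im$ of the Stieltjes-type term being bounded below by $\pi\rho_*$, which comes from \cref{a:ld}(1). \cref{p:steepest}(2) then makes the steepest-descent double integral $O(T^{-1/2})\to0$.

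The main obstacle is the region near the real axis, that is, the analogue of \cref{l:errortermscross}. Its product bounds \eqref{e:bd1}--\eqref{e:bd3} used rigidity with $\log^2 m$ windows. Here I would redo them with $D(m)$-windows from \cref{a:ld}. The exponent $C\log^2 m$ in \eqref{e:bd1} becomes $C\rho^*D$, and \eqref{e:bd2} keeps a main term of order $T\kappa\rho_*$. Since $D\ll T$, the gain $e^{-dT}$ from \cref{r:extra} still dominates, provided $\kappa$ is chosen with $D/T\ll\kappa\to0$ slowly enough.

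Finally, the residue term along $\gamma_m$ from $\overline{z_0^{(m)}}$ to $z_0^{(m)}$ is treated exactly as in Cases 1 and 2 of the proof of \cref{t:convrate}. The substitution $u=-1/z$ gives $K^{\text{sine}}_{-1/z_0}$. This is consistent with \cref{t:convrate}, since for the semicircle $-1/z_0=X/2+i\pi\rho_{sc}(X)=a(X)$.
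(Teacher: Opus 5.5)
Your proposal follows the same route as the paper, which only sketches this proof: derive a qualitative version of \cref{l:sconv} from \eqref{e:weak} and \eqref{e:dlimit}, then rerun \cref{s:convrate} with small modifications. Holding $\kappa$ fixed, since no rate is available, is a sensible way to carry this out. One small correction: the tail bound on $\frac{1}{T}\sum_{|u_r|>M}|z|/u_r^2$ comes from the upper density bound in \cref{a:ld}(1) on $M\le|u|\le Q/T$, with \cref{a:ld}(2) beyond that range. It does not come from \cref{a:is}, which only controls a signed sum of $1/u$.
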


The proof of \eqref{t:210} is substantially simpler than that of \eqref{t:27}, because the additional assumptions \eqref{e:weak} and \eqref{e:dlimit} allow us to prove a version of \cref{l:sconv}, and from there the argument in \cref{s:convrate} follows with only small modifications. In the statement of this theorem, we have omitted the rate, since it will depend on the rates of convergence in \eqref{e:weak} and \eqref{e:dlimit}. In the case of \cref{l:sconv}, the rate is determined by the bounds in statement of the eigenvalue rigidity in \cref{thm:rigid}. 

We discuss briefly the additional work needed to obtain a result like \cref{t:27}. The main challenge is just that it is more involved to verify the conditions for \cref{p:steepest} under these weaker assumptions. The argument after that point is identical, so we must focus on how to modify the results in \cref{s:zeros} and \cref{s:contours}. 

To show \cref{p:unique2}, we would replicate the approach of Gorin and Petrov \cite[Section 5]{GP19}, with some simplifications. The result showing that there are at most two complex conjugate critical points away from the real line (\cref{l:maximaginary}) would be unchanged. To show the existence of a critical point in the upper half-plane as in \cref{l:unique}, we would define $\mathcal{R}(c,\kappa_{2})$ identically as in \cref{s:convrate},
\begin{align*}
    \mathcal{R}(\kappa_{1},\kappa_{2}) := \{x+iy| |x|< \kappa_{1}^{-1}, \kappa_{2}<y<\kappa_{2}^{-1}\},
\end{align*} and construct a continuous curve $\gamma^{(m)}\subset\mathcal{R}(\kappa_{1},\kappa_{2})$ such that for all $z\in\gamma^{(m)}$, $\Im(S'_{m}(z))=0$. We then show that the real part of $S'_{m}(z)$ must change signs along $\gamma^{(m)}$. 

We outline the key steps of this argument. We start with the imaginary part of $S'_{m}(z)$,
\begin{align*}
    \Im(S_{m}'(x+iy)) & =-\frac{y}{x^{2}+y^{2}} + \frac{T}{m}y + \frac{1}{T} \sum_{r=1}^{m}\frac{y}{(x-u_{r}^{(m)})^{2}+y^{2}}.
\end{align*}
\begin{lemma}\label{l:im1} For all $X\in (-2,2)$, there exist $\kappa_{a},\kappa_{b}>0,m_{0}\in\mathbb{N}$ such that for all $\kappa_{1}\in(0,\kappa_{a}),\kappa_{2}\in(0,\kappa_{b}),m>m_{0}$ and $x^{(m)}$ satisfying \cref{a:ld} and \cref{a:is}, there exist $a_{m}(\kappa_{2}), b_{m}(\kappa_{2}) \in\mathbb{R}_{>0}$ and $C_{1}:=C_{1}(X),C_{2}:=C_{2}(X)>0$ such that  $C_{1}\kappa_{2}^{1/2}<a_{m}(\kappa_{2})<b_{m}(\kappa_{2})<C_{2}\kappa_{2}^{1/2}$ and such that
\begin{enumerate}
    \item $\Im(S_{m}'(x+i\kappa_{2}))>0$ for all  $|x|\leq \kappa_{1}^{-1}$ such that $x\in \mathbb{R}\setminus(-b_{m}(\kappa_{2}),b_{m}(\kappa_{2}))$. 
    \item  $\Im(S_{m}'(x+i\kappa_{2}))<0$ for all $|x|\leq \kappa_{1}^{-1}$ such that $x\in (-a_{m}(\kappa_{2}),a_{m}(\kappa_{2}))$.
    \item For all $(x,y)$ such that $\sqrt{x^{2}+y^2} = \kappa_{2}^{-1}$ and $y\geq \kappa_{2}$, $\Im(S_{m}'(x+iy))>0$. 
\end{enumerate} 
\end{lemma}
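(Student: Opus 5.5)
We analyze $\Im(S_m'(x+iy))$ as the sum of three explicit pieces: the negative term $-y/(x^2+y^2)$ coming from the pole at $0$, the small positive term $Ty/m$, and the Poisson-type sum $P_m(x,y):=\frac{1}{T}\sum_{r}\frac{y}{(x-u_r^{(m)})^2+y^2}$. The plan is to replace $P_m$ by two-sided bounds that come from \cref{a:ld}, and then to compare them with the explicit term $y/(x^2+y^2)$. We work at height $y=\kappa_2$ with $|x|\le\kappa_1^{-1}$. In the rescaled coordinate $u$, intervals of length $D(m)/T\to0$ each contain between $\rho_*D$ and $\rho^*D$ points, and this holds out to distance $Q(m)/T\to\infty$. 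Because $D\ll T$, the measure $\mu_m=\frac1T\sum\delta_{u_r}$ has density between $\rho_*$ and $\rho^*$ at mesoscopic scale on $|u|\le Q/T$. Integrating the Poisson kernel against it therefore gives
\[
\pi\rho_*-\epsilon_m\le P_m(x,\kappa_2)\le \pi\rho^*+\epsilon_m ,
\]
uniformly in $|x|\le\kappa_1^{-1}$. The error $\epsilon_m$ has three sources. The first is the discretization at scale $D/T$, which costs $O(D/(T\kappa_2))$. The second is the tail $|u|>Q/T$, handled with part (2) of \cref{a:ld}, which bounds the count in every window of length $Q/m$ by $\rho^*Q$; summing over dyadic windows gives a contribution $O(\kappa_2 T/Q)$. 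The third is the possible truncation at the spectral edge. For fixed $\kappa_1,\kappa_2$, all three vanish as $m\to\infty$. The lower bound $P_m\ge0$ is trivial, but we need the quantitative version to get sign changes.

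With these bounds in hand, part (1) and part (2) reduce to comparing $\kappa_2/(x^2+\kappa_2^2)$ with constants. For part (2), take $|x|<a_m(\kappa_2)$ with $a_m\approx\sqrt{\kappa_2/(2\pi\rho^*)}$. Then $\kappa_2/(x^2+\kappa_2^2)\ge 2\pi\rho^*(1+o(1))$, which exceeds $P_m+T\kappa_2/m$ once $\kappa_2<\kappa_b$ and $m$ is large, so $\Im S_m'<0$. For part (1), take $|x|\ge b_m(\kappa_2)$ with $b_m\approx\sqrt{2\kappa_2/(\pi\rho_*)}$. Then $\kappa_2/(x^2+\kappa_2^2)\le \pi\rho_*/2$, which is below $P_m$, so $\Im S_m'>0$. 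Both thresholds are of order $\kappa_2^{1/2}$, with constants $C_1,C_2$ depending on $\rho_*,\rho^*$. We then define $a_m$ and $b_m$ precisely as the infimum and supremum of $|x|$ at which the sign is not determined. This gives $C_1\kappa_2^{1/2}<a_m<b_m<C_2\kappa_2^{1/2}$.

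Part (3) is the main obstacle, because on the arc $|z|=\kappa_2^{-1}$ we have $y$ as large as $\kappa_2^{-1}$. There the Poisson kernel is spread on scale $y$, so we need the density bounds of \cref{a:ld} on a window of size comparable to $\kappa_2^{-1}$. That window is fine, since $Q/T\to\infty$. The difficulty is at points of the arc with $y$ close to $\kappa_2$ and $|x|\approx\kappa_2^{-1}$. The condition $|x|\le\kappa_1^{-1}$ is not imposed there, so $x$ can be large. We want $P_m(x,y)\ge c>0$ at such points. The negative term satisfies $y/(x^2+y^2)=y\kappa_2^2\le\kappa_2$, which is small. So it suffices to show that $P_m(x,y)\ge \pi\rho_*/2$ whenever $y\ge\kappa_2$ and $|z|=\kappa_2^{-1}$. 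This follows from the same lower density bound, because $|x|\le\kappa_2^{-1}\ll Q/T$ keeps us inside the region where \cref{a:ld}(1) applies. The only care needed is that the discretization error $D/(Ty)$ is uniformly small for $y\ge\kappa_2$, which holds for $m>m_0(\kappa_2)$. \cref{a:is} is not needed for this lemma, since it only controls the real part. We will note that it enters the later step, where $\Re S_m'$ is shown to change sign along $\gamma^{(m)}$.
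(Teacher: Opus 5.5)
Your proposal is correct and follows the route the paper indicates by reference to \cite[Lemma 5.2]{GP19}. You derive two-sided Poisson-kernel bounds $\pi\rho_*-o(1)\le\frac{1}{T}\sum_{r}\frac{y}{(x-u_r^{(m)})^2+y^2}\le\pi\rho^*+o(1)$ from \cref{a:ld} and compare them against $y/(x^2+y^2)$, which is the same mechanism behind the sign changes of $\Im(S_m'(x+i\kappa))=\partial_x\Im(S_m(x+i\kappa))$ at $|x|\asymp\kappa^{1/2}$ in the proof of \cref{l:conditions3}.

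You should state explicitly that your $m_0$ depends on $\kappa_1,\kappa_2$, through $D/(T\kappa_2)\to0$ and $\kappa_1^{-1},\kappa_2^{-1}\ll Q/T$. This dependence cannot be removed, despite the literal quantifier order in the statement. For fixed $m$ and $\kappa_2\to0$, the term $-\kappa_2/(x^2+\kappa_2^2)$ dominates at $|x|\asymp\kappa_2^{1/2}$ once $\kappa_2^{1/2}$ is below the distance from $0$ to the nearest $u_r^{(m)}$, so part (1) fails. This reading also matches the dependence $m_0(c,\kappa)$ used in \cref{l:6162}.
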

This is proved similarly to \cite[Lemma 5.2]{GP19}, using the assumptions to produce an estimate on the sum which appears in $\Im(S'_{m}(x+iy))$. 
 
We continue by constructing a curve $\gamma^{(m)}\subset\mathcal{R}(\kappa_{1},\kappa_{2})$ along which $\Im(S_{m}'(z))=0$. 

\begin{lemma}\label{l:curve}
    For all $X\in (-2,2)$ there exist $\kappa_{a},\kappa_{b}>0,m_{0}\in\mathbb{N}$ such that for all $\kappa_{1}\in (0,\kappa_{a}),\kappa_{2}\in (0,\kappa_{b}),m>m_{0}$ and $x^{(m)}$ satisfying \cref{a:ld} and \cref{a:is}, there exists a curve $\gamma^{(m)}\subset\mathcal{R}(\kappa_{1},\kappa_{2})$ such that
    \begin{enumerate}
        \item For all $z\in \gamma^{(m)},$ $\Im(S_{m}'(z))=0$, $\Im(z)\in [\kappa_{2},\kappa_{2}^{-1}]$ and $|\Re(z)|\leq \kappa_{1}^{-1}$.
        \item The curve $\gamma^{(m)}$ has an endpoint in each of the sets 
        \begin{align*}
            \{x+i\kappa_{2}|x\in [-b_{m}(\kappa_{2}),-a_{m}(\kappa_{2})]\}, & & \{x+i\kappa_{2}|x\in [a_{m}(\kappa) , b_{m}(\kappa_{2}) ]\}.
        \end{align*} 
    \end{enumerate}
\end{lemma}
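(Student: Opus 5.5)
The plan is to obtain $\gamma^{(m)}$ as a connected component of the real-analytic level set $\{z\in\mathcal{R}(\kappa_1,\kappa_2):\Im(S_m'(z))=0\}$. The idea follows \cite[Lemma 5.3]{GP19}, using the sign information of \cref{l:im1}. Throughout I fix $\kappa_1,\kappa_2$ small and $m>m_0$ as in \cref{l:im1}. I will work in the closed region bounded below by the segment $\{x+i\kappa_2:|x|\le\kappa_1^{-1}\}$ and above by the arc $\{|z|=\kappa_2^{-1},\ \Im z\ge\kappa_2\}$. This region sits inside the strip defining $\mathcal{R}(\kappa_1,\kappa_2)$, after shrinking $\kappa_a$ so that $\kappa_1^{-1}\ge\kappa_2^{-1}$ does not matter; equivalently, I work in the intersection of $\mathcal{R}$ with the disc of radius $\kappa_2^{-1}$. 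The function $h(z):=\Im(S_m'(z))$ is harmonic there, since $S_m'$ is holomorphic off the real axis.

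\textbf{Step 1: sign pattern on the boundary.} First I record the signs of $h$ on the boundary of this region. By \cref{l:im1}(3), $h>0$ on the upper arc. By \cref{l:im1}(1), $h>0$ on the bottom segment outside $(-b_m,b_m)$, and by \cref{l:im1}(2), $h<0$ on $(-a_m,a_m)+i\kappa_2$. The region is simply connected, so this pattern forces the boundary zeros of $h$ to lie only in the two windows $[-b_m,-a_m]+i\kappa_2$ and $[a_m,b_m]+i\kappa_2$.

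\textbf{Step 2: extracting the curve.} Next I take $V$ to be the connected component of $\{h<0\}$ that contains the bottom interval $(-a_m,a_m)+i\kappa_2$. $V$ cannot reach the arc or the outer parts of the bottom side, so its closure meets the boundary only near the bottom. The part of $\partial V$ lying in the interior of the region is contained in $\{h=0\}$. Because $h$ is real-analytic and nonconstant, $\{h=0\}$ is a locally finite union of analytic arcs. Following the boundary of $V$ from the left window to the right window, going around $V$ on the side away from the bottom segment, gives a continuous curve $\gamma^{(m)}$ on which $h\equiv0$. This curve stays in the region, so $\Im z\in[\kappa_2,\kappa_2^{-1}]$ and $|\Re z|\le\kappa_1^{-1}$. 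Its endpoints are where $\partial V$ meets the bottom side, and by Step 1 these lie in $[-b_m,-a_m]+i\kappa_2$ and $[a_m,b_m]+i\kappa_2$. A cleaner alternative is to argue by contradiction via connectivity: if no component of $\{h\le 0\}$ joined the two windows, the component of $\{h<0\}$ containing the negative bottom interval would be bounded by a closed curve. That curve would consist of zeros of $h$ together with a piece of the bottom side where $h<0$, and the maximum principle for $h$ on $V$ then contradicts $h=0$ on the remaining boundary.

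\textbf{Main obstacle.} The hard step is making Step 2 rigorous as a genuine continuous curve rather than merely a connected set. The level set can have singular points where $S_m''=0$, and these produce branchings. I would handle this by noting that such points are isolated: $S_m''$ is holomorphic and not identically zero. At a branch point I would choose any arc that keeps $V$ on the same side, which gives a piecewise analytic curve. Finitely many arcs suffice by compactness and local finiteness. A secondary check is that $V$ cannot touch the sides $|x|=\kappa_1^{-1}$ in a way that ends the curve there; this is excluded because \cref{l:im1}(1) makes $h>0$ near the lower corners and \cref{l:im1}(3) makes $h>0$ near the upper part, so the curve is confined as claimed.
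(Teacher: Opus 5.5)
The paper gives no argument for this lemma beyond deferring to \cite[Section 5]{GP19}. Your strategy is the intended one: read off the boundary signs of $h=\Im S_m'$ from \cref{l:im1}, then extract a zero curve of the harmonic function $h$ joining the two windows. But the central step is not actually proved.

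``Following $\partial V$ from the left window to the right window'' presupposes exactly what must be shown, namely that the outer part of $\partial V$ does not return to the left window. Local finiteness of the level set and isolatedness of the zeros of $S_m''$ only describe the local picture. Your fallback fails on two counts:
\begin{itemize}
\item the component of $\{h\le 0\}$ containing $(-a_m,a_m)+i\kappa_2$ always reaches both windows along the bottom edge, since $h(\pm a_m+i\kappa_2)\le 0$, so the contradiction hypothesis is never in force;
\item the maximum principle gives nothing on $V$, because $h<0$ in $V$, $h=0$ on the interior part of $\partial V$ and $h<0$ on the bottom part are perfectly consistent.
\end{itemize}

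A purely topological replacement works. Let $\bar D$ be a closed rectangle on whose whole boundary you know the sign pattern (see the next paragraph for why your cap does not yet qualify). Let $V$ be the component of $\{h<0\}\cap\bar D$ containing the middle bottom interval; it is relatively open. Let $C$ be the component of $\bar D\setminus\bar V$ containing $\partial D$ minus the bottom interval $(-b_m,b_m)+i\kappa_2$, which is a connected set on which $h>0$. Then:
\begin{itemize}
\item the boundary of $C$ relative to $\bar D$ lies in $\bar V\setminus V\subset\{h=0\}$;
\item it is connected, because a closed disc is a unicoherent locally connected continuum;
\item scanning the bottom edge from each corner towards the middle shows that it meets both $[-b_m,-a_m]+i\kappa_2$ and $[a_m,b_m]+i\kappa_2$.
\end{itemize}
Finally, $\{h=0\}\cap\bar D$ is a compact semianalytic set, hence locally path-connected. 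So its component containing this connected set contains an arc joining the two windows. This is where your local-structure remarks belong, and no maximum principle is needed.

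The choice of region also needs repair. The cap $\{|z|\le\kappa_2^{-1},\ \Im z\ge\kappa_2\}$ lies in $\{|\Re z|\le\kappa_1^{-1}\}$ essentially only when $\kappa_2\ge\kappa_1$. Since $\kappa_2$ ranges over all of $(0,\kappa_b)$ independently of $\kappa_1$, no shrinking of $\kappa_a$ arranges this.

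For $\kappa_2<\kappa_1$, \cref{l:im1} gives no sign for $h$ on the bottom edge where $|x|>\kappa_1^{-1}$. Nor does it give a sign on the vertical sides $|x|=\kappa_1^{-1}$ if you intersect with $\mathcal{R}(\kappa_1,\kappa_2)$. Continuity at the corners controls only a small neighbourhood, and \cref{l:im1}(3) concerns the circle $|z|=\kappa_2^{-1}$, not these sides. So your ``secondary check'' does not confine the zero set, and the boundary sign pattern on which everything rests is incomplete.

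One more estimate, using \cref{a:ld}, fixes this. On the sides and top of $\overline{\mathcal{R}(\kappa_1,\kappa_2)}$ the negative term obeys $y/(x^2+y^2)\le\max\{\kappa_1/2,\kappa_2\}$. Meanwhile $T^{-1}\sum_r y/((x-u_r)^2+y^2)$ is bounded below by a positive constant depending on $\rho_*$ for $m$ large. So $h>0$ there once $\kappa_a,\kappa_b$ are small, and you can run the argument above in the closed rectangle.
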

 
 We then deal with the real part of $S_{m}'(z)$,
\begin{align*}
    \Re(S_{m}'(x+iy)) & =  X + \frac{x}{x^{2}+y^{2}} +  \frac{T}{m}x - \frac{1}{T} \sum_{r=1}^{m}\frac{x-u_{r}^{(m)}}{(x-u_{r}^{(m)})^{2}+y^{2}}.
\end{align*}
\begin{lemma}\label{l:re} For all $X\in(-2,2)$, there exist $\kappa_{0}>0,m_{0}\in\mathbb{N}$ such that for all $\kappa_{2}\in (0,\kappa_{0}),$ $m>m_{0}$ and $x^{(m)}$ satisfying \cref{a:ld} and \cref{a:is}, $\mathrm{sgn}(\Re(S_{m}'(x_{1}+i\kappa_{2})))\neq \mathrm{sgn}(\Re(S_{m}'(x_{2}+i\kappa_{2})))$ for all $x_{1}\in (-b_{m}(\kappa_{2}),-a_{m}(\kappa_{2}))$, $x_{2}\in (a_{m}(\kappa_{2}),b_{m}(\kappa_{2}))$.  
\end{lemma}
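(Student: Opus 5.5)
The goal of \cref{l:re} is to show that $\Re(S_{m}'(x+i\kappa_{2}))$ has a definite, opposite sign on the two small windows $x\in(-b_{m}(\kappa_{2}),-a_{m}(\kappa_{2}))$ and $x\in(a_{m}(\kappa_{2}),b_{m}(\kappa_{2}))$. The mechanism is that the term $x/(x^{2}+\kappa_{2}^{2})$ dominates there. For $|x|\asymp\kappa_{2}^{1/2}$ we have $x^{2}\gg\kappa_{2}^{2}$, so $x/(x^{2}+\kappa_{2}^{2})\approx 1/x$, which has size $\asymp\kappa_{2}^{-1/2}$ and sign $\mathrm{sgn}(x)$. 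It therefore suffices to show that every other term of
\begin{align*}
\Re(S_{m}'(x+i\kappa_{2})) = X + \frac{x}{x^{2}+\kappa_{2}^{2}} + \frac{T}{m}x - \frac{1}{T}\sum_{r=1}^{m}\frac{x-u_{r}^{(m)}}{(x-u_{r}^{(m)})^{2}+\kappa_{2}^{2}}
\end{align*}
is $O(1)$ (or $o(\kappa_{2}^{-1/2})$) uniformly in $|x|\le C_{2}\kappa_{2}^{1/2}$ and $m>m_{0}$. Choosing $\kappa_{0}$ small then gives $\mathrm{sgn}\,\Re S_{m}'=\mathrm{sgn}(x)$ on each window, and the two signs differ.

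The terms $X$ and $Tx/m$ are trivially bounded. For the sum, I would split it, as in the proof of \cref{l:sconv}, into a near part $|u_{r}^{(m)}|\le R$, an intermediate part $R\le|u_{r}^{(m)}|\le\delta m/T$, and a far part $|u_{r}^{(m)}|\ge\delta m/T$.

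The far part is bounded by \cref{a:ld}(2) applied to dyadic-type blocks of length $Q(m)/m$, combined with the decay $1/|u|$. Equivalently, in macroscopic variables this is a bounded contribution from points at distance $\ge\delta$. The intermediate part equals, up to an error, $-\frac{1}{m}\sum 1/(X-x_{r}^{(m)}/\sqrt m)$ over $RT/m\le|X-x_r^{(m)}/\sqrt m|\le\delta$. This follows by replacing $(x-u)/((x-u)^{2}+\kappa_{2}^{2})$ with $-1/u$, at a cost of $O(|x|/u^{2}+\kappa_{2}^{2}/|u|^{3})$, which is summable using \cref{a:ld}(1). The main term is then bounded by $A_{R,\delta}$ from \cref{a:is}.

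The near part is the real obstacle, because points close to $x$ can make individual summands as large as $1/\kappa_{2}$. I would handle it by pairing. Using \cref{a:ld}(1), partition $[-R,R]$ into intervals of length $D(m)/T$, each containing between $\rho_{*}D$ and $\rho^{*}D$ points. The real-part kernel $v\mapsto v/(v^{2}+\kappa_{2}^{2})$ is odd, so contributions from intervals symmetric about $x$ cancel up to the density discrepancy. What remains is bounded by $(\rho^{*}-\rho_{*})$ times $\int_{\kappa_{2}\lesssim|v|\le R}|v|^{-1}\,dv$, plus the contribution of the $O(1)$ intervals nearest $x$. This gives the bound $O(\log(R/\kappa_{2}))$.

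The nearest intervals have mesh $D/T\to0$, so they carry $O(D/T)$ mass at scale $1/T$. The $\frac1T$ prefactor times at most $\rho^{*}D$ points, each of size $\le 1/\kappa_{2}$, gives $O(D/(T\kappa_{2}))=o(1)$.

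Since $\log(1/\kappa_{2})=o(\kappa_{2}^{-1/2})$, shrinking $\kappa_{0}$ absorbs these bounds. If the logarithmic estimate proves too crude, the fallback is to choose $a_m,b_m$ via \cref{l:im1}, noting that $\Im S_m'$ changes sign there, which forces the local density near $x$ to be comparable. Then compare with the flat-density computation $\int\frac{v}{v^{2}+\kappa^{2}}dv=0$, as in \cite[Lemma 5.3]{GP19}.
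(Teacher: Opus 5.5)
Your argument is correct and follows the route the paper indicates: estimate the sum in $\Re(S_m'(x+i\kappa_2))$ using \cref{a:ld} and \cref{a:is}, in the spirit of \cite[Lemma 5.5]{GP19}. The near range contributes $O(\rho^{*}\log(1/\kappa_2))$ from the upper density bound alone, so the pairing is not even needed; the intermediate range is controlled by $A_{R,\delta}$; and the far range is trivially at most $2/\delta$ because there are only $m$ points. Hence $x/(x^{2}+\kappa_2^{2})$, which has modulus $\asymp\kappa_2^{-1/2}$ and sign $\mathrm{sgn}(x)$, fixes the sign on each window.

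Two caveats. First, your estimates (e.g.\ $D/(T\kappa_2)=o(1)$) make $m_0$ depend on $\kappa_2$; this is the only reading under which this lemma and the windows of \cref{l:im1} make sense. Second, the error terms for $Q(m)/T\le|u_r^{(m)}|\le\delta m/T$ need \cref{a:ld}(2) rather than \cref{a:ld}(1).
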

As before, this is proved analogously to a simpler case of \cite[Lemma 5.5]{GP19}, using \cref{a:ld} and \cref{a:is} to estimate the sum which appears in $\Re(S_{m}'(x+iy))$. 

We can also prove versions of the statements in \cref{s:contours} with only \cref{a:ld} and \cref{a:is}. We modify the proofs of \cref{l:conditions1} and \cref{l:lengthbound} by, along the same lines as the proofs of \cite[Lemma 6.5, Lemma 6.8]{GP19}, noting that if, by way of contradiction, one of the statements fails, then there must exist a subsequence $\{m_{k}\}_{k\in\mathbb{N}}$ along which convergence results like \eqref{e:weak} and \eqref{e:dlimit} hold, but such that $S_{m}''(z_{m_{k}})$ diverges or converges to $0$, and then replicating the arguments in the proofs of those results to produce a contradiction along that subsequence. 

Results analogous to \cref{l:conditions1}, \cref{l:conditions3}, and remarks \cref{r:Rbd} and \cref{r:extra} also follow from \cref{a:ld} and \cref{a:is} by arguments analogous to those in \cite[Section 6]{GP19}, in particular \cite[Lemma 6.1, Lemma 6.2, Lemma 6.4]{GP19}. There is substantial simplification in this argument compared to the case that \cite{GP19} considers, reflecting the fact that $S_{m}$ has simpler structure in the case considered in this paper).  
\bibliographystyle{amsplain0}
\bibliography{main.bib}
\end{document}